\newdimen\margin   
\def\textno#1&#2\par{%
   \margin=\hsize
   \advance\margin by -4\parindent
          \setbox1=\hbox{\sl#1}%
   \ifdim\wd1 < \margin
      $$\box1\eqno#2$$%
   \else
      \bigbreak
      \hbox to \hsize{\indent$\vcenter{\advance\hsize by -3\parindent
      \sl\noindent#1}\hfil#2$}%
      \bigbreak
   \fi}
\theoremstyle{plain}
\newtheorem{theorem}{Theorem}
\newtheorem*{thm*}{Theorem}
\newtheorem*{theorem*}{Theorem}
\newtheorem{prop}[theorem]{Proposition}
\newtheorem*{claim*}{Claim}
\newtheorem{lemma}[theorem]{Lemma}
\newtheorem{conjecture}[theorem]{Conjecture}
\theoremstyle{definition}
\theoremstyle{remark}
\renewcommand{\leq}{\leqslant}
\renewcommand{\le}{\leqslant}
\renewcommand{\geq}{\geqslant}
\renewcommand{\ge}{\geqslant}
\def\Chvatal{Chv\'atal}
\def\Kuhn{K{\"u}hn}
\def\Haggkvist{H{\"a}ggkvist}
\def\Posa{P\'osa}
\def\Szemeredi{Szemer\'edi}
\def\COMMENT#1{}
\newcommand{\eps}{\varepsilon}
\newcommand{\mc}[1]{\mathcal{#1}}
\newcommand{\mb}[1]{\mathbb{#1}}
\newcommand{\lra}{\leftrightarrow}
\newcommand{\sm}{\setminus}
\newcommand{\sub}{\subseteq}
\newcommand{\wt}{\widetilde}
\newcommand{\exit}{{\rm exit}}
\newcommand{\entry}{{\rm entry}}
\newcommand{\Exit}{{\rm Exit}}
\newcommand{\Entry}{{\rm Entry}}
\newcommand{\V}{\mathcal{V}}
\newcommand{\Z}{\mathcal{Z}}
\newcommand{\cE}{\mathcal{E}}
\newcommand{\C}{\mathcal{C}}
\newcommand{\match}{{\rm Match}}
\newcommand{\nd}{{\rm 2nd}}
\begin{document}

\vspace*{-1.5em}

\title{A semi-exact degree condition for Hamilton cycles in digraphs}
\author{Demetres Christofides, Peter Keevash, Daniela \Kuhn\ and Deryk Osthus}
\thanks {D.~Christofides was supported by the EPSRC, grant no.~EP/E02162X/1.
D.~K\"uhn was supported by the EPSRC, grant no.~EP/F008406/1.
D.~Osthus was supported by the EPSRC, grant no.~EP/E02162X/1 and~EP/F008406/1.}
\begin{abstract}
We show that for each $\beta > 0$, every digraph $G$ of sufficiently
large order $n$ whose outdegree and indegree sequences $d_1^+ \leq
\dots \leq d_n^+$ and $d_1^- \leq \dots \leq d_n^-$ satisfy
$d_i^+, d_i^- \geq \min{\{i + \beta n, n/2\}}$ is Hamiltonian.
In fact, we can weaken these assumptions to
\begin{itemize}
\item[(i)] $d_i^+ \geq \min{\{i + \beta n, n/2\}}$ or $d^-_{n - i - \beta n} \geq n-i$;
\item[(ii)] $d_i^- \geq \min{\{i + \beta n, n/2\}}$ or $d^+_{n - i - \beta n} \geq n-i$;
\end{itemize}
and still deduce that $G$ is Hamiltonian.
This provides an approximate version of a conjecture of
Nash-Williams from 1975 and improves a previous result
of \Kuhn, Osthus and Treglown.
\end{abstract}
\maketitle

\section{Introduction}

The decision problem of whether a graph contains a Hamilton cycle is
one of the most famous NP-complete problems, and so it
is unlikely that there exists a good characterization of all
Hamiltonian graphs. For this reason, it is natural to ask for
sufficient conditions which ensure Hamiltonicity. The most basic result of this kind is
Dirac's theorem~\cite{Dirac52}, which states that
every graph of order $n \geq 3$ and minimum degree at least $n/2$ is
Hamiltonian.

Dirac's theorem was followed by a series of results by various
authors giving even weaker conditions which still guarantee
Hamiltonicity. An appealing example is a theorem of
\Posa~\cite{Posa62} which implies%
   \COMMENT{If $n$ is odd then Posa is stronger than what we state (similar to NW-conjecture)}
that every graph of order $n \geq 3$ whose degree sequence $d_1 \leq
d_2 \leq \dots \leq d_n$ satisfies $d_i \geq i+1$ for all $i<n/2$ is
Hamiltonian. Finally, \Chvatal~\cite{Chvatal72} showed that if the
degree sequence of a graph~$G$ satisfies $d_i \geq i+1$ or  $d_{n-i}
\geq n-i$ whenever $i < n/2$, then $G$ is Hamiltonian. \Chvatal's
condition is best possible in the sense that for every sequence not
satisfying this condition, there is a non-Hamiltonian graph whose
degree sequence majorises the given sequence.

It is natural to seek analogues of these theorems for digraphs. For
basic terminology on digraphs, we refer the reader to the monograph
of Bang-Jensen and Gutin~\cite{Bang-Jensen&Gutin01}.
Ghouila-Houri~\cite{Ghouila-Houri60} proved that every digraph of
order $n$ and minimum indegree and outdegree at least $n/2$ is Hamiltonian,
thus providing such an analogue of Dirac's theorem for  digraphs.
Thomassen~\cite{Thomassen81} asked the corresponding question for
oriented graphs (digraphs with no $2$-cycles). One might expect that
a weaker minimum semidegree (i.e.\ indegree and outdegree) condition
would suffice in this case. \Haggkvist~\cite{Haggkvist93} gave a construction
showing that a minimum semidegree of $\frac{3n-4}{8}$ is necessary and
conjectured that it is also sufficient to guarantee a Hamilton cycle
in any oriented graph of order $n$. This conjecture was recently proved
in \cite{Keevash&Kuhn&Osthus+}, following an asymptotic solution in \cite{Kelly&Kuhn&Osthus+}.
In \cite{Christofides&Keevash&Kuhn&Osthus+}
we gave an NC algorithm for finding Hamilton cycles in digraphs
with a certain robust expansion property which captures several
previously known criteria for finding Hamilton cycles. These and other
results are also discussed in the recent survey~\cite{cyclesurvey}.

However, no digraph analogue of \Chvatal's theorem is known. For a digraph $G$ of
order $n$, let us write $d_1^+(G) \leq \dots \leq d_n^+(G)$ for its
outdegree sequence, and $d_1^-(G) \leq \dots \leq d_n^-(G)$ for its
indegree sequence. We will usually write $d_i^+$ and $d_i^-$ instead of $d_i^+(G)$ and $d_i^-(G)$ if this is unambiguous.

The following conjecture of
Nash-Williams~\cite{Nash-Williams75} would provide such an analogue.

\begin{conjecture}\label{Nash-Williams Conjecture - Chvatal}
Let $G$ be a strongly connected digraph of order $n \geq 3$ and
suppose that for all $i < n/2$
\begin{itemize}
\item[(i)] $d_i^+ \geq i+1$ or $d^-_{n-i} \geq n-i$;
\item[(ii)] $d_i^- \geq i+1$ or $d^+_{n-i} \geq n-i$.
\end{itemize}
Then $G$ contains a Hamilton cycle.
\end{conjecture}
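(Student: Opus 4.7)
The plan is to attack the conjecture via a Bondy-\Chvatal{} closure argument. Define the digraph closure $\bar G$ of $G$ by repeatedly adding any missing arc $u \to v$ with $d^+(u) + d^-(v) \geq n$; a standard rotation argument shows that $\bar G$ is Hamiltonian if and only if $G$ is. It therefore suffices to show that under hypotheses (i), (ii) and strong connectivity, $\bar G$ equals the complete digraph $\vec K_n$, which is trivially Hamiltonian.

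Suppose for contradiction that $\bar G \neq \vec K_n$. Choose a non-arc $u \to v$ of $\bar G$ maximizing $d^+(u) + d^-(v)$, a sum which is at most $n-1$. By the symmetry between (i) and (ii) we may assume $d^+(u) \leq d^-(v)$ and set $i := d^+(u) < n/2$, so that condition (i) applies at index $i$. Maximality yields two counts. First, every $w \neq v$ with $w \to v$ missing satisfies $d^+(w) \leq d^+(u) = i$; there are at least $n - 1 - d^-(v) \geq i$ such vertices, hence $d^+_i \leq i$ and the first clause of (i) is violated. Second, every $w \neq u$ with $u \to w$ missing satisfies $d^-(w) \leq d^-(v) \leq n - 1 - i$, producing exactly $n - 1 - i$ vertices of indegree at most $n - 1 - i$. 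The second clause of (i) requires $d^-_{n-i} \geq n - i$; to refute it we would need $n - i$ vertices of indegree $\leq n - 1 - i$, so the pure closure count falls short by exactly one vertex.

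Closing this off-by-one is the main obstacle, and it is where the strong connectivity hypothesis is presumably required. A natural approach is to case-split on $d^-(u)$: if $d^-(u) \leq n - 1 - i$, then $u$ itself may be appended to the second count, yielding the $n - i$ vertices needed and the desired contradiction; otherwise $d^-(u) \geq n - i$, in which case a direct calculation from the maximality of $(u,v)$ shows that every non-arc $w \to u$ forces $d^+(w) \leq n-1-d^-(u) \leq i - 1$, so such non-arcs are few (at most $i - 1$ in number) and are themselves sources of small outdegree. One should then exploit strong connectivity of $\bar G$ (which is preserved by closure) via a short $v$-to-$u$ path, and iterate a refined closure step along this path to produce an additional low-indegree vertex outside $N^+(u) \cup \{u, v\}$, or else to construct a non-arc of strictly larger $d^+ + d^-$ contradicting the choice of $(u,v)$. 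Making this final step rigorous is delicate; it is essentially the long-standing difficulty responsible for the conjecture having remained open since 1975, and is bypassed in the paper under review by admitting a $\beta n$ slack in the degree conditions, which enables an absorbing/regularity argument tolerant of $O(\beta n)$-sized losses in the extremal count.
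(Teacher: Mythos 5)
You are attempting to prove Conjecture~\ref{Nash-Williams Conjecture - Chvatal}, which is an \emph{open problem} of Nash-Williams: the paper offers no proof of it, only of the semi-exact version (Theorem~\ref{CKKO - Approximate Chvatal}) in which the degree bounds carry an extra $\beta n$ term. So your argument has to stand on its own, and it does not. The very first step fails: the closure operation you invoke --- add the arc $uv$ whenever $d^+(u)+d^-(v)\ge n$ --- is \emph{not} Hamiltonicity-preserving in digraphs. The ``standard rotation argument'' is an undirected-graph argument: it reverses a segment of the Hamilton path, which is forbidden once edges are oriented, and a ``crossing'' pair of arcs only turns a Hamilton path into two disjoint cycles, not a Hamilton cycle. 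Concretely, on $n=4$ vertices take the arcs $1\to2$, $2\to3$, $3\to4$, $4\to2$, $4\to3$, $2\to1$, $3\to1$. The arc $4\to1$ is missing and $d^+(4)+d^-(1)=2+2=4=n$, yet this digraph is non-Hamiltonian (the only arc leaving $1$ goes to $2$, and $2\to4$ is absent), while adding $4\to1$ creates the Hamilton cycle $1\,2\,3\,4$. The valid Bondy--Chv\'atal-type closure for digraphs requires the much stronger condition $d(u)+d(v)\ge 2n-1$ for a pair joined by no arc in either direction, which is not forced by hypotheses (i) and (ii).

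Even granting the closure step, the proof is incomplete by your own account: the maximality counting yields only $n-1-i$ vertices of indegree at most $n-1-i$ where $n-i$ are needed, and the paragraph that is supposed to close this off-by-one using strong connectivity is a sketch of possible strategies (``presumably required'', ``a natural approach'', ``making this final step rigorous is delicate'') rather than an argument. As you note yourself, that missing step is essentially the reason the conjecture has been open since 1975. So the proposal contains two genuine gaps: an invalid reduction (the closure lemma, refuted above) and an unproven key step. For comparison, the paper does not attempt anything of this kind; it proves only the approximate statement, using the Diregularity Lemma, an almost $1$-factor of the reduced digraph, and shifted walks, with the $\beta n$ slack absorbing exactly the kind of boundary losses your counting runs into.
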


Nash-Williams also highlighted the following conjectural analogue
of \Posa's theorem, which would follow from
Conjecture~\ref{Nash-Williams Conjecture - Chvatal}.%
    \COMMENT{Why is the last assumption necessary? It gives a weaker
condition if $n$ is odd (have $d_i^+,d_i^- \geq i+1$ only for all $i < (n-1)/2$
instead of for all $i<n/2$ as stated previously)}

\begin{conjecture}\label{Nash-Williams Conjecture - Posa}
Let $G$ be a digraph of order $n \geq 3$ such that
$d_i^+,d_i^- \geq i+1$ for all $i < (n-1)/2$ and
$d^+_{\lceil n/2 \rceil}, d^-_{\lceil n/2 \rceil} \geq \lceil n/2 \rceil$.
Then $G$ contains a Hamilton cycle.
\end{conjecture}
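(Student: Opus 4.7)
My plan is to derive Conjecture~\ref{Nash-Williams Conjecture - Posa} from Conjecture~\ref{Nash-Williams Conjecture - Chvatal}, as the paper already indicates is possible. Assuming the stronger conjecture, I need to verify that under the P\'osa-type hypotheses of the statement (a) the split conditions (i), (ii) of Conjecture~\ref{Nash-Williams Conjecture - Chvatal} hold for every $i<n/2$, and (b) $G$ is strongly connected, which is an independent assumption in Conjecture~\ref{Nash-Williams Conjecture - Chvatal}.

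For (a), fix $i<n/2$. When $i<(n-1)/2$, the assumption $d^+_i,d^-_i\ge i+1$ directly supplies the first alternative in both (i) and (ii). The only other index in the range occurs when $n$ is odd and $i=(n-1)/2$, in which case $n-i=\lceil n/2\rceil$, and the boundary hypothesis $d^+_{\lceil n/2\rceil},d^-_{\lceil n/2\rceil}\ge\lceil n/2\rceil$ supplies the second alternative $d^\pm_{n-i}\ge n-i$.

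For (b), I would suppose for contradiction that $V(G)=A\sqcup B$ with $A,B$ non-empty and no edge from $A$ to $B$. Then every vertex of $A$ has all its outneighbours in $A$, so $d^+_{|A|}\le|A|-1$, and symmetrically $d^-_{|B|}\le|B|-1$. A short case analysis on the sizes of $A$ and $B$ versus $(n-1)/2$ and $\lceil n/2\rceil$ finishes: if one of them is strictly less than $(n-1)/2$, the P\'osa-type assumption is violated on the corresponding sequence; otherwise, $|A|+|B|=n$ forces at least one of $|A|,|B|$ to equal $\lceil n/2\rceil$, and then the boundary hypothesis at index $\lceil n/2\rceil$ is violated.

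The main obstacle is, of course, that Conjecture~\ref{Nash-Williams Conjecture - Chvatal} itself remains open. Substituting the paper's main theorem---the $\beta n$-approximate version of Conjecture~\ref{Nash-Williams Conjecture - Chvatal}---into the reduction above would only yield an approximate analogue of the present statement: every sufficiently large digraph with $d^+_i,d^-_i\ge\min\{i+\beta n,n/2\}$ is Hamiltonian. Thus the real difficulty in proving Conjecture~\ref{Nash-Williams Conjecture - Posa} unconditionally is essentially the same as that of the full Chv\'atal-type conjecture, and cannot be circumvented by this reduction alone.
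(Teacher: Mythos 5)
The statement you are addressing is labelled as a conjecture, and the paper does not give (and does not claim to give) a proof of it; it merely remarks, without detail, that Conjecture~\ref{Nash-Williams Conjecture - Posa} would follow from Conjecture~\ref{Nash-Williams Conjecture - Chvatal}, and separately that the degree condition in Conjecture~\ref{Nash-Williams Conjecture - Posa} already forces strong connectivity. Your proposal supplies exactly those two missing verifications, and both are correct: for part (a), the only non-trivial index is $i=(n-1)/2$, which can only occur when $n$ is odd (for even $n$ the constraint $i<(n-1)/2$ on integers coincides with $i<n/2$), and there $n-i=(n+1)/2=\lceil n/2\rceil$, so the boundary hypothesis supplies the second alternative in both (i) and (ii) of Conjecture~\ref{Nash-Williams Conjecture - Chvatal}; for part (b), the degree-sequence contradiction you sketch is sound, with the boundary condition at index $\lceil n/2\rceil$ ruling out the remaining balanced or near-balanced partitions. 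Your closing paragraph is also the right remark: substituting the paper's actual result, Theorem~\ref{CKKO - Approximate Chvatal}, into this reduction only reproduces Theorem~\ref{CKKO - Approximate Posa}, the semi-exact version, and yields nothing unconditional for the conjecture itself. So what you have written is a correct and honest verification of the implication the paper asserts without proof, but it is not --- and cannot be by this route --- a proof of the conjecture, which remains open.
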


Note that in Conjecture~\ref{Nash-Williams Conjecture - Posa}
the degree condition implies that $G$ is strongly connected.
It is not even known whether the above conditions guarantee the existence
of a cycle though any given pair of vertices (see \cite{bt}).
We will prove the following semi-exact form of Conjecture~\ref{Nash-Williams Conjecture - Posa}.
It is `semi-exact' in the sense that for half of the vertex degrees, we obtain the conjectured bound,
whereas for the other half, we need an additional error term.

\begin{theorem}\label{CKKO - Approximate Posa}
For every $\beta > 0$ there exists an integer $n_0 = n_0(\beta)$
such that the following holds. Suppose $G$ is a digraph on $n \geq
n_0$ vertices such that $d_i^+,d_i^- \geq \min \{i + \beta n,n/2\}$
whenever $i < n/2$. Then $G$ contains a Hamilton cycle.
\end{theorem}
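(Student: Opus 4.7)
The plan is to combine the Diregularity Lemma, the digraph Blow-up Lemma and the robust outexpansion framework of \cite{Christofides&Keevash&Kuhn&Osthus+} with an absorbing argument. Observe first that under the hypothesis at least $2\beta n$ vertices of $G$ satisfy $d^+(v), d^-(v) \ge n/2$; call this set $H$, and write $L = V(G) \setminus H$ for the ``low-degree'' vertices. Every vertex of $L$ still has in- and out-degree at least $\beta n$. The difficulty lies in $L$: its vertices do not satisfy a Dirac-type condition, and $|L|$ can be nearly $(1-2\beta)n$.

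In the first stage, I would construct an absorbing path $P_{\mathrm{abs}}$ of length $o(n)$ in $G$ with the property that any sufficiently small set $X$ disjoint from $V(P_{\mathrm{abs}})$ can be inserted into $P_{\mathrm{abs}}$ to yield a new path on $V(P_{\mathrm{abs}}) \cup X$ with the same endpoints. The standard construction identifies, for each vertex $v$, many ``absorber gadgets'' (short paths $x \to v \to y$ such that $xy$ is also an arc) and then selects a random subfamily that works for every $v$ simultaneously. This step uses only the weak bound $d^+(v), d^-(v) \ge \beta n$.

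In the second stage, I would apply the Diregularity Lemma to $G \setminus V(P_{\mathrm{abs}})$ and verify that the reduced digraph $R$ essentially inherits the degree-sequence condition of $G$. I would then show that $R$ (or a suitable spanning subdigraph) is a robust outexpander in the sense of \cite{Christofides&Keevash&Kuhn&Osthus+}, and invoke the main theorem there together with the Blow-up Lemma to produce an almost-spanning system of long vertex-disjoint paths in $G \setminus V(P_{\mathrm{abs}})$. In the third stage, these paths are stitched together with $P_{\mathrm{abs}}$ through short connecting subpaths supported inside $H$ (where enough dense structure is available), and the $o(n)$ leftover vertices are swallowed by invoking the absorbing property of $P_{\mathrm{abs}}$.

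The main obstacle I anticipate is the second stage: proving that robust outexpansion really does hold for $R$ under only the semi-exact hypothesis. One must rule out the possibility that a cluster composed primarily of low-degree vertices has too few out-neighbours (or in-neighbours) in $R$ to expand robustly; this boils down to a careful averaging argument showing that, globally, the degree-sequence condition nevertheless forces the required expansion property. A secondary difficulty is that absorbers for the low-degree vertices of $L$ are scarcer, so the probabilistic construction in stage one must be weighted in order to guarantee a sufficient supply of gadgets for each $v \in L$ rather than merely on average over $V(G)$.
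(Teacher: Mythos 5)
Your second stage contains a genuine gap, and it is the crux of the matter: a digraph satisfying the hypothesis of Theorem~\ref{CKKO - Approximate Posa} need \emph{not} be a robust outexpander, so the main theorem of~\cite{Christofides&Keevash&Kuhn&Osthus+} simply does not apply, and no ``careful averaging argument'' will fix this. Concretely, take $V(G)=A\cup B$ with $|A|=|B|=n/2$, put complete digraphs on $A$ and on $B$, and add one perfect matching from $A$ to $B$ and one from $B$ to $A$. Then $d_i^+=d_i^-=n/2$ for all $i$, so the hypothesis holds for any $\beta>0$; yet taking $S=A$, every vertex of $B$ has exactly one in-neighbour in $A$, so for any fixed $\nu>0$ and large $n$ the robust outneighbourhood of $A$ is just $A$ itself, and robust $(\nu,\tau)$-outexpansion fails for every $\nu,\tau>0$. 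Worse, the $n$ crossing edges have density $O(1/n)$ between any pair of clusters, so after the Diregularity Lemma the reduced digraph decomposes into two cliques (on ``$A$-clusters'' and on ``$B$-clusters'') with \emph{no} edges between them and is not even strongly connected. This is precisely the phenomenon the paper flags when it says that once degrees are capped at $n/2$, ``the expansion property is no longer sufficient to link up the cycles.''

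The actual proof is therefore structural rather than expansion-based, and it avoids both the Blow-up Lemma and absorbing paths. After covering the reduced digraph by a near-$1$-factor $F$, one works with the auxiliary ``shifted'' digraph $H$: if $H$ is highly connected, the argument of Section~\ref{highconnect} suffices; otherwise $H$ splits into two shifted components $L$ and $R$ of size $\approx k/2$ joined only by a narrow middle $M$, and the crossing structure needed to merge $\wt{L}$ and $\wt{R}$ into one cycle may be invisible in the reduced digraph (as in the example above). One must then return to $G$ itself to find pseudo-matchings $\match_{BL}$ and $\match_{TR}$ supplying the few available ``transitions'', thread the exceptional vertices through them via a careful classification of in/out-types, and close the resulting walk into a single Hamilton cycle by the rerouting and matching-switching argument of Lemmas~\ref{4goodcycles} and~\ref{singlecycle} in place of the Blow-up Lemma. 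These are exactly the ideas that carry the proof past the point where your plan breaks down, and none of them appear in your sketch.
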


Recently, the following approximate version of
Conjecture~\ref{Nash-Williams Conjecture - Chvatal} for large
digraphs was proved by \Kuhn, Osthus and
Treglown~\cite{Kuhn&Osthus&Treglown+}.

\begin{theorem}\label{Kuhn-Osthus-Treglown}
For every $\beta > 0$ there exists an integer $n_0 = n_0(\beta)$
such that the following holds.
Suppose $G$ is a digraph on $n \geq n_0$ vertices such that%
   \COMMENT{We'll remove the $i<n/2$ condition later.}
for all $i < n/2$
\begin{itemize}
\item[(i)] $d_i^+ \geq i + \beta n$ or $d^-_{n-i - \beta n} \geq n-i$;
\item[(ii)] $d_i^- \geq i + \beta n$ or $d^+_{n-i - \beta n} \geq n-i$.
\end{itemize}
Then $G$ contains a Hamilton cycle.
\end{theorem}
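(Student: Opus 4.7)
My plan is to combine the directed \Szemeredi\ regularity lemma, the blow-up lemma, and an absorbing argument. First, apply the directed regularity lemma to $G$ with regularity parameter $\eps \ll \beta$, producing a partition $V_0 \cup V_1 \cup \dots \cup V_k$ of $V(G)$ and a reduced digraph $R$ on $[k]$ whose edges record the $\eps$-regular pairs of density at least some $d \ll \beta$. A standard counting argument shows that $R$ inherits a slightly weakened form of the \Chvatal-type hypothesis: for all $i < k/2$ one has $d^+_i(R) \geq i + (\beta/2) k$ or $d^-_{k - i - (\beta/2)k}(R) \geq k - i$, and symmetrically for~(ii).

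The main work is then to find a Hamilton cycle in $R$ under this condition. The natural approach is to split vertices of $R$ according to whether they have high in- or out-degree, and use the \Chvatal-type hypothesis to show that the ``bad'' vertices (those with both $d^+$ and $d^-$ far below $k/2$) are few in number and can be absorbed. Concretely, let $B^+ := \{v : d^+(v) < k/2\}$ and $B^- := \{v : d^-(v) < k/2\}$. Condition (i) applied with $i = |B^+|$ forces $d^-_{k - |B^+| - (\beta/2)k} \geq k - |B^+|$, so outside $B^+$ almost every vertex has very large indegree, and symmetrically for $B^-$. This surplus allows us to remove a small set of exceptional vertices from $R$ to obtain a sub-digraph $R'$ satisfying the \Posa-type bound $d^{\pm}_i \geq \min\{i + \beta'' k, k/2\}$, to which Theorem~\ref{CKKO - Approximate Posa} applies. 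The exceptional vertices are handled in $G$ itself by a short absorbing path, built before running the regularity step: the \Chvatal-type condition ensures that even exceptional vertices have at least one of $d^+, d^-$ very large, so each admits many absorbing gadgets.

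Finally, one lifts the Hamilton cycle in $R'$ to a near-spanning cycle in $G$ via the blow-up lemma (treating the chosen regular pairs as super-regular after deleting a small proportion of vertices), then splices in the absorbing path to swallow the remaining vertices of $V(G) \setminus V(R')$, including the exceptional set $V_0$ from the regularity lemma. The main obstacle is the case analysis needed to turn the \Chvatal-type ``or'' condition into a \Posa-type ``and'' condition on $R'$: one must simultaneously control the outdegree and indegree sequences, and the trade-off between them is subtle, especially in the regime where $B^+$ and $B^-$ overlap heavily and almost every vertex is ``bad'' on at least one side. A secondary concern is the absorbing step, where one needs a single absorbing path to handle all exceptional vertices and all vertices of $V_0$ at once; this will typically require a union-bound argument over a random choice of absorber templates, with the template probabilities tuned against the degree surplus supplied by the \Chvatal-type hypothesis.
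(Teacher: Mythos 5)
The paper does not prove Theorem~\ref{Kuhn-Osthus-Treglown}; it cites it as a known result from~\cite{Kuhn&Osthus&Treglown+} and then proves the strictly stronger Theorem~\ref{CKKO - Approximate Chvatal}, from which Theorem~\ref{Kuhn-Osthus-Treglown} follows at once since $\min\{i+\beta n, n/2\}\le i+\beta n$. So the right ``one-line proof'' available within this paper is simply that implication. Your proposal instead attempts a direct proof and contains several genuine gaps.

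The most serious is the claimed reduction from the Chv\'atal-type hypothesis on the reduced digraph $R$ to a P\'osa-type bound on a subdigraph $R'$ obtained by deleting exceptional vertices, so that Theorem~\ref{CKKO - Approximate Posa} applies. This step is unsupported and, as stated, almost certainly false: the Chv\'atal-type condition is strictly more general precisely because it tolerates vertices of genuinely small out- (or in-) degree, compensated by many vertices of high in- (or out-) degree. Deleting vertices only lowers degrees, and the compensating high-degree vertices are exactly the ones you cannot afford to delete, so there is no reason the survivors should satisfy $d^\pm_i \geq \min\{i+\beta'' k, k/2\}$. Indeed, if such a reduction were available, Conjecture~\ref{Nash-Williams Conjecture - Chvatal} would reduce to Conjecture~\ref{Nash-Williams Conjecture - Posa} by a soft argument, which is not the case. (It is also circular within this paper's logic: Theorem~\ref{CKKO - Approximate Posa} is obtained as a corollary of Theorem~\ref{CKKO - Approximate Chvatal}, which itself subsumes Theorem~\ref{Kuhn-Osthus-Treglown}.) A second concrete error is the claim that ``even exceptional vertices have at least one of $d^+, d^-$ very large.'' The hypothesis only yields $\delta^\pm(G)\ge\beta n$ (see Section~\ref{prelims}); a vertex may well have both semidegrees equal to roughly $\beta n$, which kills the proposed absorption argument since there is no surplus to exploit.

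Finally, the overall architecture differs from what actually works. Neither this paper nor~\cite{Kuhn&Osthus&Treglown+} finds a Hamilton cycle in the reduced digraph; they find only an almost $1$-factor (a disjoint union of directed cycles covering all but $O(d^{1/2}k)$ clusters -- see Lemma~\ref{Existence of F}). The harder part is then connecting these cycles and the (possibly $\Theta(\eps n)$-sized) exceptional set $V_0$ into a single Hamilton cycle via a carefully controlled closed walk built from shifted walks, with the degree hypothesis used to guarantee enough expansion to route these walks. The blow-up lemma is used in~\cite{Kuhn&Osthus&Treglown+} but this paper deliberately avoids it, replacing it with a matching-switching argument in super-regular pairs (statement $(\dagger)$ in Section~\ref{highconnect} and Lemma~\ref{fk}). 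Your absorbing-path plan would require swallowing a constant fraction of $V(G)$ with a single pre-built path, which is well beyond what a union-bound over random absorber templates can deliver without substantial additional structure.
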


We will extend this to the following theorem, which implies
Theorem~\ref{CKKO - Approximate Posa}.

\begin{theorem}\label{CKKO - Approximate Chvatal}
For every $\beta > 0$ there exists an integer $n_0 = n_0(\beta)$
such that the following holds. Suppose $G$ is a digraph on $n \geq
n_0$ vertices such that for all $i < n/2$
\begin{itemize}
\item[(i)] $d_i^+ \geq \min{ \{i + \beta n,n/2\} }$ or $d^-_{n-i - \beta n} \geq n-i$;
\item[(ii)] $d_i^- \geq \min{ \{i + \beta n, n/2 \} }$ or $d^+_{n-i - \beta n} \geq n-i$.
\end{itemize}
Then $G$ contains a Hamilton cycle.
\end{theorem}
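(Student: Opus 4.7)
The plan is to follow the standard extremal/non-extremal dichotomy used in similar Hamilton-cycle results, treating Theorem~\ref{Kuhn-Osthus-Treglown} as a template and adapting its proof to the slightly weaker hypothesis. The key observation is that the hypothesis of Theorem~\ref{CKKO - Approximate Chvatal} coincides with that of Theorem~\ref{Kuhn-Osthus-Treglown} whenever $i \leq n/2 - \beta n$, since in that range $\min\{i+\beta n, n/2\} = i+\beta n$. The two conditions can differ only in the critical range $i \in (n/2-\beta n, n/2)$, where the new condition allows $d_i^+ \geq n/2$ in place of the stronger $d_i^+ \geq i + \beta n$ (and symmetrically for indegrees). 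In particular, any vertex whose outdegree activates the first disjunct of the new hypothesis but fails the corresponding disjunct of the old one must have outdegree in the narrow window $[n/2, n/2+\beta n)$, and the analogous statement holds for indegrees.

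For the non-extremal case, when $G$ is far (in edit distance) from each of the extremal configurations, I would argue as in Theorem~\ref{Kuhn-Osthus-Treglown}: the degree conditions still imply that $G$ is a robust out-expander in the sense of~\cite{Christofides&Keevash&Kuhn&Osthus+}, so the main theorem of that paper produces a Hamilton cycle. This step uses the degree hypothesis only in an averaged form, so a small loss in $\beta$ (passing from $\beta$ to $\beta/2$, say) suffices to absorb the weakening: the only thing genuinely needed is that few vertices have semi-degree substantially below $n/2$, which is implied by the new hypothesis exactly as it was by the old.

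For the extremal case, when $G$ is close to one of the extremal configurations (typically a near-bipartite structure or a near-disjoint union of two tournaments), I would analyse each configuration directly. This is where the main obstacle lies: the weakened hypothesis enlarges the class of candidate extremal configurations, in particular allowing many vertices of out- or indegree equal to $n/2$. The technical heart of the proof is to show that even in these new extremal configurations a Hamilton cycle can be routed, by exploiting the lower bound of $n/2$ on the exceptional vertices' relevant semi-degree to thread a cycle through them. A backup plan, should the direct case analysis prove too intricate, is an absorbing reduction to Theorem~\ref{Kuhn-Osthus-Treglown}: first reserve a short absorbing path containing the ``low-degree'' exceptional vertices, then delete them so that the residual digraph satisfies the hypothesis of Theorem~\ref{Kuhn-Osthus-Treglown} with parameter $\beta/2$, find an almost-Hamilton cycle there through the endpoints of the absorbing path, and finally use the absorbing property to insert the exceptional vertices back to close up to a Hamilton cycle of $G$.
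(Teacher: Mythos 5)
There is a genuine gap, and it is exactly the one the paper flags as the main obstacle. Your proposal rests on the claim that the weakened hypothesis still makes $G$ a robust out-expander in the sense of~\cite{Christofides&Keevash&Kuhn&Osthus+}, with only a loss from $\beta$ to $\beta/2$. That claim is false. Under the old hypothesis $d_i^\pm \geq i + \beta n$, one gets $|N^+(S)| \geq |S| + \Omega(\beta n)$ for \emph{all} $S$ of intermediate size, which is what robust expansion requires. Under the new hypothesis the bound is capped at $n/2$, so for a set $S$ with $|S| \approx n/2$ one can only conclude $|N^+(S)| \gtrsim n/2 \approx |S|$, with no linear excess. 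This is not a technicality: Lemma~\ref{Outexpansion of R_{G'}} in the paper establishes expansion of the reduced digraph only for $|S| \leq (1/2-2d)k$ or $|S| > (1/2+2d)k$, and the authors state explicitly in the introduction that ``the expansion property is no longer sufficient to link up the cycles'' once the degrees are capped at $n/2$. So the non-extremal half of your plan does not reduce to the results of~\cite{Christofides&Keevash&Kuhn&Osthus+}, and the entire reduction collapses.

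Your backup plan (reserve an absorbing path for the low-degree vertices, delete them, and apply Theorem~\ref{Kuhn-Osthus-Treglown}) runs into the same wall. The paper also notes a second failure specific to this setting: the technique from~\cite{Christofides&Keevash&Kuhn&Osthus+} for shrinking the exceptional set breaks down, because in the course of re-regularizing one would have to enlarge the exceptional set beyond what the capped degrees can handle. In fact the paper's actual argument is structurally different from both of your routes: it applies the Diregularity Lemma, shows $R_{G'}$ has an almost 1-factor $F$, builds the shifted auxiliary digraph $H$, and then splits not on ``extremal vs.\ non-extremal'' but on whether $H$ is strongly $\eta k$-connected. In the non-connected case it introduces shifted components $L,R$ with a middle $M$, pseudo-matchings between $\widetilde B$ and $\widetilde L$ (and $\widetilde T$ and $\widetilde R$), and a careful walk-plus-rerouting scheme that replaces both the Blow-up Lemma and the exceptional-set shrinking technique. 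The ideas you would need to supply — how to cross between the two shifted halves when expansion is unavailable, and how to incorporate a possibly large exceptional set without a 1-factor, without absorption, and without the Blow-up Lemma — are precisely the new content of the paper and are absent from your outline.
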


(For the purposes of our arguments it turns out that there is
no significant difference in the use of the assumptions,
so for simplicity the reader could just read our proof as it applies to
Theorem~\ref{CKKO - Approximate Posa}.)

The improvement in the degree condition may at first appear minor,
so we should stress that capping the degrees at $n/2$
makes the problem substantially more difficult,
and we need to develop several new techniques in our solution.
This point cannot be fully explained until we have given several
definitions, but for the expert reader we make the following comment.
Speaking very roughly, the general idea used in
\cite{Kelly&Kuhn&Osthus+,Keevash&Kuhn&Osthus+,Christofides&Keevash&Kuhn&Osthus+}
is to apply Szemer\'edi's Regularity Lemma, cover most of the reduced digraph
by directed cycles, and then use the expansion property guaranteed by the degree conditions
on~$G$ to link these cycles up into
a Hamilton cycle while absorbing any exceptional vertices.
When the degrees are capped at $n/2$ two additional difficulties arise:
(i) the expansion property is no longer sufficient to link up the cycles,
and (ii) failure of a previously used technique for reducing the size
of the exceptional set. Our techniques for circumventing these difficulties
seem instructive and potentially useful in attacking Conjectures~\ref{Nash-Williams Conjecture - Chvatal}
and~\ref{Nash-Williams Conjecture - Posa}
in full generality.

Our paper is organized as follows. The next section contains some notation
and Section 3 some preliminary observations and examples.
Our proof will use the machinery of Szemer\'edi's Regularity Lemma, which we describe in Section~4.
(Unlike~\cite{Kelly&Kuhn&Osthus+,Keevash&Kuhn&Osthus+,Christofides&Keevash&Kuhn&Osthus+}, we do not require
the Blow-up lemma.)
Section~5 contains an overview of the proof in a special case that
illustrates the new methods that we introduce in this paper.
The cycle covering result is proved in Section~6 and
the proof of the special case completed in Section~7.
In Section~8 we describe the structures that arise in the general case.
We establish some bounds for these structures in Section~9.
Our main theorem is proved in Section~10.
The final section contains a concluding remark.

\section{Notation}

Given two vertices $x$ and $y$ of a digraph $G$, we write $xy$ for
the edge directed from $x$ to~$y$. The {\em order} $|G|$ of $G$ is
the number of its vertices. We write $N^+_G(x)$ and $N^-_G(x)$ for
the outneighbourhood and inneighbourhood of $x$ and $d^+_G(x)$ and
$d^-_G(x)$ for its outdegree and indegree.
The {\em degree} of $x$ is $d_G(x)=d^+_G(x)+d^-_G(x)$.
We usually drop the subscript $G$ if this is unambiguous.
The {\em minimum degree} and {\em maximum
degree} of $G$ are defined to be $\delta(G) = \min{\{d(x):x \in
V(G)\}}$ and $\Delta(G) = \max{\{d(x):x \in V(G)\}}$ respectively.
We define the {\em minimum indegree} $\delta^-(G)$ and
{\em minimum outdegree} $\delta^+(G)$ similarly.
The {\em minimum semidegree} is $\delta^0(G) = \min\{\delta^+(G),\delta^-(G)\}$.
Given $S \sub V(G)$ we write $d^+_S(x) = |N^+(x) \cap S|$
for the outdegree of $x$ in the set $S$.
We define $d^-_S(x)$ and $d_S(x)$ similarly.
Given a set $A$ of vertices of $G$, we write $N^+_G(A)$ for the set of all
outneighbours of vertices of $A$, i.e.~for the union of $N^+_G(x)$
over all $x \in A$. We define $N^-_G(A)$ analogously.

Given vertex sets $A$ and $B$ in a graph or digraph $G$, we write
$E_G(A,B)$ for the set of all edges
$ab$ with $a \in A$ and $b \in B$ and put $e_G(A,B) = |E_G(A,B)|$.
As usual, we drop the subscripts when this is unambiguous.
If $A\cap B=\emptyset$ we write
$(A,B)_G$ for the bipartite subgraph of $G$ with vertex
classes $A$ and $B$ whose set of edges is $E_G(A,B)$.
The restriction $G[A]$ of $G$ to $A$ is the digraph with vertex set $A$
and edge set all those edges of $G$ with both endpoints in $A$.
We also write $G \sm A$ for the digraph obtained by deleting $A$
and all edges incident to it, i.e.\ $G \sm A = G[V(G) \sm A]$.

Cycles and paths will always be understood as directed cycles and directed
paths, even if this is not explicitly stated.
Given two vertices $x$ and $y$ on a directed cycle $C$ we write
$xCy$ for the subpath of $C$ from $x$ to $y$. Similarly, given two
vertices $x$ and $y$ on a directed path $P$ such that $x$ precedes
$y$, we write $xPy$ for the subpath of $P$ from $x$ to $y$. A {\em
walk of length $\ell$} in a digraph $G$ is a sequence
$v_0,v_1,\ldots,v_{\ell}$ of vertices of $G$ such that $v_{i}v_{i+1}
\in E(G)$ for all $0 \leq i \leq \ell - 1$. The walk is {\em closed}
if $v_0 = v_{\ell}$. A {\em 1-factor} of $G$ is a collection of
disjoint cycles which cover all vertices of $G$. Given a 1-factor
$F$ of $G$ and a vertex $x$ of $G$, we write $x^+_F$ and $x^-_F$ for
the successor and predecessor of $x$ on the cycle in $F$ containing
$x$. We usually drop the subscript $F$ if this is unambiguous.
We say that $x$ and $y$ are at {\em distance} $d$ on $F$ if they
belong to the same directed cycle $C$ in $F$ and the distance from~$x$ to~$y$
or from $y$ to $x$ on~$C$ is~$d$. Note in particular that with this definition,
$x$ and $y$ could be at distance $d$ and $d'$ on $F$ with $d \neq d'$.

A digraph $G$ is {\em strongly connected} if for any ordered pair of
vertices $(x,y)$ there is a directed walk from $x$ to $y$. A {\em
separator} of $G$ is a set $S$ of vertices such that $G \sm S$ is
not strongly connected. We say $G$ is {\em strongly $k$-connected}
if $|G|>k$ and if it has no separator of size less than $k$. By
Menger's theorem, this is equivalent to the property that for any
ordered pair of vertices $(x,y)$ there are $k$ internally disjoint
paths from $x$ to $y$.

We write $a = b \pm c$ to mean that the real numbers $a,b,c$ satisfy $|a-b| \le c$.
We sometimes also write an expression such as $d^\pm(x) \ge t$ to mean
$d^+(x) \ge t$ and $d^-(x) \ge t$. The use of the $\pm$ sign will be clear from
the context.

To avoid unnecessarily complicated calculations we will sometimes
omit floor and ceiling signs and treat large numbers as if they were integers.

\section{Preliminaries}\label{prelims}

In this section we record some simple consequences of our degree assumptions
and describe the examples showing that Conjectures~\ref{Nash-Williams Conjecture - Chvatal}
and~\ref{Nash-Williams Conjecture - Posa} would be best possible.
We also recall two results on graph matchings and a standard large
deviation inequality (the Chernoff bound).

Our degree assumptions are that for all $i < n/2$ we have
\begin{itemize}
\item[(i)] $d_i^+ \geq \min{ \{i + \beta n,n/2\} }$ or $d^-_{n-i - \beta n} \geq n-i$;
\item[(ii)] $d_i^- \geq \min{ \{i + \beta n, n/2 \} }$ or $d^+_{n-i - \beta n} \geq n-i$.
\end{itemize}

We claim that $\delta^+(G)=d^+_1 \ge \beta n$. For if this were false our assumptions
would give $d^-_{n-1-\beta n} \ge n-1$, i.e.\ $G$ contains at least $\beta n + 1$ vertices
of indegree $n-1$. But a vertex of indegree $n-1$ is an outneighbour of all other vertices,
so this also implies that $\delta^+(G) \ge \beta n$. Similarly we have $\delta^-(G) \ge \beta n$.

To avoid complications with boundary cases it will be convenient to drop the condition $i<n/2$.
We note that this does not change our assumptions.
For if $n/2 \le i < n - \beta n$ we can apply our assumption (i) to $i'=n-i-\beta n$
and get $d_{i'}^+ \geq \min{ \{i' + \beta n,n/2\} }$ or $d^-_{n-i' - \beta n} \geq n-i'$,
i.e.\ $d_{n-i-\beta n}^+ \ge n-i$ or $d^-_i \ge i+\beta n$, which implies assumption (ii) for $i$.
Similarly assumption (ii) for $i'$ implies assumption (i) for $i$.
The assumptions do not make sense for $i \ge n - \beta n$, but if we consider any  statement about $d^\pm_j$ with $j \notin [1,n]$ as being vacuous (i.e.\ always true), then we do not have to impose any conditions when $i \ge n - \beta n$.

For an extremal example for Conjectures~\ref{Nash-Williams Conjecture - Chvatal}
and~\ref{Nash-Williams Conjecture - Posa}, consider a digraph $G$ on $n$ vertices constructed as follows.
The vertex set is partitioned as $I \cup K$ with $|I|=k<n/2$ and $|K|=n-k$.
We make $I$ independent and $K$ complete. Then we pick a set $X$ of $k$
vertices of $K$ and add all possible edges in both directions between $I$ and $X$.
This gives a strongly connected non-Hamiltonian digraph~$G$ in which both
the indegree and outdegree sequence are
$$\underbrace{k,\ldots,k}_{k \text{ times}},
\underbrace{n-1-k,\ldots,n-1-k}_{n-2k \text{ times}},
\underbrace{n-1,\ldots,n-1}_{k \text{ times}}.$$
$G$ fails conditions~(i) and~(ii) in Conjecture~\ref{Nash-Williams Conjecture - Chvatal} for $i=k$
and also one of the conditions in Conjecture~\ref{Nash-Williams Conjecture - Posa}.%
    \COMMENT{If $k\neq (n-1)/2$ then it violates the Posa condition in the $k$th
position. If $k= (n-1)/2$ then
$d^+_{\lceil n/2 \rceil}=d^-_{\lceil n/2 \rceil}=\lfloor n/2 \rfloor$}
In fact, a more complicated example is given in \cite{Kuhn&Osthus&Treglown+}
where only one condition in Conjecture~\ref{Nash-Williams Conjecture - Chvatal} fails. So, if true,
Conjecture~\ref{Nash-Williams Conjecture - Chvatal} would be best possible
in the same sense as Chv\'atal's theorem.%

A {\em matching} in a graph or digraph $G$ is a set of pairwise disjoint edges.
A {\em cover} is a set $C$ of vertices such that every edge of $G$ is incident
to at least one vertex in $C$. For bipartite graphs these concepts are related
by the following classical result of K\"onig.

\begin{prop}\label{konig}
In any bipartite graph, a maximum matching and a minimum cover have equal size.
\end{prop}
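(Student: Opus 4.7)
The plan is to prove both inequalities $\nu(G) \le \tau(G) \le \nu(G)$, where $\nu(G)$ denotes the maximum matching size and $\tau(G)$ the minimum cover size. The inequality $\tau(G) \ge \nu(G)$ is immediate: the edges of any matching are pairwise vertex-disjoint, so a cover must contain a distinct vertex for each of them. The substance of the proof is the reverse inequality, which I would establish by constructing, from any maximum matching $M$, an explicit cover $C$ with $|C| \le |M|$.

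For the construction I would use the standard alternating-path argument. Let $A, B$ be the two vertex classes and let $U \subseteq A$ be the set of vertices not covered by $M$. Define $Z$ to be the set of all vertices reachable from $U$ by an $M$-alternating path starting with an edge outside $M$ (so edges along such a path alternate between $E(G) \setminus M$ and $M$). Set
\[
C \;=\; (A \setminus Z) \;\cup\; (B \cap Z).
\]

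The three things to verify are: (a) $C$ is a cover; (b) every vertex in $C$ is saturated by $M$; (c) no edge of $M$ has both of its endpoints in $C$. For (a), if $ab$ is an edge with $a \in A \cap Z$, then either $ab \in M$, in which case $b$ is reached from $a$ along the alternating path extended by this matching edge, or $ab \notin M$, in which case the alternating path ending at $a$ must end with an $M$-edge (since it entered $a$ from $B$ via an $M$-edge or $a \in U$), and we may append $ab$ to reach $b$; in either case $b \in Z$, so $b \in C$ covers the edge. For (b), vertices of $A \setminus Z$ are matched because $U \subseteq Z$, and a vertex $b \in B \cap Z$ unsaturated by $M$ would give an $M$-augmenting path from $U$ to $b$, contradicting maximality of $M$. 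For (c), if $ab \in M$ with $a \in A \setminus Z$ and $b \in B \cap Z$, the alternating path reaching $b$ must arrive via an $M$-edge (since $b \notin U$), which is forced to be $ab$, placing $a \in Z$—a contradiction.

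Combining (b) and (c), the map $C \to M$ sending each vertex of $C$ to the matching edge it belongs to is injective, so $|C| \le |M|$, completing the proof. There is no real obstacle here; this is a classical argument and the only care needed is the bookkeeping in (b) and (c) to see that the set $Z$ behaves correctly with respect to the matching. An alternative route would be to deduce the result from Hall's theorem (applied to a suitable auxiliary bipartite graph) or from LP duality / max-flow min-cut, but the alternating-path construction is the most direct and gives the cover explicitly.
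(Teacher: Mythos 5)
The paper states K\"onig's theorem without proof, citing it as a classical result, so there is no internal argument to compare against. Your proposal is the standard alternating-path construction, and the overall structure is correct: the set $Z$ of vertices reachable from $U$ by alternating paths (starting with a non-$M$ edge) is the right object, $C = (A \setminus Z) \cup (B \cap Z)$ is the classical vertex cover, and properties (a)--(c) do combine to give an injection from $C$ into $M$ and hence $|C| \le |M|$.

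However, you have the alternation parity backwards in two places. In the subcase of (a) where $ab \in M$ and $a \in A \cap Z$: an alternating path from $U \subseteq A$ to $a \in A$, if it has positive length, arrives at $a$ via an $M$-edge (edges at odd positions are non-$M$, at even positions are $M$, and a path $U \to B \to A \to \cdots \to A$ has even length). You therefore cannot ``extend by this matching edge''; the correct observation is that the final $M$-edge of the path must itself be $ab$ (as $a$ has only one matched edge), so $b$ was already reached. Symmetrically, in (c) the clause ``since $b \notin U$'' carries no content because $U \subseteq A$ and $b \in B$, and an alternating path from $U$ to $b \in B$ arrives at $b$ via a \emph{non}-$M$ edge, not an $M$-edge. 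The correct argument is that this path can be extended by the $M$-edge $ab$ to reach $a$ (or $a$ is already on it), so $a \in Z$, contradicting $a \in A \setminus Z$. You in fact use the correct parity implicitly in (b) --- an unsaturated $b \in B \cap Z$ yields an augmenting path precisely because the path to $b$ ends with a non-$M$ edge --- so the slips in (a) and (c) are internal inconsistencies rather than conceptual flaws. With the parity tracked correctly throughout, your proof is the standard one and is complete.
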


The following result, known as the `defect Hall theorem', may be easily deduced
from Proposition~\ref{konig}, using the observation that if $C$ is a cover
then $N(A \sm C) \sub C \cap B$.

\begin{prop}\label{dhall}
Suppose $G$ is a bipartite graph with vertex classes $A$ and $B$ and there is some number
$D$ such that for any $S \sub A$ we have $|N(S)| \ge |S|-D$.
Then $G$ contains a matching of size at least $|A|-D$.
\end{prop}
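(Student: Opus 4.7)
The plan is to reduce the statement to K\"onig's theorem (Proposition~\ref{konig}). By K\"onig, the maximum matching and the minimum cover of $G$ have equal size, so in order to guarantee a matching of size at least $|A|-D$ it suffices to prove that every vertex cover $C$ of $G$ satisfies $|C| \ge |A|-D$.

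Given an arbitrary cover $C$, I would split it as $C_A := C \cap A$ and $C_B := C \cap B$ and apply the observation flagged in the excerpt to the set $S := A \sm C_A$. Since no vertex of $S$ lies in $C$, every edge incident to $S$ must be covered via its endpoint in $B$, so $N(S) \sub C_B$. Combining this containment with the hypothesis $|N(S)| \ge |S| - D$ yields
\[
|C_B| \;\ge\; |N(S)| \;\ge\; |S| - D \;=\; |A| - |C_A| - D,
\]
and hence $|C| = |C_A| + |C_B| \ge |A| - D$, as required.

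There is essentially no obstacle here: once one writes $S = A \sm C_A$ and invokes the covering property, the desired bound collapses to a single substitution, and the work is done entirely by K\"onig's theorem. The only minor thing to check is the degenerate case $S = \emptyset$, in which $|N(S)| \ge |S|-D$ reduces to the vacuous $0 \ge -D$ and the required inequality $|C| \ge |A|-D$ reduces to $|C_A| \ge |A|-D$, which is again immediate. So the whole content of the proposition lies in translating the neighbourhood hypothesis into a lower bound on covers, which is what the hint accomplishes in one line.
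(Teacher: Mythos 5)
Your argument is correct and is exactly the route the paper intends: it only sketches the proof, pointing to K\"onig's theorem together with the observation that a cover $C$ satisfies $N(A \sm C) \sub C \cap B$, and your write-up fills in precisely those details.
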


We will also need the following well-known fact.\COMMENT{This was
Proposition 16 in the previous version. Check also that you are
happy with the paragraph before Lemma 17.}

\begin{prop} \label{1-factor}
Suppose that $J$ is a digraph such that $|N^+(S)| \ge |S|$ for every
$S \sub V(J)$. Then $J$ has a $1$-factor.
\end{prop}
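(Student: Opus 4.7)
The plan is to reduce the existence of a 1-factor to a bipartite perfect matching problem, then invoke Proposition~\ref{dhall}. A 1-factor of $J$ is precisely a spanning subdigraph in which every vertex has both in-degree and out-degree exactly $1$; equivalently, it is a permutation $\sigma$ of $V(J)$ such that $v\sigma(v) \in E(J)$ for every $v$, because such a $\sigma$ decomposes into disjoint cycles.

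To find such a $\sigma$, I would form an auxiliary bipartite graph $H$ with vertex classes $A$ and $B$, each being a disjoint copy of $V(J)$. For $a \in A$ and $b \in B$ corresponding to vertices $u, v \in V(J)$ respectively, include the edge $ab$ in $H$ if and only if $uv \in E(J)$. Then a perfect matching of $H$ matches each $u \in A$ to a distinct $\sigma(u) \in B$ with $u\sigma(u) \in E(J)$, giving exactly the desired permutation.

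The hypothesis $|N^+_J(S)| \ge |S|$ for every $S \sub V(J)$ translates directly into the condition $|N_H(S)| \ge |S|$ for every $S \sub A$, since the neighbourhood of $S$ in $H$ is the copy in $B$ of $N^+_J(S)$. Applying Proposition~\ref{dhall} with $D = 0$ yields a matching in $H$ of size at least $|A|$, which is necessarily a perfect matching. Translating back, this matching is a 1-factor of $J$.

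There is no substantive obstacle here: the entire content is the standard bipartite double cover trick, and the defect Hall statement (Proposition~\ref{dhall}) has already been recorded, so the proof is essentially a one-line application once the correspondence between 1-factors and perfect matchings of $H$ is pointed out.
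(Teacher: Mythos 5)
Your proof is correct and is essentially identical to the paper's: both construct the bipartite double cover with classes two copies of $V(J)$, connect $a$ to $b$ precisely when $ab \in E(J)$, and apply Proposition~\ref{dhall} with $D=0$ to obtain a perfect matching, which corresponds to a 1-factor. The only difference is that you spell out the permutation/cycle-decomposition correspondence a bit more explicitly.
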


\begin{proof}
The result follows immediately by applying Proposition~\ref{dhall}
(with $D=0$) to the following bipartite graph~$\Gamma$: both vertex
classes $A,B$ of~$\Gamma$ are copies of the vertex set of the
original digraph $J$ and we connect a vertex $a \in A$ to $b \in B$
in~$\Gamma$ if there is a directed edge from $a$ to $b$ in~$J$. A
perfect matching in~$\Gamma$ corresponds to a 1-factor in~$J$.
\end{proof}

We conclude by recording the Chernoff bounds for binomial and hypergeometric
distributions (see e.g.~\cite[Corollary 2.3 and Theorem 2.10]{Janson&Luczak&Rucinski00}).
Recall that the binomial random variable with parameters $(n,p)$ is the sum
of $n$ independent Bernoulli variables, each taking value $1$ with probability $p$
or $0$ with probability $1-p$.
The hypergeometric random variable $X$ with parameters $(n,m,k)$ is
defined as follows. We let $N$ be a set of size $n$, fix $S \subset N$ of size
$|S|=m$, pick a uniformly random $T \subset N$ of size $|T|=k$,
then define $X=|T \cap S|$. Note that $\mb{E}X = km/n$.

\begin{prop}\label{chernoff}
Suppose $X$ has binomial or hypergeometric distribution and $0<a<3/2$. Then
$\mb{P}(|X - \mb{E}X| \ge a\mb{E}X) \le 2 e^{-\frac{a^2}{3}\mb{E}X}$.
\end{prop}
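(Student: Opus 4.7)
The plan is to prove the binomial case by the classical Chernoff--Cram\'er exponential moment method, and then reduce the hypergeometric case to it.

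For the binomial case, write $X = \sum_{i=1}^n X_i$ with $X_i$ independent Bernoulli$(p)$, and set $\mu = \mathbb{E}X = np$. First I would apply Markov's inequality to $e^{tX}$ for any $t>0$, using independence to factor the moment generating function:
$$\mathbb{P}(X \ge (1+a)\mu) \le e^{-t(1+a)\mu} \prod_{i=1}^n \mathbb{E}[e^{tX_i}] = e^{-t(1+a)\mu}(1-p+pe^t)^n.$$
The bound $1+x \le e^x$ gives $(1-p+pe^t)^n \le e^{\mu(e^t-1)}$. Choosing $t = \log(1+a)$ to optimise yields
$$\mathbb{P}(X \ge (1+a)\mu) \le \left(\frac{e^a}{(1+a)^{1+a}}\right)^{\mu}.$$
The analogous lower-tail computation (taking $t<0$ and $t=\log(1-a)$) gives the corresponding factor $e^{-a}/(1-a)^{1-a}$.

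The next step is purely analytic: show $e^a/(1+a)^{1+a} \le e^{-a^2/3}$ for $0 < a < 3/2$, and $e^{-a}/(1-a)^{1-a} \le e^{-a^2/3}$ for $0 < a < 1$. Both reduce, after taking logs, to verifying that an explicit one-variable function is non-positive on the stated interval; the Taylor expansion $(1\pm a)\log(1\pm a) = \pm a + a^2/2 \mp a^3/6 + \cdots$ makes the $a^2/2$ term dominate, and the relaxed constant $a^2/3$ absorbs the cubic correction uniformly up to $a = 3/2$. Summing the two tail bounds produces the factor of~$2$ in the statement.

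For the hypergeometric case with parameters $(n,m,k)$, the mean is still $\mu=km/n$, and the cleanest route is to invoke Hoeffding's comparison: the moment generating function of $X$ is bounded above by that of a Binomial$(k,m/n)$ random variable with the same mean. Once this comparison is in hand, exactly the same exponential Markov computation goes through unchanged and delivers the same bound. Alternatively, one can express $X$ as a sum of negatively associated indicators and verify that the standard Chernoff argument only uses the independence through the product bound on the MGF, which is preserved under negative association.

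The main obstacle is really the hypergeometric reduction: the binomial case is a textbook exercise, but justifying that the hypergeometric MGF is dominated by the binomial one requires a genuine convexity argument (either Hoeffding's coupling of sampling-with-replacement vs.\ without, or the negative-association framework). Everything else is a routine one-variable calculus verification.
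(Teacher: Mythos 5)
The paper does not actually prove Proposition~\ref{chernoff}: it is stated with a pointer to \cite[Corollary 2.3 and Theorem 2.10]{Janson&Luczak&Rucinski00}. Your proposal supplies a proof, and it follows the same route as the cited reference: the exponential Markov (Chernoff--Cram\'er) bound for the binomial case together with Hoeffding's convex-ordering comparison (hypergeometric MGF dominated by that of the binomial with the same mean) to transfer the estimate. So you are reconstructing the textbook argument behind the citation, not diverging from it, and the structure is sound.

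One place where your sketch is a little too casual: the passage from $e^{a}/(1+a)^{1+a}$ to $e^{-a^2/3}$ is not just ``Taylor expansion makes $a^2/2$ dominate.'' Writing $f(a)=(1+a)\log(1+a)-a-\tfrac{a^2}{3}$, one has $f(0)=f'(0)=0$ and $f''(a)=\tfrac{1}{1+a}-\tfrac{2}{3}$, which changes sign at $a=\tfrac12$; thus $f'$ is eventually decreasing and in fact becomes negative before $a=\tfrac32$, so the inequality $f\ge 0$ is \emph{not} a consequence of local behaviour at $0$. You must check the endpoint: $f(3/2)=\tfrac52\log\tfrac52-\tfrac94\approx 0.04>0$, while the true breakeven is near $a\approx 1.8$, which is why the statement caps $a$ at $3/2$. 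The Taylor heuristic alone would not distinguish $3/2$ from, say, $2$, where the inequality fails. The lower-tail inequality $(1-a)\log(1-a)+a\ge a^2/2\ge a^2/3$ is genuinely cleaner, since the relevant second derivative $\tfrac{1}{1-a}-\tfrac23$ stays positive; and for $a\ge 1$ the lower tail is trivial since $X\ge 0$. With those two points made explicit, your argument is complete and matches the cited source in spirit.
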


\section{Regularity}

The proof of Theorem~\ref{CKKO - Approximate Chvatal} will use the
directed version of \Szemeredi's Regularity Lemma.
In this section, we state a digraph form of this lemma
and establish some additional useful properties.
For surveys on the Regularity Lemma and its applications
we refer the reader to~\cite{Komlos&Simonovits96,Komlos99,Kuhn&Osthus09}.

\subsection{The Regularity Lemma}

The {\em density} of a bipartite graph $G = (A,B)$ with vertex
classes $A$ and $B$ is defined to be $d_G(A,B) =
\frac{e_G(A,B)}{|A||B|}$. We often write $d(A,B)$ if this is
unambiguous. Given $\eps > 0$, we say that $G$ is $\eps$-{\em
regular} if for all subsets $X \sub A$ and $Y \sub B$ with
$|X| \geq \eps|A|$ and $|Y| \geq \eps |B|$ we have that $|d(X,Y) -
d(A,B)| < \eps$. Given $d \in [0,1]$, we say that $G$ is
$(\eps,d)$-{\em regular} if it is $\eps$-regular of density at least
$d$. We also say that $G$ is $(\eps,d)$-{\em super-regular} if it is
$\eps$-regular and furthermore $d_G(a) \geq d|B|$ for all $a \in A$
and $d_G(b) \geq d|A|$ for all $b \in B$.

Given a digraph $G$, and disjoint subsets $A,B$ of $V(G)$, we say
that the ordered pair $(A,B)_G$ is $\eps$-regular, if the corresponding
undirected bipartite graph induced by the edges of $G$ from $A$ to
$B$ is $\eps$-regular. We use a similar convention for
super-regularity. The Diregularity Lemma is a version of the
Regularity Lemma for digraphs due to Alon and
Shapira~\cite{Alon&Shapira04}. We will use the degree form of the
Diregularity Lemma, which can be easily derived from the standard
version, in exactly the same manner as the undirected degree form.
(See e.g.~\cite{Kuhn&Osthus09} for a sketch proof.)

\begin{lemma}[Diregularity Lemma; Degree form]\label{Diregularity}
For every $\eps \in (0,1)$ and each positive integer $M'$, there
are positive integers $M$ and $n_0$ such that if $G$ is a digraph on
$n \geq n_0$ vertices, $d \in [0,1]$ is any real number, then
there is a partition of the vertices of $G$ into
$V_0,V_1,\ldots,V_k$ and a spanning subdigraph $G'$ of $G$ with the
following properties:
\begin{itemize}
\item $M' \leq k \leq M$;
\item $|V_0| \leq \eps n, |V_1| = \dots = |V_k| =: m$ and $G'[V_i]$
is empty for all $0 \leq i \leq k$;
\item $d^+_{G'}(x) > d^+_{G}(x) - (d + \eps)n$
and $d^-_{G'}(x) > d^-_{G}(x) - (d + \eps)n$ for all $x \in V(G)$;
\item all pairs $(V_i,V_j)_{G'}$ with $1 \leq i,j \leq k$ and $i\neq j$ are
$\eps$-regular with density either 0 or at least $d$.
\end{itemize}
\end{lemma}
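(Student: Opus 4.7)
The plan is to derive this degree form from the standard version of the Diregularity Lemma of Alon and Shapira, which guarantees a partition into parts of equal size (plus an exceptional set $V_0'$ with $|V_0'| \le \eps' n$) in which at most an $\eps'$-fraction of the ordered cluster pairs fail to be $\eps'$-regular, but provides no density or per-vertex degree guarantees. The derivation mirrors the classical undirected case (see e.g.~\cite{Kuhn&Osthus09}), with the key technical issue being how to handle edges that lie in irregular pairs.

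First I would apply the standard Diregularity Lemma with parameters $\eps' \ll \eps$ (say $\eps' = (\eps/10)^2$) and minimum partition size $M'' := \max\{M', \lceil 3/\eps \rceil\}$, obtaining a partition $V_0', V_1,\ldots,V_k$ with $|V_0'|\le \eps' n$, $|V_1|=\cdots=|V_k|=m$, and at most $\eps' k^2$ irregular ordered pairs. I would then define the spanning subdigraph $G'$ by removing every edge that lies (a) inside some cluster $V_i$, (b) between $V_0'$ and the rest, (c) in an $\eps'$-regular pair of density less than $d$, or (d) in a pair which fails to be $\eps'$-regular. Since $\eps' < \eps$, every surviving pair $(V_i,V_j)_{G'}$ is automatically $\eps$-regular, and by construction has density either $0$ or at least~$d$.

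The main step is to ensure that \emph{every} vertex outside the exceptional set loses at most $(d+\eps)n$ in- and out-edges. For $v \in V_i$ with $v \notin V_0'$, the edges removed are bounded by $|V_0'|\le \eps' n$ (to $V_0'$), plus $m \le n/M'' \le \eps n/3$ (inside $V_i$), plus $d(k-1)m \le dn$ (in low-density regular pairs), plus the edges from $v$ lying in irregular pairs. This last term is the obstacle. To control it I would use the global bound: since there are at most $\eps' k^2$ irregular ordered pairs and each contributes at most $m^2$ edges, the total number of edges in irregular pairs is at most $\eps' n^2$; hence at most $\sqrt{\eps'} n$ vertices have more than $\sqrt{\eps'} n$ edges in irregular pairs in either direction. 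I would add all these ``bad'' vertices to $V_0'$, and finally rebalance by shifting at most $k \le M$ further vertices into the exceptional set so that the remaining clusters have a common size~$m$. Taking $V_0$ to be the resulting exceptional set, the estimates $\eps' = (\eps/10)^2$, $2\sqrt{\eps'} n \le \eps n /2$, and $n \ge n_0$ large ensure $|V_0|\le \eps n$, and for every $v \notin V_0$ the total loss is at most $\eps' n + \eps n/3 + dn + \sqrt{\eps'} n \le (d+\eps)n$, as required.

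I do not expect any serious difficulty beyond this bookkeeping: the only genuinely non-cosmetic point is the per-vertex treatment of irregular pairs, handled by the averaging argument above that sacrifices a few vertices to $V_0$. The bounds $M' \le k \le M$ are inherited from the standard Diregularity Lemma (with $M$ chosen as its output for the parameters $\eps', M''$), and the emptiness of $G'[V_i]$ for all $i$, including $i=0$, follows directly from step~(a) of the edge-removal procedure.
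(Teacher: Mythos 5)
Your overall plan --- apply the standard Diregularity Lemma with finer parameters, delete edges inside parts, in irregular pairs, and in low-density pairs, and absorb the few problematic vertices into $V_0$ via an averaging argument --- is the standard one (and what the cited reference sketches; the paper itself gives no proof). However, there is a genuine gap in your treatment of the low-density regular pairs. You bound the edges deleted from $v \in V_i$ that lie in low-density regular pairs by $d(k-1)m \le dn$. This bounds the \emph{average} over $v \in V_i$, not the loss of an individual vertex: if $(V_i,V_j)$ is $\eps'$-regular of density $d_{ij}<d$, a given $v$ can still have up to $m$ out-neighbours in $V_j$ --- $\eps'$-regularity only says that fewer than $\eps'|V_i|$ vertices of $V_i$ have more than $(d_{ij}+\eps')m$ out-neighbours in $V_j$. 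So the low-density pairs pose exactly the same per-vertex obstacle as the irregular pairs, and they need the same averaging argument that you correctly apply in case~(d): bound the (vertex, pair) incidences in which the vertex is atypical, conclude that only a small set of vertices is atypical for many pairs, and move those into $V_0$ as well.

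There is also a secondary issue with the degree condition, which the lemma asserts for \emph{all} $x \in V(G)$, including $x \in V_0$. Your step~(b) deletes all edges between $V_0'$ and the rest of the digraph; this is not required (only $G'[V_0]$ must be empty) and it makes the degree bound fail for any high-degree vertex of $V_0'$. Likewise the bad vertices you later move into $V_0$ have, by construction, already lost many edges in your $G'$, so they too can fail the degree condition; and since they come from several clusters, $G'$ need not even be empty on the final $V_0$. These problems disappear if you first construct the final exceptional set (move whole clusters that lie in many irregular pairs, then atypical vertices --- noting that the averaging in fact gives an $O(\sqrt{\eps'})m$ bound per cluster, which lets you rebalance without excessive loss), and only then define $G'$, retaining all edges between $V_0$ and the remaining clusters.
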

Note that we do not require the densities of $(V_i,V_j)_{G'}$ and
$(V_j,V_i)_{G'}$ to be the same.
We call $V_1,\ldots,V_k$ the {\em clusters} of the partition, $V_0$
the {\em exceptional set} and the vertices of $G$ in $V_0$ the {\em
exceptional vertices}. The {\em reduced digraph} $R=R_{G'}$ of $G$ with
parameters $\eps,d,M'$ (with respect to the above partition) is the
digraph whose vertices are the clusters $V_1,\ldots,V_k$ and in
which $V_iV_j$ is an edge precisely when $(V_i,V_j)_{G'}$ has
density at least $d$.%

In various stages of our proof of Theorem~\ref{CKKO - Approximate
Chvatal}, we will want to make some pairs of clusters super-regular,
while retaining the regularity of all other pairs. This can be
achieved by the following folklore lemma. Here and later on we write $0<a_1 \ll a_2$ to mean that
we can choose the constants $a_2$ and $a_1$ from right to left. More
precisely, there is an increasing function $f$ such that, given
$a_2$, whenever we choose some $a_1 \leq f(a_2)$ all calculations in the proof
of Lemma~\ref{Super-regular} are valid. Hierarchies with more constants are
to be understood in a similar way.

\begin{lemma}\label{Super-regular}
Let $0<\eps\ll d,1/\Delta$ and let $R$ be a reduced digraph of $G$ as
given by Lemma~\ref{Diregularity}. Let $H$ be a subdigraph of $R$ of
maximum degree $\Delta$. Then, we can move exactly $\Delta \eps m$
vertices from each cluster $V_i$ into $V_0$ such that each pair of
clusters corresponding to an edge of $H$ becomes $(2 \eps ,
\frac{d}{2})$-super-regular, while each pair of clusters
corresponding to an edge of $R$ becomes $(2\eps,d-\eps)$-regular.
\end{lemma}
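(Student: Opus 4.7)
The plan is a standard ``clean-up'' argument: in each cluster I would identify the few vertices that spoil super-regularity along some $H$-edge, bound their number by the maximum degree of $H$, and then discard a uniform quota of $\Delta\eps m$ vertices per cluster so as to swallow all of them.

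First I would call $x\in V_i$ \emph{out-bad for $V_iV_j\in E(H)$} if $|N^+_{G'}(x)\cap V_j|<(d-\eps)m$, and \emph{in-bad for $V_jV_i\in E(H)$} if its indegree from $V_j$ is less than $(d-\eps)m$. The standard consequence of $\eps$-regularity of $(V_i,V_j)_{G'}$ (which has density at least $d$ since $H$ is a subdigraph of $R$) is that each $H$-edge contributes at most $\eps m$ out-bad and at most $\eps m$ in-bad vertices to the relevant cluster: otherwise the $\eps m$ offending vertices in $V_i$ (say) together with $V_j$ would witness a density drop of more than $\eps$. Since $H$ has maximum degree $\Delta$, at most $\Delta\eps m$ vertices of any $V_i$ are atypical for some $H$-edge at $V_i$. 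I would therefore choose $S_i\sub V_i$ of size exactly $\Delta\eps m$ containing every atypical vertex (padding with arbitrary vertices if necessary), set $V_i':=V_i\sm S_i$, and enlarge $V_0$ by $\bigcup_i S_i$.

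Then I would verify the two regularity statements. For any pair $(V_i',V_j')$ with $V_iV_j\in E(R)$ and any $X\sub V_i'$, $Y\sub V_j'$ with $|X|\ge 2\eps|V_i'|$, $|Y|\ge 2\eps|V_j'|$, the bound $2\eps(1-\Delta\eps)m\ge\eps m$ (valid since $\eps\ll 1/\Delta$) shows $|X|,|Y|\ge\eps m$, so the original $\eps$-regularity of $(V_i,V_j)_{G'}$ yields both $|d(X,Y)-d(V_i,V_j)|<\eps$ and $|d(V_i',V_j')-d(V_i,V_j)|<\eps$; hence $|d(X,Y)-d(V_i',V_j')|<2\eps$ and $d(V_i',V_j')\ge d-\eps$. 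For $V_iV_j\in E(H)$, each surviving $x\in V_i'$ is not out-bad, so has at least $(d-\eps)m$ outneighbours in $V_j$, of which at most $|S_j|=\Delta\eps m$ are discarded; what remains is at least $(d-\eps-\Delta\eps)m\ge(d/2)|V_j'|$, using $\eps\ll d,1/\Delta$. The indegree estimate is symmetric.

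There is no serious obstacle here — the result is folklore — and the only point requiring attention is the constant-chasing that secures $0<\eps\ll d,1/\Delta$: $\Delta\eps$ must be small enough that trimming $\Delta\eps m$ vertices from each cluster neither destroys a constant fraction of any subset witnessing regularity nor swallows too many of a surviving vertex's neighbours, which is exactly what the replacements $(2\eps,d-\eps)$ and $(2\eps,d/2)$ record.
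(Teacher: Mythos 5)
Your argument is exactly the paper's: define the atypical vertices per cluster (the $A(V)$'s, in the paper's notation), bound each by $\Delta\eps m$ via $\eps$-regularity, trim a set of exactly $\Delta\eps m$ vertices containing them, and re-verify the $(2\eps,d-\eps)$-regularity and $(2\eps,d/2)$-super-regularity for the trimmed clusters. The paper states these final verifications more tersely (it simply observes $\Delta\eps\le\frac12$ and that the new minimum degree is at least $(d-(\Delta+1)\eps)m\ge\frac d2 m$), but the underlying computation is the one you spell out.
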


\begin{proof}
For each cluster $V \in V(R)$, let
\[ A(V) = \left\{x \in V:
\begin{matrix}
|N_{G'}^+(x) \cap W| < (d - \eps)m \text{ for some out-neighbour $W$ of $V$
in
$H$} \\
\text{ or } |N_{G'}^-(x) \cap W| < (d - \eps)m \text{ for some in-neighbour
$W$ of $V$ in $H$}
\end{matrix}
\right\}. \]
The definition of regularity implies that $|A(V)| \leq \Delta \eps m$.
Remove from each cluster $V$ a set of size exactly $\Delta \eps m$
containing $A(V)$. Since $\Delta \eps \leq \frac{1}{2}$, it follows
easily that all pairs corresponding to edges of $R$ become
$(2\eps,d-\eps)$-regular. Moreover, the minimum
degree of each pair corresponding to an edge of $H$ is at least $(d
- (\Delta + 1)\eps)m \geq \frac{d}{2}m$, as required.
\end{proof}

Next we note the easy fact that regular pairs have nearly perfect
matchings and super-regular pairs have perfect matchings.

\begin{lemma}\label{regmatch}
Suppose $\eps > 0$ and $G=(A,B)$ is an $(\eps,2\eps)$-regular pair with $|A|=|B|=n$.
Then $G$ contains a matching of size at least $(1-\eps)n$. Furthermore, if
$G$ is $(\eps,2\eps)$-super-regular then $G$ has a perfect matching.
\end{lemma}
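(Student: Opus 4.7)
The plan is to verify, in each of the two cases, the hypothesis of the defect Hall theorem (Proposition~\ref{dhall}): for the first assertion I would show that $|N(S)|\ge|S|-\eps n$ for every $S\sub A$, whence Proposition~\ref{dhall} with $D=\eps n$ produces a matching of size at least $n-\eps n$, and for the second assertion I would verify Hall's condition $|N(S)|\ge|S|$, whence Proposition~\ref{dhall} with $D=0$ gives a perfect matching.

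For the first part, suppose for contradiction that some $S\sub A$ satisfies $|N(S)|<|S|-\eps n$. Then in particular $|S|>\eps n$, and the set $T:=B\sm N(S)$ has $|T|>n-|S|+\eps n\ge\eps n$ (using $|S|\le n$). Since $e(S,T)=0$ by construction, $d(S,T)=0$; on the other hand the $\eps$-regularity of $(A,B)$, applied to the pair $(S,T)$ of size at least $\eps n$ each, forces $|d(S,T)-d(A,B)|<\eps$. Hence $d(A,B)<\eps$, contradicting the density assumption $d(A,B)\ge 2\eps$.

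For the super-regular case, suppose some $S\sub A$ has $|N(S)|<|S|$, and let $T=B\sm N(S)$; I would split into three cases according to $|S|$ and $|T|$. If $|S|\le 2\eps n$, then $S$ is nonempty and any $a\in S$ satisfies $|N(S)|\ge d(a)\ge 2\eps n\ge|S|$ by the minimum-degree part of super-regularity, a contradiction. If $|S|>2\eps n$ and $|T|>2\eps n$, the argument of the previous paragraph again contradicts $d(A,B)\ge 2\eps$. Finally, if $|S|>2\eps n$ and $0<|T|\le 2\eps n$, then for any $b\in T$ we have $N(b)\sub A\sm S$, so super-regularity gives $2\eps n\le d(b)\le n-|S|$; thus $|N(S)|=n-|T|\ge n-2\eps n\ge|S|$, contradicting $|N(S)|<|S|$. (The case $T=\emptyset$ is trivial as $|N(S)|=n\ge|S|$.) I expect no real obstacle: the minimum-degree clause built into super-regularity is designed precisely to cover the small-$S$ and almost-spanning-$S$ regimes where $\eps$-regularity alone does not suffice.
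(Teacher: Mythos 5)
Your proof is correct and follows essentially the same route as the paper: both verify the defect Hall condition (Proposition~\ref{dhall}) with $D=\eps n$ for the first part and $D=0$ for the super-regular part, using $\eps$-regularity for mid-sized sets and the minimum-degree clause of super-regularity for the extreme cases. The only cosmetic difference is that the paper shows directly that all but $\eps n$ vertices of $B$ have a neighbour in $S$, whereas you take the contrapositive by applying regularity to the zero-density pair $(S, B\sm N(S))$; these are the same fact.
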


\begin{proof}
For the first statement we verify the conditions of
the defect Hall theorem (Proposition~\ref{dhall}) with $D=\eps n$.
We need to show that $|N(S)| \ge |S|-D$ for $S \sub A$.
We can assume that $|S| \ge D=\eps n$. Then by $\eps$-regularity, all but
at most $\eps n$ vertices in $B$ have at least $\eps|S| > 0$ neighbours
in $S$. Therefore $|N(S)| \ge (1-\eps)n \ge |S|-\eps n$, as required.
For the second statement we need to show that $|N(S)| \ge |S|$ for $S \sub A$.
For any $x \in S$ we have $d(x) \ge 2\eps n$ by super-regularity,
so we can assume that $|S| \ge 2\eps n$. Then as before we have
$|N(S)| \ge (1-\eps)n$, so we can assume that $|S|>(1-\eps)n$.
But we also have $d(y) \ge 2\eps n$ for any $y \in B$, so
$N(y) \cap S \ne \emptyset$, i.e. $N(S)=B$ and $|N(S)|=n \ge |S|$.
\end{proof}

We will also need the following regularity criterion for finding a
Hamilton cycle in a non-bipartite digraph. We say that a general
digraph $G$ on $n$ vertices is {\em $\eps$-regular} of density $d$
if $\frac{e_G(X,Y)}{|X||Y|} = d \pm \eps$ for all (not necessarily
disjoint) subsets $X,Y$ of $V(G)$ of size at least $\eps n$, and
$(\eps,d)$-{\em super-regular} if it is $\eps$-regular and
$\delta^\pm(G) \ge dn$.

\begin{lemma} \label{fk}
Suppose $0<\eps \ll d \ll 1$, $n$ is sufficiently large and $G$ is
an $(\eps,d)$-super-regular digraph on $n$ vertices. Then $G$ is
Hamiltonian.
\end{lemma}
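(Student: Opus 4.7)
My plan is to find a Hamilton cycle in $G$ by first extracting a $1$-factor and then iteratively merging its cycles into a single Hamilton cycle, using super-regularity to drive both steps.

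For the existence of a $1$-factor, I would apply Proposition~\ref{1-factor}: it suffices to verify $|N^+(S)| \ge |S|$ for every $S \sub V(G)$. If $|S| \le \eps n$ then, since $\eps \ll d$, any $v \in S$ gives $|N^+(S)| \ge |N^+(v)| \ge \delta^+(G) \ge dn \ge |S|$. If $|S| > \eps n$ and $|N^+(S)| < n - \eps n$, then $T := V(G) \sm N^+(S)$ has $|T| \ge \eps n$ yet $e(S, T) = 0$, contradicting $\eps$-regularity of $G$, which forces $e(S, T) \ge (d - \eps)|S||T| > 0$.

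Next, I would pick a $1$-factor $F = C_1 \cup \cdots \cup C_r$ of $G$ minimizing the number of cycles $r$, and aim to show $r = 1$. If $r \ge 2$, short cycles of length $< \eps n$ can first be absorbed into longer cycles one vertex at a time: for each vertex $w$ on a short cycle, the bounds $|N^-(w)|, |N^+(w)| \ge dn$ combined with averaging over edges of a longer cycle $C_i$ produce an edge $uv$ on $C_i$ with $uw, wv \in E(G)$, so that $w$ can be spliced in. Hence we may assume all cycles have length $\ge \eps n$. The basic two-edge swap between two long cycles $C_i$ and $C_j$ takes $x \in V(C_i)$, $y \in V(C_j)$ with $xy^+, yx^+ \in E(G)$, removes $xx^+$ and $yy^+$, and inserts $xy^+, yx^+$, thereby merging $C_i$ and $C_j$ into a single cycle and contradicting minimality of $r$.

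The main obstacle is locating such a swap pair when $d < 1/2$. Each of the two edge conditions holds for a set of pairs $(x, y) \in V(C_i) \times V(C_j)$ of density $\ge d - \eps$, by $\eps$-regularity combined with the bijections $x \mapsto x^+$ on $V(C_i)$ and $y \mapsto y^+$ on $V(C_j)$; but a naive inclusion-exclusion is only decisive when $d > 1/2$. I would bypass this via a counting-lemma-style estimate. Since the two conditions $xy^+ \in E$ and $yx^+ \in E$ involve distinct pairs of vertices, namely $\{x, y^+\}$ and $\{y, x^+\}$, an iterated application of $\eps$-regularity gives that the total count $N$ of swap pairs is at least $(d^2 - O(\eps))|V(C_i)||V(C_j)| > 0$. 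Concretely, for most $x \in V(C_i)$ regularity of the pair $(V(C_i), V(C_j))$ forces $|N^+(x) \cap V(C_j)| \approx d|V(C_j)|$ and so $D_1 := (N^+(x) \cap V(C_j))^- \sub V(C_j)$ has size $\approx d|V(C_j)|$; for most such $x$, regularity of $(V(C_j), V(C_i))$ applied at the typical vertex $x^+$ gives that $D_2 := N^-(x^+) \cap V(C_j)$ has size $\approx d|V(C_j)|$; and a further application of $\eps$-regularity, this time between $D_1$ (or $D_2$) viewed as a large subset of $V(G)$ and an appropriate target, forces $D_1 \cap D_2$ to contain a valid $y$. Making this joint count rigorous while respecting the cycle structure is the technical heart of the proof.
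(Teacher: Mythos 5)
The paper itself does not prove Lemma~\ref{fk}: it cites \cite{FK}, where the much stronger statement that $G$ has $(d-4\eps^{1/2})n$ edge-disjoint Hamilton cycles is proved, and notes that the lemma also follows from Lemma~10 of \cite{Keevash&Kuhn&Osthus+}. So your sketch must stand on its own. Your first step (producing a $1$-factor via Proposition~\ref{1-factor}) is correct, but both halves of the merging step have genuine gaps, and they stem from the same misconception: $\eps$-regularity controls edge densities between \emph{large sets}, but you repeatedly try to use it to control the neighbourhood of a single fixed vertex and, separately, the \emph{joint} occurrence of two edge events.

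For the short-cycle absorption: given one vertex $w$, the only information you have is $|N^\pm(w)|\ge dn$. These sets could be entirely disjoint from $V(C_i)$ whenever $|C_i|\le(1-d)n$, so ``averaging over edges of $C_i$'' cannot produce an edge $uv$ on $C_i$ with $uw,wv\in E(G)$; $\eps$-regularity says nothing about one fixed vertex. (Moreover, splicing one vertex out of a cycle of the $1$-factor leaves a directed path, not a smaller $1$-factor, and the existence of any ``longer cycle'' is not established if the minimum $1$-factor happens to consist entirely of short cycles.) For the $2$-edge swap: writing $A=V(C_i)$, $B=V(C_j)$, the sets $M=\{(x,y):xy^+\in E\}$ and $M'=\{(x,y):yx^+\in E\}$ are each, after relabelling by the cycle shifts, $\eps$-regular bipartite graphs of density $d\pm\eps$ on $A\times B$; but $\eps$-regularity of $G$ does \emph{not} force $|M\cap M'|\ge(d^2-O(\eps))|A||B|$. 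Disjointness of $M$ and $M'$ only implies $e_G(A,B)+e_G(B,A)\le|A||B|$, which is compatible with $\eps$-regularity whenever $d<1/2-\eps$, and circulant-type constructions (choose the difference sets $S_1=\{k-l:a_kb_l\in E\}$ and $S_2=\{l-k:b_la_k\in E\}$ to be pseudorandom but with $(S_1+1)\cap(-S_2-1)=\emptyset$) realise $M\cap M'=\emptyset$. Your fallback ``further application of $\eps$-regularity'' to $D_1,D_2$ cannot rescue this: $D_1\cap D_2\ne\emptyset$ is a pure set-intersection condition inside $B$, not an edge-density condition between two large sets, and in any case $|D_1|,|D_2|\approx d|B|$ may fall below $\eps n$ when $|B|$ is only of order $\eps n$. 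So ``minimise the number of cycles, then merge by a $2$-edge swap'' cannot establish the lemma when $d<1/2$; the actual proofs use more flexible merging operations (rotation-extension arguments in \cite{FK}, or re-choosing a whole perfect matching at once so as to simultaneously merge all cycles meeting a fixed set, rather than making one local swap).
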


In fact, Frieze and Krivelevich \cite[Theorem 4]{FK} proved that an $(\eps,d)$-super-regular
digraph has $(d-4\eps^{1/2})n$ edge-disjoint Hamilton cycles, which is a substantial
strengthening of Lemma~\ref{fk}. Lemma~\ref{fk} can also be deduced from Lemma 10 in \cite{Keevash&Kuhn&Osthus+}.

Next we need a construction that we will use to preserve super-regularity of a pair
when certain specified vertices are excluded.

\begin{lemma}\label{tw}
Suppose $0<\eps \ll d \ll 1$, $G=(A,B)$ is an $(\eps,d)$-super-regular pair
with $|A|=|B|=n$ sufficiently large and $X \sub A$ with $|X|\le n/3$.
Then there is a set $Y \sub B$ with $|Y|=|X|$ such that
$(A \sm X, B \sm Y)_G$ is $(2\eps,d/2)$-super-regular.
\end{lemma}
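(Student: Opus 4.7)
The plan is to construct $Y$ in two stages. First I would include in $Y$ every $b \in B$ that already has too few neighbours in $A \sm X$; such vertices must lie in $Y$ so that the degree condition for $(A\sm X, B\sm Y)$ on the $B$-side can hold at all. Then I would top $Y$ up to the target size $|X|$ using a uniformly random sample from the remaining vertices of $B$, and use the Chernoff bound to argue that no vertex of $A\sm X$ loses too many of its neighbours to $Y$.

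Concretely, define $B^* := \{b \in B : d_{A \sm X}(b) < (d/2)(n-|X|)\}$. The first step would be to show $|B^*| \le \eps n$. If $|X| < \eps n$, then every $b \in B$ satisfies $d_{A\sm X}(b) \ge d(b) - |X| \ge dn - \eps n > (d/2)(n-|X|)$ since $\eps \ll d$, so $B^*$ is empty. If $|X| \ge \eps n$, then $|A \sm X| \ge 2n/3 \ge \eps n$, so were $|B^*| \ge \eps n$ the $\eps$-regularity of $(A,B)$ applied to the pair $(A \sm X, B^*)$ would force $d(A\sm X, B^*) \ge d - \eps \ge d/2$, contradicting the average-degree bound imposed by the definition of $B^*$. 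I would then pick $Y_1$ uniformly at random from the $(|X| - |B^*|)$-element subsets of $B \sm B^*$ and set $Y := B^* \cup Y_1$.

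It remains to verify the three clauses of $(2\eps, d/2)$-super-regularity for $(A \sm X, B \sm Y)$. The $2\eps$-regularity is inherited from $(A,B)$: both $|A \sm X|$ and $|B \sm Y|$ are at least $2n/3$, so any subset of size at least $2\eps \cdot 2n/3 \ge \eps n$ still triggers $\eps$-regularity of $(A,B)$, with the density of the restriction changing by at most $\eps$. The $B$-side degree condition is automatic since $B \sm Y \sub B \sm B^*$ by construction. For the $A$-side, fix $a' \in A \sm X$; I want $d_Y(a') \le d(a') - (d/2)(n - |X|)$. I would bound $d_Y(a') \le |B^*| + d_{Y_1}(a') \le \eps n + d_{Y_1}(a')$ and apply Proposition~\ref{chernoff} to the hypergeometric variable $d_{Y_1}(a')$, whose mean is at most $(|X|/n) d(a')(1 + O(\eps))$. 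Since $d(a') \ge dn$ and $|X| \le n/3$, the target inequality has slack at least $(n - |X|)(d(a')/n - d/2) \ge dn/3$, which comfortably dominates $|B^*|$ together with the Chernoff fluctuations; a union bound over $a' \in A \sm X$ then yields a valid choice of $Y$.

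The main obstacle is the tension between the two degree conditions: vertices of $B^*$ must be placed in $Y$ to protect the $B$-side, but each such forced inclusion eats into the budget we have to spare on the $A$-side. The numerical calculation works out because the bound $|X| \le n/3$ guarantees that the $A$-side slack of order $dn$ dominates the $\eps n$ cost of absorbing $B^*$ and the Chernoff deviation; if $|X|$ were allowed to approach $n$ the argument would fail.
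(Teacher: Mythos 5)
Your proposal is correct and follows essentially the same approach as the paper: both proofs first place the low-degree vertices of $B$ (your $B^*$, the paper's $B_1$, defined by the same threshold $(d/2)|A\sm X|$) into $Y$, bound this set by $\eps n$ via $\eps$-regularity, then complete $Y$ with a uniformly random subset and use the hypergeometric Chernoff bound plus a union bound to control each $d_{Y}(a')$. The only differences are cosmetic (your case split for bounding $|B^*|$ versus the paper's separate handling of $|X|\le 2\eps n$, and the precise form of the Chernoff deviation).
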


\begin{proof}
If $|X| \le 2\eps n$ then we choose $Y$ arbitrarily with $|Y|=|X|$.
Next suppose that $|X| > 2\eps n$. We let $B_1$ be the set of
vertices in $B$ that have less than $\frac{1}{2}d|A \sm X|$
neighbours in $A \sm X$. Then $|B_1| \le \eps n$ by
$\eps$-regularity of $G$. Consider choosing $B_2 \sub B \sm B_1$ of
size $|X|-|B_1|$ uniformly at random. For any $x$ in $A$ its degree
in $B_2$ is $d_{B_2}(x)=|N_G(x) \cap B_2|$, which has hypergeometric
distribution with parameters $(|B \sm B_1|,d_{B \sm B_1}(x),|B_2|)$.
Super-regularity gives $\mb{E}[d_{B_2}(x)]= d_{B \sm B_1}(x)|B_2|/|B
\sm B_1| > \eps dn/2$, and the Chernoff bound (Proposition~\ref{chernoff})
applied with $a=n^{2/3}/\mb{E}d_{B_2}(x)>n^{-1/3}$ gives
$\mb{P}(|d_{B_2}(x)-\mb{E}d_{B_2}(x)| > n^{2/3})
< 2e^{-an^{2/3}/3} < 2e^{-n^{1/3}/3}$.%
\COMMENT{** recalculated}
By a union bound, there is some choice of $B_2$ so that every $x$ in
$A$ has $d_{B_2}(x) = d_{B \sm B_1}(x)|B_2|/|B \sm B_1| \pm n^{2/3}
< 0.4d_B(x)$ (say). Let $Y=B_1 \cup B_2$. Then $(A \sm X, B \sm
Y)_G$ is $2\eps$-regular, by $\eps$-regularity of $G$. Furthermore,
every $y \in B \sm Y$ has $d_{A \sm X}(y) \ge \frac{1}{2}d|A \sm X|$
by definition of $B_1$, and every $x$ in $A \sm X \sub A$ has $d_{B
\sm Y}(x) \ge d_B(x) - |B_1| - d_{B_2}(x) \ge \frac{1}{2}d|B \sm
Y|$.
\end{proof}

Finally, given an $(\eps,d)$-super-regular pair $G=(A,B)$, we will often need
to isolate a small subpair that maintains super-regularity in any
subpair that contains it. For $A^* \sub A$ and $B^* \sub B$
we say that $(A^*,B^*)$ is an \emph{$(\eps^*,d^*)$-ideal for $(A,B)$}
if for any $A^* \sub A' \sub A$ and $B^* \sub B' \sub B$
the pair $(A',B')$ is $(\eps^*,d^*)$-super-regular.
The following lemma shows that ideals exist, and moreover randomly chosen
sets $A^*$ and $B^*$ form an ideal with high probability.

\begin{lemma} \label{ideal}
Suppose $0<\eps \ll \theta,d < 1/2$, $n$ is sufficiently large
and $G=(A,B)$ is $(\eps,d)$-super-regular with $n/2 \le |A|,|B| \le n$.
Let $A^* \sub A$ and $B^* \sub B$ be independent uniformly random
subsets of size $\theta n$. Then with high probability
$(A^*,B^*)$ is an $(\eps/\theta,\theta d/4)$-ideal for $(A,B)$.
\end{lemma}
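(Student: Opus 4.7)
My plan is to reduce the whole statement to a single high-probability event controlling cross-degrees between $A$ and $B^*$ and between $B$ and $A^*$, after which super-regularity of each enlarged pair $(A',B')$ will follow deterministically. The key observation I would exploit is that once $|A^*|=|B^*|=\theta n$, the rescaling $\eps \mapsto \eps/\theta$ in the conclusion is exactly compensated by the lower bound $|A'|\ge \theta n$ in the hypothesis: any set testing $\eps/\theta$-regularity of $(A',B')$ is automatically a legal test set for the $\eps$-regularity of the original pair $(A,B)$.

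First I would handle the degree requirement. For each $a\in A$, the count $d_{B^*}(a)$ is hypergeometric with mean $\mb{E}[d_{B^*}(a)] = d_B(a)\cdot \theta n/|B| \ge d\theta n/2$ by super-regularity of $(A,B)$ and $|B|\le n$. One application of the Chernoff bound (Proposition~\ref{chernoff}) with deviation $1/2$ gives failure probability $\le 2e^{-\Omega(d\theta n)}$, and a union bound over $A$, symmetrically over $B$, collapses these into a good event of probability $1-o(1)$. On this event every $a\in A$ has $d_{B^*}(a)\ge d\theta n/4$, and since $B'\sups B^*$ and $|B'|\le n$ this immediately propagates to $d_{B'}(a)\ge (\theta d/4)|B'|$ for every admissible $A'\sub A$ with $a\in A'$; the analogous statement for $b\in B'$ is symmetric.

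Next I would handle the regularity, which is deterministic. Fix any $A^*\sub A'\sub A$ and $B^*\sub B'\sub B$, and take $X\sub A'$, $Y\sub B'$ with $|X|\ge (\eps/\theta)|A'|$ and $|Y|\ge (\eps/\theta)|B'|$. Because $|A'|\ge |A^*|=\theta n$ and $|A|\le n$, the lower bound on $|X|$ forces $|X|\ge (\eps/\theta)\cdot \theta n = \eps n\ge \eps|A|$, and similarly $|Y|\ge \eps|B|$. So $(X,Y)$ and $(A',B')$ are both legal test pairs for the $\eps$-regularity of $(A,B)$, giving $d(X,Y) = d(A,B)\pm\eps$ and $d(A',B') = d(A,B)\pm\eps$. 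The triangle inequality then yields
\[
|d(X,Y) - d(A',B')| \le 2\eps < \eps/\theta,
\]
since $\theta<1/2$, which is exactly $\eps/\theta$-regularity of $(A',B')$.

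There is no real obstacle here: the argument is one Chernoff plus one trivial deterministic observation. What makes the lemma useful, and what I would be careful to flag, is not the difficulty of either step but the fact that this single random choice produces a pair $(A^*,B^*)$ which is robust to \emph{all} deterministic enlargements $A'\sups A^*$, $B'\sups B^*$ simultaneously, rather than merely to a single target pair. That uniform robustness is what the ``ideal'' terminology is designed to capture, and it is what will be used in later sections.
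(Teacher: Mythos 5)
Your proof is correct and follows essentially the same two-step route as the paper's: a Chernoff/union bound on the hypergeometric degrees $d_{B^*}(a)$ (and symmetrically $d_{A^*}(b)$) to secure the minimum-degree part, and the deterministic observation that since $|X|\ge(\eps/\theta)|A'|\ge\eps n\ge\eps|A|$, both $(X,Y)$ and $(A',B')$ are legal test pairs for the $\eps$-regularity of $(A,B)$, giving $|d(X,Y)-d(A',B')|<2\eps<\eps/\theta$. The only difference from the paper's write-up is cosmetic (a slightly different parametrisation of the Chernoff deviation, and the paper simply asserts the regularity step where you spell it out), so nothing further is needed.
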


\begin{proof}
First we note that $\eps$-regularity of $G$ implies that $(A',B')$
is $\eps/\theta$-regular for any $A' \sub A$ and $B' \sub B$ with
$|A'|,|B'|\ge \theta n$. For each $x\in A$, the degree of $x$ in
$B^*$ is $d_{B^*}(x)=|N_G(x) \cap B^*|$, which has hypergeometric
distribution with parameters $(|B|,d_G(x),\theta n)$ and expectation
$\mb{E}[d_{B^*}(x)] \ge \theta d_G(x)$. By super-regularity we have
$d_G(x) \ge d n/2$, so by the Chernoff bound
(Proposition~\ref{chernoff}) applied with $a=\theta
dn/4\mb{E}[d_{B^*}(x)]\ge d/4$, we have $\mb{P}(d_{B^*}(x) < \theta
dn/4) < 2e^{-\theta d^2n/48}$. By a union bound, there is some
choice of $B^*$ so that every $x \in A$ has at least $\theta dn/4$
neighbours in $B^*$, and so at least $(\theta d/4)|B'|$ neighbours
in $B'$ for any $B^* \sub B' \sub B$. Arguing similarly for $A^*$
gives the result.
\end{proof}


\section{Overview of the proof} \label{techniques}

We will first prove a special case of Theorem~\ref{CKKO - Approximate Chvatal}.
Although it would be possible to give a single argument that covers all cases,
we believe it is instructive to understand the methods
in a simplified setting before introducing additional complications.
This section gives an overview of our techniques.%
   \COMMENT{If H is highly connected then consider a random partition into R and L.
Then with high probability (since we know many short paths survive),
both H[L] and H[R] are still highly connected}
We begin by defining additional constants such that%
  \COMMENT{Some of these parameters are not needed for the highly connected case.}
$$\frac{1}{n_0} \ll \eps \ll d \ll \gamma \ll d' \ll \eta \ll \eta' \ll \beta \leq 1.$$
Note that this hierarchy of parameters will be used throughout the paper.
By applying the Diregularity Lemma to $G$ with parameters $\eps,d$
and $M' = 1/\eps$, we obtain a reduced digraph $R_{G'}$ on $k$
clusters of size $m$ and an exceptional set $V_0$. We will see that the
degree sequences of $R_{G'}$ inherit many of the properties of the
degree sequences of $G$. Then it will follow that $R_{G'}$
contains a union of cycles $F$ which covers all but at most
$O(d^{1/2}k)$ of the clusters of $R_{G'}$. We move the vertices of all
clusters not covered by $F$ into $V_0$. By moving some further
vertices into $V_0$ we can assume that all edges of $F$
correspond to super-regular pairs.

Let $R_{G'}^*$ be the digraph obtained
from $R_{G'}$ by adding the set $V_0$ of exceptional vertices and for
each $x \in V_0$ and each $V \in R_{G'}$ adding the edge $xV$ if $x$
has an outneighbour in $V$ and the edge $Vx$ if $x$ has an
inneighbour in $V$. We would like to find a closed walk $W$ in
$R_{G'}^*$ such that
\begin{itemize}
\item[(a)] For each cycle $C$ of $F$, $W$ visits every cluster of $C$
the same number of times, say $m_C$;
\item[(b)] We have $1 \le m_C \le m$, i.e.\
$W$ visits every cluster at least once but not too many times;
\item[(c)] $W$ visits every vertex of $V_0$ exactly once;
\item[(d)] For each $x_i \in V_0$ we can choose
an inneighbour $x_i^-$ in the cluster preceding $x_i$ on $W$ and
an outneighbour $x_i^+$ in the cluster following $x_i$ on $W$,
so that as $x_i$ ranges over $V_0$
all vertices $x_i^+$, $x_i^-$ are distinct.
\end{itemize}
If we could find such a walk $W$ then by properties (a) and (b)
we can arrange that $m_C=m$ for each cycle $C$ of $F$
by going round $C$ an extra $m-m_C$ times on one particular
visit of $W$ to $C$. Then we could apply properties (c) and (d) to choose
inneighbours and outneighbours for every vertex of $V_0$ such that
all choices are distinct. Finally, we could apply a powerful tool known
as the Blow-up Lemma (see \cite{Komlos&Sarkozy&Szemeredi97})
to find a Hamilton cycle $C_{Ham}$ in $G$ corresponding to $W$,
where $C_{Ham}$ has the property that whenever $W$ visits a vertex of $V_0$, $C_{Ham}$ visits the
same vertex, and whenever $W$ visits a cluster $V_i$ of $R_{G'}$, then
$C_{Ham}$ visits a vertex $x \in V_i$.
(We will not discuss the Blow-up Lemma further,
as in fact we will take a different approach that does not need it.)

To achieve property (a), we will build up $W$ from certain `shifted'
walks, each of them satisfying property (a).
Suppose $R$ is a digraph, $R'$ is a subdigraph of $R$,
$F$ is a $1$-factor in $R$ and $a,b$ are vertices.
A {\em shifted walk} (with respect to $R'$ and $F$) from $a$ to $b$
is a walk $W(a,b)$ of the form
\[ W(a,b) = X_1 C_1 X^-_1 X_2 C_2 X^-_2 \ldots X_t C_t X^-_t X_{t+1},\]
where $X_1=a$, $X_{t+1} = b$, $C_i$ is the  cycle of $F$ containing $X_i$,
and for each $1 \leq i \leq t$,
$X^-_i$ is the predecessor of $X_i$ on $C_i$
and the edge $X^-_i X_{i+1}$ belongs to $R'$.
We say that $W(a,b)$
\emph{traverses the cycles $C_1,\ldots,C_t$}.
Note that even if the cycles
$C_1,\ldots,C_t$ are not distinct we say that $W$ traverses $t$
cycles. Note also that, for every cycle $C$ of $F$, the walk $W(a,b)\sm b$
visits the vertices of $C$ an equal number of times.

Given a shifted walk $W = W(a,b)$ as above
we say that $W$ {\em uses}  $X$ if $X$ appears in the list
$\{X_1^-, \ldots , X_t,X_t^-,X_{t+1} \}$.%
\COMMENT{originally, we included $X_1$ in this list, but then $X_1$ may be used but not used $a=1$ times,
which is rather odd. Omitting $X_1$ doesn't matter, as we only consider concatenations of walks anyway.}
More generally, we say that $X$ is {\em used $s$ times} by $W$ if it appears $s$ times in the above list (counting multiplicities). Thus $W$ uses $2t$ clusters, counting multiplicities.
We say that $W$ \emph{internally uses} $X$ if
$X \in \{X_2,X_2^-, \ldots , X_t,X_t^- \}$
(i.e.\ we do not count the uses of $X_1^-$ or $X_{t+1}$).
We also refer to the uses of $X_2,\ldots,X_{t+1}$ as {\em entrance uses}
and $X_1^-,\ldots,X_t^-$ as {\em exit uses}.
If $X$ is used as both $X_i$ and $X_j$ for some $2 \le i < j \le t+1$
then we can obtain a shorter shifted walk from $a$ to $b$ by deleting the
segment of $W$ between $X_i$ and $X_j$ (retaining one of them).
Similarly, we can obtain a shorter shifted walk if $X$ is used
as both $X_i^-$ and $X_j^-$ for some $1 \le i < j \le t$.
Thus we can always choose shifted walks so that any cluster
is used at most once as an entrance and at most once as an exit,
and so is used at most twice in total.

We say that a cluster $V$ is \emph{entered $a$ times} by $W$
if $W$ contains $a$ edges whose final vertex is $V$ and which do not lie in~$F$
(where the edges of $W$ are counted with multiplicities).
We have a similar definition for \emph{exiting $V$ $a$ times}.

Next we define an auxiliary digraph $H$ that plays a crucial role in our argument.
Let $R_{G''}$ be the spanning subdigraph of $R_{G'}$ obtained by deleting all those edges
corresponding to a pair of clusters whose density is less than $d'$.
Let $F$ be the $1$-factor of $R_{G'}$ mentioned above.
The vertices of $H$ are the clusters of $R_{G''}$.
We have an edge from $a$ to $b$ in $H$ if there is a shifted walk with respect to $R_{G''}$ and $F$
from $a$ to $b$ which traverses exactly one cycle.
One can view $H$ as a `shifted version' of $R_{G''}$.

For now we will only consider the special case in which $H$ is highly connected.
Even then, the fact that the exceptional set $V_0$ can be much bigger than the cluster sizes
creates a difficulty in ensuring property (b), that $W$ does not visit a cluster too many times.
A natural attempt to overcome this difficulty is the technique
from~\cite{Christofides&Keevash&Kuhn&Osthus+}.
In that paper we split each cluster $V_i$ of $R_{G'}$ into two equal pieces
$V_i^1$ and $V_i^2$. If the splitting is done
at random, then with high probability, the super-regularity between
pairs of clusters corresponding to the edges of $F$ is
preserved. We then applied the Diregularity Lemma to the subdigraph of
$G$ induced by $V_0 \cup V_1^2 \cup \dots \cup V_{k}^2$ with
parameters $\eps_2,d_2$ and $M_2' = 1/\eps_2$ to obtain a reduced
graph $R_2$ and an exceptional set $V_0^2$. The advantage gained is that by choosing
$\eps_2 \ll d_2 \ll \eps$ the exceptional set $V_0^2$ becomes much smaller than
the original cluster sizes and there is no difficulty with property (b) above.
However, the catch is that in our present case the degrees are capped at $n/2$, and
in the course of constructing the union of cycles in $R_2$ we would have to enlarge
$V_0^2$ to such an extent that this approach breaks down.

Our solution is to replace condition (b) by the following property for $W$:
\begin{itemize}
\item[(b$'$)] $W$ visits every cluster of $R_{G'}$ at least once but does not {\em use}
any cluster of $R_{G'}$ too many times.
\end{itemize}

This condition can be guaranteed by the high connectivity property of $H$.
However, we now have to deal with the fact that $W$  may  `wind around' each cycle of $F$ too many times.
This will be addressed by a shortcutting technique, where for each cycle $C$
in $F$ we consider the required uses of $C$ en masse and reassign routes
so as not to overload any part of $C$. A side-effect of this procedure is that
we may obtain a union of cycles, rather than a single Hamilton cycle.
However, using a judicious choice of $W$ and a switching procedure for matchings,
we will be able to arrange that these shortcuts do produce a single Hamilton cycle.
In particular, this approach does not rely on the Blow-up Lemma.%


\section{Structure I: Covering the reduced digraph by cycles}

We start the proof by applying the Diregularity Lemma (Lemma~\ref{Diregularity})
to $G$ with parameters $\eps,d$ and $M' = 1/\eps$,
obtaining a reduced digraph $R_{G'}$ on $k$ clusters of size $m$ and an exceptional set $V_0$.
Initially we have $|V_0| \le \eps n$, although we will add vertices to $V_0$ during the argument.
Note also that $n = km +|V_0|$.

\subsection{Properties of  $R_{G'}$}

Our main aim in this section is to show that $R_{G'}$ contains an
almost $1$-factor $F$, more specifically, a disjoint union of
directed cycles covering all but at most $7d^{1/2}k$ vertices of
$R_{G'}$. To begin with, we show that the degree sequences of
$R_{G'}$ have similar properties to the degree sequences of $G$.

\begin{lemma}\label{Degree sequence of R_{G'}} $ $
\begin{itemize}
\item[(i)] $d_i^+(R_{G'}) \geq \frac{1}{m}d_{im}^+(G) - 2dk$;
\item[(ii)] $d_i^-(R_{G'}) \geq \frac{1}{m}d_{im}^-(G) - 2dk$;
\item[(iii)] $\delta^+(R_{G'}) \geq \frac{\beta}{2}k$;
\item[(iv)] $\delta^-(R_{G'}) \geq \frac{\beta}{2}k$;
\item[(v)] $d_i^+(R_{G'}) \geq \min{\left\{i + \frac{\beta}{2}k, \left(\frac{1}{2} -
2d \right) k\right\}}$ or $d^-_{\left(1 -
\frac{\beta}{2}\right)k - i}(R_{G'}) \geq k - i - 2dk$;
\item[(vi)] $d_i^-(R_{G'}) \geq \min{\left\{i + \frac{\beta}{2}k, \left(\frac{1}{2} -
2d \right) k\right\}}$ or $d^+_{\left(1 -
\frac{\beta}{2}\right)k - i}(R_{G'}) \geq k - i - 2dk$.
\end{itemize}
\end{lemma}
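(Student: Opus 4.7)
The plan is to prove all six parts by translating between the degree sequences of $R_{G'}$ and those of $G$, exploiting the structure imposed by the Diregularity Lemma. The key observation is that if a cluster $V_i$ has out-degree $t$ in $R_{G'}$, then for every $x \in V_i$ we have $d^+_{G'}(x) \le tm + |V_0|$: indeed $G'[V_i]$ is empty and all inter-cluster edges of $G'$ live in pairs of density at least $d$, so out-neighbours of $x$ in $G'$ must lie in $V_0$ or in one of the $t$ out-neighbour clusters of $V_i$. Combining this with the degree form of the Regularity Lemma gives $d^+_G(x) \le tm + |V_0| + (d+\eps)n$, and the error term is absorbed into $2dkm$ using $|V_0| \le \eps n$, $n \le km/(1-\eps)$ and the hierarchy $\eps \ll d$, so that $d^+_G(x) \le tm + 2dkm$.

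Parts (i) and (ii) are then immediate: applying this bound to the $i$ clusters of $R_{G'}$ of smallest out-degree covers $im$ vertices of $G$, whence $d^+_{im}(G) \le m\, d^+_i(R_{G'}) + 2dkm$; dividing by $m$ yields (i), and (ii) is the analogous statement for indegrees. For (iii) and (iv), I would take $i = 1$ in (i) and (ii) and combine with the bound $\delta^+(G), \delta^-(G) \ge \beta n$ established in Section~\ref{prelims}. Since $n/m \ge k$, this gives $\delta^\pm(R_{G'}) \ge \beta k - 2dk \ge \frac{\beta}{2}k$, using $d \ll \beta$.

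Parts (v) and (vi) are the most involved, but invoke only the same translation principle. For (v), suppose the first alternative fails, i.e.\ $d^+_i(R_{G'}) < \min\{i + \frac{\beta}{2}k, (\frac{1}{2} - 2d)k\}$; I aim to verify the second. By (i), this places $d^+_{im}(G)$ strictly below $\min\{im + \beta n, n/2\}$ after multiplying through by $m$ and using $km \le n$ together with $d \ll \beta$. Hence hypothesis~(i) of Theorem~\ref{CKKO - Approximate Chvatal} at index $im$ forces $d^-_{n - im - \beta n}(G) \ge n - im$. Monotonicity of the indegree sequence, combined with the inequality $n - im - \beta n \le ((1 - \beta/2)k - i)m$ (which reduces to $|V_0| \le \beta(n - km/2)$ and follows from $|V_0| \le \eps n$, $km \le n$ and $\eps \ll \beta$), then yields $d^-_{((1 - \beta/2)k - i)m}(G) \ge (k - i)m$. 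Applying (ii) at index $(1 - \beta/2)k - i$ converts this to the desired lower bound on $d^-_{(1 - \beta/2)k - i}(R_{G'})$, and part (vi) is completely symmetric.

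The only genuine difficulty is bookkeeping: the discrepancies between $n$, $km$ and $|V_0|$, together with the $(d+\eps)n$ loss in the Regularity Lemma, must be absorbed neatly into the $2dk$ slack in the statement, and one must verify that the index $n - im - \beta n$ provided by the hypothesis of the main theorem really does sit below the index $((1 - \beta/2)k - i)m$ where (ii) is being applied. This is routine given the hierarchy $\eps \ll d \ll \beta$, and the proof otherwise proceeds mechanically.
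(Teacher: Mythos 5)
Your proposal is correct and follows essentially the same approach as the paper: translate degree bounds between $G$ and $R_{G'}$ via the Diregularity Lemma for (i) and (ii), set $i=1$ and use $\delta^\pm(G)\ge\beta n$ for (iii) and (iv), and argue (v) and (vi) by contraposition through the degree hypotheses of Theorem~\ref{CKKO - Approximate Chvatal} and a second application of (i)/(ii). The only cosmetic difference is that the paper isolates a single vertex of outdegree at least $d^+_{im}(G)$ among the $im$ vertices in the low-degree clusters, whereas you bound the outdegree of every vertex in those clusters at once; both observations give the same inequality.
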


\begin{proof}
We will only prove parts (i),(iii) and (v). Parts (ii), (iv) and
(vi) can be obtained in exactly the same way as parts (i), (iii) and
(v) respectively, by interchanging $+$ and $-$ signs. Consider $i$
clusters with outdegrees at most $d_i^+(R_{G'})$ in
$R_{G'}$. These clusters contain $im$ vertices of
$G$, so must include a vertex~$x$ of outdegree at least
$d_{im}^+(G)$. Lemma~\ref{Diregularity} implies that the cluster~$V$
containing~$x$ satisfies
$$
d_G^+(x) \leq d_{G'}^+(x) + (d + \eps)n \le d_{R_{G'}}^+(V)m + (d + \eps)n +|V_0| \le
d_{R_{G'}}^+(V)m + \frac{3}{2}dn.
$$
Therefore
\[ d_i^+(R_{G'}) \ge d^+_{R_{G'}}(V)\geq \frac{1}{m}d_{im}^+(G) - \frac{3}{2}d\frac{n}{m}
\geq \frac{1}{m}d_{im}^+(G) - 2dk,\] which proves (i).
Next, (iii) follows from (i), since $\delta^+(G) \geq \beta n$.
To prove (v), suppose
that $d_i^+(R_{G'}) < \min{\left\{i + \frac{\beta}{2}k,
\left(\frac{1}{2} - 2d \right) k\right\}}$. It follows from (i) that
\[ d^+_{im}(G) < \min{\left\{ m(i + \beta k/2 + 2dk),
mk/2 \right\}} \leq \min{\left\{ im + \beta n , n/2 \right\}}.\]
Using our degree assumptions gives
$d^-_{n - im - \beta n}(G) \geq n - im$.
Then by (ii) we have
\begin{align*}
d^-_{\left(1 - \frac{\beta}{2} \right)k - i}(R_{G'})
\geq \frac{1}{m} d^-_{\left(1 - \frac{\beta}{2} \right)km -im}(G) - 2dk
\geq \frac{1}{m} d^-_{\left(1 - \beta \right)n - im}(G) - 2dk
\geq k - i - 2dk,
\end{align*}
as required.
\end{proof}

Unfortunately, $R_{G'}$ need not satisfy the hypothesis of
Proposition~\ref{1-factor}, so we cannot use it to deduce the
existence of a 1-factor in $R_{G'}$. The next lemma shows that a
problem can only occur for subsets $S$ of $V(R_{G'})$ of size close
to $k/2$.

\begin{lemma}\label{Outexpansion of R_{G'}}
Let $S$ be a subset of $V(R_{G'})$ such that either
$|S| \leq (1/2 - 2d)k$ or $|S| > (1/2 + 2d)k$.
Then $|N^+(S)|,|N^-(S)| \geq |S|$.
\end{lemma}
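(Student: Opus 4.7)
My approach is by contradiction. Suppose $|N^+(S)| < |S| =: s$; the inequality $|N^-(S)| \ge |S|$ will follow by the symmetric argument, exchanging the roles of parts~(v) and~(vi) of Lemma~\ref{Degree sequence of R_{G'}}. Since every $v \in S$ has $N^+(v) \subseteq N^+(S)$, we have $d^+(v) \le s-1$, giving $d^+_s(R_{G'}) \le s-1$. Setting $T := V(R_{G'}) \sm N^+(S)$, every $v \in T$ has $N^-(v) \cap S = \emptyset$, so $d^-(v) \le k-s$; since $|T| \ge k-s+1$, this yields $d^-_{k-s+1}(R_{G'}) \le k-s$.

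In Case 1 ($s \le (1/2-2d)k$), the quantity $\min\{s+\beta k/2,\,(1/2-2d)k\}$ is at least $s$, so the first alternative of Lemma~\ref{Degree sequence of R_{G'}}(v) at $i=s$ would contradict $d^+_s \le s-1$. Hence the second alternative must hold: writing $j_0 := (1-\beta/2)k - s$, we have $d^-_{j_0} \ge k-s-2dk$. Feeding this into Lemma~\ref{Degree sequence of R_{G'}}(vi) at $i=j_0$: its second alternative would give $d^+_s \ge k-j_0-2dk = \beta k/2 + s - 2dk > s-1$ (using $\beta \gg d$), so its first alternative must hold, which in Case 1 simplifies to $d^-_{j_0} \ge (1/2-2d)k$. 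Monotonicity of the indegree sequence together with $d^-_{k-s+1} \le k-s$ then places at least $\beta k/2$ vertices into the narrow indegree window $[k-s-2dk,\,k-s]$. A double-counting of edges from $V(R_{G'}) \sm S$ into the subset of these vertices which also lies in $T$ (each such vertex draws all of its $\ge k-s-2dk$ inedges from $V(R_{G'}) \sm S$, missing at most $2dk$ vertices of $V(R_{G'})\sm S$) produces the contradiction.

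In Case 2 ($s > (1/2+2d)k$) we have $k-s+1 \le (1/2-2d)k$, and both terms of $\min\{(k-s+1)+\beta k/2,\,(1/2-2d)k\}$ strictly exceed $k-s$ (the first because $\beta>0$, the second because $s>(1/2+2d)k$); thus the first alternative of Lemma~\ref{Degree sequence of R_{G'}}(vi) at $i=k-s+1$ would contradict $d^-_{k-s+1} \le k-s$, forcing the second alternative $d^+_{s-1-\beta k/2} \ge s-1-2dk$. A symmetric chain of deductions using (v) then closes this case. The main obstacle throughout is the bookkeeping in the iterated application of (v) and (vi): the hierarchy $d \ll \beta$ is what ensures that the second alternative of each applied condition consistently fails, thereby forcing the first. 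The excluded middle range $((1/2-2d)k,\,(1/2+2d)k]$ corresponds to where the capped-at-$n/2$ clause in the original hypothesis on $G$ becomes active, so this is exactly where the degree-sequence argument must break down.
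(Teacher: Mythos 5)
There is a genuine gap in Case~1, and the Case~2 sketch then does not recover. Where the paper applies Lemma~\ref{Degree sequence of R_{G'}}(v) at the shifted index $i=|S|-2dk-1$, you apply it at $i=s$. The shift is exactly what the argument needs: the second alternative at $i=|S|-2dk-1$ gives $d^-_{(1-\beta/2)k-|S|+2dk+1}\ge k-i-2dk=k-|S|+1$, so at least $\beta k/2+|S|-2dk\ge|S|$ vertices have indegree at least $k-|S|+1$, and any such vertex has fewer than $|S|$ non-in-neighbours, hence an in-neighbour in~$S$, hence lies in $N^+(S)$, giving the contradiction $|N^+(S)|\ge|S|$ directly. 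Your choice $i=s$ only yields $d^-_{j_0}\ge k-s-2dk$, and a vertex of indegree $k-s-2dk$ can avoid $S$ entirely, so no automatic membership in $N^+(S)$ follows. Your two attempts to make up the $2dk$ deficit do not succeed: the extra application of (vi) at $i=j_0$ gives $d^-_{j_0}\ge(1/2-2d)k$, but since $s\le(1/2-2d)k$ you have $k-s-2dk\ge(1/2)k>(1/2-2d)k$, so this is strictly weaker than what (v) already gave; and the concluding double-count is only asserted, not carried out. What it would have to show is not there: you end with about $\beta k/2$ vertices of $T$ each having at least $k-s-2dk$ in-neighbours inside the set $V(R_{G'})\setminus S$ of size $k-s$, but that configuration is perfectly possible (the number of edges you force is far below what $V(R_{G'})\setminus S$ can carry), so no contradiction is reached.

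Case~2 also does not close. After obtaining $d^+_{s-1-\beta k/2}\ge s-1-2dk$ from (vi), the symmetric next step would be (v) at $i'=s-1-\beta k/2$. There the first alternative reads $d^+_{i'}\ge\min\{s-1,(1/2-2d)k\}=(1/2-2d)k$, and since $s>(1/2+2d)k$ you already know $d^+_{i'}\ge s-1-2dk>(1/2-2d)k$, so the first alternative holds and cannot be negated to force the second; the chain stops. The paper avoids this entirely by reducing Case~2 to Case~1: set $T=V(R_{G'})\setminus N^+(S)$, note $N^-(T)\cap S=\emptyset$ so $|N^-(T)|<|T|$, apply Case~1 (for $N^-$) to deduce $|T|>(1/2-2d)k$, then apply Case~1 to a subset $T'\subseteq T$ of size exactly $(1/2-2d)k$ to get $|N^-(T)|\ge(1/2-2d)k$, hence $|S|\le(1/2+2d)k$, a contradiction. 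I would rework Case~1 with the index shift and adopt this reduction for Case~2.
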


\begin{proof}
Suppose firstly that $|S| \leq (1/2 - 2d)k$ but $|N^+(S)| < |S|$. By
the minimum outdegree condition of $R_{G'}$ (Lemma~\ref{Degree
sequence of R_{G'}}~(iii)) we must have $|S| \geq \beta k/2$. Also
$d^+_{|S| - 2dk - 1} (R_{G'}) \leq d_{|S|}^+ (R_{G'}) < |S| \leq
(1/2 - 2d)k$, so Lemma~\ref{Degree sequence of R_{G'}}~(v) gives
$d^-_{(1 -\beta/2)k - |S| + 2dk + 1} (R_{G'})\geq k - |S| + 1$. Thus
there are at least $\beta k/2 + |S| - 2dk\geq |S|$ vertices of
indegree at least $k - |S| + 1$. Now if $x$ has indegree at least $k
- |S| + 1$ then $N^-(x)$ intersects $S$, so $x$ belongs to $N^+(S)$.
We deduce that $|N^+(S)| \geq |S|$. A similar argument shows that
$|N^-(S)| \geq |S|$ as well. Now suppose that $|S| > (1/2 + 2d)k$
but $|N^+(S)| < |S|$, and consider $T = V(R_{G'}) \sm N^+(S)$. Since
$N^-(T) \cap S = \emptyset$, we have $|N^-(T)| < |T|$, and so $|T| >
(1/2 - 2d)k$ by the first case. But now we can consider a subset
$T'$ of $T$ of size $|T'|=(1/2-2d)k$ to see that $|N^-(T)| \geq
|N^-(T')| \geq |T'| = (1/2 - 2d)k$, and so $|S| \leq (1/2 + 2d)k$, a
contradiction. The claim for $|N^-(S)|$ follows by a similar
argument.
\end{proof}

Applying Hall's theorem as in Proposition~\ref{1-factor},
one can use Lemma~\ref{Outexpansion of R_{G'}} to partition the vertex set
of $R_{G'}$ into a union of cycles and at most $4dk$ paths.
However, for our approach we need to find a disjoint union of cycles
covering almost all the vertices. The first step towards this goal will
be to arrange that for each path its initial vertex has large indegree
and its final vertex has large outdegree.
To prepare the ground, we show in the next lemma that if $R_{G'}$
does not have a $1$-factor, then it has many vertices of large
outdegree and many vertices of large indegree.

\begin{lemma}\label{No 1-factor in R_{G'} ==> many vertices of large degree}
If $R_{G'}$ does not have a $1$-factor, then it contains more than
$(1/2 + 2d)k$ vertices of outdegree at least $(1/2 - 2d)k$
and more than $(1/2 + 2d)k$ vertices of indegree at least $(1/2 - 2d)k$.
\end{lemma}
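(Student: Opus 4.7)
The plan is to prove the outdegree statement by contradiction; the indegree statement follows by a symmetric argument using Lemma~\ref{Degree sequence of R_{G'}}(vi) in place of (v). So suppose $R_{G'}$ has no $1$-factor yet at most $(1/2+2d)k$ vertices of $R_{G'}$ have outdegree at least $(1/2-2d)k$, i.e.\ $d^+_{(1/2-2d)k}(R_{G'}) < (1/2-2d)k$. Applying Lemma~\ref{Degree sequence of R_{G'}}(v) with $i=(1/2-2d)k$, the first alternative fails, so we must have $d^-_{(1/2-\beta/2+2d)k}(R_{G'}) \ge k/2$. The parameter hierarchy $\beta \gg d$ (so in particular $\beta > 8d$) then guarantees that the set $H := \{v \in V(R_{G'}) : d^-(v) \ge k/2\}$ satisfies $|H| > (1/2+\beta/2-2d)k > (1/2+2d)k$.

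By Proposition~\ref{1-factor} there exists $S \subseteq V(R_{G'})$ with $|N^+(S)| < |S|$, and Lemma~\ref{Outexpansion of R_{G'}} forces $(1/2-2d)k < |S| \le (1/2+2d)k$. I would then split on the position of $|S|$ relative to $k/2$. If $|S| > k/2$, then every $v \in V(R_{G'}) \sm N^+(S)$ satisfies $N^-(v) \sub V(R_{G'}) \sm S$, so $d^-(v) \le k - |S| < k/2$; hence $H \sub N^+(S)$, and $|N^+(S)| \ge |H| > (1/2+2d)k \ge |S|$, contradicting $|N^+(S)| < |S|$.

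The main obstacle is the remaining case $|S| \le k/2$, where this kind of counting no longer forces $H \cap (V(R_{G'}) \sm N^+(S)) = \emptyset$. To handle it I would apply Lemma~\ref{Degree sequence of R_{G'}}(v) a second time, this time with $i = |S| - \beta k/2$. Since $|S| > (1/2-2d)k$, the minimum in the first alternative equals $(1/2-2d)k$; so if the first alternative held we would get at least $k - |S| + \beta k/2 + 1 \ge k/2 + \beta k/2 + 1 > (1/2+2d)k$ vertices of outdegree at least $(1/2-2d)k$, contradicting our standing assumption. Hence the second alternative must hold: $d^-_{k-|S|}(R_{G'}) \ge k - |S| + (\beta/2 - 2d)k$. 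Since $\beta > 4d$, this bound strictly exceeds $k - |S|$, so at least $|S|+1$ vertices of $R_{G'}$ have indegree strictly greater than $k-|S|$. But every $v \in V(R_{G'}) \sm N^+(S)$ has $d^-(v) \le k - |S|$, so all of these $|S|+1$ vertices must lie in $N^+(S)$, forcing $|N^+(S)| \ge |S|+1$ and contradicting $|N^+(S)| < |S|$. This completes the outdegree case, and the indegree case follows by an identical argument swapping the roles of $+$ and $-$ and using Lemma~\ref{Degree sequence of R_{G'}}(vi).
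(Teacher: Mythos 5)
Your proof is correct and uses the same ingredients as the paper's (Proposition~\ref{1-factor} to obtain $S$ with $|N^+(S)|<|S|$, Lemma~\ref{Outexpansion of R_{G'}} to pin $|S|$ near $k/2$, and Lemma~\ref{Degree sequence of R_{G'}}(v) applied at the shifted index $|S|-\beta k/2$), only reorganized as a proof by contradiction where the paper argues directly from the same dichotomy. Your preliminary application of Lemma~\ref{Degree sequence of R_{G'}}(v) at $i=(1/2-2d)k$, the set $H$, and the case $|S|>k/2$ are in fact redundant: since $|S|\le(1/2+2d)k$, the first alternative at $i=|S|-\beta k/2$ already yields at least $k-|S|+\beta k/2+1\ge(1/2+\beta/2-2d)k+1>(1/2+2d)k$ high-outdegree vertices (using $\beta\gg d$), so your Case~2 argument alone forces a contradiction across the entire admissible range, matching the paper's single-application structure.
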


\begin{proof}
Since $R_{G'}$ does not have a $1$-factor, by Proposition~\ref{1-factor}
it contains a set $S$ with $|N^+(S)| < |S|$. Then by
Lemma~\ref{Degree sequence of R_{G'}} (i) we have
\[ |S| > |N^+(S)| \ge d^+_{|S|}(R_{G'}) \geq \frac{1}{m}d^+_{m|S|}(G) - 2dk,\]
and so
\[ d^+_{m|S| - \frac{\beta}{2}n}(G) \leq
m(|S| + 2dk) \leq m|S| + \frac{\beta}{2}n. \]
Moreover, $(1/2-2d)k <|S| \le (1/2+2d)k$ by
Lemma~\ref{Outexpansion of R_{G'}}.
So if it were also the case that
$d^+_{m|S| - \frac{\beta}{2}n}(G) < n/2=\min \{m|S|+ \beta n/2,n/2\}$,
then $d^-_{(1 -\beta/2)n - m|S|}(G) \geq (1 + \beta/2)n - m|S|$ and so by
Lemma~\ref{Degree sequence of R_{G'}} (ii) we would have
\[ d^-_{\left(1 - \frac{\beta}{4} \right)k - |S|}(R_{G'}) \geq \left(1
+ \frac{\beta}{2} \right)k - |S| - 2dk \geq k - |S| + 1. \]
Then $R_{G'}$ contains at least $\beta k/4 + |S|$ vertices of
indegree at least $k - |S| + 1$, and these must all belong to
$N^+(S)$, a contradiction. It follows that
$d^+_{m|S| - \frac{\beta}{2}n}(G) \geq n/2$.
So Lemma~\ref{Degree sequence of R_{G'}} (i) gives
\[ d^+_{|S| - \frac{\beta}{2}k}(R_{G'}) \geq d^+_{|S| -
\frac{\beta}{2}\frac{n}{m}}(R_{G'}) \geq \frac{n}{2m} - 2dk
\geq \left( \frac{1}{2} - 2d \right)k,\]
i.e.~$R_{G'}$ contains at least $(1 + \beta/2)k - |S|\ge (1/2+2d)k$ vertices of
outdegree at least $(1/2 - 2d)k$, which proves the first part of
the lemma. The second part can be proved in exactly the same way.
\end{proof}

Now we can show how to arrange the degree property for the paths.

\begin{lemma}\label{R_{G'} is a union of cycles and not too many paths}
The vertex set of $R_{G'}$ can be partitioned into a union of cycles
and at most $4dk$ paths such that the initial vertices of the
paths each have indegree at least $(1/2 - 2d)k$ and the final vertices
of the paths each have outdegree at least $(1/2 - 2d)k$.
\end{lemma}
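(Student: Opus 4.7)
The plan is as follows. If $R_{G'}$ has a $1$-factor, it serves as the desired partition with zero paths, so assume otherwise. Set
\[ L^+ := \{V \in V(R_{G'}) : d^+_{R_{G'}}(V) \ge (1/2-2d)k\}, \quad X := V(R_{G'}) \sm L^+, \]
and define $L^-, Y$ analogously. Lemma~\ref{No 1-factor in R_{G'} ==> many vertices of large degree} gives $|L^+|, |L^-| > (1/2+2d)k$, so $|X|, |Y| < (1/2-2d)k$.

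Let $\Gamma$ be the bipartite graph whose vertex classes $A, B$ are both copies of $V(R_{G'})$ and with an edge from $a \in A$ to $b \in B$ whenever $ab \in E(R_{G'})$, as in the proof of Proposition~\ref{1-factor}. Any matching $M$ in $\Gamma$ corresponds to a subdigraph of $R_{G'}$ in which every vertex has in- and out-degree at most one, i.e.\ to a decomposition of $V(R_{G'})$ into vertex-disjoint cycles and paths; the number of paths equals $k - |M|$, and a vertex is a final (resp.\ initial) vertex of such a path exactly when it is unmatched on the $A$- (resp.\ $B$-) side of $M$.

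To obtain a large matching I will apply Proposition~\ref{dhall} to $\Gamma$. Lemma~\ref{Outexpansion of R_{G'}} gives $|N^+(S)| \ge |S|$ whenever $|S| \le (1/2-2d)k$ or $|S| > (1/2+2d)k$; for $S$ in the intermediate range, taking $S' \sub S$ of size $(1/2-2d)k$ yields $|N^+(S)| \ge |N^+(S')| \ge (1/2-2d)k \ge |S| - 4dk$. Thus Proposition~\ref{dhall} with $D = 4dk$ produces a matching $M$ in $\Gamma$ of size at least $k - 4dk$, which I take to be of maximum size.

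Finally I modify $M$ by alternating-path swaps so that every unmatched $A$-vertex lies in $L^+$ and every unmatched $B$-vertex lies in $L^-$. Suppose some $x \in X$ is unmatched on the $A$-side, let $T$ be the alternating tree in $\Gamma$ rooted at $x$, and let $S_A, S_B$ denote its $A$- and $B$-vertices. Since $M$ is maximum, $T$ contains no unmatched $B$-vertex, so every non-matching edge leaving $S_A$ ends in $S_B$; hence $N^+(S_A) \sub S_B$ and $|S_B| = |S_A| - 1$, giving $|N^+(S_A)| < |S_A|$. By Lemma~\ref{Outexpansion of R_{G'}} this forces $|S_A| > (1/2-2d)k > |X|$, so some $x' \in S_A \cap L^+$ lies on $T$; switching matching and non-matching edges along the alternating $x$--$x'$ path produces a matching of the same size in which $x$ is matched and $x'$ is not, moving the unmatched $A$-vertex from $X$ into $L^+$ and leaving the set of unmatched $B$-vertices unchanged. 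Iterating eliminates all unmatched vertices in $X$, and a symmetric argument using in-neighbourhoods and $B$-side alternating trees eliminates all unmatched vertices in $Y$ without affecting the $A$-side. The main technical point is this last step: defect Hall on its own does not control where the unmatched vertices sit, and the alternating-path argument depends crucially on the bound $|X|, |Y| < (1/2-2d)k$ from Lemma~\ref{No 1-factor in R_{G'} ==> many vertices of large degree} to force $T$ to reach $L^+$ (resp.\ $L^-$).
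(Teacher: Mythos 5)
Your argument is correct but proceeds quite differently from the paper's proof. The paper adjoins $4dk$ dummy vertices (joined pairwise in both directions, receiving edges from all vertices of outdegree at least $(1/2-2d)k$ and sending edges to all vertices of indegree at least $(1/2-2d)k$), verifies Hall's condition so that the augmented digraph $R'_{G'}$ has a $1$-factor, and then deletes the dummies; the degree requirements on the path endpoints are absorbed into the edge set of the auxiliary digraph, and a single application of Proposition~\ref{1-factor} does all the work. You instead work directly in the bipartite graph $\Gamma$: defect Hall (Proposition~\ref{dhall}) yields a maximum matching of defect at most $4dk$, and then you relocate the defects by alternating-path swaps, exploiting the maximum-matching characterisation $N^+(S_A)\subseteq S_B$ with $|S_B|=|S_A|-1$ together with Lemma~\ref{Outexpansion of R_{G'}} to force $|S_A|>(1/2-2d)k>|X|$, so that the alternating tree reaches some $x'\in L^+$ (and symmetrically on the $B$-side). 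Both proofs rest on the same two inputs, Lemmas~\ref{Outexpansion of R_{G'}} and~\ref{No 1-factor in R_{G'} ==> many vertices of large degree}; the paper's dummy-vertex construction earns the endpoint constraint automatically from the edges it inserts, while yours is a two-phase surgery that makes the rebalancing explicit, at the cost of the somewhat more delicate alternating-tree bookkeeping. One small presentational point: the step ``every non-matching edge leaving $S_A$ ends in $S_B$; hence $N^+(S_A)\subseteq S_B$'' implicitly also uses that the matching edges from $S_A\setminus\{x\}$ land in $S_B$ by construction of the tree, which is worth stating so that the containment covers all edges out of $S_A$, not just the non-matching ones.
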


\begin{proof}
We may assume that $R_{G'}$ does not have a $1$-factor and so the
consequences of Lemma~\ref{No 1-factor in R_{G'} ==> many vertices of
large degree} hold. We define an auxiliary digraph $R_{G'}'$ by
adding $4dk$ new vertices $v_1,v_2,\ldots,v_{4dk}$ to $R_{G'}$,
adding all possible edges between
these vertices (in both directions), adding all edges of the form
$vv_i$, where $1 \leq i \leq 4dk$ and $v$ is a vertex of $R_{G'}$
of outdegree at least $(1/2 - 2d)k$ and finally adding all
edges of the form $v_iv$ where $1 \leq i \leq 4dk$ and $v$ is a
vertex of $R_{G'}$ of indegree at least $(1/2 - 2d)k$.
Then any vertex that previously had indegree at least $(1/2-2d)k$
now has indegree at least $(1/2+2d)k$, and similarly for outdegree.
Also, Lemma~\ref{No 1-factor in R_{G'} ==> many vertices of
large degree} implies that every new vertex $v_i$ has
indegree and outdegree more than $(1/2 + 2d)k$.
We claim that $R_{G'}'$ has a $1$-factor. Having proved this, the result will follow
by removing $v_1,\ldots,v_{4dk}$ from the cycles in the 1-factor. To prove the claim, let us take
$S \sub V(R_{G'}')$. By Proposition~\ref{1-factor}
we need to show that $|N^+(S)| \geq |S|$. We consider cases according
to the size of $S$.
If $|S| \leq (1/2 - 2d)k$, then either $S \sub V(R_{G'})$, in
which case $|N^+(S)| \geq |S|$ by Lemma~\ref{Outexpansion of R_{G'}},
or $S$ contains some new vertex $v_i$, in which case
$|N^+(S)| \geq d^+(v_i) \geq (1/2 + 2d)k \geq |S|$.
Next suppose that $(1/2 - 2d)k < |S| \leq (1/2 + 2d)k$.
As before, if $S$ contains a new vertex $v_i$ we have
$|N^+(S)| \geq d^+(v_i) \geq (1/2 + 2d)k \geq |S|$,
so we can assume $S \sub V(R_{G'})$.
Now by Lemma~\ref{No 1-factor in R_{G'} ==> many vertices of large degree}
each new vertex $v_i$ has at least $(1/2+2d)k>k-|S|$ inneighbours in $V(R_{G'})$ and
so $v_i$ has an inneighbour in~$S$, i.e.\ $v_i \in N^+(S)$.
Also, $S$ has at least $(1/2 - 2d)k$ outneighbours in $R_{G'}$
by Lemma~\ref{Outexpansion of R_{G'}}, so in $R_{G'}'$ we have
$|N^+(S)| \geq 4dk + (1/2 - 2d)k \ge |S|$.
Finally suppose that $|S| > (1/2 + 2d)k$.
Let $T = V(R_{G'}') \sm N^+(S)$. Considering a subset $S'\sub S$ of size
$(1/2+2d)k$ shows that $|T|\le k-|N^+(S')|\le k-|S'|=(1/2-2d)k$.
However, $N^-(T)$ is disjoint from $S$, so
if $|N^+(S)| < |S|$ we have $|T| > |N^-(T)|$.
Now similar arguments to before give $|T| > (1/2 + 2d)k$, a contradiction.
\end{proof}

\subsection{The almost 1-factor}

We now come to the main result of this section.

\begin{lemma}\label{Existence of F}
$R_{G'}$ contains a disjoint union $F$ of cycles covering all but at most
$7d^{1/2}k$ of its vertices.
\end{lemma}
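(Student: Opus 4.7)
My plan is to start from the partition given by Lemma~\ref{R_{G'} is a union of cycles and not too many paths}, which decomposes $V(R_{G'})$ into cycles and at most $p \le 4dk$ paths $P_1, \ldots, P_p$, where each path $P_i$ has start $s_i$ with $d^-(s_i) \ge (1/2 - 2d)k$ and end $e_i$ with $d^+(e_i) \ge (1/2 - 2d)k$. The idea is to iteratively eliminate these paths by rerouting within the current structure, then discard the remaining obstructions (at most $7d^{1/2}k$ vertices) to obtain the desired cycle cover.

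The key operation is a \emph{swap} on the current partition $F$. Given a remaining path $P_i$, look for an edge $(v^-, v) \in F$ (so $v^-$ is the $F$-predecessor of $v$) such that $e_i \to v$ and $v^- \to s_i$ are both edges of $R_{G'}$. Replacing $(v^-, v)$ in $F$ by the pair of edges $(e_i, v), (v^-, s_i)$ reduces the number of paths by exactly one: a short case analysis on whether $v$ lies on $P_i$, on another path, or on a cycle shows that the operation always produces a valid new partition with one path fewer (splitting $P_i$ into two cycles; merging $P_i$ and another path into a single path; or absorbing a cycle into $P_i$ to form one cycle). Existence of such a swap edge follows, when enough path vertices remain, from a double-counting that uses the fact that the $F$-predecessor map is a bijection between $V(F)$ minus path starts and $V(F)$ minus path ends: valid swap edges correspond to pairs $(v^-, v) \in F$ with $v^- \in N^-(s_i)$ and $v \in N^+(e_i)$, so the count is bounded below by $|N^+(e_i)| + |N^-(s_i)| - k - O(dk)$. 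When this single-step bound is insufficient, I would use a \emph{shifted swap} of length $t \ge 2$, namely $e_i \to w_1, w_1^- \to w_2, \ldots, w_t^- \to s_i$, which traverses $t$ distinct cycles of $F$ and merges them with $P_i$ into a single new cycle; each additional step provides extra degrees of freedom via the semi-degree bound $\delta^0(R_{G'}) \ge \beta k/2$ from Lemma~\ref{Degree sequence of R_{G'}}.

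The main obstacle is bounding the uncovered vertices at the end of the iteration, when no swap of any length works for the remaining paths. The square-root factor in $7d^{1/2}k$ should arise from a balancing argument: the number of ``stuck'' paths is at most $O(dk)$, but each such path must be short (of length at most $O(d^{-1/2})$), because a longer path would contain internal vertices providing enough neighbourhood overlap (through the minimum semi-degree condition) to force a swap. Multiplying these two quantities gives the desired $O(d^{1/2}k)$ total of discarded vertices, which is sharper than the naive $O(dk)$ bound one obtains from the path count alone. Care is also needed to ensure that the cycles traversed by longer shifted swaps are distinct, which is guaranteed by the expansion property of Lemma~\ref{Outexpansion of R_{G'}}, and to verify that the iterated swap operation terminates (it does, since each swap strictly decreases the number of paths).
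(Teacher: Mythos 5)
Your swap framework and case analysis (landing on a cycle, another path, or the same path) is a correct description of the basic rerouting moves, and it is the same basic set of moves the paper uses. However, the crucial quantitative machinery that makes the paper's proof go through is missing, and the gaps you leave are not ones that can be filled along the lines you sketch.

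First, your single-step count is $\ge |N^+(e_i)| + |N^-(s_i)| - k - O(dk)$, which with $d^-(s_i), d^+(e_i) \ge (1/2-2d)k$ is at most about $-4dk$, i.e.\ negative; so it never proves a swap exists, and you rightly say so. But your proposed remedy, a ``shifted swap'' $e_i \to w_1, w_1^- \to w_2, \dots, w_t^- \to s_i$ traversing $t$ cycles, is only well-defined if every $w_j$ lies on a cycle. When some $w_j$ lies on another path $P_r$ (which is unavoidable, since a lot of $N^+(e_i)$ can sit inside paths), the move no longer produces a cycle; you would strand the prefix of $P_r$ before $w_j$. The paper's resolution is precisely to allow this and pay for it: it permits the connecting vertex to lie inside a path, provided you jump at most $\ell_r+1$ steps in from an end or between two hits, discarding the at most $\ell_r$ skipped vertices into a waste set. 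That is the content of the four conditions (1)--(4) and the four corresponding actions in the paper's proof, and none of this appears in your proposal.

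Second, your balancing argument (``stuck paths are short, at most $O(d^{-1/2})$'') is asserted, not proved, and I do not see how the semidegree condition forces it. The swap as you define it only consults $s_i$ and $e_i$, so having many internal vertices on $P_i$ gives you nothing directly. The paper gets the $d^{1/2}k$ bound by an entirely different mechanism: it fixes $\alpha = 5dk/S$ where $S$ is the total current path length, sets $\ell_r = \alpha |P_r|$ so that $\sum_r \ell_r = 5dk$, and proves via a careful sum of $|N^-(u)\cap \cdot| + |N^+(v)\cap\cdot|$ over the waste set, the cycles and each path that one of its four options must hold, since otherwise $d^-(u)+d^+(v) \le k - 5dk$, contradicting $d^-(u)+d^+(v) \ge (1-4d)k$. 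Once $S \le 5 d^{1/2}k$ the algorithm simply discards everything remaining. The $\sqrt{d}$ appears because $S > 5 d^{1/2}k$ gives $\alpha = 5dk/S \le d^{1/2}$, so each skip discards at most a $d^{1/2}$ fraction of a path, and a charging argument shows each initial path is charged at most twice, for a total of $2 d^{1/2}k$ plus the final $5 d^{1/2}k$. Your proposal has no analogue of the weights $\ell_r$, the counting inequality, the termination threshold, or the twice-charging accounting, and without them the $7 d^{1/2}k$ bound is unsupported.
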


\begin{proof}
We implement the following algorithm.
At each stage, the vertex set of $R_{G'}$ will be partitioned into some
cycles and paths and a waste set $W$. In every path the initial vertex
will have indegree at least $(1/2 - 2d)k$ and the final vertex
will have outdegree at least $(1/2 - 2d)k$. One of the paths
will be designated as the `active path'.

In the initial step, we begin with the partition guaranteed by
Lemma~\ref{R_{G'} is a union of cycles and not too many paths}.
We have $W = \emptyset$ and choose an arbitrary path to be active.

In each iterative step we have some active path $P$.
Let $u$ be the initial vertex of $P$ and $v$ its final vertex.
Let $S$ be the sum of the numbers of vertices in all the paths. If at any
point $S \leq 5d^{1/2}k$, then we move the vertices of all these
paths into $W$ and stop. Otherwise we define
$\alpha = 5dk/S$ and for each path $P_r$, we let $\ell_r = \alpha|P_r|$.
Note that the parameters $S$, $\alpha$ and $\{\ell_r\}$ are
recalculated at each step.
By our assumption on $S$ we have $\alpha \leq d^{1/2}$.
Also $\sum_r \ell_r = \alpha S = 5dk$.

For each cycle $C=w_1\dots w_t w_1$ and $X \sub V(C)$ we write $X^+ = \{w_{i+1}: w_i \in X\}$
for the set of successors of vertices of $X$.
For each path $P_r = w_1 \ldots w_t$, $X \sub P_r$ and $1 \le s \le t$
we let $X^{+s} = \{w_j: \exists w_i \in X, i<j \le i+s \}$.
Also, for each path
$P_r=w_1 \ldots w_t$ which contains at least one outneighbour of $v$ we let $i^v_r \ge 0$
be minimal such that $w_{i^v_r+1} \in N^+(v)\cap P_r$. Similarly, for
each path $P_r=w_1 \ldots w_t$ which contains at least one inneighbour of $u$ we let
$i^u_r \ge 0$ be minimal such that $w_{t-i^u_r} \in N^-(u)\cap P_r$.
We claim that at least one of the following conditions holds:
\begin{itemize}
\item[(1)] There is a $w \in W$ such that $wu,vw \in E(R_{G'})$.
\item[(2)] There is a cycle $C = w_1 \ldots w_i w_{i+1} \ldots w_t w_1$
such that $w_iu,vw_{i+1} \in E(R_{G'})$.
\item[(3)] There is a path $P_r = w_1 \ldots w_t$ and $1 \leq i < j \leq t$
with $j-i \leq \ell_r+1$ such that $w_iu,vw_j \in E(R_{G'})$.
\item[(4)] There is a path $P_r = w_1 \ldots w_t$ with $i^u_r\le \ell_r$ or $i^v_r \le \ell_r$.
\end{itemize}
To see this, suppose to the contrary that all these conditions fail. Since (1) fails, then $(N^-(u) \cap W) \cap (N^+(v) \cap W) = \emptyset$ and so
\begin{itemize}
\item $|N^-(u) \cap W| + |N^+(v) \cap W| \le |W|$.
\end{itemize}
Since (2) fails, then for each cycle $C$ we have $(N^-(u) \cap C)^+ \cap (N^+(v) \cap C) = \emptyset$ and so
\begin{itemize}
\item $|N^-(u) \cap C| + |N^+(v) \cap C| \le |C|$ for each $C$.
\end{itemize} 
Since (4) fails then for each path $P_r$ we have $|N^-(u)\cap P_r|\le |P_r|-\ell_r$ and $|N^+(v)\cap P_r|\le |P_r|-\ell_r$. In particular
\begin{itemize}
\item for each path $P_r$, if $P_r$ does not meet both $N^-(u)$ and $N^+(v)$ then $|N^-(u) \cap P_r|+|N^+(v) \cap P_r| \le |P_r| - \ell_r$.
\end{itemize} 
On the other hand if a path $P_r$ meets both $N^-(u)$ and $N^+(v)$ then, since (3) fails we have $(N^-(u) \cap P_r)^{+(\ell_r+1)} \cap (N^+(v) \cap P_r) = \emptyset$. Moreover, since $i^u_r> \ell_r$ and since also (4) fails, we also have that $|(N^-(u) \cap P_r)^{+(\ell_r+1)}| \ge |N^-(u) \cap P_r|+\ell_r$. Altogether this gives that
\begin{itemize}
\item for each path $P_r$, if $P_r$ meets both $N^-(u)$ and $N^+(v)$ then $|N^-(u) \cap P_r|+|N^+(v) \cap P_r| \le |P_r| - \ell_r$.
\end{itemize} 
Summing these inequalities gives
\[d^-(u)+d^+(v) \le |W| + \sum_C |C| + \sum_r (|P_r| - \ell_r) = k - \sum_r \ell_r.\]
But we also have $\sum_r \ell_r = \alpha S = 5dk$
and $d^-(u), d^+(v) \ge (1/2-2d)k$ by the degree property of the paths.
This contradiction shows that at least one of the conditions (1)--(4) holds.

According to the above conditions we take one of the following actions.
\begin{itemize}
\item[(1)] Suppose there is a $w \in W$ such that $wu,vw \in E(R_{G'})$.
Then we replace the path $P$ by the cycle $C = wuPvw$, replace $W$ by
$W \sm \{w\}$, choose a new active path, and repeat.
\item[(2)] Suppose there is a cycle $C = w_1 \ldots w_i w_{i+1} \ldots w_t w_1$
such that $w_iu,vw_{i+1} \in E(R_{G'})$. Then we replace the path
$P$ and the cycle $C$ by the cycle $C' = w_1 \ldots w_iuPvw_{i+1}
\ldots w_tw_1$, choose a new active path, and repeat.
\item[(3)] Suppose there is a path $P_r = w_1 \ldots w_t$
and $1 \leq i < j \leq t$ with $j-i \le  \ell_r+1$ such that $w_iu,vw_j\in E(R_{G'})$.

(i) If $P_r\neq P$ then we replace the paths $P$ and $P_r$ with the path
$P_r' = w_1 \ldots w_i u P v w_j \ldots w_t$, replace $W$ with $W \cup
\{w_{i+1}, \ldots , w_{j-1} \}$, make $P_r'$ the new active path, and repeat.

(ii) If $P_r=P$ (so $w_1 = u$ and $w_t =v$) then we replace $P$ with the cycles
$C_u = uw_2 \ldots w_{i-1}w_iu$ and $C_v = vw_j \ldots w_{t-1}v$, replace $W$ with
$W \cup \{w_{i+1}, \ldots , w_{j-1} \}$, choose a new active path, and repeat.
\item[(4)] Suppose there is a path $P_r = w_1 \ldots w_t$ with
$i^u_r\le \ell_r$ or $i^v_r \le \ell_r$.

(i) If $P_r\neq P$ and $i^u_r\le \ell_r$ then we replace the paths $P$ and $P_r$ with the path
$P'_r = w_1\dots w_{t-i^u_r}uPv$, replace $W$ with
$W \cup \{w_{t-i^u_r+1},\ldots,w_t\}$, make $P'_r$ the new active path, and repeat.

(ii) If $P_r\neq P$ and $i^v_r \le \ell_r$ then we replace the paths $P$ and $P_r$ with the path
$P'_r = uPvw_{i^v_r+1}\dots w_t$, replace $W$ with
$W\cup \{w_1, \ldots, w_{i^v_r}\}$, make $P'_r$ the new active path, and repeat.

(iii) If $P_r=P$ (so $w_1 = u$ and $w_t =v$) and $i^u_r\le \ell_r$ then we replace $P$ with the cycle
$C = uPw_{t-i^u_r}u$, replace $W$ with $W \cup \{w_{t-i^u_r+1}, \dots, w_t\}$,
choose a new active path, and repeat.

(iv) If $P_r=P$ and $i^v_r\le \ell_r$ then we replace $P$ with the cycle
$C = vw_{i^v_r+1}Pv$, replace $W$ with $W \cup \{w_1, \dots, w_{i^v_r}\}$,
choose a new active path, and repeat.
\end{itemize}

At each step the number of paths is reduced by at least~1, so the
algorithm will terminate. It remains to show that $|W| \leq
7d^{1/2}k$. Recall that at every step we have $\ell_r = \alpha|P_r|
\leq d^{1/2}|P_r|$ for each path $P_r$. For every vertex $w$ added
to $W$ we charge its contribution to the path that $w$ {\em
initially} belonged to. To calculate the total contribution we break
it down by the above cases and by initial paths. Cases (1) and (2)
do not increase the size of $W$. In Case~3(i), every initial path
$P_r$ is merged with an active path $P$ at most once, and then its
remaining vertices stay in the active path until a new active path
is chosen, so this gives a contribution to~$W$ of at most $\ell_r
\leq d^{1/2}|P_r|$ from $P_r$. In Case~3(ii), the vertices of the
active path $P_r=P$ are contained in a union $\cup_{i\in I} V(P_i)$
of some subset of the initial paths (excluding some vertices already
moved into $W$). These paths collectively contribute at most $\alpha
|P| \le d^{1/2} \sum_{i \in I} |P_i|$, and each initial path is
merged at most once into such a path $P$. In Cases~4(i) and~4(ii),
as in Case 3(i), an initial path $P_r$ contributes at most
$\alpha|P_r|$. In Cases 4(iii) and 4(iv), as in Case 3(ii), the
vertices of the active path $P_r=P$ are contained in a union
$\cup_{i\in I} V(P_i)$ of some subset of the initial paths and
contribute at most $\alpha |P| \le d^{1/2} \sum_{i \in I} |P_i|$. So
each initial path contributes to $W$ at most twice: once when it is
merged into the active path (in Cases~3(i), 4(i) or 4(ii)) and once
when this active path is turned into one or two cycles (in Cases
3(ii), 4(iii) or 4(iv)). Therefore we get a total contribution from
the paths of at most $2d^{1/2}k$ to~$W$. Finally, there is another
contribution of at most $5d^{1/2}k$ if at any step we have $S \leq
5d^{1/2}k$. In total we have $|W| \le 7d^{1/2}k$.
\end{proof}

\subsection{Further properties of $F$ }

Now we have an almost $1$-factor $F$ in $R_{G'}$, i.e.\ a disjoint union of cycles
covering all but at most $7d^{1/2}k$ clusters of $R_{G'}$.
We move all vertices of these uncovered clusters into $V_0$,
which now has size at most $8d^{1/2}n$.
During the proof of Theorem~\ref{CKKO - Approximate Chvatal} it will be helpful to
arrange that each cycle of $F$ has length at least~4 (say) and moreover,
all pairs of clusters corresponding to edges of $F$ correspond to
super-regular pairs. (This assumption on the lengths is not actually necessary
but does make some of the arguments in the final section more transparent.)\COMMENT{Changed the length so that the $10\eps$-regular later on becomes correct.}

We will now show that we may assume this. Indeed, if $F$ contains
cycles of lengths less than 4, we arbitrarily partition each cluster
of $R_{G'}$ into~2 parts of equal size. (If the sizes of the
clusters are not divisible by 2, then before the partitioning we
move at most 1 vertex from each cluster into $V_0$ in order to
achieve this.) Consider the digraph $R'_{G'}$ whose vertices
correspond to the parts and where two vertices are joined by an edge
if the corresponding bipartite subdigraph of $G'$ is
$(2\eps,\frac{2d}{3})$-regular. It is easy to check that this
digraph contains the $2$-fold `blowup' of $R_{G'}$, i.e.~each
original vertex is replaced by an independent set of $2$ new
vertices and there is an edge from a new vertex $x$ to a new vertex
$y$ if there was such an edge between the original vertices. Each
cycle of length $\ell$ of $F$ induces an $2$-fold blowup of
$C_{\ell}$ in $R'_{G'}$, which contains a cycle of length $2\ell
\ge 4$. So $R'_{G'}$ contains a $1$-factor $F'$ all of whose
cycles have length at least~4. Note that the size of $V_0$ is now at
most $9d^{1/2}n$.

Secondly, we apply Lemma~\ref{Super-regular} to make the pairs of
clusters corresponding to edges of $F'$
$(4\eps,\frac{d}{3})$-super-regular by moving exactly $4\eps |V_i|$
vertices from each cluster $V_i$ into $V_0$ and thus increasing
the size of $V_0$ to at most $10d^{1/2}n$.
For convenience, having made these alterations,
we will still denote the reduced digraph by $R_{G'}$, the order of $R_{G'}$ by $k$,
its vertices (the clusters) by $V_1,\ldots,V_{k}$ and their sizes by $m$.
We also rename $F'$ as $F$. We sometimes refer to the cycles in~$F$ as \emph{$F$-cycles}.

\subsection{A modified reduced digraph}

Let $R_{G''}$ be the spanning subdigraph obtained from $R_{G'}$ by deleting all those edges
which correspond to pairs of density at most $d'$.
Recalling that $d \ll d'$, we note that the density of pairs corresponding to edges in $R_{G''}$
is much larger than the proportion $10\sqrt{d}$ of vertices lying in $V_0$.
The purpose of $R_{G'}$ was to construct $F$ so that this property would hold.
Now we have no further use for $R_{G'}$ and will work only with $R_{G''}$.
(Actually, we could use either $R_{G''}$ or $R_{G'}$ for the special case in the next section,
but we need to work with $R_{G''}$ in general.)

Let $G''$ be the digraph obtained from $G'$ obtained by deleting all edges belonging to
pairs $(X,Y)$ of clusters so that $(X,Y)_{G'}$ has density at most $d'$.
We say that a vertex $x \in X$ is \emph{typical} if
\begin{itemize}
\item $d^\pm_{G''}(x) \ge d^\pm_{G}(x) - 4d'n$;
\item there are at most $\sqrt{\eps}k$ clusters $Y$ such that $x$ does not have
$(1 \pm 1/2)d_{XY}m$ outneighbours in $Y$, where $d_{XY}$ denotes the density of the pair $(X,Y)_{G''}$.
The analogous statement also holds for the inneighbourhood of $x$.
\end{itemize}
\begin{lemma}\label{Degree sequence of R_{G'} - Modified}
By moving exactly $16\sqrt{\eps} m$ vertices from each cluster into $V_0$, we can arrange that each
vertex in each cluster of $R_{G''}$ is typical. We still denote the cluster sizes by $m$. Then we have
\begin{itemize}
\item[(i)] $d_i^+(R_{G''}) \geq \frac{1}{m}d_{im}^+(G) - 5d'k$;
\item[(ii)] $d_i^-(R_{G''}) \geq \frac{1}{m}d_{im}^-(G) - 5d'k$;
\item[(iii)] $\delta^+(R_{G''}) \geq \frac{\beta}{2}k$;
\item[(iv)] $\delta^-(R_{G''}) \geq \frac{\beta}{2}k$;
\item[(v)] $d_i^+(R_{G''}) \geq \min{\left\{i + \frac{\beta}{2}k, \left(\frac{1}{2} -
5d' \right) k\right\}}$
or $d^-_{\left(1 -\frac{\beta}{2}\right)k - i}(R_{G''}) \geq k - i - 5d'k$;
\item[(vi)] $d_i^-(R_{G''}) \geq \min{\left\{i + \frac{\beta}{2}k, \left(\frac{1}{2} -
5d' \right) k\right\}}$
or $d^+_{\left(1 -\frac{\beta}{2}\right)k - i}(R_{G''}) \geq k - i - 5d'k$.
\end{itemize}
\end{lemma}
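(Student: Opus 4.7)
The plan has two parts: bounding the number of atypical vertices so that moving exactly $16\sqrt{\eps}m$ from each cluster into $V_0$ suffices to reach typicality, and then reproving the degree-sequence bounds (i)--(vi) for $R_{G''}$ along the lines of Lemma~\ref{Degree sequence of R_{G'}}.

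First I would observe that the second typicality condition implies the first, so it suffices to count vertices failing the second. If $x\in X$ has at most $\sqrt{\eps}k$ bad clusters $Y$ in the out-direction, then for the remaining $Y$ with $d_{XY}\le d'$ we have $d^+_Y(x)\le \tfrac32 d_{XY}m\le \tfrac32 d'm$, so the out-edges of $x$ lost in passing from $G'$ to $G''$ total at most $\sqrt{\eps}n+\tfrac32 d'n\le 2d'n$. Combined with the Diregularity bound $d^+_{G'}(x)\ge d^+_G(x)-(d+\eps)n$ and $d\ll d'$, this gives $d^+_{G''}(x)\ge d^+_G(x)-4d'n$, as required. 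To count atypical vertices, note that for each surviving pair $(X,Y)$ with $d_{XY}\ge d'$, $\eps$-regularity together with $d_{XY}\gg\eps$ gives at most $2\eps m$ vertices $x\in X$ with $d^+_Y(x)\notin(1\pm 1/2)d_{XY}m$ (since $(d_{XY}\pm\eps)m\subset(1\pm 1/2)d_{XY}m$). Summing over $Y$ yields at most $2\eps mk$ bad pairs $(x,Y)$, so at most $2\sqrt{\eps}m$ vertices in $X$ exceed the threshold of $\sqrt{\eps}k$ bad pairs on the out-side; symmetrically for the in-side. Hence at most $4\sqrt{\eps}m$ atypical vertices per cluster, well inside the budget $16\sqrt{\eps}m$, leaving room to pad with arbitrary vertices.

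After this move $|V_0|$ grows by at most $16\sqrt{\eps}n=O(\sqrt{d}n)$, so $|V_0|\le 11\sqrt{d}n$ and $n/m= k+O(\sqrt{d}k)$. Parts (i) and (ii) then follow verbatim from the argument for Lemma~\ref{Degree sequence of R_{G'}}: the $i$ clusters of minimum out-degree in $R_{G''}$ contain a vertex $x$ with $d^+_G(x)\ge d^+_{im}(G)$; typicality gives $d^+_G(x)\le d^+_{G''}(x)+4d'n\le m\,d^+_{R_{G''}}(V)+4d'n$; and $4d'n/m\le 5d'k$. Parts (iii), (iv) follow from (i), (ii) and $\delta^{\pm}(G)\ge\beta n$, using $d'\ll\beta$. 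For (v), if $d_i^+(R_{G''})<\min\{i+\beta k/2,(1/2-5d')k\}$, then (i) and the slack from $d'\ll\beta$ give $d_{im}^+(G)<\min\{im+\beta n,n/2\}$, so the degree hypothesis on $G$ forces $d^-_{n-im-\beta n}(G)\ge n-im$; feeding this through part (ii) of the present lemma with $j=(1-\beta/2)k-i$ (which satisfies $jm\ge n-im-\beta n$ because $|V_0|\ll\beta n$) yields $d^-_j(R_{G''})\ge n/m-i-5d'k\ge k-i-5d'k$, as desired; (vi) is symmetric.

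The main obstacle is the atypicality count in the second paragraph: one must choose the thresholds so that the lossy sum $\sqrt{\eps}n+\tfrac32 d'n$ stays inside $4d'n$, and simultaneously so that a double counting across pairs brings the number of offending vertices down from the crude $2\eps mk$ (which can exceed $m$) to $4\sqrt{\eps}m$ via the pigeonhole threshold $\sqrt{\eps}k$. Everything else is a faithful rewriting of the proof of Lemma~\ref{Degree sequence of R_{G'}} with $d$ replaced by $5d'$.
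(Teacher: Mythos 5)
Your overall strategy matches the paper exactly: bound the number of vertices failing a per-pair degree condition by $\eps$-regularity, average to get $O(\sqrt{\eps}m)$ per cluster exceeding the $\sqrt{\eps}k$ threshold, remove them, and then rederive (i)--(vi) by the argument of Lemma~\ref{Degree sequence of R_{G'}} with $d$ replaced by $5d'$. The parts (i)--(vi) are indeed ``verbatim'' and fine, modulo the omitted $|V_0|\le 11\sqrt{d}n\ll d'n$ term in (i), which the slack in $5d'k$ absorbs.

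The one point that needs care is your opening claim that \emph{``the second typicality condition implies the first.''} As the lemma is stated, $d_{XY}$ in the second condition is the density of $(X,Y)_{G''}$. For a pair $(X,Y)$ whose $G'$-density is at most $d'$, the $G''$-density is $0$, so the second condition is vacuous for such $Y$ (any $x$ trivially has $0 = (1\pm\tfrac12)\cdot 0\cdot m$ outneighbours in $(X,Y)_{G''}$) and says nothing about the $G'$-edges that $x$ loses there. So the second condition, taken literally, does \emph{not} bound the edge loss and cannot imply $d^{\pm}_{G''}(x)\ge d^{\pm}_{G}(x)-4d'n$. Your very next sentence, however, bounds $d^+_Y(x)\le\tfrac32 d_{XY}m$ for low-density $Y$ --- a bound that only makes sense if $d_{XY}$ and $d^+_Y(x)$ are computed in $G'$. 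This is precisely what the paper does: it introduces an auxiliary notion ``$x$ is out-typical for $Y$'' using the $(X,Y)_{G'}$ density (with slack $\tfrac13$ rather than $\tfrac12$, so that it is strong enough to imply both parts of the lemma's typicality simultaneously), removes vertices that are not $G'$-out-typical for too many $Y$, and deduces the first condition from that. So your argument is correct in substance, but you should make the $G'$-typicality explicit instead of attributing the consequence to the lemma's second condition. One smaller point: at this stage the pairs $(X,Y)_{G'}$ are $4\eps$-regular (after Lemma~\ref{Super-regular} and the blow-up step), not $\eps$-regular, so the per-pair count of outliers is $8\eps m$ rather than $2\eps m$; this gives $8\sqrt{\eps}m$ per direction and uses up the full budget of $16\sqrt{\eps}m$, so there is less padding room than your estimate suggests, though the conclusion still holds.
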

\proof
Suppose that we are given clusters $X,Y$ such that $XY$ is an edge of $R_{G'}$.
Write $d_{XY}$ for the density of $(X,Y)_{G'}$.
We say that $x \in X$ is \emph{out-typical for $Y$} if (in $G'$) $x$ has  $(1 \pm 1/3)d_{XY}m$ outneighbours
in $Y$. Since the pair $(X,Y)_{G'}$ is $4\eps$-regular, it follows that at most $8 \eps m$
vertices of $X$ are not out-typical for $Y$.
Then on average, a vertex of $X$ is not out-typical for at most $8 \eps k$ clusters.
It follows that there are at most $8\sqrt{\eps}m$ vertices $x$ in $X$ for which
there are more than $\sqrt{\eps}k$ clusters $Y$ such that $x$ is not out-typical for $Y$.
Therefore we can remove a set of exactly $8\sqrt{\eps} m$ vertices from each cluster so that
all of the remaining vertices are out-typical for at least $(1-\sqrt{\eps})k$ clusters.
We proceed similarly for the inneighbourhood of each cluster.
Altogether, we have removed exactly $16 \sqrt{\eps} m$ vertices from each cluster.
These vertices are added to $V_0$, which now has size $|V_0| \le 11 \sqrt{d}n$.
Now consider some cluster $X$ and a vertex $x \in X$.
Since $x$ is out-typical for all but at most $\sqrt{\eps}k$ clusters,
it sends at most $\sqrt{\eps}k\cdot m + k\cdot 2d'm \le 3d'n$ edges into clusters $Y$
such that $(X,Y)_{G'}$ has density at most $d'$.
Then the following estimate shows that $x$ is typical:
\COMMENT{Need to account for more vertices added to $V_0$ since applying the Reglem}
$$
d_{G''}(x) \ge d_{G'}(x) - 3d'n - |V_0| \ge d_G(x) - (d+\eps)n -3d'n - 11 \sqrt{d}n \ge d_{G}(x)-4d'n.
$$

For (i)--(vi), we proceed similarly as in the proof of
Lemma~\ref{Degree sequence of R_{G'}}. For (i), consider $i$
clusters with outdegrees at most $d_i^+(R_{G''})$ in
$R_{G''}$. These clusters contain
$im$ vertices of $G$, so must include a typical vertex~$x$ of
outdegree at least $d_{im}^+(G)$. As in the previous estimate, the
cluster~$V$ containing~$x$ satisfies
$$
d_{im}^+(G) \le d_G^+(x) \leq d_{R_{G''}}^+(V)m+ 4d'n +|V_0|\le d_{R_{G''}}^+(V)m + \frac{9}{2}d' n.
$$
Therefore
\[
d_i^+(R_{G''}) \ge d^+_{R_{G''}}(V) \geq \frac{1}{m}d_{im}^+(G) - \frac{9}{2}d'\frac{n}{m}
\geq \frac{1}{m}d_{im}^+(G) - 5d'k,
\]
which proves (i).
Next, (iii) follows from (i), since $\delta^+(G) \geq \beta n$.
The proof of (v) is the same as that of (v) in Lemma~\ref{Degree sequence of R_{G'}},
with $2d$ replaced by $5d'$.
\COMMENT{
To prove (v), suppose
that $d_i^+(R_{G'}) < \min{\left\{i + \frac{\beta}{2}k,
\left(\frac{1}{2} - 5d' \right) k\right\}}$.
It follows from (i) and $d' \ll \beta$ that
$d^+_{im}(G) < \min{\left\{ m(i + \beta k/2 + 5d'k),
mk/2 \right\}} \leq \min{\left\{ im + \beta n , n/2 \right\}}.
$
Using our degree assumptions gives
$d^-_{n - im - \beta n}(G)  \ge n-im$.
Then by (ii) we have
$
d^-_{\left(1 - \frac{\beta}{2} \right)k - i}(R_{G''})
\geq \frac{1}{m} d^-_{\left(1 - \frac{\beta}{2} \right)km -im}(G) - 5d'k
\geq k - i - 5d'k,
$
as required.}
The proofs of the other three assertions are similar.
\endproof

By removing at most one extra vertex from each cluster we may assume that the size of each cluster is even.\COMMENT{This is needed in Lemma~\ref{split}} We continue to denote the sizes of the modified clusters by $m$ and
the set of exceptional vertices by $V_0$. The large-scale structure
of our decomposition will not undergo any significant further changes:
there will be no further changes to the cluster sizes,
although in some subsequent cases we may add a small number of clusters
to $V_0$ in their entirety. For future reference we note the following properties:
\begin{itemize}
\item $|V_0| \le 11\sqrt{d}n$,
\item all edges of $R_{G''}$ correspond to $(10\eps,d'/2)$-regular pairs
(the deletion of atypical vertices may have reduced the densities slightly),
\item all edges of $F$ correspond to $(10\eps,d/4)$-super-regular pairs.
\end{itemize}


\section{The highly connected case} \label{highconnect}

In this section we illustrate our methods by proving Theorem~\ref{CKKO - Approximate Chvatal}
in the case when the auxiliary graph $H$ is strongly $\eta k$-connected.
We recall that $d' \ll \eta \ll \beta$, and that $H$ was defined in Section~\ref{techniques}
as a `shifted version' of $R_{G''}$, i.e.\ there is an edge in $H$ from a cluster $V_i$ to a cluster $V_j$
if there is a shifted walk (with respect to $R_{G''}$ and $F$) from $V_i$ to $V_j$ which traverses
exactly one cycle. We refer to that section for the definitions of when a cluster is `used'
or `internally used' by a shifted walk, and recall that we can assume that any cluster is
used at most once as an entrance and at most once as an exit.
\begin{lemma} \label{disjointpaths}
Suppose $H$ is strongly $c k$-connected for some $c>0$ and $a,b$ are vertices of $H$ (i.e. clusters).
Then there is a collection of at least $c^2 k/16$ shifted walks (with respect to $R_{G''}$ and $F$)
from $a$ to $b$ such that each walk traverses at most $2/c$ cycles
and each cluster is internally used by at most one of the walks.
\end{lemma}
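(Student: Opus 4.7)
The plan is to apply Menger's theorem inside $H$, translate the resulting short internally disjoint $a$-$b$ paths into shifted walks, and then prune the family so that the walks have pairwise disjoint sets of internal uses. Since $H$ is strongly $ck$-connected, Menger's theorem supplies $ck$ pairwise internally disjoint directed paths from $a$ to $b$ in $H$. Their internal vertices are distinct elements of $V(H)\sm\{a,b\}$, so the sum of the internal-vertex counts is less than $k$; a standard pigeonhole argument then retains a sub-family $\mc P$ of at least $ck/2$ paths each of length at most $2/c$.

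Next, I would translate the $H$-paths into shifted walks. By the very definition of $H$, an edge $u\to v$ of $H$ is witnessed by a one-cycle shifted walk of the form $u\,C(u)\,u^-\,v$, where $C(u)$ denotes the $F$-cycle through $u$. Concatenating along an $H$-path $a=v_0,v_1,\dots,v_t=b$ therefore yields a shifted walk $W$ (with respect to $R_{G''}$ and $F$) traversing exactly $t$ cycles, whose set of internal uses is
\[
U_W = \{v_j,v_j^- : 1 \le j \le t-1\}.
\]
In particular, every walk coming from $\mc P$ traverses at most $2/c$ cycles.

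The crucial step is to pass to a sub-collection of $\mc P$ whose internal use sets $U_W$ are pairwise disjoint. Internal disjointness of the $H$-paths already says that the internal entries $\{v_j\}$ of distinct walks are disjoint. Since $F$ is a $1$-factor of $V(R_{G''})$, the predecessor map $v\mapsto v^-$ is a bijection on clusters, so the internal exits $\{v_j^-\}$ of distinct walks are also disjoint. The only remaining source of collisions is that a cluster $X$ might appear as an entry of one walk and an exit of another; however $X$ is an internal entry of at most one walk in $\mc P$, and $X$ is an internal exit of at most one walk in $\mc P$, namely the (unique, if any) walk whose internal entries contain the $F$-successor of $X$. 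Hence every cluster lies in at most two sets $U_W$ with $W\in\mc P$.

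To finish, form the conflict graph $\mc Q$ on $\mc P$ in which two walks are adjacent when their internal use sets meet. Each $W\in\mc P$ has $|U_W|\le 2(t-1)\le 4/c$, and by the previous paragraph each $X\in U_W$ contributes at most one further walk to the $\mc Q$-neighbourhood of $W$. Thus $\Delta(\mc Q)\le 4/c$, and a greedy argument produces an independent set in $\mc Q$ of size at least
\[
\frac{|\mc P|}{1+\Delta(\mc Q)}\ge\frac{ck/2}{1+4/c}\ge\frac{c^2 k}{16},
\]
yielding the required collection of walks. The mildly tricky point, and the only step requiring genuine insight beyond invoking Menger, is the third one: internal disjointness of the $H$-paths only directly controls entries of the associated shifted walks, and one must leverage bijectivity of $v\mapsto v^-$ to bound the conflict degree by a constant depending only on $c$ and then extract a linear-size independent family.
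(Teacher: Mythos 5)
Your proof is correct and follows essentially the same route as the paper's: Menger's theorem, a pigeonhole to extract $ck/2$ short internally disjoint $H$-paths, translation into shifted walks, the observation that each cluster is internally used by at most two of these walks (once as an entry, once as an exit), and a greedy independent-set extraction. Your bound on the conflict degree ($\Delta\le 4/c$) is slightly sharper than the paper's ($8/c-1$), and you spell out the bijectivity of the predecessor map more explicitly than the paper does, but the argument is the same in substance and both yield $c^2k/16$.
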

\proof Since $H$ is strongly $ck$-connected we can find $ck$
internally disjoint paths $P_1,\cdots,P_{ck}$ from $a$ to $b$. There
cannot be $ck/2$ of these paths each having at least $2/c$ internal
vertices, as $H$ has $k$ vertices. Therefore $H$ contains at least
$\ell:=c k/2$ internally disjoint paths $P_1,\dots,P_\ell$ (say,
after relabelling) from $a$ to $b$ which have length at most $2/c$.
Note that each of these corresponds to a shifted walk from $a$ to
$b$ which traverses at most $2/c$ cycles. Let $W_1,\dots,W_\ell$
denote these shifted walks. Since the $P_i$ are internally disjoint,
each cluster $x$ is internally used by at most 2 of the shifted
walks $W_j$ (either as an entrance or as an exit).
Each shifted walk $W_i$ internally uses at most $4/c$ clusters, so
there are at most $8/c - 1$ other shifted walks $W_j$ which
internally use a cluster that $W_i$ also uses
internally. Thus we can greedily choose a subset of the walks
$W_1,\dots,W_\ell$ having the required properties.
\endproof

Given any cluster $X$, recall that we write $X^+$ for the successor of $X$ on $F$
and $X^-$ for its predecessor. For every $X$, we apply Lemma~\ref{ideal} with $\theta=16d$
to the $(10\eps,d/4)$-super-regular pair $(X,X^+)_{G'}$ to obtain an $(\sqrt{\eps},d^2)$-ideal $(X_1,X^+_2)$.
Set $X^*:=X_1 \cup X_2$ (where $(X_1^-,X_2)$ is the ideal chosen for $(X^-,X)$).%
\COMMENT{Complement of previous definition for consistency with later notation.}
Then, by Lemma~\ref{ideal}, we have $|X^*| \le 32dm$ and for any $X^* \sub X' \sub X$ and
$(X^+)^* \sub (X^+)^\prime \sub X^+$ the pair $(X',(X^+)^\prime)$ is $(\sqrt{\eps},d^2)$-super-regular.

First we construct the walk $W$ described in the overview.
List the elements of the exceptional set as $V_0 = \{x_1,\dots,x_r\}$.
We go through the list sequentially, and for each $x_i$ we pick clusters
$X_i$ and $Y_i$ of $R_{G'}$ and vertices $x_i^- \in N^-_G(x_i) \cap (X_i \setminus X_i^*)$ and $x_i^+ \in N^+_G(x_i) \cap (Y_i \setminus Y_i^*)$ such that $x_1^-,x_1^+,\ldots,x_r^-,x_r^+$ are distinct and moreover no cluster of~$R_{G''}$
appears more than $m/60$ times as a cluster of the form $X_i, Y_i$
(and thus no cluster appears more than $m/20$ times as a cluster of the form $X_i,X_i^+,Y_i,Y_i^-$).
To see that this is possible, recall that $|V_0| \le 11d^{1/2} n$ and $|X^*| \le 32dm$ for all $X$.
At most $3|V_0|$ vertices belong to $V_0$ or to the set $\{x_1^-,x_1^+,\ldots,x_{i-1}^-,x_{i-1}^+\}$,
and at most $120|V_0|$ belong%
  \COMMENT{$=(2|V_0|/(m/60))m$}
to clusters that appear at least $m/60$ times as $X_j$ or $Y_j$ for $x_j$ with $j<i$.
Therefore at most $1500d^{1/2}n \le \beta n \le \delta^\pm(G)$ vertices are unavailable at stage $i$,
so we can choose $x_i^-$ and $X_i$ as required. A similar argument applies for $x_i^+$ and $Y_i$.
Note that by construction each cluster contains at most $m/20$ of the vertices $x^\pm_i$.%
\COMMENT{** changed again}

Next we sequentially define shifted walks $W(Y_i,X^+_{i+1})$ with respect to  $R_{G''}$ and $F$
from $Y_i$ to $X^+_{i+1}$ for $1 \le i \le r-1$.
We want each $W(Y_i,X^+_{i+1})$ to traverse
at most $2/\eta$ cycles and each cluster to be internally used at most $m/30$ times
by the collection of all the walks $W(Y_i,X^+_{i+1})$.
To see that this is possible, suppose we are about to find $W(Y_i,X^+_{i+1})$
and let $A$ be the set of clusters internally used at least $m/40$ times
by the walks $W(Y_j,X^+_{j+1})$ with $j<i$. Since each of our walks internally uses at most
$4/\eta$ clusters (although it visits many more)
we have $|A| < \frac{11 d^{1/2} n \cdot 4/\eta}{m/40} < \eta^2 k/16$ (since $d \ll \eta$).
Now Lemma~\ref{disjointpaths} implies that
we can find a shifted walk $W(Y_i,X^+_{i+1})$ from $Y_i$ to $X^+_{i+1}$ that
traverses at most $2/\eta$ cycles and does not internally use any cluster in $A$.
We may assume that $W(Y_i,X^+_{i+1})$ uses each cluster at most once as an entrance
and at most once as an exit, and then
no cluster is internally used more than $2+m/40\le m/30$ times by the collection of all the
walks $W(Y_j,X^+_{j+1})$ for all $j\le i$, as required.

We conclude this step by choosing a shifted walk $W(Y_r,X_1^+)$
from $Y_r$ to $X_1^+$. Since there may be clusters in $R_{G''}$ that
we have not yet used, we construct this walk as a sequence of at
most $k$ shifted walks each traversing at most $2/\eta$ cycles,
in such a way that every cluster is used at least once by $W(Y_r,X_1^+)$.

This leads us to define a closed walk $W$
with vertex set $V_0 \cup V(R_{G''})$ as follows.
Let $W(Y_i,X_{i+1})$
be the walk from $Y_i$ to $X_{i+1}$ which is obtained from $W(Y_i,X^+_{i+1})$
by adding the path from $X^+_{i+1}$ to $X_{i+1}$ in~$F$.
We now define
$$W = x_1 W(Y_1,X_2)x_2 \ldots x_r W(Y_r,X_1) x_1.$$
Using the choice of the clusters $X_i$ and $Y_i$ it is easy to see that $W$ uses
every cluster of~$R_{G''}$ at most $m/20+m/30+k\cdot 8/\eta\le m/10$ times.
Thus $W$ has the properties mentioned in the overview, namely:
\begin{itemize}
\item[(a)] For each cycle $C$ of $F$, $W$ visits every vertex of $C$
the same number of times;
\item[(b$'$)] $W$ visits every cluster of $R_{G''}$ at least once
and uses every cluster of $R_{G''}$ at most $m/10$ times;
\item[(c)] $W$ visits every vertex of $V_0$ exactly once;
\item[(d)] For each $x_i \in V_0$ we have chosen
an inneighbour $x_i^-$ in the cluster $X_i$ preceding $x_i$ on $W$ and
an outneighbour $x_i^+$ in the cluster $Y_i$ following $x_i$ on $W$,
so that as $x_i$ ranges over $V_0$ all the vertices $x_i^+$, $x_i^-$ are distinct.
\end{itemize}
Now we fix edges in $G$ corresponding to all edges of $W$ that do not lie within a cycle of $F$.
We have already fixed the edges incident to vertices of $V_0$ (properties (c) and (d)).
Then we note that the remaining edges of $W$ not in $E(F)$ are precisely those of the form $AB$ where
$A$ is used as an exit by $W$ and $B$ is used as an entrance by $W$. To see this, note that we cannot
have $A=B^-$, as then $B$ would be used twice as an entrance in one of the shifted walks constructed
above, which is contrary to our assumption.
Next we proceed through the clusters $V_1,\ldots,V_{k}$ sequentially choosing edges as follows.
When we come to $V_i$, we consider each $j<i$ in turn. If $V_iV_j\notin E(F)$ we let
$w_{ij}$ be the number of times that $W$ uses $V_iV_j$. Similarly, if $V_jV_i\notin E(F)$
we let $w_{ji}$ be the number of times that $W$ uses $V_jV_i$. We aim to choose a matching in $G$
that avoids all previously chosen vertices and uses $w_{ij}$ edges from $V_i \sm V_i^*$
to $V_j \sm V_j^*$ and $w_{ji}$ edges from $V_j \sm V_j^*$ to $V_i \sm V_i^*$.
This can be achieved greedily as follows. Suppose for example that $w_{ij}>0$
and that when we come to $V_j$ the available vertices are $V'_i \sub V_i$ and $V'_j \sub V_j$.
Since every cluster is used at most $m/10$ times we have $w_{ij} \le m/10$,
and we have $|V'_i|, |V'_j| \ge m/2$ (say, taking account of at most $m/10$ uses,
$m/20$ vertices $x^\pm_i$ and $32dm$ vertices in $V_i^*$ or $V_j^*$).
Then $(V'_i,V'_j)_{G''}$ induces a $(20\eps,d'/3)$-regular pair,
so by Lemma~\ref{regmatch} has a matching of size at least $(1-20\eps)m/2 > w_{ij}$.
The same argument can be used if we also have $w_{ji}>0$. After considering all such pairs
$(i,j)$ we have found edges in $G$ corresponding to all edges of $W$ that do not
lie within a cycle of $F$.

Now let $\Entry$ denote the set of all those vertices which do not lie in the exceptional set
and which are the final vertex of an edge of $G$ that we have fixed
(i.e.~the edges incident to the vertices in $V_0$ and the edges chosen in the previous paragraph).
Similarly, let $\Exit$ denote the set of all those vertices which do not lie in the exceptional set
and which are the initial vertex of an edge of $G$ that we have fixed.
Note that $\Entry \cap \Exit= \emptyset$.

For every cluster $U$, let $U_{Exit}:=U \cap \Exit$ and $U_{Entry}= U \cap \Entry$.
Since $W$ was built up by shifted walks, it follows that $|U_{Exit}|=|U^+_{Entry}|$.
Moreover, since we chose $\Entry$ and $\Exit$ to avoid $U^*$,
we know that $(U \sm U_{Exit}, U^+ \sm U^+_{Entry})_{G'}$ is $(\sqrt{\eps},d^2)$-super-regular,
so contains a perfect matching by Lemma~\ref{regmatch}. Now the edges of these perfect matchings
together with the edges of $W$ that we fixed in the previous step form a $1$-factor $\C$ of $G$.
It remains to modify $\C$ into a Hamilton cycle of $G$.

The following statement provides us with the tool we need.
For any cluster $U$, let $G_U:=(U^-\sm U^-_{Exit},U\sm U_{Entry})_{G'}$
and let Old$_U$ be the perfect matching in~$G_U$ which is contained in $\C$.

 \textno
For any cluster $U$, we can find a perfect matching New$_U$ in $G_U$
so that if we replace Old$_U$ in $\C$ with New$_U$, then all
vertices of $G_U$ will lie on a common cycle in the new
$1$-factor~$\C$. In particular, all vertices in $U \sm U_{Entry}$
will lie on a common cycle $C_U$ in~$\C$ and moreover any pair of
vertices of $G$ that were formerly on a common cycle are still on a
common cycle after we replace Old$_U$ by New$_U$. &(\dagger)

To prove this statement we proceed as follows. For every $u \in U
\sm U_{Entry}$, we move along the cycle $C_u$ of $\C$ containing $u$
(starting at $u$) and let $f(u)$ be the first vertex on $C_u$ in
$U^-  \sm U^-_{Exit}$. Define an auxiliary digraph $J$ on $U\sm
U_{Entry}$ such that $N^+_J(u):=N^+_{G_U}(f(u))$.\COMMENT{The other
equation was redundant.} So $J$ is obtained by identifying each pair
$(u,f(u))$ into one vertex with an edge from $(u,f(u))$ to
$(v,f(v))$ if $G_U$ has an edge from $f(u)$ to $v$. Now $G_U$ is
$(\sqrt{\eps},d^2)$-super-regular by the definition of the sets
$U^*$, so $J$ is also $(\sqrt{\eps},d^2)$-super-regular (according
to the definition for non-bipartite digraphs).\COMMENT{Actually,
$G_U$ may have loops but we are still ok since it has at most one
loop on every vertex.} By Lemma~\ref{fk}, $J$ has a Hamilton cycle,
which clearly corresponds to a perfect matching New$_U$ in~$G'$ with
the desired property.

Now we apply ($\dagger$) to every cluster $U$ sequentially. We
continue to denote the resulting $1$-factor by $\C$ and we write
$C_U$ for the cycle that now contains all vertices in $U \sm
U_{Entry}$. Since $U_{Entry}$ and $U_{Exit}$ have size at most $m/4$
(say) for any $U$, we have $V(G_U) \cap V(G_{U^+}) \ne \emptyset$,
so $C_U = C_{U^+}$. Then $C_{U^-}=C_U=C_{U^+}$, and since $U_{Entry}
\cap U_{Exit} =\emptyset$, we deduce that $C_U$ actually contains
all vertices of $U$. Then $C_U=C_{U^+}$ implies that $C_U$ contains
all vertices lying in clusters belonging to the cycle of $F$
containing $U$.

We now claim that $\C$ is in fact a Hamilton cycle. For this, recall that $W(Y_r,X_1^+)$ used every cluster.
Write $W(Y_r,X_1^+)= U_1 C_1 U_1^- U_2 C_2 U_2^- \ldots U_t C_t U^-_tU_{t+1}$,
where each cluster appears at least once in $U_1,\dots,U_{t+1}$.
Let $u^-_iu_{i+1}$ be the edge that we have chosen for the edge $U^-_iU_{i+1}$ on~$W(Y_r,X_1^+)$.
Note that for each $i=1,\dots, t$ the vertices $u_{i+1}$ and $u^-_i$ lie on a common cycle of~$\C$,
as this holds by construction of $\C$, whatever matchings we use to create $\C$.
Since $u_i,u^-_i\in U_i$ also lie on a common cycle, this means that all of
$u_1,\dots,u_{t}$ (and thus also $u_{t+1}$) lie on the same cycle $C$ of~$\C$,
which completes the proof.


\section{Structure II: Shifted components, transitions and the exceptional set}
\label{structure2}

Having illustrated our techniques in the case when $H$ is strongly $\eta k$-connected,
we now turn to the case when this does not hold. In this section we impose further structure on $G$
by introducing `shifted components' of $H$ and various matchings linking these
components and the vertices of the exceptional set~$V_0$. In the first subsection we construct
the shifted components. We describe some of their properties in the second subsection.
The third subsection describes a process by which our shifted walk $W$ will make transitions
between the shifted components. In the fourth subsection we partition $V_0$ into $4$ parts
according to the existence of certain matchings between $V_0$ and the remainder of the digraph.
Then we complete the description of the transitions in the fifth subsection.
Since we need to introduce a large amount of notation in this section,
we conclude with a summary of the important points.

We recall that $\eps \ll d \ll \gamma \ll d' \ll \eta \ll \eta' \ll \beta$
and $|V_0| \le 11d^{1/2}n$.

\subsection{Shifted components of $H$}

Note that the in- and outdegrees of $H$ are obtained by permuting those of $R_{G''}$,
so $H$ has the same in- and outdegree sequences as $R_{G''}$,
and the bounds in Lemma~\ref{Degree sequence of R_{G'} - Modified} also apply to $H$.
We start by establishing an expansion property for subsets of $V(H)$.

\begin{lemma} \label{boundexpand}
If $X \sub V(H)$ with $|X| \le (1-\beta)k/2$ then
$$|N^{\pm}_H(X)| \ge |X| + \frac{\beta}{2}k - 5d'k - 1 \ge |X|+\frac{\beta}{4}k.$$
\end{lemma}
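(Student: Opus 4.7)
The plan is to prove the bound for $N^+_H(X)$; the symmetric bound for $N^-_H(X)$ follows by the same argument, using parts (iv) and (vi) of Lemma~\ref{Degree sequence of R_{G'} - Modified} in place of (iii) and (v). Recall that these bounds apply to $H$ because $H$ has the same in- and outdegree sequences as $R_{G''}$. Write $s := |X|$. If $s \le 5d'k + 1$, then the first claimed inequality reduces to $|N^+_H(X)| \ge \beta k/2$, which is immediate from the minimum outdegree bound $\delta^+(H) \ge \beta k/2$. So we may assume $s > 5d'k + 1$.

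Suppose for contradiction that $|N^+_H(X)| < s + \beta k/2 - 5d'k - 1$. Every vertex of $X$ has all of its outneighbours inside $N^+_H(X)$, so $d^+_s(H) \le |N^+_H(X)|$. Setting $i := s - 5d'k - 1 \ge 1$, our assumption rearranges to $d^+_i(H) \le d^+_s(H) < i + \beta k/2$, and the hypothesis $s \le (1-\beta)k/2$ gives
\[ i + \tfrac{\beta}{2}k \le \tfrac{(1-\beta)k}{2} - 5d'k - 1 + \tfrac{\beta}{2}k < \left(\tfrac{1}{2} - 5d'\right)k. \]
Thus the first alternative in part (v) of Lemma~\ref{Degree sequence of R_{G'} - Modified}, applied to $H$ at this value of $i$, must fail, so the second alternative holds:
\[ d^-_{(1-\beta/2)k - i}(H) \ge k - i - 5d'k = k - s + 1. \]
In particular $H$ has at least $k - ((1-\beta/2)k - i) + 1 = \beta k/2 + s - 5d'k$ vertices of indegree at least $k - s + 1$.

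To finish, observe that any such vertex $v$ has at most $k - 1 - (k - s + 1) = s - 2$ non-inneighbours other than itself, and hence at most $s - 1$ non-inneighbours in total. Since $|X| = s$, this forces $v$ to have an inneighbour in $X$, i.e.\ $v \in N^+_H(X)$. Therefore $|N^+_H(X)| \ge \beta k/2 + s - 5d'k$, contradicting the upper bound assumed above. The second inequality of the lemma is then just $\beta k/4 \le \beta k/2 - 5d'k - 1$, which holds since $d' \ll \beta$ and $k$ is large. The only subtlety in the entire argument is the correct calibration of the shift $i = s - 5d'k - 1$ so that both inputs to (v) are satisfied simultaneously; once that is in place, the rest is a routine Hall-type counting step of a type already used in Lemma~\ref{Outexpansion of R_{G'}}.
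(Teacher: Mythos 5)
Your proof is correct and follows essentially the same route as the paper's: apply part (v) of Lemma~\ref{Degree sequence of R_{G'} - Modified} to $H$ at the shifted index $i = |X| - 5d'k - 1$ to force the indegree alternative, then count vertices of large indegree and observe they all lie in $N^+_H(X)$. The only cosmetic difference is that you handle the small-$|X|$ case by a case split upfront while the paper derives $|X| > 5d'k + 1$ directly from the contradiction hypothesis and the minimum outdegree bound, and you spell out the non-inneighbour count a bit more explicitly.
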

\proof
The argument is similar to that for Lemma~\ref{Outexpansion of R_{G'}}.
By symmetry it suffices to obtain the bound for $|N^+_H(X)|$.
Suppose for a contradiction that
$|N^+_H(X)| < |X| + \frac{\beta}{2}k - 5d'k - 1$.
By Lemma~\ref{Degree sequence of R_{G'} - Modified}(iii) we have $|X| > 5d'k + 1$.
Also $d^+_{|X| - 5d'k-1}(H) \leq |N^+_H(X)| < (1/2- 5d')k$,
so by Lemma~\ref{Degree sequence of R_{G'} - Modified}(v) we have
$d^-_{(1 - \beta/2)k - |X| + 5d'k + 1}(H) \geq k - |X| + 1$.
Then $H$ contains at least $|X| + \beta k/2 - 5d'k$ vertices
of indegree at least $k - |X| + 1$, and these all belong to $N^+_H(X)$, a contradiction.
\endproof

\noindent
We are assuming that $H$ is not strongly $\eta k$-connected, so we can choose
a separator $S$ of $H$ of size $|S| < \eta k$. Thus we have a partition
of the vertices of $H$ into sets $S$, $C$, and $D$ such that $H\sm S$ does not contain an
edge from $C$ to $D$ (although it might contain edges from $D$ to $C$).

\begin{lemma} \label{CDsize}
$|C|,|D| = k/2 \pm 2\eta k.$
\end{lemma}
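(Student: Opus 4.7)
The plan is to establish the lower bounds $|C|, |D| \ge k/2 - 2\eta k$; the matching upper bounds are then immediate from $|C| + |D| + |S| = k$ together with $|S| < \eta k$.

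The starting observation is purely structural: since $H\setminus S$ has no edge from $C$ to $D$, we have $N^+_H(C) \subseteq C \cup S$, and symmetrically $N^-_H(D) \subseteq D \cup S$. In particular $|N^+_H(C)| \le |C| + |S| < |C| + \eta k$, and likewise for $N^-_H(D)$.

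I would then apply Lemma~\ref{boundexpand} to $C$ in two stages. \emph{Stage one} rules out $|C| \le (1-\beta)k/2$: under that assumption the lemma would give $|N^+_H(C)| \ge |C| + \beta k/2 - 5d'k - 1$, which is incompatible with $|N^+_H(C)| < |C| + \eta k$ since $\eta, d' \ll \beta$. Hence $|C| > (1-\beta)k/2$. \emph{Stage two} sharpens this: pick any $X \subseteq C$ with $|X| = (1-\beta)k/2$ and apply Lemma~\ref{boundexpand} at the boundary of its range of validity, obtaining
\[
|N^+_H(X)| \ge |X| + \tfrac{\beta}{2}k - 5d'k - 1 = \tfrac{k}{2} - 5d'k - 1.
\]
Since $N^+_H(X) \subseteq N^+_H(C) \subseteq C \cup S$, this forces $|C| + |S| \ge k/2 - 5d'k - 1$, so $|C| \ge k/2 - 5d'k - 1 - |S| > k/2 - 2\eta k$, using $5d' \ll \eta$ and $n$ large enough that $1/k < \eta$. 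The mirror argument using $N^-_H$ applied first to $D$ (to get $|D| > (1-\beta)k/2$) and then to a subset of $D$ of size $(1-\beta)k/2$ gives $|D| > k/2 - 2\eta k$.

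The upper bounds follow by subtraction: $|C| = k - |D| - |S| \le k - (k/2 - 2\eta k) = k/2 + 2\eta k$, and similarly for $|D|$. I expect no real obstacle beyond the mild subtlety that a single application of Lemma~\ref{boundexpand} to $C$ yields only the weak bound $|C| > (1-\beta)k/2$, which is much looser than what is needed; the trick is to follow it with a second application to a subset of $C$ of the largest admissible size $(1-\beta)k/2$, which converts the weak bound into the required $k/2 \pm 2\eta k$ estimate.
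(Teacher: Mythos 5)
Your proposal is correct and takes essentially the same approach as the paper: two applications of Lemma~\ref{boundexpand} (one to the whole set, one to a subset of the maximal admissible size $(1-\beta)k/2$), combined with the structural observation that $N^+_H(C)\subseteq C\cup S$ (resp.\ $N^-_H(D)\subseteq D\cup S$). The only cosmetic difference is that the paper phrases the argument as a proof by contradiction starting from $|D|<k/2-2\eta k$, whereas you derive the lower bound directly; the content is identical.
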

\proof
Suppose for a contradiction that $|D|<k/2-2\eta k$.
If the stronger inequality $|D| \le (1-\beta)k/2$ holds then Lemma~\ref{boundexpand}
implies that $|N^-_H(D)|\ge |D|+\frac{\beta}{4}k>|D|+|S|$, a contradiction.
So we may assume that $|D| \ge (1-\beta)k/2$. Let $D'$ be a subset of $D$
of size $(1-\beta)k/2$. Now the first inequality of Lemma~\ref{boundexpand} implies that
$$
|N^-_H(D)| \ge |N^-_H(D')| \ge k/2  - 5d'k -1> (|D|+2\eta k)- 5d'k-1
> |D|+\eta k  \ge |D|+|S|,
$$
a contradiction. The bound $|C| \ge k/2-2\eta k$ is obtained in a similar way,
which proves the lemma.
\endproof
Let $C_{\rm small}$ be the set of vertices in $C$ which (in the digraph $H$)
have at most $\beta k/10$ inneighbours in $C$.
Let $D_{\rm small}$ be the set of vertices in $D$ which (in the digraph $H$)
have at most $\beta k/10$ outneighbours in $D$.
\begin{lemma}\label{small}
$|C_{\rm small}|,|D_{\rm small}| \le 8\eta k$.
\end{lemma}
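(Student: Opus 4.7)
The plan is to emulate the strategy used in the proof of Lemma~\ref{CDsize}: exploit the fact that the structural constraint $N^-_H(C_{\rm small}) \cap D \subseteq D$ forces $|N^-_H(C_{\rm small})|$ to be rather small, yet Lemma~\ref{boundexpand} forces it to be large, giving the desired upper bound on $|C_{\rm small}|$ by contradiction. I treat $|C_{\rm small}|$; the bound on $|D_{\rm small}|$ is obtained by the symmetric argument using outneighbours.

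Suppose for contradiction that $t:=|C_{\rm small}| > 8\eta k$. The first observation is that vertices of $C_{\rm small}$ pull most of their in-edges from $D$: by Lemma~\ref{Degree sequence of R_{G'} - Modified}(iv) every $v \in C_{\rm small}$ satisfies $d^-_H(v) \ge \beta k/2$, while the defining property gives $|N^-_H(v) \cap C| \le \beta k/10$, and hence $|N^-_H(v) \cap (D \cup S)| \ge 2\beta k/5$ and $|N^-_H(v) \cap D| \ge \beta k/3$ (since $|S| < \eta k$).

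Now I would bound $|N^-_H(C_{\rm small})|$ from above using the $C$-part constraint. A union bound over $v \in C_{\rm small}$ shows $|N^-_H(C_{\rm small}) \cap C| \le t \cdot \beta k/10$. Combined with Lemma~\ref{CDsize} (which gives $|D|+|S| \le k/2+3\eta k$), the inclusion $N^-_H(C_{\rm small}) \subseteq C \cup D \cup S$ yields
\[
|N^-_H(C_{\rm small})| \le k/2 + 3\eta k + t\beta k/10.
\]
For the matching lower bound, I would apply Lemma~\ref{boundexpand}. If $t \le (1-\beta)k/2$, the lemma directly gives $|N^-_H(C_{\rm small})| \ge t + \beta k/2 - 5d'k - 1$; otherwise I choose a subset $X \subseteq C_{\rm small}$ of size $(1-\beta)k/2$ and obtain $|N^-_H(C_{\rm small})| \ge |N^-_H(X)| \ge k/2 - 5d'k - 1$. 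Combining with the upper bound (and using $d' \ll \eta$) contradicts $t > 8\eta k$.

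For $D_{\rm small}$, the analogous argument starts from $N^+_H(D_{\rm small}) \subseteq C \cup D \cup S$, where the defining property $|N^+_H(v) \cap D| \le \beta k/10$ for $v \in D_{\rm small}$ bounds the $D$-part of $N^+_H(D_{\rm small})$ via a union bound by $|D_{\rm small}|\beta k/10$, while Lemma~\ref{CDsize} bounds $|C|+|S|$ above; Lemma~\ref{boundexpand} then gives the matching lower bound.

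The main obstacle is Step 4, extracting the sharp $8\eta k$ bound from the two inequalities. The naive combination delivers a bound of the form $t(1-\beta/10) \le (1-\beta)k/2 + O(\eta k)$, which is only $O(\eta k)$ when $\beta$ is close to $1$. To sharpen this for general $\beta$, I expect to need the tight form $|D|+|S| \le k/2+2\eta k$ (which follows by combining the lower bounds $|C|,|D| \ge k/2-2\eta k$ from Lemma~\ref{CDsize} with $|C|+|D|+|S|=k$), together possibly with an additional input from the Chv\'atal-type inequalities Lemma~\ref{Degree sequence of R_{G'} - Modified}(v),(vi) to rule out many vertices in $D$ with very large outdegree (which would otherwise be the vertices supplying the $D\to C_{\rm small}$ edges). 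This tightening is where the bulk of the work lies.
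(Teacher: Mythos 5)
Your expansion-based strategy has a genuine gap, and you have essentially diagnosed it yourself: combining the lower bound from Lemma~\ref{boundexpand} with the upper bound $|N^-_H(C_{\rm small})| \le |D| + |S| + t\beta k/10$ gives $t(1-\beta/10) \le (1-\beta)k/2 + O(\eta k)$, which is of order $(1-\beta)k/2$ and hence nowhere near $8\eta k$ unless $\beta$ is close to $1$. The sharper bound $|D|+|S| \le k/2+2\eta k$ does not change this dependence on $\beta$, and your suggestion to further constrain the outdegrees of vertices in $D$ is aimed in the wrong direction: the argument needs to control how much of $C$ can fail to receive in-edges from within $C$, and that is not an expansion property of $N^-_H(C_{\rm small})$. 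Fundamentally, your argument only re-derives $|C| \approx k/2$ (which is Lemma~\ref{CDsize}) rather than the finer statement that only $O(\eta k)$ vertices of $C$ are poorly connected within $C$.

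The paper's proof uses a different, and essential, idea. It defines $C_{\rm big}$ to be the set of vertices of $C$ with outdegree at least $k/2-\eta k$ in $H$, and shows $|C_{\rm big}| \ge \beta k/5$ by a degree-sequence argument: since $C\to D$ has no edges, Lemma~\ref{CDsize} forces $d^-_{k/2-3\eta k}(H) \le k/2+3\eta k$, and then Lemma~\ref{Degree sequence of R_{G'} - Modified}(v) with $i = k/2 - \beta k/4$ rules out the indegree alternative, yielding many vertices of outdegree $\ge (1/2-5d')k$; by Lemma~\ref{CDsize} a $\beta k/5$-fraction of these lie in $C$. Each such vertex has at least $|C|-4\eta k$ outneighbours inside $C$, hence, if $|C_{\rm small}| > 8\eta k$, more than half of $C_{\rm small}$ as outneighbours. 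Double-counting the edges from $C_{\rm big}$ to $C_{\rm small}$ then produces a vertex of $C_{\rm small}$ with more than $|C_{\rm big}|/2 > \beta k/10$ inneighbours in $C$, contradicting its membership in $C_{\rm small}$. This `dense in-star' double-counting is the key step you are missing; no amount of tightening the expansion estimate substitutes for it.
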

\proof Let $C_{\rm big}$ be the set of vertices in $C$ which have at
least $k/2-\eta k$ outneighbours in $H$. We claim that $|C_{\rm
big}|\ge \beta k/5$. To see this, first note that Lemma~\ref{CDsize}
and the fact that there are no edges from $C$ to $D$ imply that $D$
contains no vertex of indegree greater than $|D|+|S| \le k/2+3\eta
k$. So again by Lemma~\ref{CDsize}, the number of vertices of
indegree greater than $k/2+3\eta k$ in $H$ is at most $k/2+3\eta k$,
which gives $d^-_{k/2-3\eta k} \le k/2 + 3\eta k$. Now
Lemma~\ref{Degree sequence of R_{G'} - Modified}(v) with
$i=k/2-\beta k/4$ says that $d^+_{k/2-\beta k/4} \ge (1/2-5d')k$ or
$d^-_{k/2-\beta k/4} \ge k/2 + \beta k/4 - 5d'k$. The latter option
cannot hold, as it would contradict our previous inequality for
$d^-_{k/2-3\eta k}$, so the former option holds, and $H$ has at
least $k/2+\beta k/4$ vertices of outdegree at least $k/2-5d'k \ge
k/2-\eta k$. By Lemma~\ref{CDsize}, $C$ has to contain at least
$\beta k/5$ of these vertices of high outdegree, which proves the
claim.

Now note that yet another application of Lemma~\ref{CDsize} shows that
every vertex in $C_{\rm big}$ has at least
$k/2-\eta k -|S| \ge |C|-4\eta k$ outneighbours in $H[C]$.
Suppose that $|C_{\rm small}| > 8\eta k$.
Then every vertex in $C_{\rm big}$ has more than half of the vertices of $C_{\rm small}$
as outneighbours. This in turn implies that there is a vertex in $C_{\rm small}$
with more than half the vertices in $C_{\rm big}$ as inneighbours.
In particular, it has more than $\beta k/10$ inneighbours in $C$.
This contradicts the definition of $C_{\rm small}$, so in fact $|C_{\rm small}| \le 8\eta k$.
The argument for $D_{\rm small}$ is similar.
\endproof
Let $C':=C \sm C_{\rm small}$ and $D':=D \sm D_{\rm small}$.

\begin{lemma} \label{CDconnectivity}
$H[C']$ and $H[D']$ are strongly $\eta' k$-connected.
\end{lemma}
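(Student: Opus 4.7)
The plan is to prove $H[C']$ is strongly $\eta' k$-connected; the argument for $H[D']$ is symmetric, swapping in- and out-neighbourhoods and using that all in-neighbours of $D$ in $H$ lie in $D \cup S$ because there are no edges from $C$ to $D$. Fix $T \sub C'$ with $|T| < \eta' k$ and a nonempty partition $C' \sm T = A \cup B$; the task is to find an edge from $A$ to $B$ in $H$. Two preliminary estimates will be used throughout: $d^+_{C'}(v) \ge \beta k/3$ for every $v \in C'$ (since $N^+_H(v) \sub C \cup S$ for $v \in C$, and only $|S| + |C_{\rm small}| \le 9\eta k$ out-neighbours can lie outside $C'$), and $d^-_{C'}(v) \ge \beta k/11$ for every $v \in C'$ (since $v \in C'$ has more than $\beta k/10$ in-neighbours in $C$ by the definition of $C_{\rm small}$).

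If $|A| \le (1-\beta)k/2$ then Lemma~\ref{boundexpand} yields $|N^+_H(A)| \ge |A| + \beta k/4$, while $N^+_H(A) \cap D = \emptyset$ and, absent an edge from $A$ to $B$, also $N^+_H(A) \cap B = \emptyset$, forcing $N^+_H(A) \sub A \cup T \cup C_{\rm small} \cup S$ and hence $\beta k/4 \le |T| + |C_{\rm small}| + |S|$, which contradicts $\eta, \eta' \ll \beta$. So I may assume $|A| > (1-\beta)k/2$, and therefore $|B| < \beta k/2 + 2\eta k$. If further $|B| \le \beta k/12$, any $v \in B$ has at most $|B| + |T| - 1 < \beta k/11$ in-neighbours in $C' \sm A = B \cup T$, so some in-neighbour of $v$ must lie in $A$.

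The main obstacle is the remaining sub-case $|A| > (1-\beta)k/2$ and $|B| > \beta k/12$: Lemma~\ref{boundexpand} cannot be applied directly to $A$, and applying it to $B$ in in-form yields no contradiction since $N^-_H(B)$ can be entirely absorbed by $D$ (there is no structural restriction on edges from $D$ to $C$). My plan here is a two-step argument. First, applying Lemma~\ref{boundexpand} to a subset $A' \sub A$ of size $\lfloor(1-\beta)k/2\rfloor$ and again invoking $N^+_H(A') \cap (D \cup B) = \emptyset$ forces $|A| \ge k/2 - \beta k/4 - 2\eta' k$ and consequently $|B| \le \beta k/4 + 3\eta' k$. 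Second, I mimic the degree-sequence argument from the proof of Lemma~\ref{small}: Lemma~\ref{CDsize} together with the absence of $C \to D$ edges ensures that no vertex of $D$ has in-degree exceeding $k/2 + 3\eta k$, so the second alternative of Lemma~\ref{Degree sequence of R_{G'} - Modified}(v) fails for $i = k/2 - \beta k/2 + 3\eta k$, and the first alternative supplies at least $k/2 + \beta k/2 - 3\eta k$ vertices of $H$ with out-degree at least $(1/2 - 5d')k$. A counting argument combined with the lower bound on $|A|$ places at least $\beta k/5$ of these vertices inside $A$; for each such $v$ we have $d^+_C(v) \ge k/2 - 2\eta k$ (since $N^+_H(v) \cap D = \emptyset$), and absent an edge from $A$ to $B$ these out-neighbours all lie in $A \cup T \cup C_{\rm small}$. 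This forces $|A| \ge k/2 - 2\eta' k$ and hence $|B| \le 3\eta' k$, contradicting $|B| > \beta k/12$ since $\eta' \ll \beta$.
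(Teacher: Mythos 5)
Your proof is correct, and most of its skeleton coincides with the paper's: the lower bound $d^-_{C'}(v) \ge \beta k/11$ from the definition of $C_{\rm small}$, the observation that in the absence of an $A\to B$ edge one has $N^+_H(A) \sub A \cup T \cup S \cup C_{\rm small}$, and the case split against $(1-\beta)k/2$. Your sub-case $|B|\le\beta k/12$ is exactly the contrapositive of the paper's preliminary step $|W|\ge\delta^-(H[C'])-|T|\ge\beta k/12$. The genuine divergence is in the remaining sub-case $|A|>(1-\beta)k/2$, $|B|>\beta k/12$ (which, using $|C'|\le k/2+2\eta k$, is the paper's case $|U|>(1-\beta)k/2$ combined with $|U|\le k/2-\beta k/13$). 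There the paper applies the \emph{first}, sharper inequality of Lemma~\ref{boundexpand} to a subset $U'$ of size $(1-\beta)k/2$, obtaining $|N^+(U')|\ge k/2-5d'k-1\ge |U|+\beta k/13-5d'k-1 > |U|+|S|+|T|+|C_{\rm small}|$, a one-line contradiction. You invoke only the weaker second inequality $|N^+(A')|\ge |A'|+\beta k/4$, which forces merely $|A|\ge k/2-\beta k/4-2\eta' k$ and leaves a gap of order $\beta k/4$; you then close that gap by re-running the degree-sequence argument of Lemma~\ref{small} via Lemma~\ref{Degree sequence of R_{G'} - Modified}(v) to manufacture a vertex of $A$ with almost all of $C$ as out-neighbourhood. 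Both steps are sound, so your proof goes through, but the second step is avoidable: the first inequality of Lemma~\ref{boundexpand} applied to $A'$ already yields $|A|\ge k/2-2\eta' k$ and hence $|B|\le 3\eta' k<\beta k/12$ directly. The paper's route is therefore shorter and stays entirely within the expansion lemma, whereas your route re-invokes the global degree-sequence hypotheses inside this lemma; it buys nothing additional but is a valid alternative.
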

\proof
By symmetry it suffices to consider $H[C']$.
The definition of $C_{\rm small}$ and Lemma~\ref{small} give
$\delta^-(H[C'])  \ge \beta k/10-|C_{\rm small}| \ge \beta k/11$.
Suppose that $H[C']$ is not strongly $\eta' k$-connected.
Then there is a separator $T$ of size at most $\eta' k$ and a partition  $U$, $W$ of $C' \setminus T$ such that
$H[C']\sm T$ contains no edge from $U$ to $W$.
Note that $|W| \ge \delta^-(H[C'])-|T| \ge \beta k/12$.
So
\begin{equation} \label{eqU}
|U| \le |C'|-|W| \le (k/2+2\eta k)- \beta k/12 \le k/2- \beta k/13.
\end{equation}
If the stronger inequality $|U| \le (1-\beta)k/2$ holds then Lemma~\ref{boundexpand}
implies that $|N^+(U)|\ge |U|+\beta k/4 >|U|+|S|+|T|+|C_{\rm small}|$, a contradiction.
So we may assume that $|U| \ge (1-\beta)k/2$. Let $U'$ be a subset of $U$
of size $(1-\beta)k/2$. Now the first inequality in Lemma~\ref{boundexpand} implies that
$$
|N^+(U)| \ge |N^+(U')| \ge k/2  - 5d'k - 1 \stackrel{(\ref{eqU})}{\ge}
(|U|+\beta k/13)- 5d'k -1> |U|+|S|+|T|+|C_{\rm small}|,
$$
a contradiction again.
\endproof
Let $S'$ be the set obtained from $S$ by adding $C_{\rm small}$ and $D_{\rm small}$.
So $|S'|\le 17\eta k$ and $S'$, $C'$, $D'$ is a vertex partition of $H$.

Now let $L$ (for `left') be the set obtained from $C'$ by adding all those vertices $v$ from
$S'$ which satisfy $|N_H^{+}(v) \cap C'| \ge \eta' k$ and $|N_H^{-}(v) \cap C'| \ge \eta' k$.
Next, let $R$ (for `right') be the set obtained from $D'$ by adding all those remaining vertices $v$
from $S'$ which satisfy $|N_H^{+}(v) \cap D'| \ge \eta' k$ and $|N_H^{-}(v) \cap D'| \ge \eta' k$.
Then $H[L]$ and $H[R]$ are both still $\eta' k$-connected.
We write $M_V$ (for `vertical middle') for the remaining vertices in $S'$
(i.e.~those which were not added to $C'$ or $D'$). Then $|M_V| \leq |S'| \le 17 \eta k$.
Moreover, $L$, $M_V$ and $R$ partition the vertex set of $R_{G''}$.

We also define another partition of $V(R_{G''})$ into three sets
which we call $T,M_H$ and $B$ (for `top', `horizontal middle', and `bottom') as follows:
\begin{itemize}
\item a cluster belongs to $T$ if and only if its successor in
$F$ belongs to $L$;
\item a cluster belongs to $M_H$ if and only if its
successor in $F$ belongs to $M_V$;
\item a cluster belongs to $B$ if and
only if its successor in $F$ belongs to $R$.
\end{itemize}
The general picture (including a partition of $V_0$ defined below) is
illustrated in Figure~1.
For each of the above subsets of $V(H)=V(R_{G''})$ we use a `tilde' notation to denote
the subset of $V(G)$ consisting of the union of the corresponding clusters,
thus $\wt{L} = \cup_{U \in L} U \sub V(G)$, etc.
Note that
\begin{equation}\label{eq:middle}
|M_H|=|M_V| \le 17 \eta k.
\end{equation}

\begin{figure}\label{structure}
\includegraphics[scale=0.4]{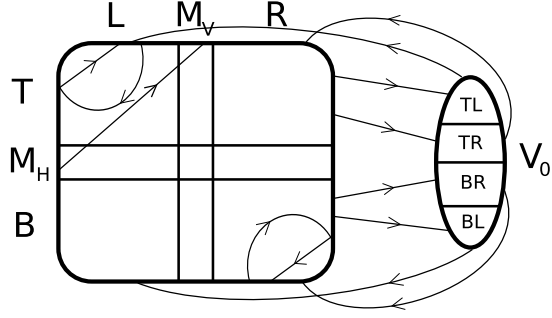}
\caption{Shifted components and the exceptional set}
\end{figure}

We need to remove certain cycles from $F$ that would create difficulties later on.
Let $M:=M_V \cup M_H$.
We say that a cycle $C$ of $F$ \emph{significantly intersects $M$} if $|C \cap M| \ge |C|/10$.
If we have $|\wt{M}| \le |V_0|/\gamma^3$ then
we remove all cycles that significantly intersect $M$ from $F$ and
add all vertices in their clusters to the exceptional set.%
\COMMENT{We only use non-significant intersection to choose a cluster at distance~2 from $M$ when defining
$\Z_L\cup \Z_R$ (so a 4 instead of 10 would be enough). But this distance 2 property is needed since it
ensures that no cluster in $\Z_L\cup \Z_R$ is nearly 4-good. So we cannot just add all cycles lying
entirely in~$M$ to~$V_0$.}
Since $d \ll \gamma$ we still have the inequality
\begin{equation} \label{boundV0}
|V_0| \le 11 d^{1/2} n +10\cdot 11 d^{1/2} n /\gamma^3 \le d^{1/4} n.
\end{equation}
Later we will distinguish the following two cases according to the size of $\wt{M}$.
\begin{itemize}
\item[($\star$)] $|\wt{M}| \le |V_0|/\gamma^3$. Moreover, no cycle of $F$ significantly intersects $M$.
\item[($\star \star$)] $|\wt{M}| \ge |V_0|/\gamma^3>0$.
\end{itemize}
\COMMENT{1. $V_0 \ne \emptyset$ as we deleted vertices e.g. in
making pairs super-regular. 2. $\gamma^2$ would probably be ok too,
but we didn't want to risk changing it as it doesn't really make
things prettier} (The proof would be considerably simpler if we
could remove all the cycles which significantly intersect/lie in
$M$ in the case ($\star \star$), but this would make $|V_0|$ too
large.) Since any cycle in $F$ has equal intersection sizes with
$M_H$ and $M_V$ we still have $|M_H|=|M_V|$ of size at most $17 \eta
n$. We still denote the remaining subset of $R$ by $R$, and
similarly for all the other sets $B,L,M_V$ etc.

\subsection{Properties of the shifted components.}

We start by justifying the name `shifted components'. The following lemma shows that
we have decomposed most of the digraph into two pieces of roughly equal size,
where in each piece we have the high connectivity that enabled us to establish
the result in the previous section.

\begin{lemma} \label{propRL} $ $
\begin{enumerate}
\item[(i)] $H[L]$ and $H[R]$ are strongly $\eta' k/2$-connected.
\item[(ii)] $|\wt{L}|,|\wt{R}|,|\wt{B}|,|\wt{T}|=n/2 \pm 19 \eta n$.
\end{enumerate}
\end{lemma}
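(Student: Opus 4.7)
The plan is to deduce both parts from the structural facts already collected about the partition $V(H) = L \cup M_V \cup R$, namely Lemma~\ref{CDconnectivity}, Lemma~\ref{CDsize}, Lemma~\ref{small}, and the bound $|S'| \le 17\eta k$ established just before the statement of the lemma.

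For (i), I will argue directly from the definition of a separator; by symmetry it suffices to treat $H[L]$. Suppose for contradiction that $T \sub L$ is a separator of $H[L]$ with $|T| < \eta'k/2$. Since $|T \cap C'| \le |T| < \eta' k$ and $H[C']$ is strongly $\eta' k$-connected by Lemma~\ref{CDconnectivity}, removing $T \cap C'$ from $H[C']$ leaves a strongly connected digraph, so all vertices of $C' \sm T$ lie in a single strong component $K$ of $H[L] \sm T$. Any vertex $v \in L \sm (C' \cup T)$ lies in $S'$, and by the definition of $L$ has at least $\eta' k$ in-neighbours and at least $\eta' k$ out-neighbours in $C'$; deleting $T$ leaves at least $\eta' k - |T| > \eta'k/2$ of each in $C' \sm T$, so $v$ joins $K$. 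Hence $H[L] \sm T$ is strongly connected, contradicting the choice of $T$. The cluster removals performed in case~$(\star)$ affect at most $10(|M_H|+|M_V|) = O(d^{1/2}k/\gamma^3)$ clusters, which by the hierarchy $d \ll \gamma \ll \eta'$ is far smaller than $\eta'k/2$, so the same argument goes through after trivially adjusting constants.

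For (ii), combining Lemma~\ref{CDsize}, Lemma~\ref{small} and $|S'| \le 17\eta k$ yields
\[
k/2 - 10\eta k \;\le\; |C'| \;=\; |C| - |C_{\mathrm{small}}| \;\le\; |L| \;\le\; |C'| + |S'| \;\le\; k/2 + 19\eta k,
\]
so $|L| = k/2 \pm 19\eta k$; the analogous chain gives $|R| = k/2 \pm 19\eta k$. Since $m = (n-|V_0|)/k$ and $|V_0| \le d^{1/4}n \ll \eta n$ by~(\ref{boundV0}), multiplying through gives $|\wt L|, |\wt R| = n/2 \pm 19\eta n$ (the $|V_0|$ error is absorbed into the constant). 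Finally, because $F$ is a 1-factor on $V(R_{G''})$, the successor map $X \mapsto X^+$ is a bijection of $V(R_{G''})$ onto itself, and by definition $T$ (respectively $B$) is exactly the preimage of $L$ (respectively $R$) under this bijection. Hence $|T|=|L|$ and $|B|=|R|$, which immediately yields $|\wt T|=|\wt L|$ and $|\wt B|=|\wt R|$, completing the bounds.

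There is no real obstacle here: both parts reduce to routine accounting once Lemma~\ref{CDconnectivity} and the size bounds of the previous subsection are in hand. The only detail requiring a moment's care is confirming that the clusters potentially removed in case~$(\star)$ are negligible on the scale of $\eta'$, which is immediate from the parameter hierarchy.
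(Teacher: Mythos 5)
Your argument follows essentially the same route as the paper, with the notable plus that you supply a proof of the claim (asserted without justification just before the lemma) that $H[L]$ and $H[R]$ remain highly connected after adding the vertices of $S'$. Two small numerical points deserve attention, however.

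In (i), you take a putative separator $T$ of $H[L]$ with $|T| < \eta'k/2$ and derive a contradiction. But because you invoke Lemma~\ref{CDconnectivity} for $H[C']$ as it stood before the cycle removal, this shows that the \emph{pre-removal} $H[L]$ is strongly $\eta'k/2$-connected. After removing the $O(d^{1/2}k/\gamma^3)$ clusters from the cycles significantly intersecting $M$, the connectivity can drop below $\eta'k/2$, so "the same argument goes through after trivially adjusting constants" doesn't literally land on the stated bound. The fix is to run your argument with $|T| < \eta'k$ instead — the slack is there: $|T\cap C'| < \eta'k$ still leaves $H[C']\setminus(T\cap C')$ strongly connected, and each $v\in L\setminus(C'\cup T)$ retains at least $\eta'k - |T| > 0$ in- and out-neighbours in $C'\setminus T$, so $H[L]$ is strongly $\eta'k$-connected pre-removal, and then subtracting the removed clusters (each costs at most one unit of connectivity) gives $\eta'k/2$. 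This is precisely the paper's argument, which simply cites the $\eta'k$-connectivity and then subtracts $\le \eta'k/2$ removed clusters.

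In (ii), your chain $|C'| \le |L|$ is correct before the cycle removal, but after case $(\star)$ removals $L$ has been redefined as a subset of the original $L$, so $|C'| \le |L|$ need no longer hold — you must subtract the $\le \eta k$ clusters removed from $L$ (the paper does this explicitly). Your final bound $|L| = k/2 \pm 19\eta k$ is nevertheless correct because your lower bound $k/2 - 10\eta k$ had $9\eta k$ of slack to absorb the missing subtraction, but the intermediate step should be repaired.
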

\proof
To prove (i), recall that before the removal of the cycles we knew that $H[L]$ and $H[R]$ were
strongly $\eta' k$-connected. By~(\ref{boundV0}), the number of clusters removed in case ($\star$)
is at most $d^{1/4} n/m  \le \eta' k/2$. Since we only removed entire $F$-cycles,
we did not delete any edges from $H$ other than those incident to the clusters that
were deleted. Thus for each cluster removed the connectivity only decreases by at most one.

For (ii), we recall that $|C| = k/2 \pm 2\eta k$ (Lemma~\ref{CDsize})
and $L$ was obtained from $C$ by removing $|C_{\rm small}| \le 8\eta k$ clusters,
adding at most $|S'| \le 17\eta k$ clusters and removing at most $d^{1/4} n/m  \le \eta k$ clusters.
The argument for $R$ is the same. The other two bounds follow since $|B|=|R|$ and $|T|=|L|$.
\endproof

Next we define a partition of $M_V$ into $M_V^{LR}$ and $M_V^{RL}$ as follows.
A cluster $X \in M_V$ belongs to $M_V^{LR}$
if $|N_H^{+}(X) \cap C'| < \eta' k$ and $|N_H^{-}(X) \cap D'| < \eta' k$.
A cluster $X \in M_V$ belongs to $M_V^{RL}$
if $|N_H^{+}(X) \cap D'| < \eta' k$ and $|N_H^{-}(X) \cap C'| < \eta' k$.
The definition of $L$ and $R$ and the fact that $H$ has minimum semidegree
at least $\beta k/2$ imply that this is indeed a partition of $M_V$.
Since $|M_V| \le 17 \eta k$ and $\delta^{\pm}(H)\ge \beta k/2$
we have the following properties.

\begin{lemma}\label{LRandRLmiddle} $ $
\begin{itemize}
\item[(i)] All $V\in M_V^{LR}$ satisfy $|N_H^{+}(V) \cap L|,|N_H^{-}(V) \cap R| < 2\eta' k$
and $|N_H^{+}(V) \cap R|,|N_H^{-}(V) \cap L| >\beta k/3$.
\item[(ii)] All $V\in M_V^{RL}$ satisfy $|N_H^{+}(V) \cap R|,|N_H^{-}(V) \cap L| < 2\eta' k$
and $|N_H^{+}(V) \cap L|,|N_H^{-}(V) \cap R|>\beta k/3$.
\end{itemize}
\end{lemma}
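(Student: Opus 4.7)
The plan is that this lemma is essentially a book-keeping consequence of how $L$ and $R$ are built from $C'$ and $D'$, together with the minimum semidegree bound $\delta^{\pm}(H) \ge \beta k/2$ (inherited from Lemma~\ref{Degree sequence of R_{G'} - Modified}(iii),(iv)). By symmetry between $(C,D)$ and $(D,C)$, and between $+$ and $-$, it suffices to establish the four inequalities in~(i). So let $V\in M_V^{LR}$.

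First I would control the ``wrong side'' neighbourhoods. Write $L = C' \cup (L\setminus C')$ and note that $L\setminus C' \sub S'$, so $|L\setminus C'|\le |S'|\le 17\eta k$. Since $V\in M_V^{LR}$ gives $|N_H^{+}(V)\cap C'|<\eta' k$ by definition, we obtain
\[
|N_H^{+}(V)\cap L| \le |N_H^{+}(V)\cap C'| + |L\setminus C'| < \eta' k + 17\eta k < 2\eta' k,
\]
using $\eta \ll \eta'$. The same argument, applied with $R\supseteq D'$ and the defining inequality $|N_H^{-}(V)\cap D'|<\eta' k$, gives $|N_H^{-}(V)\cap R|<2\eta' k$.

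Next I would turn these into lower bounds on the ``correct side'' using the semidegree hypothesis. Since $V(H)$ is partitioned into $L,M_V,R$, and $|M_V|\le 17\eta k$ by~(\ref{eq:middle}), we have
\[
|N_H^{+}(V)\cap R| \ge \delta^{+}(H) - |N_H^{+}(V)\cap L| - |M_V| \ge \tfrac{\beta}{2}k - 2\eta' k - 17\eta k > \tfrac{\beta}{3}k,
\]
using $\eta'\ll\beta$; and analogously $|N_H^{-}(V)\cap L| > \beta k/3$. Part~(ii) follows by swapping the roles of $C',D'$ (equivalently $L,R$) in the same argument.

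There is essentially no obstacle here: the only thing to be careful about is that the hierarchy $\eta \ll \eta' \ll \beta$ is strong enough to absorb both the slack term $|L\setminus C'|$ (of size $O(\eta k)$) into $\eta' k$, and the combined $O(\eta' k)$ error into a constant fraction of the $(\beta/2)k$ semidegree. Both comparisons are immediate from the standing hierarchy.
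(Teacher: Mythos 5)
Your proof is correct and is essentially the argument the paper has in mind: the paper introduces this lemma with the single remark ``Since $|M_V| \le 17 \eta k$ and $\delta^{\pm}(H)\ge \beta k/2$ we have the following properties,'' which is exactly your two-step computation (absorb the $O(\eta k)$-sized set $L\setminus C'\subseteq S'$ into the $\eta' k$ budget, then subtract the small $L$- and $M_V$-contributions from the minimum semidegree). One very minor point you could flag but which affects neither your argument nor the paper's: by the time this lemma is stated, up to $\eta' k/2$ clusters may already have been removed from $F$ (in case $(\star)$), so strictly $\delta^{\pm}(H)$ could be $\beta k/2 - \eta' k/2$ rather than $\beta k/2$; the hierarchy $\eta\ll\eta'\ll\beta$ comfortably absorbs this extra slack.
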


Let $M_H^{LR}$ the set of clusters whose
successor in $F$ belongs to $M_V^{LR}$ and define $M_H^{RL}$ similarly.
Note that this yields a partition of $M_H$.

It will be helpful later to note that if $M_V^{LR}\neq \emptyset$ then we can use clusters
in $M_V^{LR}$ to obtain shifted walks from $L$ to $R$. Similarly, any clusters in $M_V^{RL}$
can be used to obtain shifted walks from $R$ to $L$. This will use the following lemma.

\begin{lemma}\label{middleedges} $  $
\begin{enumerate}
\item[(i)] For all $x \in \wt{M}_H^{LR}$, we have $|N^+_G(x) \cap \wt{L}| \le 3 \eta' n$
and $|N^+_G(x) \cap \wt{R}| \ge \beta n/2$. Also, at most $12\eta' n$ vertices in $\wt{L}$
have more than $|\wt{M}^{LR}_H|/4$ inneighbours in $\wt{M}^{LR}_H$.
\item[(ii)] For all $x \in \wt{M}_V^{LR}$, we have $|N^-_G(x) \cap \wt{B}| \le 3 \eta' n$
and $|N^-_G(x) \cap \wt{T}| \ge \beta n/2$. Also, at most $12\eta' n$ vertices in $\wt{B}$
have more than $|\wt{M}^{LR}_V|/4$ outneighbours in $\wt{M}^{LR}_V$.
\item[(iii)] For all $x \in \wt{M}_H^{RL}$, we have $|N^+_G(x) \cap \wt{R}| \le 3 \eta' n$
and $|N^+_G(x) \cap \wt{L}| \ge \beta n/2$. Also, at most $12\eta' n$ vertices in $\wt{R}$
have more than $|\wt{M}^{RL}_H|/4$ inneighbours in $\wt{M}^{RL}_H$.
\item[(iv)] For all $x \in \wt{M}_V^{RL}$, we have $|N^-_G(x) \cap \wt{T}| \le 3 \eta' n$
and $|N^-_G(x) \cap \wt{B}| \ge \beta n/2$. Also, at most $12\eta' n$ vertices in $\wt{T}$
have more than $|\wt{M}^{RL}_V|/4$ outneighbours in $\wt{M}^{RL}_V$.
\end{enumerate}
\end{lemma}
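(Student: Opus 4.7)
The plan is to prove part (i) in detail; parts (ii)--(iv) then follow by entirely analogous arguments (interchanging the roles of $M_V$/$M_H$, $L$/$R$, $T$/$B$ and in/out-neighbourhoods). Fix $x \in X$ with $X \in M_H^{LR}$, so that $X^+ \in M_V^{LR}$. My first step would be to translate the defining property of $M_V^{LR}$ (phrased in $H$) into a property of $R_{G''}$: since every edge $ab$ of $H$ corresponds to a shifted walk $a\,C_1\,a^-\,b$ whose sole non-$F$ edge $a^- b$ lies in $R_{G''}$, we have $N_H^+(a) = N_{R_{G''}}^+(a^-)$ for every cluster $a$. Taking $a = X^+$ gives $N_H^+(X^+) = N_{R_{G''}}^+(X)$, so the definition of $M_V^{LR}$ yields $|N_{R_{G''}}^+(X) \cap C'| < \eta' k$. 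Combined with $L \sm C' \sub S'$ and $|S'| \le 17\eta k$, this produces the key inequality $|N_{R_{G''}}^+(X) \cap L| \le 2\eta' k$.

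Next I would transfer this bound from $R_{G''}$ to $G$ using the typicality of $x$ (guaranteed by Lemma~\ref{Degree sequence of R_{G'} - Modified}). The contribution to $|N^+_G(x)|$ from clusters $Y$ with $XY \notin R_{G''}$ is at most $4d'n$, by the first typicality condition. Among the at most $2\eta' k$ clusters $Y \in L$ with $XY \in R_{G''}$, the out-typical ones contribute at most $\tfrac{3}{2} d_{XY} m \le \tfrac{3}{2} m$ outneighbours each, while the at most $\sqrt{\eps}k$ atypical clusters contribute at most $\sqrt{\eps}\,n$ altogether. Summing gives
\[
|N^+_G(x) \cap \wt{L}| \le \tfrac{3}{2}(2\eta' k)m + \sqrt{\eps}\,n + 4d' n \le 3\eta' n,
\]
which is the first assertion. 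The companion bound $|N^+_G(x) \cap \wt{R}| \ge \beta n/2$ then follows by subtraction: $d^+_G(x) \ge \delta^+(G) \ge \beta n$, and since $\{V_0, \wt{L}, \wt{M}_V, \wt{R}\}$ partitions $V(G)$, using $|V_0| \le d^{1/4}n$ from (\ref{boundV0}), $|\wt{M}_V| \le 17\eta n$ from (\ref{eq:middle}), and $\eta \ll \eta' \ll \beta$, at least $\beta n - d^{1/4}n - 3\eta' n - 17\eta n \ge \beta n/2$ outneighbours must lie in $\wt{R}$.

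The final assertion of (i) then follows by a simple double-count: by the first bound, the total number of edges from $\wt{M}_H^{LR}$ to $\wt{L}$ is at most $|\wt{M}_H^{LR}| \cdot 3\eta' n$, so the number of $y \in \wt{L}$ with more than $|\wt{M}_H^{LR}|/4$ inneighbours in $\wt{M}_H^{LR}$ is at most $12\eta' n$. The only real obstacle in all of this is bookkeeping: carefully tracking the correspondence between the three graphs $G$, $R_{G''}$ and $H$ via the cluster structure and the shift map $a \mapsto a^-$ on $F$, together with the small parameter inequalities from our hierarchy.
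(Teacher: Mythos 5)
Your approach is essentially the same as the paper's: you unpack the shift relation $N^+_H(X^+) = N^+_{R_{G''}}(X)$ to obtain the cluster-level bound $|N^+_{R_{G''}}(X)\cap L| \le 2\eta' k$ (the paper packages this as Lemma~\ref{LRandRLmiddle}(i)), transfer it to a vertex-level degree bound in $G$ via typicality, and then derive the companion lower bound by subtraction and the final assertion by a double count. The paper phrases the first bound as a contradiction argument, but it is the same calculation.

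One displayed step does not quite close as written. You bound the contribution from the $\le 2\eta' k$ clusters $Y \in L$ with $XY \in E(R_{G''})$ by $\tfrac{3}{2}(2\eta' k)m$ using out-typicality and then add $\sqrt{\eps}\,n + 4d'n$. But $\tfrac{3}{2}(2\eta' k)m = 3\eta' km = 3\eta'(n-|V_0|)$ already uses up essentially the entire $3\eta' n$ budget, since $|V_0| = O(\sqrt{d}\,n)$; and $\sqrt{\eps}\,n + 4d'n \le 3\eta'|V_0|$ is not guaranteed by the hierarchy, because $d' \gg \sqrt{d}\,\eta'$ is allowed (e.g.\ when $d \ll (d')^2$). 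The fix is that out-typicality is not needed here at all: every $G''$-outedge of $x$ landing in $\wt{L}$ lies in some pair $(X,Y)$ with $Y$ one of those $\le 2\eta' k$ clusters, so simply bound $|N^+_{G''}(x) \cap Y| \le |Y| = m$ for each, giving $2\eta' km + 4d'n \le 2\eta' n + 4d'n \le 3\eta' n$. With this small correction the proof is complete and matches the paper's.
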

\proof
For (i), suppose $x \in \wt{M}_H^{LR}$ satisfies $|N^+_G(x) \cap \wt{L}| \ge 3 \eta' n$.
Note that Lemma~\ref{Degree sequence of R_{G'} - Modified} implies that $x$ is typical.
Using the definition of `typical' and accounting for vertices added to $V_0$
we still have $|N^+_{G''}(x) \cap \wt{L}| \ge 2 \eta' n$.
Then the cluster $U$ containing $x$ must have (in $R_{G''}$) at least
$2 \eta' k$ outneighbours in $L$.
The definition of $M_H^{LR}$ implies that the successor $U^+$ of $U$ lies in $M_V^{LR}$.
Then $|N^+_H(U^+) \cap L| = |N^+_{R_{G''}}(U) \cap L| \ge 2 \eta' k$,
contradicting Lemma~\ref{LRandRLmiddle}(i). We deduce that $|N^+_G(x) \cap \wt{L}| \le 3 \eta' n$.
It follows that there are at most $3\eta' n |\wt{M}_H^{LR}|$ edges from $\wt{M}_H^{LR}$ to $\wt{L}$,
so the final assertion of (i) holds. For the second bound in (i), we note that
$$
|N^+_{G}(x) \cap \wt{R}| \ge \delta^+(G)- |N^+_G(x) \cap \wt{L}| - |\wt{M}_V| -|V_0|\ge
\beta n- 3 \eta' n - 17 \eta n - d^{1/4} n
\ge \beta n/2.
$$
For (ii), suppose $x \in \wt{M}_V^{LR}$ satisfies $|N^-_G(x) \cap \wt{B}| \ge 3 \eta' n$.
Then the cluster $U\in M_V^{LR}$ containing $x$ must have (in $R_{G''}$) at least
$2 \eta' k$ inneighbours in $B$. Thus in $H$ it has at least $2 \eta' k$ inneighbours in $R$,
contradicting Lemma~\ref{LRandRLmiddle}(i). The remainder of (ii) follows as for (i).
The proof of (iii) is very similar to that of (i) and the proof of (iv) to that of (ii).
\endproof

If $X$ and $Y$ are clusters in $L$, then there are many shifted walks (with respect to~$R_{G''}$ and~$F$)
from $X$ to $Y$. Later we will need that paths corresponding to such walks can be found in $G$,
even if a large number of vertices
in clusters lying on these paths have already been used for other purposes.
This will follow from the following lemma.

\begin{lemma}\label{twinnbs}
Suppose $U$ is a cluster, $u \in U$ and write $s=\eta' k/4$.
\begin{itemize}
\item[(i)] If $U\in R\cup M_V^{RL}$ then there are clusters
$V_1,\dots,V_{s} \in B$ such that $V_iU\in E(R_{G''})$ and
$u$ has at least $d'm/4$ inneighbours in $V_i$ for $1 \le i \le s$.
\item[(ii)] If $U\in T\cup M_H^{RL}$ then there are  clusters
$V_1,\dots,V_{s} \in L$ such that $UV_i\in E(R_{G''})$ and
$u$ has at least $d'm/4$ outneighbours in $V_i$ for $1 \le i \le s$.
\item[(iii)] If $U\in L\cup M_V^{LR}$ then there are clusters
$V_1,\dots,V_{s} \in T$ such that $V_iU\in E(R_{G''})$ and
$u$ has at least $d'm/4$ inneighbours in $V_i$ for $1 \le i \le s$.
\item[(iv)] If $U\in B\cup M_H^{LR}$ then there are clusters
$V_1,\dots,V_{s} \in R$ such that $UV_i\in E(R_{G''})$ and
$u$ has at least $d'm/4$ outneighbours in $V_i$ for $1 \le i \le s$.
\end{itemize}
\end{lemma}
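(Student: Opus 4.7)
The plan is to translate each required edge condition in $R_{G''}$ into an edge condition in $H$. Recall that $H$ has an edge $a\to b$ exactly when $a^- b\in E(R_{G''})$, where $a^-$ is the predecessor of $a$ on~$F$. Equivalently, an edge $VW\in E(R_{G''})$ with $V$ in a set $\mc{A}$ of clusters corresponds to an edge $V^+\to W$ in $H$ with $V^+\in \mc{A}^+$. Since $T,B,M_H$ are precisely the $F$-preimages of $L,R,M_V$, the four parts reduce to counting neighbours of $U$ (or of $U^+$) in $L$ or $R$ within $H$: part~(i) asks for inneighbours of $U$ in $R$, part~(ii) for outneighbours of $U^+$ in $L$, part~(iii) for inneighbours of $U$ in $L$, and part~(iv) for outneighbours of $U^+$ in $R$.

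First I would produce a large pool of candidate clusters. For part~(i), if $U\in R$ then Lemma~\ref{propRL}(i) gives $\delta^-(H[R])\ge \eta'k/2$ so $U$ has at least $\eta'k/2$ inneighbours in $R$; if $U\in M_V^{RL}$ then Lemma~\ref{LRandRLmiddle}(ii) gives $|N_H^-(U)\cap R|>\beta k/3$. Either way $U$ has $\ge \eta'k/2$ inneighbours in $R$ within $H$. Parts~(iii) and (i) are symmetric under swapping $L\leftrightarrow R$, $C'\leftrightarrow D'$, $M_V^{LR}\leftrightarrow M_V^{RL}$, using Lemma~\ref{LRandRLmiddle}(i) in place of (ii). For parts~(ii) and~(iv) the same argument is applied to $U^+$ instead of $U$: since $U\in T\cup M_H^{RL}$ (resp.\ $U\in B\cup M_H^{LR}$) we have $U^+\in L\cup M_V^{RL}$ (resp.\ $U^+\in R\cup M_V^{LR}$), and Lemma~\ref{propRL}(i) or Lemma~\ref{LRandRLmiddle} yields at least $\eta' k/2$ outneighbours of $U^+$ in $L$ (resp.\ $R$) inside $H$.

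Finally I would filter the pool using typicality of $u$. Each candidate cluster $V_i$ satisfies $V_iU\in E(R_{G''})$ (resp.\ $UV_i\in E(R_{G''})$), so the corresponding pair in $G''$ has density $d_{V_iU}\ge d'/2$ by the final property listed before Section~6. Since $u$ is typical (Lemma~\ref{Degree sequence of R_{G'} - Modified}), at most $\sqrt{\eps}k$ clusters $Y$ fail the estimate $(1\pm 1/2)d_{YU}m$ for inneighbours of $u$ in $Y$ (respectively outneighbours, in parts~(ii) and~(iv)). Discarding these atypical clusters leaves at least $\eta'k/2-\sqrt{\eps}k\ge \eta'k/4=s$ candidates, since $\eps\ll \eta'$; and for each surviving $V_i$, $u$ has at least $\tfrac{1}{2}\cdot\tfrac{d'}{2}m=d'm/4$ in- (resp.\ out-) neighbours in $V_i$, as required. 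There is no real obstacle beyond bookkeeping: the only point requiring care is correctly identifying, in each of the four parts, which $H$-edge corresponds to the sought edge of $R_{G''}$ and whether the relevant typicality is the in- or out-side.
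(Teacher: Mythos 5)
Your proposal is correct and follows essentially the same route as the paper: use the $\eta'k/2$-connectivity of $H[L]$ and $H[R]$ (Lemma~\ref{propRL}) together with Lemma~\ref{LRandRLmiddle} to build a pool of $2s$ candidate clusters corresponding to $H$-neighbours of $U$ or $U^+$, then discard at most $\sqrt{\eps}k$ of them using typicality of $u$. You spell out the correspondence between $H$-edges and $R_{G''}$-edges more explicitly than the paper, but the argument is the same.
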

\proof
To prove~(i) recall from Lemma~\ref{propRL} that $H[R]$ is strongly $\eta' k/2$-connected,
and so has minimum indegree at least $\eta' k/2$. Thus any $U\in R$ has inneighbours
$V_1,\dots,V_{2s}$ in $R_{G''}$ such that $V_i \in B$. This also holds for $U\in M_V^{RL}$
by Lemma~\ref{LRandRLmiddle}(ii), since $\beta \gg \eta'$.
In both cases we remove all the~$V_i$ for which $u$ does not have at least $d'm/4$ inneighbours in $V_i$. Then, since $u$ is typical (this was defined before
Lemma~\ref{Degree sequence of R_{G'} - Modified}), we are left with
$2s-\eps^{1/2}k \ge s$ clusters where $u$ has at least $d'm/4$ inneighbours.
Statements~(ii)--(iv) are proved similarly.
\endproof


\subsection{Transitions}\label{transitions}

As in the highly connected case, our general strategy is to find a suitable shifted
walk $W$ and transform it into a Hamilton cycle. We will be able to move easily within $\wt{L}$,
and also within $\wt{R}$, using the same arguments as in the highly connected case.
However, we need other methods to move between $\wt{L}$ and $\wt{R}$, which we will
now discuss. To avoid excessive notation we will just describe how to move from $\wt{R}$ to $\wt{L}$,
as our arguments will be symmetric under the exchange $R \lra L$ (and so $B \lra T$).
To move from $\wt{R}$ to $\wt{L}$ we use two types of `transitions' from $\wt{B}$ to $\wt{L}$.
The first of these is a set of edges $\match_{BL}$ from $\wt{B}$ to $\wt{L}$,
which will `almost' be a matching, and will have certain desirable properties defined as follows.

Given matchings $\match'$ and $\match''$ in $G$ from $\wt{B}$ to $\wt{L}$, we call a
cluster $V$ \emph{full} (with respect to~$\match'\cup \match''$) if it contains
at least $\gamma m$ endvertices of edges in $\match'\cup \match''$.
Given a number $\ell$, we say $V$ is \emph{$\ell$-fair} (with respect to~$\match'\cup \match''$)
if no cluster with distance at most~$\ell$ from~$V$ in~$F$ is full.
A cluster $V$ is \emph{$\ell$-excellent} if it is $\ell$-fair and no cluster with distance at
most~$\ell$ from~$V$ in~$F$ lies in~$M=M_V\cup M_H$ (the `middle').%
     \COMMENT{We use $\ell$-fair for  $\ell=2,3,4,5$ and $\ell$-excellent for $\ell=4,5$.}
We call $\match'\cup \match''$ a \emph{pseudo-matching from $\wt{B}$ to $\wt{L}$}
if the following properties are satisfied:
\begin{itemize}
\item $\match'\cup \match''$ is a vertex-disjoint union of `components',
each of which is either a single edge or a directed path of length 2.
\item Every single edge component has at least one endvertex
in a $4$-excellent cluster, and every directed path of length 2
has both endvertices in $4$-excellent clusters.
\end{itemize}
Given matchings $\match'$ and $\match''$ from $\wt{T}$ to $\wt{R}$, we say that
$\match'\cup \match''$ is a \emph{pseudo-matching from~$\wt{T}$ to~$\wt{R}$} if it
satisfies the analogous properties.%
     \COMMENT{We allow matching edges whose endvertices lie in the same cluster.
But this doesn't seem to create problems.}
As we shall see later, each edge of a pseudo-matching from $\wt{B}$ to $\wt{L}$
allows us to move from $\wt{R}$ to $\wt{L}$. Note that this applies even to the
two edges in any directed paths of length 2: these will enable us to move twice
from  $\wt{R}$ to $\wt{L}$, using the rerouting procedure described later.
Similarly, each edge of a pseudo-matching from $\wt{T}$ to $\wt{R}$
allows us to move from $\wt{L}$ to $\wt{R}$.
We consider pseudo-matchings rather than matchings because in general
$\wt{B} \cap \wt{L} \neq \emptyset$, so the largest matching we can guarantee
is only half as large as the largest pseudo-matching. This would not provide all the
edges we need to move from $\wt{R}$ to $\wt{L}$.

We now choose pseudo-matchings $\match_{BL}$ from $\wt{B}$ to $\wt{L}$
and $\match_{TR}$ from $\wt{T}$ to $\wt{R}$, each of which is maximal subject to the condition
\begin{itemize}
\item $|\match_{BL}|, |\match_{TR}| \le \gamma^2 n$.
\end{itemize}
(Here $|\match_{BL}|$ denotes the number of edges in~$\match_{BL}$.)
Note that $\match_{BL}$ and $\match_{TR}$ may have common vertices.
Recalling that $|M| \le 34\eta k$ by~(\ref{eq:middle}),
\textno at most $2|\match_{BL}|/\gamma m \le 3\gamma k$ clusters are full with respect to~$\match_{BL}$,
and at most $11(3\gamma k+|M|)\le 400\eta k$ clusters are not $5$-excellent with respect to~$\match_{BL}$.
A similar statement holds for~$\match_{TR}$.
&(\clubsuit)

From now on, whenever we refer to a fair or excellent cluster it will be with respect to the pseudomatching $\match_{BL}$. 

As in the highly connected case, we will identify `entries' and `exits'
for edges of the cycle that do not lie in a pair corresponding to an edge of $F$.
For $\match_{BL}$, the \emph{exits} is the set $\exit_{BL}$ of all initial vertices
of edges in~$\match_{BL}$, and the \emph{entries} is the set $\entry_{BL}$
of all final vertices of edges in~$\match_{BL}$. (We will define further exits
and entries in due course.)

At this stage, we do not know how many of the matching edges we
actually will need in $W$, as this depends on a partition of the
exceptional set $V_0$ to be defined in the next subsection.
So, given a cluster $V$, we want to ensure if e.g.~we only use some of the vertices
in $V \cap \exit_{BL}$, then the unused remainder of $V$ and $V^+$ still forms a super-regular pair.
We may not be able to achieve this for any $V$, but if $V$ is $2$-fair,
we know that none of $V^-$, $V$, $V^+$ is full, which gives us the flexibility we need.
We say that a cluster $V$ is {\em nearly $2$-fair} if $V$ is either $2$-fair or at distance 1 on $F$
from a $2$-fair cluster. In the following lemma we choose partitions of the nearly $2$-fair clusters
which allow us to avoid any `interference' between exits/entries and the exceptional set.
Figure~2 illustrates these partitions, and also some additional sets that will be defined in
Subsection~\ref{twins} (`twins' of exits/entries and an ideal to preserve super-regularity).

\begin{figure}\label{fig:split}
\includegraphics[scale=0.5]{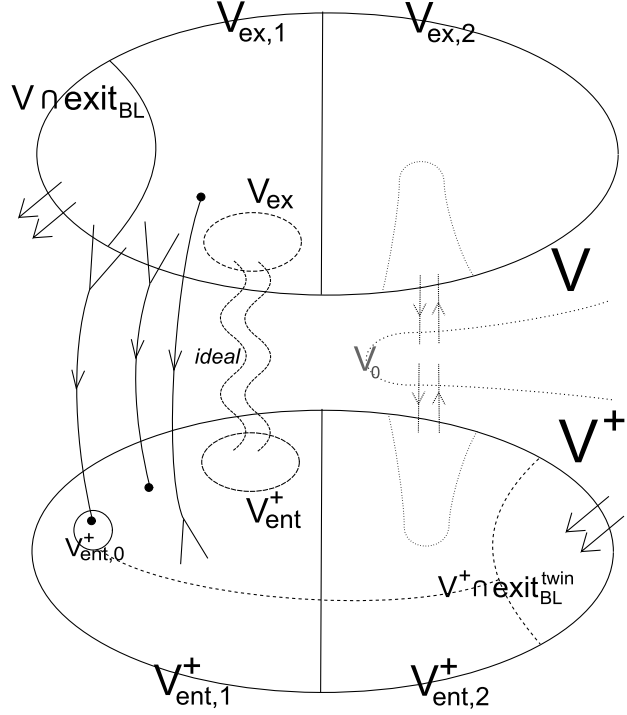}
\caption{Partitions avoiding interference between exits/entries and $V_0$.}
\end{figure}

We define our partitions of the nearly $2$-fair clusters as follows.
For every $2$-fair cluster $V$ with $V \cap \exit_{BL} \ne \emptyset$
we will choose a partition $V_{ex,1},V_{ex,2}$ of $V$ and a partition $V^+_{ent,1}, V^+_{ent,2}$ of $V^+$.
Also, for every $2$-fair cluster $V$ with $V \cap \entry_{BL} \ne \emptyset$
we choose a partition $V_{ent,1},V_{ent,2}$ of $V$ and a partition $V^-_{ex,1}, V^-_{ex,2}$ of $V^-$.
There is no conflict in our notation, i.e.~we will not e.g.~define $V_{ex,1}$ twice,
since when $V \cap \exit_{BL} \ne \emptyset$ we must have $V \in B$,
whereas when $V^+ \cap \entry_{BL} \ne \emptyset$ we must have $V^+ \in L$, so $V \in T$,
and these cannot occur simultaneously.
We also define the analogous partitions with respect to $\match_{TR}$,
although for simplicity we will not explicitly introduce notation for them,
as we will mainly focus on the case when only $\match_{BL}$
is needed for the argument. So for each cluster $V$ we will choose at most~$4$ partitions.
We let $V_{\nd}$ be the intersection of all the second parts of the at most 4 partitions
defined for~$V$. (So if all 4 partitions are defined, then $V_\nd$ is the intersection
of the sets $V_{ex,2},V_{ent,2}$ defined with respect to $\match_{BL}$ and the 2 analogous sets
defined with respect to $\match_{TR}$. If only 3 partitions are defined for $V$, then $V_\nd$ is the
intersection of only~3 sets etc. If no partition is defined for $V$, then $V_\nd=V$.)
We let $X_\nd$ be the union of $V_\nd$ over all clusters~$V$.
We choose these partitions to satisfy the following lemma.%
\COMMENT{this lemma is now much more complicated than in the 0704 version because the previous lemma
wasn't strong enough to prove Lemma 33 in the case where $V$ intersects $\exit_{BL}$
but $|V \cap \Exit_{BL}| \le 10 \eps m$. In this case the vertices
$V^+_{ent,0} \setminus \Exit_{BL}^{twins}$ didn't
have sufficiently high degree in the random set $V_{ex}$ (as defined in that version)
in order to guarantee superregularity.}

\begin{lemma}\label{split}
The partitions $V_{ex,1}, V_{ex,2}$ and $V_{ent,1}, V_{ent,2}$
can be chosen with the following properties (when they are defined).
\begin{itemize}
\item[(i)]  $|V_{ex,1}|=m/2$, $|V_{ent,1}|=m/2$.
\item[(ii)] For any $2$-fair cluster $V$ with $V \cap \exit_{BL} \ne \emptyset$ we have
$V_{ex,2}\cap \exit_{BL}= \emptyset$. Moreover, there is
a set $V^+_{ent,0}\subseteq V^+_{ent,1}$ of size at most $10\eps m$ such that:
\begin{itemize}
\item[$\bullet$] Each vertex in $V^+\sm V^+_{ent,0}$ has at least $dm/40$ inneighbours in $V_{ex,1}\sm \exit_{BL}$.
\item[$\bullet$] Each vertex in $V^+_{ent,0}$ has at least $dm/8$ inneighbours in $V_{ex,1}$.
\item[$\bullet$] Each vertex in $V$ has at least $dm/20$ outneighbours in $V^+_{ent,1}$.
\end{itemize}
\item[(iii)] For any $2$-fair cluster $V$ with $V \cap \entry_{BL} \ne \emptyset$ we have
$V_{ent,2}\cap \entry_{BL}= \emptyset$. Moreover, there is
a set $V^-_{ex,0}\subseteq V^-_{ex,1}$ of size at most $10\eps m$ such that:
\begin{itemize}
\item[$\bullet$] Each vertex in $V^-\sm V^-_{ex,0}$ has at least $dm/40$ outneighbours in $V_{ent,1}\sm \entry_{BL}$.
\item[$\bullet$] Each vertex in $V^-_{ex,0}$ has at least $dm/8$ outneighbours in $V_{ent,1}$.
\item[$\bullet$] Each vertex in $V$ has at least $dm/20$ inneighbours in $V^-_{ex,1}$.
\end{itemize}
\end{itemize}
Also, the analogues of statements (i)--(iii) for $\match_{TR}$ hold. Moreover,
\begin{itemize}
\item[(iv)] Every vertex in~$V_0$ has at least $\beta n/20$ inneighbours and at least $\beta n/20$ outneighbours
in $X_\nd$.
\item[(v)] If $d^+_{(1-\beta)n/2}(G)\ge n/2$ then there are sets $S'_B\subseteq \wt{B}\cap X_\nd$ and
$S'_T\subseteq \wt{T}\cap X_\nd$ such that $|S'_B|,|S'_T|\ge \beta n/80$ and such that every vertex
in $S'_B\cup S'_T$ has outdegree at least $n/2$ in~$G$.
\item[(vi)] If $d^-_{(1-\beta)n/2}(G)\ge n/2$ then there are sets $S'_L\subseteq \wt{L}\cap X_\nd$ and
$S'_R\subseteq \wt{R}\cap X_\nd$ such that $|S'_L|,|S'_R|\ge \beta n/80$ and such that every vertex
in $S'_L\cup S'_R$ has indegree at least $n/2$ in~$G$.
\end{itemize}
\end{lemma}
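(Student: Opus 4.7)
My plan is to build all the required partitions simultaneously by a single random process and then verify each of (i)--(vi) via Chernoff bounds (Proposition~\ref{chernoff}) together with a union bound. The key preliminary observation is that if $V$ is $2$-fair with respect to $\match_{BL}$ then $V$ itself is not full, and hence both $|V\cap \exit_{BL}|$ and $|V\cap \entry_{BL}|$ are less than $\gamma m$; the analogous bounds hold for $\match_{TR}$. Consequently every containment constraint imposed on the partitions below has size $O(\gamma m+\eps m)\ll m/4$, so the random partitions are essentially uniform on $V$ and the Chernoff tails are all of order $e^{-\Omega(m)}$.

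For (ii), given a $2$-fair $V$ with $V\cap \exit_{BL}\ne\emptyset$, I first define
\[ V^+_{ent,0}\,:=\,\{\, v\in V^+ : |N^-_G(v)\cap (V\sm \exit_{BL})|< dm/8\, \}. \]
Since $(V,V^+)_G$ is $(10\eps,d/4)$-super-regular and $V\sm \exit_{BL}$ has size at least $(1-\gamma)m$, $10\eps$-regularity forces $|V^+_{ent,0}|\le 10\eps m$. Moreover, for $v\in V^+_{ent,0}$ the super-regularity bound $|N^-_G(v)\cap V|\ge dm/4$ yields $|N^-_G(v)\cap (V\cap\exit_{BL})|\ge dm/8$ \emph{deterministically}, and since $V\cap\exit_{BL}\sub V_{ex,1}$ by construction this immediately gives the second bullet. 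Next, pick $V_{ex,1}$ uniformly at random from size-$m/2$ subsets of $V$ containing $V\cap\exit_{BL}$; for each $v\in V^+\sm V^+_{ent,0}$ the quantity $|N^-_G(v)\cap (V_{ex,1}\sm \exit_{BL})|$ is hypergeometric with mean $\Omega(dm)$, and Proposition~\ref{chernoff} together with a union bound over $v$ gives the first bullet. Finally, pick $V^+_{ent,1}$ uniformly at random from size-$m/2$ subsets of $V^+$ containing $V^+_{ent,0}$ and apply Chernoff once more, using $|N^+_G(v)\cap V^+|\ge dm/4$ from super-regularity, to obtain the third bullet for every $v\in V$. Part (iii) is identical with edge directions reversed, and the $\match_{TR}$ analogues go through by the same argument.

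For (iv)--(vi) I exploit that each $V_\nd$ is an intersection of at most four independently chosen size-$m/2$ subsets of $V$ (taking $V_\nd=V$ when no partition is defined), each subject only to containment constraints of size $\ll m/4$. For any fixed set $S\sub V$ with $|S|=\Omega(m)$, iterating the hypergeometric Chernoff bound over the (at most) four partitions gives $|S\cap V_\nd|\ge |S|/16-o(m)$ with probability $1-e^{-\Omega(m)}$. Specifically, for each $v\in V_0$ (of which there are $\le d^{1/4}n$), summing over clusters yields $|N^\pm(v)\cap X_\nd|\ge (d^\pm(v)-o(n))/16\ge \beta n/20$, using $\delta^\pm(G)\ge \beta n$ from Section~\ref{prelims}; this gives (iv). For (v), the hypothesis $d^+_{(1-\beta)n/2}(G)\ge n/2$ produces at least $(1+\beta)n/2$ vertices of outdegree at least $n/2$, and combined with $|\wt{B}|,|\wt{T}|=n/2\pm 19\eta n$ from Lemma~\ref{propRL}(ii), inclusion--exclusion places at least $\beta n/3$ of them in each of $\wt{B}$ and $\wt{T}$; the same intersection argument then gives $|S'_B|,|S'_T|\ge (\beta n/3)/16\ge \beta n/80$, and (vi) is symmetric using indegrees and the pair $(\wt{L},\wt{R})$.

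The main subtlety I expect is the interplay of two regimes in part (ii): the first bullet holds only outside the small set $V^+_{ent,0}$, while inside $V^+_{ent,0}$ we obtain the weaker second bullet. This split is exactly what lets us absorb vertices with few inneighbours in $V\sm\exit_{BL}$ without spoiling the Chernoff estimate, handling them deterministically via super-regularity. Beyond this, the remaining difficulty is bookkeeping: each cluster may carry up to four random partitions (two from $\match_{BL}$ and two from $\match_{TR}$) with different containment constraints coming from $V\cap\exit_{BL}$, $V\cap\entry_{BL}$, or the small sets $V^{\pm}_{*,0}$, but since every such constraint has size $\ll m/4$, the conditional distributions stay close to uniform and the union bound over the $O(k|V_0|)$ bad events is easily absorbed.
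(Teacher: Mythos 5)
Your proposal is correct and follows essentially the same route as the paper's own proof: define $V^+_{ent,0}$ as the small set of vertices with few inneighbours in $V\sm\exit_{BL}$ (bounded by $10\eps m$ via $\eps$-regularity), handle the second bullet deterministically from super-regularity, obtain the first and third bullets by choosing the size-$m/2$ parts uniformly at random subject to the required containments and applying the hypergeometric Chernoff bound, and prove (iv)--(vi) by observing that each non-excluded vertex lies in $X_\nd$ with probability at least $(1/2)^4$. The only cosmetic differences are that the paper splits into cases depending on whether $|V\cap\exit_{BL}|<20\eps m$ (which your threshold $dm/8$ makes unnecessary) and is slightly more explicit about pruning the excluded vertices before the Chernoff step in (v), which your $o(n)$ error term absorbs.
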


\proof
Consider a $2$-fair cluster $V$ with $V \cap \exit_{BL} \ne \emptyset$.
If $|V \cap \exit_{BL}| < 20\eps m$ we set $V^+_{ent,0}=\emptyset$.
Otherwise, if $|V \cap \exit_{BL}| \ge 20\eps m$ we define $V^+_{ent,0}$
to be the set of vertices in $V^+$ that have less than $\frac{d}{8}|V \sm \exit_{BL}|$
inneighbours in $V \sm \exit_{BL}$. Recall that $(V,V^+)_{G'}$ is $(10\eps,d/4)$-super-regular.
Since $V$ is $2$-fair we deduce that $|V^+_{ent,0}|\le 10\eps m$.

Now consider constructing a partition of $V$ into $V_{ex,1}$ and $V_{ex,2}$ as follows.
Include $V\cap\exit_{BL}$ into $V_{ex,1}$ and distribute the remaining vertices of $V$
between $V_{ex,1}$ and $V_{ex,2}$ so that $|V_{ex,1}|=m/2$ (recall that $m$ is even),
choosing uniformly at random between all possibilities.
Note that since $V$ is $2$-fair the probability that a vertex of $V \setminus \exit_{BL}$ is included in $V_{ex,1}$
is at least $1/3$. Then by the Chernoff bound for the hypergeometric distribution
(Proposition~\ref{chernoff}), with high probability
each vertex in $V^+\sm V^+_{ent,0}$ has at least $\frac{1}{4}\frac{d}{8}|V \sm \exit_{BL}| \ge dm/40$
inneighbours in $V_{ex,1}\sm \exit_{BL}$.
Also, by definition of $V_{ent,0}^+$ and super-regularity, each vertex in $V^+_{ent,0}$ has at least
$dm/4-\frac{d}{8}|V \sm \exit_{BL}| \ge dm/8$ inneighbours in $V_{ex,1}$.%
\COMMENT{In fact these inneighbours lie in $V \cap \exit_{BL}$, so $V^+_{ent,0}$ can only be non-empty
when $|V \cap \exit_{BL}| \ge dm/8$.}
Next, consider similarly constructing a partition of $V^+$ into $V_{ent,1}^+$ and $V_{ent,2}^+$ as follows.
Include $V_{ent,0}^+$ into $V_{ent,1}^+$ and distribute the remaining vertices of $V^+$
uniformly at random between $V_{ent,1}^+$ and $V_{ent,2}^+$ so that $|V_{ent,1}^+|=m/2$.
Note that any vertex in $V$ has outdegree at least $dm/4-|V^+_{ent,0}| \ge dm/5$ in $V^+ \sm V^+_{ent,0}$.
Again, the probability that a given vertex from $V^+ \sm V^+_{ent,0}$ is included in $V_{ent,1}^+$
is at least $1/3$, so with high probability
each vertex in $V$ has at least $(dm/5)/4 = dm/20$ outneighbours in $V^+_{ent,1}$.
This shows the existence of the partitions required for (ii).
The existence of partitions satisfying (iii) is proven in the same way.

For each vertex $v$ in a cluster $V$ which does not lie in
$\exit_{BL} \cup V_{ent,0} \cup \entry_{BL} \cup V_{ex,0}$
or the analogous set defined with respect to $\match_{TR}$,%
\COMMENT{I.e. $\exit_{TR} \cup V_{ent,0} \cup \entry_{TR} \cup V_{ex,0}$,
where $V_{ent,0}$ and $V_{ex,0}$ are with respect to  $\match_{TR}$.}
the probability that it lies in the second part of each of the (up to) 4
partitions defined on $V$ (and thus lies in $X_\nd$) is at least
$(1/2)^4$. Since $\delta^0(G) \ge \beta n$, a Chernoff bound
(Proposition~\ref{chernoff}) implies that we can also choose the
partitions to satisfy (iv).

Now suppose that  $d^+_{(1-\beta)n/2}(G)\ge n/2$.
Then $G$ contains at least $(1+\beta)n/2$ vertices of outdegree at least $n/2$.
So Lemma~\ref{propRL}(ii) implies that $\wt{B}$ contains a set $\wt{B}'_{\rm large}$ of
at least $\beta n/3$ vertices whose outdegree in $G$ is at least $n/2$.
$\wt{B}'_{\rm large}$ in turn contains a set $\wt{B}_{\rm large}$ of at least $\beta n/4$ vertices
which do not lie in $\exit_{BL} \cup V_{ent,0} \cup \entry_{BL} \cup V_{ex,0}$
(for any cluster~$V$) or in the analogous set defined with respect to $\match_{TR}$.
Similarly as for (iv), with high probability we have $|\wt{B}_{\rm large} \cap X_\nd| \ge (\beta n/4)/20$.
Similar arguments applied to $\wt{T}$, $\wt{L}$ and $\wt{R}$
show that we can choose the partitions to satisfy (v) and (vi).
\endproof

\subsection{The exceptional set}

Next we will assign each vertex $x$ in the exceptional set $V_0$ an {\em in-type} which is one of $T$ or $B$
and an {\em out-type} which is one of $L$ or $R$. Combining these two types together we will
say each vertex of $V_0$ gets a type of the form $TR,TL,BR$ or $BL$.
We will also abuse notion and think of $TL$ as the set of all vertices of $V_0$ of
in-type $T$ and out-type $L$, etc.  We write $\wt{T}^*$ for the set of all
those vertices which belong to the set $X_\nd$ defined in the previous subsection
as well as to clusters of $T$ which are $5$-excellent with respect to both
$\match_{BL}$ and~$\match_{TR}$. The other sets $\wt{B}^*$ etc.~are defined similarly.

\begin{lemma}\label{excvs}
We can assign each vertex $x\in V_0$ an in- and an out-type such that the following
conditions are satisfied.
\begin{itemize}
\item[(i)] There is a matching $\match_T$ from $\wt{T}^*$ to the set of vertices of in-type $T$.
\item[(ii)] There is a matching $\match_B$ from $\wt{B}^*$ to the set of vertices of in-type $B$.
\item[(iii)] There is a matching $\match_L$ from the set of vertices of out-type $L$ to $\wt{L}^*$.
\item[(iv)] There is a matching $\match_R$ from the set of vertices of out-type $R$ to $\wt{R}^*$.
\item[(v)] The endvertices of the matchings $\match_T$, $\match_B$, $\match_L$, $\match_R$
in $V(G) \sm V_0$ are all distinct. Let $V^*_0$ denote the set of all these endvertices.
\item[(vi)] No cluster of $R_{G'}$ contains more than $\gamma m$ vertices of $V_0^*$.%
   \COMMENT{It is important to consider the endpoints collectively, so that we can change
types of vertices without violating this property.}
\item[(vii)] Subject to the above conditions, $\lvert |TR| - |BL| \rvert$ is minimal.
\end{itemize}
\end{lemma}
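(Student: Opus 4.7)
The plan is to first establish by a greedy construction that the set of type assignments for which matchings satisfying (i)--(vi) exist is non-empty; condition (vii) then follows automatically by taking from this set an assignment that minimizes $\lvert |TR|-|BL| \rvert$.

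The key preliminary observation is that every $x \in V_0$ has many usable in- and out-neighbours. By Lemma~\ref{split}(iv), $x$ has at least $\beta n/20$ in-neighbours in $X_\nd$. The set $\wt{T}^* \cup \wt{B}^*$ is obtained from $X_\nd$ by discarding the vertices lying in clusters of $M_H$ or in clusters which are not $5$-excellent with respect to $\match_{BL}$ or $\match_{TR}$. By~(\ref{eq:middle}) and $(\clubsuit)$, the total number of such clusters is at most $17\eta k + 800\eta k$, accounting for at most $820\eta n$ vertices. Since $\eta \ll \beta$, this leaves $|N^-(x) \cap (\wt{T}^* \cup \wt{B}^*)| \ge \beta n/30$, and the analogous bound for $|N^+(x) \cap (\wt{L}^* \cup \wt{R}^*)|$ holds by symmetry.

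Now I would list $V_0 = \{x_1, \dots, x_r\}$ and process its vertices in order. For each $x_i$, pick distinct vertices $y_i^- \in N^-(x_i) \cap (\wt{T}^* \cup \wt{B}^*)$ and $y_i^+ \in N^+(x_i) \cap (\wt{L}^* \cup \wt{R}^*)$ subject to two constraints: they avoid all previously chosen vertices, and each of the two clusters containing them still has fewer than $\gamma m$ previously chosen vertices in it. Assign the in-type of $x_i$ to be $T$ or $B$ according to whether $y_i^- \in \wt{T}^*$ or $\wt{B}^*$, and assign the out-type analogously from $y_i^+$. A valid pair $(y_i^-, y_i^+)$ always exists: at step $i$, at most $2|V_0| \le 2 d^{1/4} n$ vertices are forbidden by the first constraint, and at most $2|V_0|/(\gamma m)$ clusters are currently full, contributing at most $2d^{1/4}n/\gamma$ further forbidden vertices. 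Since $d \ll \gamma \ll \beta$ the total exclusion is far smaller than $\beta n/30$, so a valid choice remains. Collecting all chosen $y_i^\pm$ into $V_0^*$ and splitting the edges $y_i^- x_i$ and $x_i y_i^+$ by the assigned types produces the four matchings $\match_T, \match_B, \match_L, \match_R$ satisfying (i)--(vi).

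Property (vii) is then obtained by selecting, among the non-empty finite collection of feasible type assignments, one with $\lvert |TR|-|BL| \rvert$ minimal. The main obstacle throughout is maintaining the cluster cap (vi) while greedily satisfying (i)--(v) simultaneously for both in- and out-directions, but the hierarchy $d \ll \gamma \ll \beta$ is designed precisely to leave a comfortable margin for this.
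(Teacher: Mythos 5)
Your proposal is correct and follows essentially the same greedy argument as the paper: you bound the number of available in-/out-neighbours via Lemma~\ref{split}(iv) and the smallness of the "bad" clusters, show the forbidden set at each step is much smaller than this supply, and then obtain (vii) by minimality over the non-empty set of feasible assignments. The only difference is cosmetic bookkeeping (you pass directly to $\wt{T}^*\cup\wt{B}^*$, while the paper first isolates $\wt{T}\cap X_{\nd}$ or $\wt{B}\cap X_{\nd}$ before restricting to $5$-excellent clusters), plus a trivial slack adjustment to the cluster cap during the greedy phase.
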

\proof
To show that such a choice is possible, we claim that we can proceed sequentially
through the vertices of $V_0$, assigning in-types and out-types and greedily extending
the appropriate matchings. Since $|V_0| \le d^{1/4}n$ by~(\ref{boundV0}), at any stage in this process
we have constructed at most $2d^{1/4}n$ edges of the matchings
$\match_T$, $\match_B$, $\match_L$, $\match_R$, and so there are at most
$2d^{1/4}n/\gamma\le d^{1/5} n$ vertices
belonging to clusters which contain at least $\gamma m$ endpoints of the matchings. In addition,
we have to avoid all the at most $800\eta n$
vertices lying in clusters which are not $5$-excellent with respect to both $\match_{BL}$
and $\match_{TR}$. So in total we have to avoid at most $801\eta n$ vertices in each step.
But by Lemma~\ref{split}(iv) each exceptional vertex has in- and outdegree at least $\beta n/20$ in~$X_\nd$,
so Lemma~\ref{propRL}(ii) implies that any vertex has at least $\beta n/50$ inneighbours in $\wt{T} \cap X_\nd$
or at least $\beta n/50$ inneighbours in $\wt{B} \cap X_\nd$.
A similar statement holds for outneighbours in $\wt{L} \cap X_\nd$ or $\wt{R} \cap X_\nd$,
Thus a greedy procedure can satisfy conditions (i)--(vi),
and then we can choose an assignment to satisfy~(vii).
\endproof

Note that one advantage of choosing $V_0^*$ in $X_\nd$ is that $V_0^*$ will be disjoint
from the sets $\Entry_{BL}$ etc.
The strategy of the remaining proof depends on the value of $|TR| - |BL|$.
We will only consider the case $|TR|-|BL| \ge 0$,
as the argument for $|TR|-|BL| \le 0$ is identical,
under the symmetry $L \lra R$, $T \lra B$.
When $|TR|>|BL|$ only $\match_{BL}$ is needed for the argument.
When $|TR|=|BL|$ we do not need either pseudo-matching,
although the case $|TR|=|BL|=0$ has additional complications.

\subsection{Twins}\label{twins}

When $|TR|>|BL|$, we obtain one type of transitions from $\wt{B}$ to $\wt{L}$
by fixing a pseudo-matching $\match'_{BL} \sub \match_{BL}$.
The other type of transitions uses a set $\Entry_{RL} \sub \wt{M}_V^{RL}$, as explained below.
We define exits $\Exit_{BL} \sub \exit_{BL}$ and entries $\Entry_{BL} \sub \entry_{BL}$
of $\match'_{BL}$ as for $\match_{BL}$.
Lemma~\ref{MBL2cor}(i) below implies that $|\match_{BL}|+|\wt{M}_V^{RL}| \ge |TR|-|BL|.$
Thus we can fix sets $\match'_{BL}$ and $\Entry_{RL}$ to satisfy%
  \COMMENT{This may not hold in case $(\star\star)$, but it's easier for the reader if
we don't say this yet.}
$$|\match'_{BL}|+|\Entry_{RL}| = |TR|-|BL|.$$

For each edge $xy \in \match'_{BL}$ we will choose
`twins' $x^{twin}$ and $y^{twin}$ of its endpoints. To use the edge $xy$ in our shifted walk
$W$, we will enter the cycle of $F$ containing $x$ at $x^{twin}$, wind around the cycle to $x$,
use the edge $xy$, wind around the cycle containing $y$, and then leave it at $y^{twin}$.
A vertex that is the midpoint of a directed path of length 2 in $\match'_{BL}$ will actually have two twins,
but we will not complicate the notation to reflect this, as it will be clear from the context
which twin is intended. Thus we obtain two `twin maps' $x \mapsto x^{twin}$ and $y \mapsto y^{twin}$.
We also use the notation $S^{twin}=\{x^{twin}: x\in S\}$ when $S$ is a set of vertices.
The twin maps will be injective on $\Exit_{BL}$ and on $\Entry_{BL}$, in that
$|\Exit_{BL}|=|\Exit_{BL}^{twin}|$, $|\Entry_{BL}|=|\Entry_{BL}^{twin}|$,
and moreover $|V\cap \Exit_{BL}|=|V^+ \cap \Exit_{BL}^{twin}|$,
$|V\cap \Entry_{BL}|=|V^- \cap \Entry_{BL}^{twin}|$.

Our choice of $x^{twin}$ depends on whether the cluster $V$ containing $x$ is $2$-fair
with respect to $\match_{BL}$. If $V$ is not $2$-fair then we fix arbitrary perfect matchings
in $G'$ from $V^-$ to $V$ and from $V$ to $V^+$ (using Lemma~\ref{regmatch}).
Then for every $x \in V \cap \Exit_{BL}$ we let $x^{twin}$ be the vertex $x$ is matched to in $V^+$
and for every $x \in V \cap \Entry_{BL}$ we let $x^{twin}$ be the vertex in $V^-$ matched to $x$.

On the other hand, if $V$ is $2$-fair then we make use of the partitions defined in Lemma~\ref{split}.
If $V \cap \Exit_{BL} \ne \emptyset$ then we choose twins for vertices in $V \cap \Exit_{BL}$
within $(V^+_{ent,2}\cup V^+_{ent,0})\sm V^*_0$, arbitrarily subject to the condition that if $|V \cap \Exit_{BL}| > 20\eps m$
then $(V \cap \Exit_{BL})^{twin}$ contains $V^+_{ent,0}$. (Recall that $V^*_0$ was defined in Lemma~\ref{excvs}(v).) If $V$ is $2$-fair, we will also choose some ideal of $(V\sm \Exit_{BL},V^+\sm \Exit_{BL}^{twin})_{G'}$ to create
flexibility when selecting further sets while preserving super-regularity.
To do this, recall that $(V,V^+)_{G'}$ was
$(10\eps,d/4)$-super-regular. Together with Lemma~\ref{split}(i),(ii) this implies that
$(V_{ex,1}\sm \Exit_{BL},V^+_{ent,1}\sm \Exit_{BL}^{twin})_{G'}$ is $(30\eps,d/40)$-super-regular.
Next we randomly choose sets $V_{ex}\subseteq V_{ex,1}\sm \Exit_{BL}$ and $V^+_{ent}\subseteq V^+_{ent,1}\sm \Exit_{BL}^{twin}$
of size $80dm$. Lemma~\ref{ideal} (applied with $\theta=160d$ and $n=m/2$) implies that with high probability
$(V_{ex},V^+_{ent})$ is an $(\sqrt{\eps},d^2)$-ideal for $(V_{ex,1}\sm \Exit_{BL},V^+_{ent,1}\sm \Exit_{BL}^{twin})_{G'}$.
Moreover, Lemma~\ref{split}(ii) and the Chernoff bound (Proposition~\ref{chernoff}) together imply that with
high probability
every vertex in $V_{ex,2}$ has at least $d^2m$ outneighbours in $V^+_{ent}$ while every vertex in
$V^+_{ent,2}$ has at least $d^2m$ inneighbours in $V_{ex}$. Altogether this shows that we can choose $(V_{ex},V^+_{ent})$
to be a $(\sqrt{\eps},d^2)$-ideal for $(V\sm \Exit_{BL},V^+\sm \Exit_{BL}^{twin})_{G'}$.

Similarly, if $V \cap \Entry_{BL} \ne \emptyset$ then we choose
twins for vertices in $V \cap \Entry_{BL}$ in $(V^-_{ex,2}\cup
V^-_{ex,0}) \setminus V_0^*$, arbitrarily subject to the condition
that if $|V \cap \Entry_{BL}| > 20\eps m$ then $(V \cap
\Entry_{BL})^{twin}$ contains $V^-_{ex,0}$. We also choose a
$(\sqrt{\eps},d^2)$-ideal $(V^-_{ex},V_{ent})$ for $(V^-\sm
\Entry_{BL}^{twin},V\sm \Entry_{BL})_{G'}$. Then we define $X_{BL}$
to be the union of the sets $V_{ex}$ and $V_{ent}$ defined using
$\match'_{BL}$ over all nearly 2-fair clusters~$V$. Note that these
sets will play a similar role to the sets $V^*$ used in the highly
connected case, in that they preserve super-regularity even if when
some vertices are deleted. We let
$$X^*_{BL}:=X_{BL}\cup\Exit_{BL}\cup\Entry_{BL}\cup\Exit^{twin}_{BL}\cup\Entry^{twin}_{BL}.$$
Note that $X^*_{BL}\cap V^*_0=\emptyset$. Define $X_{TR}$ and
$X^*_{TR}$ similarly using the matching $\match'_{TR}$.

We will also choose twins for vertices in $\Entry_{RL}$ such that if $x \in V \in M_V^{RL}$
then $x^{twin} \in V^- \in M_H^{RL}$. Lemma~\ref{middleedges}(iii),(iv) implies that
each $x\in \Entry_{RL}$ has many inneighbours in~$\wt{B}$
while~$x^{twin}$ has many outneighbours in~$\wt{L}$.
Writing $C$ for the cycle of $F$ containing the cluster containing $x$,
we get a transition from~$\wt{B}$ to $\wt{L}$ by entering $C$ at $x$
from an inneighbour in~$\wt{B}$, traversing $C$,
then exiting $C$ at $x^{twin}$ to an outneighbour in~$\wt{L}$.

Now we describe how to choose twins for $\Entry_{RL}$,
and also some ideals to create flexibility while preserving super-regularity.
Call a cluster~$V$ \emph{$M^{RL}$-full} if it contains at least $\gamma m$ vertices in
$\Entry_{RL}$. Say $V$ is \emph{$\ell$-good}
(with respect to $\match_{BL}$ and $\Entry_{RL}$)
if $V$ is $\ell$-fair with respect to~$\match_{BL}$ and no cluster with
distance at most~$\ell$ from~$V$ on $F$ is $M^{RL}$-full.
Since $|\Entry_{RL}| \le |V_0| \le d^{1/4}n$ the number
of $M^{RL}$-full clusters is at most $\gamma^{-1}d^{1/4}n/m$.
Recalling that by~($\clubsuit$) at most $3\gamma k$ clusters are full,%
   \COMMENT{Formerly used excellent to bound good, but incorrectly used $\gamma$ for $\eta$,
which is too weak.}
\textno at most $9(3\gamma k+\gamma^{-1}d^{1/4}n/m) \le 30\gamma k$ clusters are not $4$-good.
&(\diamondsuit)

Consider a cluster $V \in M_V^{RL}$ with $V \cap \Entry_{RL} \ne
\emptyset$. If $V$ is not $2$-good then we choose a perfect matching
in~$G'$ from $V^-$ to $V$ (using Lemma~\ref{regmatch}), and for each
$x \in V$ let $x^{twin}$ be the vertex in $V^-$ that is matched to
$x$. Now suppose that $V$ is $2$-good. Then none of $V^-$, $V$ and
$V^+$ is full with respect to $\match_{BL}$ or $M^{RL}$-full. Since
$(V^-,V)_{G'}$ is $(10\eps,d/4)$-super-regular and $|V \cap
\Entry_{RL}| < \gamma m$ we can apply Lemma~\ref{tw} to obtain a set
$Y \sub V^-$ with $|Y|=|V \cap \Entry_{RL}|$ such that $(V^- \sm Y,
V \sm \Entry_{RL})_{G'}$ is $(20\eps,d/8)$-super-regular. Then we
let the twin map be an arbitrary bijection from $V \cap \Entry_{RL}$
to $Y$. Next we apply Lemma~\ref{ideal} with $\theta=32d$ to obtain
a $(\sqrt{\eps},d^2)$-ideal for $(V^- \sm \Entry_{RL}^{twin}, V \sm
\Entry_{RL})_{G'}$, which we will call $(V^-_{ex},V_{ent})$.
(Similarly to the earlier argument  and the one in the next
paragraph, the partitions of $V(G) \sm V_0$ into $\wt{L}$, $\wt{R}$
and $\wt{M}_V$ and into $\wt{T}$, $\wt{B}$ and $\wt{M}_H$ guarantee
that there is no conflict
with our previous notation.) Now we define%
   \COMMENT{1. Should we include $\Entry_{RL}$ and twins? (cf $X^*_{BL}$) - done this
2. Changed definition of $X^*$.}
$$X^*_{RL} :=  \bigcup_{V \in M_V^{RL}} V_{ent} \cup\bigcup_{V \in M_H^{RL}} V_{ex}
\cup \Entry_{RL} \cup \Entry_{RL}^{twin},$$
\begin{equation} \label{Xstar}
X^*:=X^*_{BL}\cup X^*_{RL}.
\end{equation}
Then $|X^*| \le \gamma n$ and $X^*\cap V^*_0=\emptyset$. (The latter follows since the vertices
in $V^*_0$ lie in 5-excellent clusters and so $X^*_{RL}\cap V^*_0=\emptyset$.) We also define
$$V_{entry}:=V\cap (\Entry_{BL}\cup\Exit_{BL}^{twin}\cup\Entry_{RL}),$$
$$V_{exit}:= V\cap (\Exit_{BL}\cup\Entry_{BL}^{twin}\cup\Entry_{RL}^{twin}).$$
Note that $\Entry_{BL} \sub \wt{L}$, $\Exit_{BL}^{twin} \sub \wt{R}$
and $\Entry_{RL} \sub \wt{M}_V^{RL}$. Since $\wt{L}$, $\wt{R}$ and $\wt{M}_V$
partition $V(G) \sm V_0$, any vertex will be used at most once to enter a cluster.
In particular,
\[ V_{entry} = \begin{cases}
V \cap \Entry_{BL} & \text{if } V \in L; \\
V \cap \Exit_{BL}^{twin} & \text{if } V \in R; \\
V \cap \Entry_{RL} & \text{if } V \in M_V^{RL}.
\end{cases}
\]
Similarly, $\Exit_{BL} \sub \wt{B}$, $\Entry_{BL}^{twin} \sub \wt{T}$
and $\Entry_{RL}^{twin}\sub \wt{M}_H^{RL}$. Since $\wt{T}$, $\wt{B}$ and $\wt{M}_H$
also partition $V(G) \sm V_0$, any vertex will be used at most once to exit a cluster
and
\[ V_{exit} = \begin{cases}
V \cap \Exit_{BL} & \text{if } V \in B; \\
V \cap \Entry_{BL}^{twin} & \text{if } V \in T; \\
V \cap \Entry_{RL}^{twin} & \text{if } V \in M_H^{RL}.
\end{cases}
\]
Some vertices may be used for both exits and entrances, and they will have two twins.
We summarise the properties of twins with the following lemma.

\begin{lemma}\label{twinprops}
Suppose that $|TR|>|BL|$. Then
\begin{itemize}
\item[(i)] $|\match'_{BL}|+|\Entry_{RL}|=|TR|-|BL|$.
\item[(ii)] Every cluster intersects at most one of $\Entry_{BL}$, $\Exit_{BL}^{twin}$, $\Entry_{RL}$.
Similarly, every cluster intersects at most one of $\Exit_{BL}$, $\Entry_{BL}^{twin}$, $\Entry_{RL}^{twin}$.
\item[(iii)] There exists a perfect matching from $V\sm V_{exit}$ to $V^+\sm V^+_{entry}$.
\item[(iv)] Suppose $V$ is $3$-good with respect to $\match_{BL}$ and $\Entry_{RL}$. Then
\begin{itemize}
\item[$\bullet$]
For all sets
$X'$ and $Y'$ with $(V\cap X^*)\sm V_{exit} \sub X'\sub V\sm V_{exit}$
and $(V^+\cap X^*)\sm V^+_{entry} \sub Y' \sub V^+\sm V^+_{entry}$
the pair $(X',Y')_{G'}$ is $(\sqrt{\eps},d^2)$-super-regular.
\item[$\bullet$]
For all sets $X'$ and $Y'$ with
$(V\cap X^*)\sm V_{entry}\sub X'\sub V\sm V_{entry}$
and $(V^-\cap X^*)\sm V^-_{exit} \sub Y' \sub V^-\sm V^-_{exit}$
the pair $(Y',X')_{G'}$ is $(\sqrt{\eps},d^2)$-super-regular.
\end{itemize}
\end{itemize}
\end{lemma}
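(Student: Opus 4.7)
My plan is to handle the four parts in turn, with~(i) and~(ii) being essentially definitional. Part~(i) holds because we explicitly fixed $\match'_{BL}\sub\match_{BL}$ and $\Entry_{RL}\sub\wt{M}_V^{RL}$ so that $|\match'_{BL}|+|\Entry_{RL}|=|TR|-|BL|$; this choice is feasible by Lemma~\ref{MBL2cor}(i), which guarantees $|\match_{BL}|+|\wt{M}_V^{RL}|\ge|TR|-|BL|$. Part~(ii) follows because each cluster lies entirely in one block of the tripartition $\wt{L},\wt{R},\wt{M}_V$ of $V(G)\sm V_0$, while $\Entry_{BL}\sub\wt{L}$, $\Exit_{BL}^{twin}\sub\wt{R}$, and $\Entry_{RL}\sub\wt{M}_V^{RL}$ are confined to different blocks; the analogous statement for exits is identical using $\wt{T},\wt{B},\wt{M}_H$.

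For part~(iii) I would first verify that $|V_{exit}|=|V^+_{entry}|$ by a case analysis on which of $T,B,M_H^{RL},M_H^{LR}$ contains~$V$. Using that $V\in B$ forces $V^+\in R$, $V\in T$ forces $V^+\in L$, and $V\in M_H^{RL}$ forces $V^+\in M_V^{RL}$, in each of the first three cases only one of the three exit-sets is nonempty in~$V$ and it matches the corresponding entry-set in~$V^+$ under the twin bijection; recall from the twin construction that $|V\cap\Exit_{BL}|=|V^+\cap\Exit_{BL}^{twin}|$, together with the analogous equalities for $\Entry_{BL}^{twin}$ and $\Entry_{RL}^{twin}$. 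In the $M_H^{LR}$ case both sides vanish. To produce the matching itself I would split on whether $V$ is $2$-fair with respect to $\match_{BL}$ (or its analogue for $\match_{TR}$). If $V$ is not $2$-fair but meets an exit set, we already fixed a perfect matching in $(V,V^+)_{G'}$ and defined the twins to be exactly the matched pairs, so restricting this matching to $(V\sm V_{exit})\cup(V^+\sm V^+_{entry})$ yields the required perfect matching. If $V$ is $2$-fair, the ideals constructed via Lemma~\ref{ideal} make $(V\sm V_{exit},V^+\sm V^+_{entry})_{G'}$ super-regular, and Lemma~\ref{regmatch} supplies the perfect matching.

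For part~(iv), the sets $V_{ex},V^+_{ent},V^-_{ex},V_{ent}$ were chosen via Lemma~\ref{ideal} to be $(\sqrt{\eps},d^2)$-ideals for the pairs obtained by removing $\Exit_{BL}$/$\Entry_{BL}$/$\Entry_{RL}$ and their twins. The case analysis from~(iii) identifies $V_{exit}$ (respectively $V^+_{entry}$) with exactly one of those removed sets, and $3$-goodness bounds these removed sets by $\gamma m$, so the ideal is contained in $(V\sm V_{exit},V^+\sm V^+_{entry})_{G'}$. The defining property of an ideal then yields $(\sqrt{\eps},d^2)$-super-regularity for any pair $(X',Y')$ sandwiched between the ideal portion in~$X^*$ and the full residual pair. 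The second bullet follows by the symmetric argument with $(V^-_{ex},V_{ent})$ replacing $(V_{ex},V^+_{ent})$. The principal obstacle is simply the bookkeeping: one must track two pseudo-matchings together with $\Entry_{RL}$, verify the twin construction dovetails cleanly across all branches of the tripartition, and check that the ideal property composes correctly with the removal of exit/entry vertices in both the $3$-good and the not-$2$-fair regimes.
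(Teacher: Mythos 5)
Your proposal follows the same overall architecture as the paper's proof: parts (i) and (ii) are definitional, part (iii) is obtained by case analysis and either restricting a fixed perfect matching or applying super-regularity via ideals, and part (iv) follows from the ideal construction. The paper happens to prove (iv) first and then invoke it inside (iii); you instead re-derive the needed super-regularity directly inside (iii) and prove (iv) afterwards, which is a harmless reordering.

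However, there is a real slip in your case split for part~(iii). You split ``on whether $V$ is $2$-fair,'' but the correct condition depends on which of $B$, $T$, $M_H^{RL}$ contains~$V$. If $V\in B$ then indeed $V_{exit}=V\cap\Exit_{BL}$ and the twin construction, the partitions of Lemma~\ref{split}, and the ideal $(V_{ex},V^+_{ent})$ are all tied to whether \emph{$V$} is $2$-fair, so your split is right there. But if $V\in T$ then $V_{exit}=V\cap\Entry_{BL}^{twin}$: the twin map, the partitions $V_{ex,1},V_{ex,2}$ of $V$, and the ideal for $(V\sm\Entry_{BL}^{twin},V^+\sm\Entry_{BL})_{G'}$ were all defined relative to the $L$-cluster $V^+$, so the relevant condition is whether \emph{$V^+$} is $2$-fair. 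In particular, $V$ may well be $2$-fair while $V^+$ is not; in that case no ideal was ever constructed for the pair $(V,V^+)$, so ``the ideals $\ldots$ make $(V\sm V_{exit},V^+\sm V^+_{entry})_{G'}$ super-regular'' is not available, and the matching must instead come from the fixed matching from $V$ to $V^+$ chosen during the twin construction for the non-$2$-fair cluster $V^+$. Similarly, when $V\in M_H^{RL}$ the dichotomy is not $2$-fair at all but whether the $M_V^{RL}$-cluster \emph{$V^+$} is $2$-good (this governs whether Lemma~\ref{tw} and an ideal were used, or whether a fixed matching from $V^-$ to $V$ was chosen). So the split should be: for $V\in B$, on $2$-fairness of $V$; for $V\in T$, on $2$-fairness of $V^+$; for $V\in M_H^{RL}$, on $2$-goodness of $V^+$. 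Apart from this correction, your argument matches the paper's.
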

\proof
As discussed at the beginning of the subsection, Lemma~\ref{MBL2cor}(i) will allow us to choose
$\match'_{BL}$ and $\Entry_{RL}$ of the size required in (i). Property (ii) was discussed above.
We will just consider the first point of property (iv), as the second is similar.
Suppose $V$ is $3$-good and consider sets $X'$ and $Y'$ as in the statement.
We need to show that $(X',Y')_{G'}$  is $(\sqrt{\eps},d^2)$-super-regular.
If $V \cap \Entry_{RL}^{twin} \ne \emptyset$ this holds by definition of $X^*_{RL}$
since $V_{ex} \cap V_{exit} = \emptyset$ and $V_{ent}^+ \cap V_{entry}^+= \emptyset$ by~(ii)
(and so $V_{ex}\subseteq X'$ and $V^+_{ent}\subseteq Y'$), and since $V^+$ is $2$-good.
If $V \cap \Exit_{BL}  \ne \emptyset$ this holds by definition of $X^*_{BL}$,
since $V$ is $2$-fair, and similarly, if $V \cap \Entry_{BL}^{twin} \ne \emptyset$
this holds again by definition of $X^*_{BL}$, since $V^+$ is $2$-fair.
It remains to prove property (iii).
Suppose first that $V \in M_H^{RL}$. If $V^+$ is not $2$-good then the required matching
exists by the way we defined twins for $\Entry_{RL}$ in this case. On the other hand,
if $V^+$ is $2$-good then we can apply the super-regularity property (iv) just
established (which only used the fact that $V^+$ is $2$-good) and Lemma~\ref{regmatch}.
Next suppose that $V \in B$. Then $V_{exit}=V \cap \Exit_{BL}$ and
$V^+_{entry}=V^+ \cap \Exit_{BL}^{twin}$.
Thus if $V$ is not $2$-fair then the required matching
exists by the way we defined twins for $\match'_{BL}$ in this case.
On the other hand, if $V$ is $2$-fair then
we can apply the first point of property (iv), which only used the fact that $V$ is $2$-fair.
Similarly, when $V \in T$, then  $V_{exit}=V \cap \Entry_{BL}^{twin}$ and
$V^+_{entry}=V^+ \cap \Entry_{BL}$. If $V^+$ is not $2$-fair the required matching exists
by the construction in this case, whereas if $V^+$ is $2$-fair then
we can apply the second point of property (iv) with $(V,V^+)$ playing the role of
$(V^-,V)$,
which only uses the fact that $V^+$ (playing the role of $V$ in the second point) is $2$-fair.
\endproof

\subsection{Summary}

The auxiliary graph $H$ is decomposed into shifted components $L$ `left' and $R$ `right'
of size $k/2 \pm 19\eta k$ and a set $M_V$ of size $|M_V| < 17\eta k$.
This corresponds to a partition of $V(G) \sm V_0 = \wt{L} \cup \wt{R} \cup \wt{M}_V$.
The $1$-factor $F$ defines a partition $V(H) = T \cup B \cup M_H$, where
a cluster $V$ belongs to $T,B,M_H$ if and only if its successor $V^+$ belongs to $L,R,M_V$ respectively.
The shifted walk $W$ will use two types of transitions from $\wt{B}$ to $\wt{L}$.
One type is a pseudo-matching $\match'_{BL}$ from $\wt{B}$ to $\wt{L}$, matching $\Exit_{BL}$ to $\Entry_{BL}$.
The other type is a set $\Entry_{RL}$ of vertices in $\wt{M}_V^{RL} \sub \wt{M}_V$,
with the property that if $V \in M_V^{RL}$ then any $x \in V$ has
many inneighbours in $\wt{B}$ and any $y \in V^-$ has many outneighbours in $\wt{L}$.
We did not discuss transitions from $\wt{T}$ to $\wt{R}$, but these are obtained similarly
under the transformation $L \lra R$, $B \lra T$, etc.
Each vertex in these sets has a twin (or possibly two twins) that will be used
when $W$ traverses the cycle of $F$ containing it.
For any cluster $V$, the set of exit points from $V$ is $V_{exit}$ and
the set of entry points to $V$ is $V_{entry}$.
There exists a perfect matching from $V \sm V_{exit}$ to $V^+ \sm V^+_{entry}$.
The exceptional set $V_0$ is decomposed into $4$ parts $TR$, $TL$, $BR$ and $BL$,
where the first letter gives the in-type of a vertex and the second letter the out-type:
there is a matching $\match_T$ from $\wt{T}^*$ to vertices of in-type $T$ (and so on).
Technical complications are created by the possibility that a cluster may be full
(contain at least $\gamma m$ endpoints of $\match_{BL}$) or $M^{RL}$-full
(contain at least  $\gamma m$ endpoints of $\Entry_{RL}$).
A cluster $V$ is $\ell$-fair if no cluster at distance at most $\ell$ from $V$ is full,
$\ell$-excellent if no cluster at distance at most $\ell$ from $V$ is full or in $M=M_V\cup M_H$,
and $\ell$-good if no cluster at distance at most $\ell$ from $V$ is full or $M^{RL}$-full.
We have a set $X^* = X^*_{BL} \cup X^*_{RL}$ such that
whenever $V$ is $3$-good, we have flexibility to use any vertices avoiding these sets
in $V^-$, $V$ and $V^+$ (as well as avoiding the exits and entries already chosen),
while preserving super-regularity of the corresponding pairs in $F$.
Finally, the set $V^*_0$ of endpoints in $V(G) \sm V_0$ of the matchings $\match_T$ etc. only
uses $5$-excellent clusters and avoids $X^*$.

\section{The size of the pseudo-matching}

Our aim in this section is to prove the following lower bound on the size of our pseudo-matchings
$\match_{BL}$ and $\match_{TR}$.%
   \COMMENT{It seems at first as if this could appear earlier, but the argument uses
the exceptional set matchings $\match_T$ etc., which must be chosen after the pseudo-matchings.}

\begin{lemma}\label{MBL2cor}$ $
\begin{itemize}
\item[(i)] $|\match_{BL}|\ge \min\{|\wt{M}_V^{LR}|/2,\gamma^4 n\}-|\wt{M}_V^{RL}|-|V_0|$.
Moreover, if $|TR|>|BL|$ then $|\match_{BL}|\ge |TR|-|BL|-|\wt{M}_V^{RL}|+
\min\{|\wt{M}_V^{LR}|/2,\gamma^4 n\}$.
\item[(ii)] $|\match_{TR}|\ge \min\{|\wt{M}_V^{RL}|/2,\gamma^4 n\}-|\wt{M}_V^{LR}|-|V_0|$.
Moreover, if $|BL|>|TR|$ then $|\match_{TR}|\ge |BL|-|TR|-|\wt{M}_V^{LR}|+
\min\{|\wt{M}_V^{RL}|/2,\gamma^4 n\}$.
\end{itemize}
\end{lemma}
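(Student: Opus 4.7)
Since (i) and (ii) are symmetric under the exchange $L\lra R$, $T\lra B$ (which also swaps the roles of $M_V^{LR}$ and $M_V^{RL}$, and of $\match_{BL}$ and $\match_{TR}$), it suffices to prove~(i). My plan is to treat the two bounds separately, in each case arguing by contradiction via the maximality of~$\match_{BL}$.

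For the \emph{first} bound, suppose $|\match_{BL}|<\min\{|\wt{M}_V^{LR}|/2,\gamma^4 n\}-|\wt{M}_V^{RL}|-|V_0|$; in particular $|\match_{BL}|<\gamma^4 n\ll \gamma^2 n$, so $\match_{BL}$ is genuinely maximal (the size cap is not active). The plan is to exhibit a pseudo-matching component (a single edge with at least one $4$-excellent endpoint, or a directed path of length~$2$ with both endpoints in $4$-excellent clusters) that is vertex-disjoint from $\match_{BL}$, contradicting maximality. The vertex set used by $\match_{BL}$ has size at most $2|\match_{BL}|$, and by~($\clubsuit$) at most $400\eta k$ clusters fail to be $4$-excellent, so nearly all of $\wt{B}\cup\wt{L}$ is available.

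The structural input is a count, parametrised by $|\wt{M}_V^{LR}|$, of edges of $G$ from $\wt{B}$ into $\wt{L}$ between $4$-excellent clusters. The heuristic is balance between $L\to R$ and $R\to L$ transitions in the shifted walk that will be built later: each $V\in M_V^{LR}$ creates a forced $L\to R$ transition, whereas the $R\to L$ transitions must be supplied by $\match_{BL}$, by $\Entry_{RL}\sub \wt{M}_V^{RL}$, or (at most $|V_0|$ times) via the exceptional set. I would make this precise by combining the degree sequence information of Lemma~\ref{Degree sequence of R_{G'} - Modified}, the expansion property of Lemma~\ref{boundexpand}, and the neighbourhood estimates in Lemmas~\ref{LRandRLmiddle} and~\ref{middleedges}: for each $V\in M_V^{LR}$ one extracts a pair of $4$-excellent clusters $B_V\in B$, $L_V\in L$ carrying many $G$-edges from $B_V$ to $L_V$, using essentially disjoint vertex budgets across different~$V$. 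After discarding the $O(|\match_{BL}|)$ vertices used by $\match_{BL}$, the $O(|V_0|)$ vertices in $V_0^*$, and the $O(|\wt{M}_V^{RL}|)$ vertices consumed by $M_V^{RL}$-routings, at least one candidate edge survives and can be added to $\match_{BL}$. The factor $1/2$ in $|\wt{M}_V^{LR}|/2$ reflects that a pseudo-matching component may absorb two endpoints in $\wt{L}$ via a path of length $2$ whose middle vertex lies in $\wt{B}\cap\wt{L}$. The main obstacle is verifying that the candidate edges can be chosen simultaneously to land between $4$-excellent clusters and to avoid $V_0^*$, the ideals $X^*$, full clusters and clusters adjacent to $M$; this is what fixes the constants in the bound.

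For the \emph{second} bound, assume $|TR|>|BL|$ and suppose $|\match_{BL}|<|TR|-|BL|-|\wt{M}_V^{RL}|+\min\{|\wt{M}_V^{LR}|/2,\gamma^4 n\}$. I would derive a contradiction with the minimality of $\lvert |TR|-|BL|\rvert$ in Lemma~\ref{excvs}(vii). The assumed strict inequality, together with $|\Entry_{RL}|\le |\wt{M}_V^{RL}|$, provides surplus pseudo-matching capacity beyond what is needed to realise $|\match'_{BL}|+|\Entry_{RL}|=|TR|-|BL|$; this surplus should enable an augmenting-path-type swap in the matchings $\match_T,\match_B,\match_L,\match_R$ that reassigns some exceptional vertex currently of type $TR$ to type $BL$, strictly decreasing $|TR|-|BL|$ by two and contradicting~(vii). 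The hard part here is producing this swap while respecting all the side conditions of Lemma~\ref{excvs} (vertex-disjointness of endpoints across the four matchings, confinement to $5$-excellent clusters and to $X_\nd$, and avoidance of full clusters and $X^*$), which will require a careful alternating-path analysis in the auxiliary bipartite graph between $V_0$ and $V(G)\sm V_0$.
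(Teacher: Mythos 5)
There are genuine gaps in both branches of your sketch, and the core of the paper's argument is missing. For the first bound, your plan — derive a contradiction by exhibiting a pseudo-matching component vertex-disjoint from $\match_{BL}$ after "counting edges from $\wt{B}$ to $\wt{L}$" — does not work as stated, because an abundance of such edges is entirely compatible with $\match_{BL}$ being maximal: all the edges could share a tiny vertex set (a vertex cover), leaving no room for a new disjoint component. This is precisely the dichotomy the paper handles via K\"onig's theorem (Proposition~\ref{konig}): either the maximum matching $Q$ in the auxiliary bipartite graph $J$ is already large (Case~1, where a random sparsification of $Q$ directly gives the pseudo-matching), or the minimum vertex cover $A$ is small (Case~2), and then the pseudo-matching is built greedily as $\match_1\cup\match_2$ with all its middle vertices \emph{inside} $A$, exploiting the fact that the large sets $S_B$ and $S_L$ of Lemma~\ref{MBL1} have their entire $\wt{B}\to\wt{L}$ neighbourhoods forced into $A$. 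Your proposed source of edges is also wrong: Lemmas~\ref{boundexpand}, \ref{LRandRLmiddle} and \ref{middleedges} describe the shifted-component structure and edges into/out of $\wt{M}$, not edges from $\wt{B}$ to $\wt{L}$; the latter come from the \emph{degree assumptions of the theorem itself}, packaged in Lemma~\ref{MBL1}, and it is only through the explicit quantities $\deg_L$, $\deg_B$ (resp.\ their "furthermore" analogues) that the $|\wt{M}_V^{LR}|/2$ and $|\wt{M}_V^{RL}|$ terms enter the final count.

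For the "moreover" statement your instinct to invoke the minimality condition of Lemma~\ref{excvs}(vii) is the right ingredient, but applied at the wrong level: the swap argument (moving a vertex from $TL\cup TR$ to in-type $B$) is used in the paper's proof of Lemma~\ref{MBL1} to force the degree bound $|N^+_G(x)\cap\wt{L}|\ge\deg_L$ for $x\in S_B$, by showing that $x$ cannot have an outneighbour in $TL\cup TR$. Your plan to deduce a swap directly from the assumption that $|\match_{BL}|$ is too small has a missing implication: a small pseudo-matching does not in itself furnish the edge from $\wt{B}^*$ into $TL\cup TR$ that a type-swap requires, and you give no mechanism to produce one. The correct logical flow is minimality $\Rightarrow$ degree bounds (Lemma~\ref{MBL1}) $\Rightarrow$ pseudo-matching size (via the matching/cover dichotomy), not a direct contradiction as you propose. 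You should also note the paper's reduction at the outset: since all claimed lower bounds are at most $\gamma^2 n$, it suffices to \emph{exhibit} pseudo-matchings of that size, and maximality of $\match_{BL}$ then does the rest — this is cleaner than arguing by contradiction against maximality and avoids the pitfalls above.
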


To prove this we first show that there are large sets $S_B \sub \wt{B}$
with many outneighbours in $\wt{L}$ and $S_L \sub \wt{L}$ with many inneighbours in $\wt{B}$.
Note that part (v) in the following lemma is not used in the proof of Lemma~\ref{MBL2cor}
but will be needed in the final section of the paper.

\begin{lemma}\label{MBL1}$ $
\begin{itemize}
\item[(i)] If $|TR|>|BL|$ there is $S_B \sub \wt{B}$ with $|S_B| \ge \beta n/100$,
such that every $x \in S_B$ satisfies
$$|N^+_G(x) \cap \wt{L}| \ge \deg_L := \frac{n}{2} - (|BL| + |BR| + |\wt{R}|)-|\wt{M}^{RL}_V|-|\wt{M}^{LR}_V|/4.$$
Furthermore, in any case, $\wt{B}$ contains a set $S^*_B$ of size $|S^*_B| \ge \beta n/100$,
such that every $x \in S^*_B$ satisfies
$|N^+_G(x) \cap \wt{L}| \ge \frac{n}{2} - |V_0|-|\wt{R}|-|\wt{M}^{RL}_V|-|\wt{M}^{LR}_V|/4$.
\item[(ii)] If $|TR|>|BL|$ there is $S_L \sub \wt{L}$ with $|S_L| \ge \beta n/100$,
such that every $x \in S_L$ satisfies
$$|N^-_G(x) \cap \wt{B}| \ge \deg_B := \frac{n}{2} - (|TL| + |BL| + |\wt{T}|)-|\wt{M}^{RL}_H|-|\wt{M}^{LR}_H|/4.$$
Furthermore, in any case, $\wt{L}$ contains a set $S^*_L$ of size $|S^*_L| \ge \beta n/100$,
such that every $x \in S^*_L$ satisfies
$|N^-_G(x) \cap \wt{B}| \ge \frac{n}{2} - |V_0|- |\wt{T}|-|\wt{M}^{RL}_H|-|\wt{M}^{LR}_H|/4$.
\item[(iii)] If $|BL|>|TR|$ there is $S_T \sub \wt{T}$ with $|S_T| \ge \beta n/100$,
such that every $x \in S_T$ satisfies
$$|N^+_G(x) \cap \wt{R}| \ge \deg_R := \frac{n}{2} - (|TL| + |TR| + |\wt{L}|)-|\wt{M}^{LR}_V|-|\wt{M}^{RL}_V|/4.$$
Furthermore, in any case, $\wt{T}$ contains a set $S^*_T$ of size $|S^*_T| \ge \beta n/100$,
such that every $x \in S^*_T$ satisfies
$|N^+_G(x) \cap \wt{R}| \ge \frac{n}{2} - |V_0|- |\wt{L}|-|\wt{M}^{LR}_V|-|\wt{M}^{RL}_V|/4$.
\item[(iv)] If $|BL|>|TR|$ there is $S_R \sub \wt{R}$ with $|S_R| \ge \beta n/100$,
such that every $x \in S_R$ satisfies
$$|N^-_G(x) \cap \wt{T}| \ge \deg_T := \frac{n}{2} - (|TR| + |BR| + |\wt{B}|)-|\wt{M}^{LR}_H|-|\wt{M}^{RL}_H|/4.$$
Furthermore, in any case, $\wt{R}$ contains a set $S^*_R$ of size $|S^*_R| \ge \beta n/100$,
such that every $x \in S^*_R$ satisfies
$|N^-_G(x) \cap \wt{T}| \ge \frac{n}{2} - |V_0|- |\wt{B}|-|\wt{M}^{LR}_H|-|\wt{M}^{RL}_H|/4$.
\item[(v)] Finally, suppose that $M_V^{RL}$, $TR$ and $BL$ are all empty.
\begin{itemize}
\item[$\bullet$] If $|\wt{L} \cup TL| \ge |\wt{B} \cup BR|$, then $\wt{B}$ contains a set $S_B$
of at least $\beta n/100$ vertices, each having at least $|\wt{M}_V^{LR}|/4$ outneighbours in $\wt{L} \cup TL$.
\item[$\bullet$] If $|\wt{L} \cup TL| \le |\wt{B} \cup BR|$, then $\wt{L}$ contains a set $S_L$
of at least $\beta n/100$ vertices, each having at least $|\wt{M}_V^{LR}|/4$ inneighbours in $\wt{B} \cup BR$.
\end{itemize}
\end{itemize}
\end{lemma}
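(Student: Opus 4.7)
The proof starts by applying the Chv\'atal-type degree hypotheses of~$G$ at $i=(1-\beta)n/2$. Condition~(i) of the Theorem gives either (Case~A) $d^+_{(1-\beta)n/2}\ge n/2$ or (Case~B) $d^-_{(1-\beta)n/2}\ge (1+\beta)n/2$; condition~(ii) gives either (Case~A$'$) $d^-_{(1-\beta)n/2}\ge n/2$ or (Case~B$'$) $d^+_{(1-\beta)n/2}\ge (1+\beta)n/2$. Since Case~B implies Case~A$'$ and Case~B$'$ implies Case~A, at least one of Cases~A and~A$'$ always holds, so at least one of Lemma~\ref{split}(v) and~(vi) is always available; when only one is, the complementary stronger case (B$'$ or~B) will supply the missing information via a double-counting argument.

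For the \emph{in any case} part of~(i) under Case~A, Lemma~\ref{split}(v) yields $S'_B\sub \wt{B}\cap X_\nd$ of size at least $\beta n/80$ with outdegree $\ge n/2$. For $x\in S'_B$,
\[
|N^+(x)\cap\wt{L}|\ge d^+(x)-|V_0|-|\wt{R}|-|N^+(x)\cap\wt{M}_V^{LR}|-|\wt{M}_V^{RL}|.
\]
Using Lemma~\ref{middleedges}(ii) to discard the at most $12\eta' n$ vertices of $\wt{B}$ with more than $|\wt{M}_V^{LR}|/4$ outneighbours in $\wt{M}_V^{LR}$ leaves the desired $S^*_B$ of size $\ge \beta n/100$ satisfying the stated bound. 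Part~(iii) is analogous via $S'_T$ and Lemma~\ref{middleedges}(iv). Parts~(ii) and~(iv) follow symmetrically from Lemma~\ref{split}(vi) and Lemma~\ref{middleedges}(i),(iii) whenever Case~A$'$ holds; in the remaining subcase (where only Case~B$'$ holds, so $\ge (1+\beta)n/2$ vertices have outdegree $\ge (1+\beta)n/2$ and hence send at least $\beta n/3$ edges each into $\wt{L}$), a double-counting argument on the bipartite graph between such vertices in $\wt{B}$ and~$\wt{L}$ produces sufficiently many $v\in\wt{L}$ with $|N^-(v)\cap\wt{B}|$ above the threshold.

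For the conditional parts under $|TR|>|BL|$, the target bound $\deg_L$ improves the \emph{in any case} bound by exactly $|TL|+|TR|$. The extra slack is extracted from the minimality condition~(vii) of Lemma~\ref{excvs}: a K\"onig--Hall switching argument shows that if many $x\in\wt{B}^*$ had outneighbour sets meeting $TL\cup TR$ heavily, we could reassign some $TR$-vertex to $BR$ via an augmenting path in the bipartite graph between $\wt{B}^*$ and in-type-$T$ vertices (edges corresponding to inedges of~$G$), strictly decreasing $\lvert |TR|-|BL| \rvert$ and contradicting~(vii). Bounding the outneighbours of $x\in S'_B$ in $TL\cup TR$ this way, and plugging into the calculation above, yields the stronger set $S_B$; part~(ii) is dual.

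Finally, part~(v) uses $M_V^{RL}=TR=BL=\emptyset$, which gives $|V_0|=|TL|+|BR|$ and $\wt{M}_V=\wt{M}_V^{LR}$. Combining $|\wt{L}\cup TL|\ge|\wt{B}\cup BR|$ with $|\wt{L}|+|\wt{R}|+|\wt{M}_V^{LR}|+|V_0|=n$ gives $n/2-|\wt{R}|-|BR|\ge|\wt{M}_V^{LR}|/2$, so any $x\in\wt{B}$ with $d^+(x)\ge n/2$ and $|N^+(x)\cap\wt{M}_V^{LR}|\le|\wt{M}_V^{LR}|/4$ satisfies $|N^+(x)\cap(\wt{L}\cup TL)|\ge|\wt{M}_V^{LR}|/4$; Lemma~\ref{split}(v) supplies abundantly many such $x$ under Case~A, with the symmetric argument in $\wt{L}$ handling the other subcase. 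The main obstacle is the conditional statements of~(i) and~(ii), where condition~(vii) from Lemma~\ref{excvs} must be converted into a quantitative bound on the distribution of edges between $\wt{B}$ (or~$\wt{L}$) and the in-type-$T$ portion of the exceptional set --- a switching analysis that is the technical heart of the proof but does not require the degree hypotheses beyond what is used above.
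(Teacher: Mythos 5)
Your overall structure follows the paper's: the degree dichotomy at $i=(1-\beta)n/2$, invoking Lemma~\ref{split}(v)/(vi) when the $\min$-branch is sharp and a double-counting argument in the complementary case, Lemma~\ref{middleedges} to control edges into $\wt{M}_V^{LR}$, and minimality of $\lvert|TR|-|BL|\rvert$ for the conditional improvement. The \emph{in any case} parts and part (v) are essentially correct as sketched.

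However, there is a genuine gap in the conditional parts, and it is precisely at the step you flag as ``the technical heart'' and leave unwritten. The bound $\deg_L$ improves on the unconditional bound by replacing $|V_0|$ with $|BL|+|BR|$, so the calculation needs the sharp fact that \emph{every} $x\in S_B$ has \emph{no} outneighbour at all in $TL\cup TR$. The argument for this is not a K\"onig--Hall / augmenting-path analysis at all, and your phrasing ``if many $x\in\wt{B}^*$ had outneighbour sets meeting $TL\cup TR$ heavily'' indicates you are aiming at a collective estimate that would be too weak: even a single outneighbour of $x$ in $TL\cup TR$ breaks the bound $\deg_L$. What actually happens is a single-edge switch: if $xy\in E(G)$ with $y\in TL\cup TR$, one deletes the edge of $\match_T$ at $y$ and adds $xy$ to $\match_B$, reclassifying $y$'s in-type from $T$ to $B$. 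This does not need any path: $y$ is currently unmatched in $\match_B$, and $x$ can be used as the new endvertex because $S_B$ has been pruned in advance to avoid $V_0^*$, clusters that are not $5$-excellent, and clusters already containing $\ge\gamma m/2$ vertices of $V_0^*$ --- this pruning is precisely what keeps Lemma~\ref{excvs}(ii),(v),(vi) intact after the switch. Since $|TR|>|BL|$, the switch strictly decreases $\lvert|TR|-|BL|\rvert$ whether $y\in TR$ or $y\in TL$, contradicting (vii). Without isolating this pointwise, zero-outneighbour conclusion and the pruning that makes the switch legal, the argument does not yield $\deg_L$. (A smaller remark: in part (v), if $d^+_{(1-\beta)n/2}<n/2$ you still need a set $S_B$; the Case~B double-counting set already gives $\ge\beta^3 n\ge|\wt{M}_V^{LR}|/4$ outneighbours in $\wt{L}\subseteq\wt{L}\cup TL$, so that case should be mentioned rather than appealing only to Lemma~\ref{split}(v).)
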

\proof
Suppose that $|TR|>|BL|$. To prove (i), we first consider the case when $d^+_{(1 - \beta)n/2}(G) \geq n/2$.
Let $S'_B$ be as defined in Lemma~\ref{split}(v). Let $S_B$ be the set obtained from $S'_B$
by deleting the following vertices.%
  \COMMENT{** Is this all necessary? We choose the pseudo-matching before anything else,
so why do we take them into account when bounding its size? It seems that we don't need these
properties in the statement of the lemma...Answer: Later we argue that no $x \in S_B$
has an outneighbour in $TL \cup TR$ (since otherwise we can decrease $|TR|-|BL|$).
But for this we need that $x$ could be chosen as an endpoint of an edge in $\match_B$,
ie $x$ has to satisfy the properties in Lemma 32}
\begin{itemize}
\item The set $V_0^*$ of $2|V_0|\le 2d^{1/4} n$ endvertices in $V(G) \sm V_0$ of edges in
$\match_T$, $\match_B$, $\match_L$, $\match_R$.
\item All the at most $800\eta n$ vertices which lie in clusters
that are not $5$-excellent with respect to~$\match_{BL}$ or~$\match_{TR}$.
\item All the at most $2|V_0|k/(\gamma m/2)\le d^{1/5} n$ vertices which lie in clusters
containing at least $\gamma m/2$ vertices of $V_0^*$.
\item All the at most $12\eta' n$ vertices in $\wt{B}$ having more than
$|\wt{M}^{LR}_V|/4$ outneighbours in $\wt{M}^{LR}_V$ (see Lemma~\ref{middleedges}(ii)).
\end{itemize}
Thus $|S_B|\ge \beta n/100$. Now we make the following key use of the minimality of $|TR|-|BL|>0$.
We claim that any vertex $x\in S_B$ has outdegree at most $|BL| + |BR|$ in $V_0$.
Otherwise, there would be some edge $xy$ with $y \in TL \cup TR$.
But then we can change the in-type of $y$ to $B$ by deleting the edge in $\match_T$ incident to $y$
and adding the edge $xy$ to $\match_B$. Conditions (v) and (vi) in Lemma~\ref{excvs} will still hold,
since $S_B$ is disjoint from $V_0^*$ and only contains vertices in clusters
containing at most $\gamma m/2$ vertices of $V_0^*$.
Condition (ii) holds since $x \in \wt{B}^*$ by definition of $S_B$.
This reduces $||TR|-|BL||$, which contradicts the minimality condition in Lemma~\ref{excvs}(vii).
Therefore the claim holds.
Now recall that $\wt{R} \cup \wt{L} \cup \wt{M}^{LR}_V\cup \wt{M}^{RL}_V \cup V_0$ is a partition of $V(G)$.
Any $x \in S_B$ has at least $n/2$ outneighbours, of which at most $|\wt{R}|+|\wt{M}^{RL}_V|$ belong to
$\wt{R} \cup \wt{M}^{RL}_V$, at most $|\wt{M}^{LR}_V|/4$ belong to $\wt{M}^{LR}_V$
and at most $|BL| + |BR|$ belong to $V_0$. This shows that $S_B$ is a set as required in~(i).

Now consider the case when $d^+_{(1 - \beta)n/2}(G) < n/2$, and so
$d^-_{(1 - \beta)n/2}(G) \geq (1 +\beta)n/2$ by our degree assumptions.
Then $G$ has at least $(1 + \beta)n/2$ vertices of indegree at least $(1 +\beta)n/2$,
and by Lemma~\ref{propRL} at least
$(1 + \beta)n/2-|\wt{R}|-|V_0|-|\wt{M}_V|> \beta n/3$ of these belong to $\wt{L}$.
Let $A \sub \wt{L}$ be a set of $\beta n/3$ vertices with indegree
at least $(1 +\beta)n/2$. Note that every vertex in $A$ has indegree at least
$(1 +\beta)n/2+|\wt{B}|-n > \beta n/3$ in $\wt{B}$. Then we must have a set $S_B$ of at least
$\beta n/100$ vertices in $\wt{B}$, each having outdegree at least $\beta^3 n$ in $A$,
or we would have $\beta n/3 \cdot |A|\le E(\wt{B},A) \le \beta n/100 \cdot |A| + \beta^3 n |\wt{B}|$,
a contradiction. Then every vertex in $S_B$ has at least $\beta^3 n\ge \frac{n}{2} - |\wt{R}|$
outneighbours in $A\sub \wt{L}$, as required. This completes the proof of~(i) when $|TR|>|BL|$.

The argument for (i) when we do not have $|TR|>|BL|$ is the same, except that we no longer have
the minimality argument for $||TR|-|BL||$, so vertices in~$S^*_B$ may have all of~$V_0$ as outneighbours.
The arguments for~(ii)--(iv) are analogous, so we omit them.

Finally, suppose that $M_V^{RL}$, $TR$ and $BL$ are all empty, so that
$\wt{R} \cup \wt{L} \cup \wt{M}^{LR}_V \cup TL \cup BR$ is a partition of $V(G)$.
For the first point in (v), suppose that $|\wt{L} \cup TL| \ge |\wt{B} \cup BR|$.
Since $|\wt{B}|=|\wt{R}|$, every vertex $x$ in $S_B$ (defined as above)
has at least $n/2-|BR|-|\wt{B}|-|\wt{M}_V^{LR}|/4$ outneighbours in $\wt{L} \cup TL$.
By assumption, we have $|BR|+|\wt{B}|+|\wt{M}_V^{LR}|/2 \le n/2$, so
the number of outneighbours of $x$ in $\wt{L} \cup TL$ is at least $|\wt{M}_V^{LR}|/4$, as required.
The second point follows in the same way.\endproof

\noindent
{\em Proof of Lemma~\ref{MBL2cor}.}
Observe that all stated lower bounds are at most $\gamma^2 n$, so it is enough
to prove the existence of pseudo-matchings satisfying these bounds.
We will suppose that $|TR|>|BL|$ and prove the `moreover'
statement of~(i); the arguments for the other assertions are similar.
Define an auxiliary bipartite graph whose vertex classes are $\wt{B}$ and $\wt{L}$ by joining
a vertex $x\in \wt{B}$ to a vertex $y\in \wt{L}$ if $xy$ is an edge of~$G$.
Let~$J$ be the graph obtained from this bipartite graph by deleting all the edges whose
endvertices both lie in clusters having distance at most~4 in $F$ from~$M$.
Let $Q$ be the largest matching in~$J$.

\medskip

\noindent
\textbf{Case~1.} $|Q|\ge \gamma^2 n$.

\smallskip

\noindent Let~$Q'$ be a matching obtained from $Q$ by deleting  as
few edges as possible so as to ensure that every vertex of~$G$
belongs to at most one edge from~$Q'$. Note that every vertex of $G$
has indegree at most $1$ and outdegree at most $1$ in $Q$, so $Q$
considered as a subdigraph of $G$ is
a vertex-disjoint union of directed paths and cycles. Thus we can
retain at least $1/3$ of the edges of $Q$ in $Q'$ (with equality for
a disjoint union of directed triangles). By deleting further edges
if necessary we may assume that $|Q'|=\gamma^2 n/3$.

We claim that there is a submatching $Q''$ of $Q'$ of size at least $\gamma|Q'|/3$ such that no cluster is
full with respect to~$Q''$ (i.e.~every cluster contains at most $\gamma m$ endvertices of $Q''$).
To see that such a $Q''$ exists, consider the submatching $Q''$ obtained from~$Q'$ by
retaining every edge of~$Q'$ with probability $\gamma/2$ in $Q''$, independently of all
other edges of~$Q'$. Then for any cluster $V$, the expected number of endvertices of $Q''$ in $V$
is at most $\gamma m/2$,
and the expected size of $Q''$ is  $\gamma|Q'|/2$.
By Chernoff bounds we see that with high probability $Q''$ has the claimed properties.%
  \COMMENT{Actually, if there are matching edges with both endpoints in $V$, we cannot directly use
the binomial distribution. But for such edges, we fix one of its endvertices and then apply Chernoff
separately (i) to the fixed endvertices of these edges (ii) to those vertices which have exactly
one endpoint in the cluster (of course one could also use a single application of Azuma).
People who notice this subtlety will know how to fix it, so it's probably better not to go into this..) }

Note that $Q''$ is a pseudo-matching from~$\wt{B}$ to $\wt{L}$, as
by construction it is a matching, and by definition of $J$ every
edge in $Q''$ has an endvertex in a $4$-excellent cluster. Also,
since $|V_0| \le d^{1/4}n$ we have $|Q''|\ge \gamma^3 n/9 \ge
|V_0|+\gamma^4 n\ge |TR|-|BL|-|\wt{M}_V^{RL}|+\gamma^4 n$, as
required.

\medskip

\noindent
\textbf{Case~2.} $|Q|\le \gamma^2 n$.

\smallskip

\noindent
Let $A$ be a minimum vertex cover of~$J$. Then $|A|\le \gamma^2 n$ by K\"onig's theorem
(Proposition~\ref{konig}). Write $A_B:=A\cap \wt{B}$ and $A_L:=A\cap \wt{L}$.
We say that a cluster~$V$ is \emph{$A$-full} if it contains at least $\gamma m/3$ vertices from~$A$.
We say that $V$ is \emph{$A$-excellent} if no cluster of distance at most~4 from~$V$ on~$F$ is $A$-full
or lies in~$M$. Note that at most $\gamma^2 n/(\gamma m/3)=3\gamma n/m$ clusters are
$A$-full and thus by~(\ref{eq:middle}) all but at most $9(3\gamma n+|M|m)\le 350 \eta n$ vertices lie
in $A$-excellent clusters. Since $|TR|>|BL|$ we can construct the sets~$S_B$ and $S_L$
given by Lemma~\ref{MBL1}(i) and~(ii).
Let $S'_B$ be the set of all those vertices in~$S_B\sm A$ which lie in $A$-excellent clusters.
Thus $|S'_B| \ge |S_B|-\gamma^2 n-350\eta n\ge \beta n/101$.
Moreover, $N^+_G(S'_B)\cap \wt{L}\sub A_L$,
since none of the edges deleted in the construction of $J$ were incident to $S'_B$.

Now we greedily choose a matching $\match_1$ from $S'_B$ to $A_L\sub \wt{L}$
of size $\deg_L$ (defined in Lemma~\ref{MBL1}(i)) in such a way that every cluster
contains at most $\gamma m/3$ vertices on the $\wt{B}$-side of $J$.
To see that this is possible, note that at any stage in the process we have
excluded at most $|A|/(\gamma m/3) < \beta n/101 \le |S'_B|$ vertices in $S'_B$,
so we can always pick a suitable vertex $x$ in $S'_B$. Then, since we have chosen less
than $\deg_L$ vertices in $A_L$, we can choose an unused outneighbour of $x$ in $\wt{L}$
(which lies in the cover $A$, so in $A_L$).

Let $S'_L$ be the set of all those vertices in~$S_L\sm A$ which lie in $A$-excellent clusters
and are not endvertices of edges in~$\match_1$.
Then $|S'_L|\ge |S_L|-\gamma^2 n-350\eta n-2|\match_1|\ge \beta n/101$
and $N^-_G(S'_L)\cap \wt{B}\sub A_B$. As before, we can greedily choose a matching
$\match_2$ from $A_B\sub \wt{B}$ to $S'_L$ of size $\deg_B$ in such a way that
every cluster contains at most $\gamma m/3$ vertices on the $\wt{L}$-side of $J$.

Note that every $A$-excellent cluster is $4$-excellent with respect to $\match_1\cup \match_2$,
as it contains at most $\gamma m$ endvertices of edges from~$\match_1\cup \match_2$
(it is not $A$-full), and so is not full with respect to $\match_1\cup \match_2$.
Also, any edge $e$ in $\match_1\cup \match_2$ has one endvertex in~$A$ and one endvertex outside~$A$.
The endvertex outside $A$ is that in $S'_B$ (if $e\in \match_1$) or $S'_L$ (if $e\in \match_2$).
So the endvertex outside $A$ is not an endvertex of another edge from $\match_1\cup \match_2$ and
lies in a cluster which is $4$-excellent with respect to $\match_1\cup \match_2$.
We deduce that $\match_1\cup \match_2$ is a disjoint union of edges and directed paths of length $2$
satisfying the definition of a pseudo-matching from $\wt{B}$ to~$\wt{L}$.
Moreover, since $|\wt{M}^{RL}_V|=|\wt{M}^{RL}_H|$ and $|\wt{M}^{LR}_V|=|\wt{M}^{LR}_H|$, we have
\begin{align*}
|\match_1 \cup \match_2| & = \deg_L + \deg_B \\
& = n-(2|BL| + |BR| + |TL|+ |\wt{R}|+|\wt{T}|)-2|\wt{M}^{RL}_V|-2|\wt{M}^{LR}_V|/4\\
& = |TR|-|BL| + (n-|V_0|-|\wt{R}|-|\wt{L}|) -2|\wt{M}^{RL}_V|-|\wt{M}^{LR}_V|/2\\
& = |TR|-|BL|+|\wt{M}_V|-2|\wt{M}^{RL}_V|-|\wt{M}^{LR}_V|/2\\
& = |TR|-|BL|-|\wt{M}^{RL}_V|+|\wt{M}^{LR}_V|/2,
\end{align*}
as required.
The proof of the first statement of (i) is the same, except that
we use the `furthermore' statements of Lemma~\ref{MBL1}(i) and~(ii) in the final calculation
instead of working with $\deg_L$ and $\deg_B$.
\hfill{$\square$}


\section{Proof of Theorem~\ref{CKKO - Approximate Chvatal}}

In this section we use the matchings and sets constructed in Section~\ref{structure2} to
prove Theorem~\ref{CKKO - Approximate Chvatal}. We will assume that $H$ is not strongly
$\eta k$-connected, as we have already covered this case in Section~\ref{highconnect}
(although it could also be deduced from the arguments in this section).
Our strategy will depend on the value of $|TR|-|BL|$, and also on
the size of middle, as described by the cases $(\star)$ or $(\star\star)$ above.
We divide the proof into three subsections: the first covers the case
when $|TR| \ne |BL|$ and~$(\star)$ holds, the second when $(\star\star)$ holds,
and the third when $|TR|=|BL|$ and~$(\star)$ holds.

\subsection{The case when $|TR| \ne |BL|$ and~$(\star)$ holds.}\label{subsec:BLstar}
We will just give the argument for the case when $|TR|>|BL|$, as the other case is similar.
We recall that $(\star)$ is the case when $|\wt{M}| \le |V_0|/\gamma^3$ and $|C\cap M|<|C|/10$ for every
cycle $C$ of $F$. In this case we will use the pseudo-matching~$\match'_{BL}$ as well as the additional transitions
from~$\wt{B}$ to $\wt{L}$ which we get from $\Entry_{RL}\cup \Entry_{RL}^{twin}$.
We want to construct a walk $W$ with the same properties as in the proof of the
case when $H$ is highly connected.

Recall that both $H[L]$ and $H[R]$ are strongly $\eta'k/2$-connected (see Lemma~\ref{propRL}).
Then by arguing as in the proof of Lemma~\ref{disjointpaths} for the graph $H[L]$ instead of $H$ we deduce that for any two clusters $V,V'\in L$ we can find $\eta^{\prime 2} k/64$
shifted walks (with respect to~$R_{G''}$ and~$F$) from $V$ to $V'$ such that each walk traverses at
most $4/\eta'$ cycles from~$F$ and every cluster is internally used by at most one of these walks.
A similar statement holds for any two clusters in~$R$.
For any two clusters $V$ and~$V'$, we call a shifted walk (with respect to~$R_{G''}$ and~$F$) from $V$ to~$V'$
\emph{useful} if it traverses at most $4/\eta'$ cycles from~$F$ and if every cluster which is
internally used by the walk is $4$-excellent (note that $4$-excellence is defined with respect to
pseudo-matching~$\match'_{BL}$ in this case, and the pseudo-matching $\match_{TR}$ is irrelevant).
Since all but at most $400\eta k$ clusters are $4$-excellent (by ($\clubsuit$))
we have the following property.
 \textno Whenever $\V$ is a set of at most $\eta^{\prime 2}k/100$ clusters and $V,V'\in L$ there exists
a useful shifted walk from $V$ to $V'$ that does not internally use clusters in $\V$.
A similar statement holds for any two clusters in~$R$. &(\heartsuit)

We incorporate the vertices of the exceptional set $V_0$ using whichever edges
in the matchings $\match_T$, $\match_B$, $\match_L$, $\match_R$ correspond to their in-types and out-types.
Suppose for example that we have just visited a vertex $x$ of out-type $L$, arriving
via some edge in $\match_T$ or $\match_B$ (depending on the in-type of $x$)
and leaving to its outneighbour $x^+$ in the matching $\match_L$.
Then $x^+$ belongs to some cluster $U$ in $L$. Since $H[L]$ is highly connected
we can proceed to incorporate any vertex $y$ of in-type $T$ as follows.
Let $y^-$ be the inneighbour of $y$ in $\match_T$. Then $y^-$ belongs to some cluster $V$ of $T$,
and so by definition of $T$ the successor $V^+$ of $V$ on $F$ is a cluster of $L$.
Let $W'_{xy} = X_1 C_1 X_1^- X_2 C_2 X_2^- \ldots X_t C_t X_t^- X_{t+1}$ be a useful shifted walk
with $X_1 = U$ and $X_{t+1} = V^+$. Let $C_{t+1}$ be the cycle of $F$ containing $X_{t+1}=V^+$
and form $W_{xy}$ by appending the path in $C_{t+1}$ from $V^+$ to $V$. So for any cycle $C$
of $F$ the clusters of $C$ are visited equally often by $W_{xy}$. Then in the construction
of $W$ we can use the walk $W_{xy}$ to move from $x$ to $y$. Note that since we chose
$\match_T$, $\match_B$, $\match_L$, $\match_R$ to use at most $\gamma m$ vertices from any cluster and
since $|V_0|/(\gamma m) \ll \eta^{\prime 2} k$,
($\heartsuit$) implies that we can avoid using any cluster more than $3\gamma m$ times
(although we may visit a cluster more often).%
\COMMENT{we do need a 3 here -- 1 directly from the matching 1 for twins from the predecessor
1 for twins from the successor}

Thus we see that the structure of $H$ allows us to follow any vertex of out-type $L$ with
any vertex of in-type $T$, and similarly we can follow any vertex of out-type $R$ with
any vertex of in-type $B$. In particular, we can incorporate all vertices of type $TL$
sequentially, all vertices of type $BR$ sequentially, and vertices of type $BL$ or $TR$
can be incorporated in an alternating sequence, while there remain vertices of both types.
This explains the purpose of condition (vii) in Lemma~\ref{excvs}: choosing
$\lvert |TR| - |BL| \rvert$ to be minimal.

We order the vertices of $V_0$ as follows.  First, we list all vertices of type $TL$ (if any exist).
These will be followed by an arbitrary vertex of type $TR$ (which must exist as $|TR|>|BL| \geq 0$).
Then list all vertices of type $BR$ (if any exist). Then we
alternately list vertices of type $BL$ and $TR$ until all vertices of
type $BL$ are exhausted. Finally, we list all vertices of type $TR$
(if any remain). So the list by type has the form:
\[
TL, \ldots , TL\, |\, TR\, |\, BR , \ldots , BR\, |\, BL , TR ,
\ldots , BL , TR\, |\, TR, \ldots , TR.
\]
We can follow the procedure described above to incorporate all vertices in the list
apart from the final block of $|TR|-|BL|-1$ vertices of type $TR$.
At this point the above procedure would require a shifted walk from $R$ to $L$, which need not
exist. For these remaining vertices we will use the $|TR|-|BL|$ transitions from $\wt{B}$ to
$\wt{L}$ formed by the matching $\match'_{BL}$ and the vertices in $\Entry_{RL}\cup \Entry_{RL}^{twin}$.
(We need $|TR|-|BL|$ transitions rather than $|TR|-|BL|-1$ since we need to close the walk~$W$ after
incorporating the last exceptional vertex.)
Suppose we have just visited an exceptional vertex $a$ of type $TR$,
leaving to its outneighbour $a^+$ in the matching $\match_R$,
and we want to visit another vertex $b$ of type $TR$, with inneighbour $b^-$ in the matching $\match_T$.
Let $U$ be the cluster of $R$ containing $a^+$ and $V$ the cluster of $T$ containing $b^-$.
We pick an unused edge $xy$ of $\match'_{BL}$, where $x$ belongs to a cluster $X$ of $B$
and $y$ to a cluster $Y$ of $L$. Recall from Subsection~\ref{transitions}
that $x\in \exit_{BL}$ and $y\in \entry_{BL}$ have twins
$x^{twin}\in X^+\in R$ and $y^{twin}\in Y^-\in T$. By Lemma~\ref{twinnbs} there
are $X'\in B$ and $Y'\in L$ such that $x^{twin}$ has at least $d' m/4$
inneighbours in~$X'$, whereas $y^{twin}$ has at least $d'm/4$ outneighbours in~$Y'$.
We can also choose $X'$ and $Y'$ to be $4$-excellent, since
by Lemma~\ref{twinnbs} there are at least $\eta' k/4$ choices for both $X'$ and $Y'$,
and at most $400\eta k\ll \eta' k/4$ clusters are not $4$-excellent (by ($\clubsuit$)).
Choose a useful shifted walk $W_1$ from $U$ to the $F$-successor $(X')^+$ of~$X'$ and
a useful shifted walk $W_2$ from $Y'$ to the $F$-successor $V^+$ of $V$.
$W_1$ and $W_2$ exist by~($\heartsuit$), since $(X')^+\in R$ as $X'\in B$ and
$V^+\in L$ as $V\in T$. Thus, as illustrated in Figure~3,
we can form a segment of the walk $W$ linking $a$ to $b$ by first following
$W_1$ to $(X')^+$, then the path in $F$ from $(X')^+$ to $X'$, then the edge $X'X^+$,
then the path in $F$ from $X^+$ to $X$, then the edge $xy$, then the path in $F$ from $Y$ to $Y^-$,
then the edge $Y^-Y'$, then $W_2$ to~$V^+$, and finally the path in $F$ from $V^+$ to $V$.
When we are transforming our walk $W$ into a Hamilton cycle we will replace $X'X^+$ with an edge of~$G$
from some vertex in~$X'$ to~$x^{twin}$ and replace $Y^-Y'$ with an edge from~$y^{twin}$ to some vertex in~$Y'$.
So we say that $x^{twin}$ is a \emph{prescribed endvertex} for this particular occurrence of $X'X^+$
on~$W$ and that $y^{twin}$ is a \emph{prescribed endvertex} for this particular occurrence of $Y^-Y'$
on~$W$. The vertices $x$ and $y$ will be \emph{prescribed endvertices} for the edge $xy$ on~$W$.
(We will also define other prescribed endvertices on $W$, and if they are not endpoints of $\match'_{BL}$
then they will always be such that they have at least $d' m/4$ inneighbours in the previous cluster on $W$
or at least $d' m/4$ outneighbours in the next cluster on $W$. Note our eventual Hamilton cycle may
not follow the route connecting $x^{twin}$, $x$, $y$ and $y^{twin}$ used here since we will use
a rerouting procedure which is similar to that in the case when $H$ is highly connected.)

\begin{figure}\label{transition}
\includegraphics[scale=0.8]{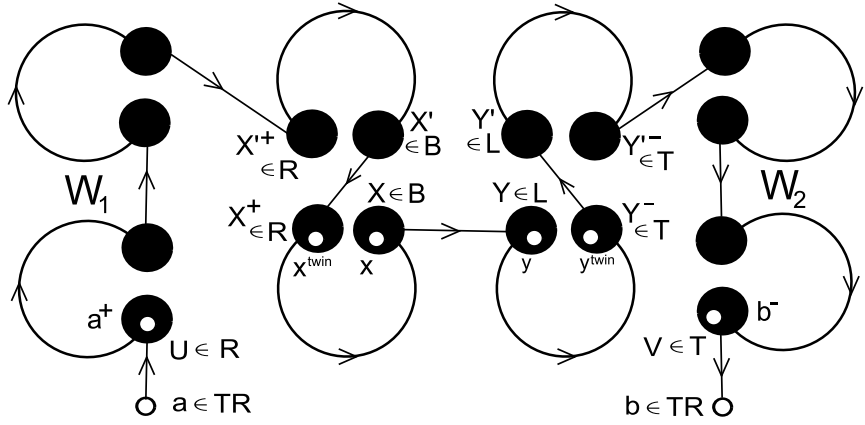}
\caption{Transitions using $\match_{BL}$}
\end{figure}

We use different matching edges from~$\match'_{BL}$ for different vertices of type $TR$.
After having used all of~$\match'_{BL}$, we use the $|TR|-|BL|-|\match'_{BL}|$ transitions
from~$\wt{B}$ to~$\wt{L}$ which we get from $\Entry_{RL}\cup \Entry_{RL}^{twin}$ instead.
If $X\in M_V^{RL}$ is the cluster containing $x\in \Entry_{RL}$ and so $X^-\in M_H^{RL}$ is the cluster
containing its twin $x^{twin}\in \Entry_{RL}^{twin}$, then, using Lemma~\ref{twinnbs} again,
we can choose $4$-excellent clusters $X'\in B$ and $X''\in L$
such that $x$ has at least $d' m/4$ inneighbours in~$X'$ whereas $x^{twin}$ has at least $d'm/4$
outneighbours in~$X''$. We then take~$W_1$ to be a useful walk from~$U$ to the $F$-successor $(X')^+$
of $X'$ and~$W_2$ to be a useful walk from $X''$ to~$V^+$.
Then when we are transforming $W$ into a Hamilton cycle we will replace $X'X$ with an edge of~$G$
from some vertex in~$X'$ to $x$ and
replace $X^-X''$ with an edge from~$x^{twin}$ to some vertex in~$X''$.
So we say that $x$ is a \emph{prescribed endvertex} for this particular occurrence of $X'X$
on~$W$ and that $x^{twin}$ is a \emph{prescribed endvertex} for this particular occurrence of $X^-X''$
on~$W$.

At the moment we have constructed a walk~$W$ which starts in the cluster~$U^*\in T$ containing the inneighbour
of the first exceptional vertex in our list, then goes into that vertex and then joins up all the exceptional vertices. After
visiting the last exceptional vertex of type $TR$, $W$ follows our last transition from~$\wt{B}$ to~$\wt{L}$
and ends in some $4$-excellent cluster $V^*\in L$.
(Using the same notation as above, if this last transition was a matching edge $xy\in \match'_{BL}$ then $V^*=Y'$,
and if it was a transition formed by a vertex $x\in \Entry_{RL}$ and its twin~$x^{twin}$ then $V^*=X''$.)
Say that a cluster $V$ is {\em nearly $4$-good} if $V$ is either $4$-good or at distance $1$ on $F$
from a $4$-good cluster. (A nearly $4$-good cluster is $3$-good, but not conversely.)
Note that since all walks above were useful, the walk $W$ constructed only {\em uses} nearly $4$-good clusters,
except when it uses a prescribed endvertex.
Using $(\heartsuit)$, it is easy to check that we can choose~$W$ in such a way that
every nearly $4$-good cluster is used at most $9\gamma m$ times.%
\COMMENT{3 from the exceptional matching, 3 from matchBL, 3 from the middle transitions}

Before closing up the walk~$W$, we have to enlarge it by some special walks
$W_L^{bad}$, $W_L^{good}$, $W_R^{bad}$ and $W_R^{good}$ which will ensure that
we can actually transform~$W$ into a Hamilton cycle of~$G$ (rather than a 1-factor).
We start by defining $W_L^{good}$. List the $4$-good clusters in $L$ as $V_1,\dots,V_s$,
for some $s$, where $V_1=V^*$. Choose useful shifted walks $W_i$ from $V_i$ to $V_{i+1}$,
for $i=1,\dots,s$, where $V_{s+1}:=V^*$. Let $W^{good}_L:=W_1\dots W_s$.
Then $W^{good}_L$ is a shifted walk from~$V^*$ to itself, which uses every $4$-good cluster in~$L$
at least once, and which only uses nearly $4$-good clusters.

Call a cycle in~$F$ \emph{bad} if it does not contain a $4$-good cluster
lying in~$L\cup R$. For every bad cycle we pick a cluster whose distance from~$M$ on~$F$ is at
least~$2$. (This is possible since we are in case $(\star)$, when no $F$-cycle significantly intersects~$M$.)
We let $\Z_L$ and $\Z_R$ be the sets of clusters in~$L$ and $R$ thus obtained.
Then no cluster $Z\in\Z_L \cup \Z_R$ is nearly $4$-good, since $Z$ has distance at least~2
from~$M$ on~$F$, and so the neighbours of~$Z$ on~$F$ cannot be $4$-good by definition of `bad'.
In particular, no cluster in $\Z_L \cup \Z_R$ is $4$-good,
so $|\Z_L|,|\Z_R|\le 30\gamma k$ by $(\diamondsuit)$.

The purpose of the walk $W_L^{bad}$ is to `fill up' each cluster
in~$\Z_L$: $W_L^{bad}$ will ensure that $W$ enters each such cluster precisely $m$ times.%
   \COMMENT{We could have focussed on exits instead.}
(Recall that this notion was defined in Section~\ref{techniques}.)
List the clusters in~$\Z_L$ that are not already entered $m$ times as $Z^1,\dots,Z^t$,
and let $a_i:=m-|Z^i_{entry}|$, where $Z^i_{entry}$ is as defined before the statement of Lemma~\ref{twinprops}.
(Recall that $Z^i$ is not nearly $4$-good and so~$W$ only enters $Z^i$ in vertices which are prescribed.)
Let $U^i\in T$ be the $F$-predecessor of~$Z^i$. Let $z^i_1,\dots,z^i_{a_i}$ be the vertices in
$Z^i\sm Z^i_{entry}$ and $u^i_1,\dots,u^i_{a_i}$ be the vertices in
$U^i\sm U^i_{exit}$. (Lemma~\ref{twinprops}(iii) implies that $|Z^i_{entry}|=|U^i_{exit}|$.)%
    \COMMENT{$U^i_{exit}$ is actually the set of exits in~$U^i$ of the walk~$W$ constructed so far
since $W$ is a shifted walk and so it exits $U^i$ precisely $|Z^i_{entry}|=|U^i_{exit}|$ times.}
Apply Lemma~\ref{twinnbs} to choose $4$-excellent clusters $Z^i_j\in T$ and $U^i_j\in L$ such
that $z^i_j$ has at least $d' m/4$ inneighbours in $Z^i_j$ and such that $u^i_j$ has at least $d'm/4$
outneighbours in $U^i_j$. We now find the following shifted walks:%
\begin{itemize}
\item For each $i=1,\dots t$ and each $j=1,\dots,a_i-1$ choose a useful
walk $W'_{i,j}$ from $U^i_j$ to the $F$-successor $(Z^i_{j+1})^+ \in L$ of $Z^i_{j+1}$.
\item Choose a useful walk $W''_0$ from $V^*$ to the $F$-successor $(Z^1_1)^+$ of $Z^1_1$.
\item For each $i=1,\dots,t-1$ choose a useful walk $W''_i$ from $U^i_{a_i}$ to the $F$-successor
$(Z^{i+1}_1)^+$ of $Z^{i+1}_1$.
\item Choose a useful walk $W''_t$ from $U^t_{a_t}$ to~$V^*$.%
   \COMMENT{Changed $(V^*)^+$ to $V^*$.}
\item Define the shifted walks $W''_{i,j}:=(Z^i_j)^+ C^i_j Z^i_j Z^i C^i U^i U^i_j$
for each $i=1,\dots t$ and each $j=1,\dots,a_i$, where
$C^i_j$ is the $F$-cycle containing $Z^i_j$ and where $C^i$ is the $F$-cycle containing~$Z^i$.
\end{itemize}
Then, as illustrated in Figure~4, we define
$$
W_L^{bad}:=W''_0 W''_{1,1} W'_{1,1} W''_{1,2} W'_{1,2}\dots
W'_{1,a_1-1}W''_{1,a_1}W''_1W''_{2,1} W'_{2,1}\dots W'_{2,a_2-1}\dots W'_{t,a_t-1}W''_{t,a_t}W''_t.
$$
So $W_L^{bad}$ is a shifted walk from~$V^*$ to itself.
When transforming our walk~$W$ into a Hamilton cycle of~$G$, for each $i=0,\dots,t-1$ we will replace
the edge $Z^i_jZ^i$ on $W''_{i,j}$ by an edge of~$G$ entering $z^i_j$ and the edge $U^iU^i_j$
on $W''_{i,j}$ by an edge leaving $u^i_j$. So we say that $z^i_j$, $u^i_j$ are \emph{prescribed endvertices}
for these particular occurrences of $Z^i_jZ^i$, $U^iU^i_j$.

\begin{figure}\label{bad}
\includegraphics[scale=0.6]{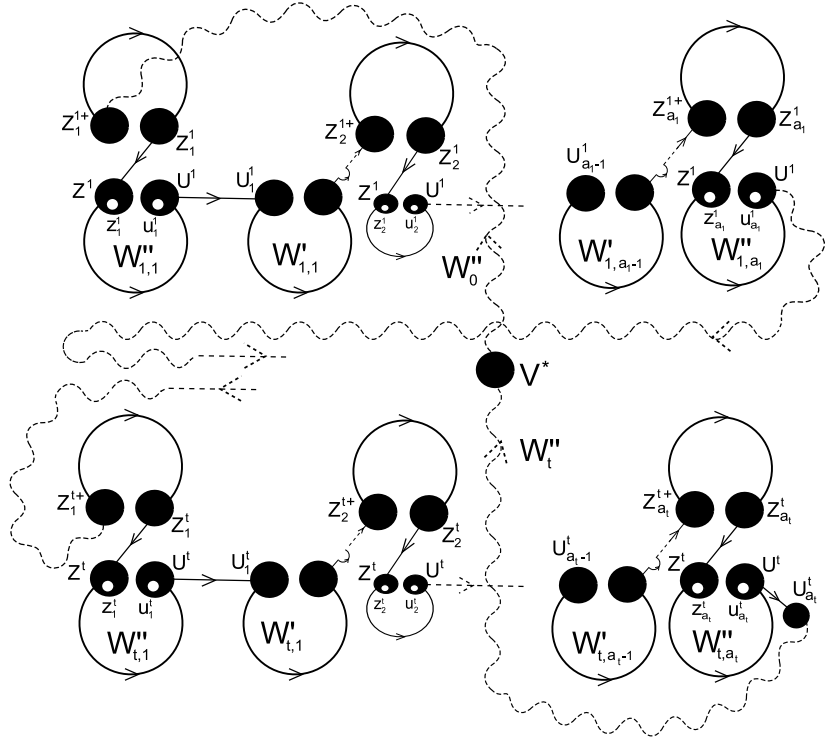}
\caption{A bad walk}
\end{figure}

Note that $W_L^{bad}$ is composed of $1 + \sum_{i=1}^t 2a_i \le 3tm \le 90\gamma n$ walks,
each using at most $8/\eta'$ clusters (by definition of useful walks). Also $W_L^{good}$
is composed of at most $|L| \le k$ further such walks. Using ($\heartsuit$), we can choose
$W^{good}_L$ and $W_L^{bad}$ such that the number of times they use every cluster
outside $\Z_L\cup \Z^-_L$ is at most
$\frac{(90\gamma n + k)(8/\eta')}{\eta^{\prime 2} k/100} < \gamma^{1/2}m$ (say).
Here $\Z^-_L$ is the set of predecessors of $\Z_L$ on $F$.

Let $V^{**}\in R$ be the cluster which contains the neighbour in $\match_R$ of the first
exceptional vertex of type $TR$ in our list (this exists, since $|TR|>|BL|$).
Then $V^{**}$ is $4$-excellent by Lemma~\ref{excvs}. Define walks $W_R^{good}$ and $W_R^{bad}$
similarly to $W_L^{good}$ and $W_L^{bad}$, where $V^{**}$ now plays the role of~$V^*$.
(So both $W_R^{good}$ and $W_R^{bad}$ are walks from~$V^{**}$ to itself.)
Now we construct our final walk, which we will also call $W$, as follows.
We start with our previous walk $W$ joining~$U^*$ to~$V^*$, then we add $W_L^{good}W_L^{bad}$
and replace the occurrence of~$V^{**}$ mentioned above with $W_R^{good}W_R^{bad}$.
We close up~$W$ by adding a useful walk from~$V^*$ to~$(U^*)^+$,
and then following the path in~$F$ from~$(U^*)^+$ to~$U^*$.
Our final walk $W$ has the properties listed below.

\begin{itemize}
\item[(a)] For each cycle $C$ of $F$, $W$ visits every cluster of $C$ the same number of times, say $m_C$.
\item[(b$''$)]
\begin{itemize}
\item[$\bullet$] $W$ enters every cluster of $R_{G''}$ at most $m$ times,
and thus $W$ exits every cluster at most~$m$ times.
\item[$\bullet$] If $V\in \Z_L\cup \Z_R$, then $W$ enters~$V$ precisely $m$ times and all the vertices of~$V$
are prescribed endvertices for these~$m$ entering edges of~$W$.
If $V\in \Z^-_L\cup \Z^-_R$, then $W$ exits~$V$ precisely $m$ times and all the vertices of~$V$
are prescribed endvertices for these~$m$ exiting edges of~$W$.
\item[$\bullet$]
If $V\notin \Z_L\cup \Z_R$ is not nearly $4$-good then $W$ enters~$V$ precisely $|V_{entry}|$ times and
the set $V_{entry}$ is the set of prescribed endvertices for all these $|V_{entry}|$ entering edges of~$W$.
Similarly, if $V\notin \Z^-_L\cup \Z^-_R$ is not nearly $4$-good then $W$ exits~$V$ precisely $|V_{exit}|$ times
and the set $V_{exit}$ is the set of prescribed endvertices for all these $|V_{exit}|$ exiting edges of~$W$.
\item[$\bullet$]
If $V$ is nearly $4$-good then $W$ enters~$V$ between $|V_{entry}|$ and $|V_{entry}|+2\gamma^{1/2} m$ times,
the set $V_{entry}$ is the set of prescribed endvertices for $|V_{entry}|$ of these entering edges of~$F$
and no vertex in~$V$ is a prescribed endvertex for the other entering edges of~$W$.
The analogue holds for the exits of~$W$ at~$V$.
\end{itemize}
\item[(c)] $W$ visits every vertex of $V_0$ exactly once.
\item[(d)] For each $x_i \in V_0$ we can choose an inneighbour $x_i^-$ in the cluster
preceding $x_i$ on $W$ and an outneighbour $x_i^+$ in the cluster following $x_i$ on $W$,
so that as $x_i$ ranges over $V_0$ all vertices $x_i^+$, $x_i^-$ are distinct.
\end{itemize}
Recall that $V_0^*$ denotes the set of all endvertices of the matching edges in
$\match_B\cup \match_T\cup \match_L\cup \match_R$ outside of~$V_0$.
Our aim now is to transform~$W$ into a Hamilton cycle of~$G$.
We start by fixing edges in~$G$ corresponding to all
those edges of~$W$ that lie in~$R_{G''}$ but not in~$F$. We first do this for all those occurrences
$VU\in E(R_{G''})\sm E(F)$ of edges
on~$W$ for which there is no prescribed endvertex. Note that the second and third conditions in (b$''$)
together imply that in this case both $V$ and $U$ must be nearly $4$-good.
Then, applying Lemma~\ref{regmatch} as in Section~\ref{highconnect}, we can
replace each such occurrence $VU$ by an edge from $V\sm (X^*\cup V_0^*)$
to $U\sm (X^*\cup V_0^*)$ in~$G$, so that all the edges of $G$ obtained in this
way are disjoint. We denote the set of edges obtained by $\cE_1$.
Next we choose the edge in~$G$ for all those occurrences
$VU\in E(R_{G''})\sm E(F)$ of edges on~$W$ which have a prescribed endvertex.
This can be achieved by the following greedy procedure.
Suppose that we have assigned the endvertex $u\in U$ to $VU$. Then $V$ will be $4$-excellent,
so by the last condition in~(b$''$) we have chosen at most $3\gamma^{1/2} m$ endvertices in~$V$
for edges constructed in previous steps. But $u$ has at least $d'm/4$ inneighbours in $V$,
where $d' \gg \gamma$, and $|V\cap (X^*\cup V_0^*)| \le 2\gamma m$,
so we can replace $VU$ by $vu$ for some $v\in V\sm (X^*\cup V_0^*)$
which is distinct from all the vertices chosen before.
(This is the point where we need to work with $R_{G''}$ instead of $R_{G'}$
-- we have $d \ll \gamma$, and so the above argument would fail for $R_{G'}$.)
We denote the set of edges obtained by $\cE_2$.

Let $\cE = \cE_1 \cup \cE_2 \cup \cE_3 \cup \cE_4$, where $\cE_3 = \match'_{BL}$
and $\cE_4 = \match_B\cup \match_T\cup \match_L\cup \match_R$.
(Note that $W$ used each edge in $\cE_3 \cup \cE_4$ precisely once.)
For each cluster $V$ let $V_{Exit}\sub V$ be the subset of all initial vertices of edges in~$\cE$
and let $V_{Entry}\sub V$ be the subset of all final vertices of edges in~$\cE$.
Then $V_{exit}\sub V_{Exit}$ and $V_{entry}\sub V_{Entry}$.
The following lemma provides useful properties of these fixed edges.

\begin{lemma}\label{exitentry}$ $
\begin{itemize}
\item[(i)] $\cE$ is a vertex-disjoint union of directed paths,
each having at least one endvertex in a $4$-excellent cluster.
\end{itemize}
Moreover, every cluster $V$ satisfies the following.
\begin{itemize}
\item[(ii)] $|V_{Exit}|=|V^+_{Entry}|$.
\item[(iii)] If $V$ is nearly $4$-good then $|V_{Exit}|,|V_{Entry}|\le 4\gamma^{1/2} m$,
$(V\cap X^*)\sm V_{entry}\sub V\sm V_{Entry}$
and $(V\cap X^*)\sm V_{exit}\sub V\sm V_{Exit}$.
Moreover, $V_{Exit}\cap V_{Entry}= V_{exit}\cap V_{entry}$.
\item[(iv)] If $V$ is nearly $4$-good then
the pairs $(V\sm V_{Exit},V^+\sm V^+_{Entry})_{G'}$
and $(V^-\sm V^-_{Exit},V\sm V_{Entry})_{G'}$ are $(\sqrt{\eps},d^2)$-super-regular.
\item[(v)] There is a perfect matching from $V\sm V_{Exit}$ to $V^+\sm V^+_{Entry}$.
\end{itemize}
\end{lemma}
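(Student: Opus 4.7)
The plan is to verify each assertion by a careful analysis of the construction of $\cE=\cE_1\cup\cE_2\cup\cE_3\cup\cE_4$ together with the properties of~$W$ summarised in~(b$''$) and the structural lemmas for twins.

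For (i), I first establish that each vertex of~$G$ has in-degree and out-degree at most~$1$ in~$\cE$. Edges of~$\cE_1$ are vertex-disjoint by their greedy construction; edges of~$\cE_2$ are added with one prescribed endvertex (lying in~$X^*$) while the other is chosen fresh outside $X^*\cup V_0^*$; the pseudo-matching $\cE_3=\match'_{BL}$ has each vertex in at most two edges (the length-$2$ components sharing a midpoint); and~$\cE_4$ consists of four matchings to and from~$V_0$ whose endvertices outside~$V_0$ are pairwise distinct by Lemma~\ref{excvs}(v). The potential cross-interactions occur only at prescribed endvertices, and Lemma~\ref{twinprops}(ii) together with $X^*\cap V_0^*=\emptyset$ ensures each such vertex is used at most once as an initial vertex and at most once as a terminal vertex. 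Closed cycles in~$\cE$ are ruled out because every edge of~$\cE$ is separated on~$W$ from the next $\cE$-edge by a non-trivial $F$-traversal, so a cycle in~$\cE$ would force~$W$ itself to contain a proper closed sub-walk avoiding the exceptional vertices, contradicting its construction. Finally, each path carries a $4$-excellent endvertex: $\cE_3$ edges have both endpoints in such clusters by the pseudo-matching definition, $\cE_4$ edges have their $V_0^*$-endpoint in a $5$-excellent cluster by Lemma~\ref{excvs}, and the non-prescribed endpoints of $\cE_1,\cE_2$ edges land in $4$-excellent clusters because the underlying shifted walks are useful.

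For (ii), since~$W$ is a concatenation of shifted walks, writing~$m_C$ for the number of $W$-traversals of the $F$-cycle through~$V$ and $f^+_V=f^-_{V^+}$ for the number of uses of the $F$-edge~$VV^+$, we have $m_C=f^+_V+|V_{Exit}|=f^-_{V^+}+|V^+_{Entry}|$, giving the claimed equality. For (iii), the size bounds are immediate from the fourth bullet of~(b$''$): for nearly $4$-good~$V$ neither~$V$ nor its $F$-neighbours is full or $M^{RL}$-full, so $|V_{entry}|,|V_{exit}|\le 2\gamma m$, whence $|V_{Entry}|,|V_{Exit}|\le 2\gamma m+2\gamma^{1/2}m\le 4\gamma^{1/2}m$. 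The inclusion $(V\cap X^*)\sm V_{entry}\sub V\sm V_{Entry}$ holds because any vertex in $V_{Entry}\sm V_{entry}$ is the freshly chosen terminal of a non-prescribed $\cE_1$ or~$\cE_2$ edge and hence lies outside~$X^*$; the identity $V_{Exit}\cap V_{Entry}=V_{exit}\cap V_{entry}$ follows from the same reasoning applied symmetrically.

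For (iv), since a nearly $4$-good cluster is $3$-good, Lemma~\ref{twinprops}(iv) applies with $X'=V\sm V_{Exit}$ and $Y'=V^+\sm V^+_{Entry}$; the required inclusions are precisely (iii) applied to~$V$ and to~$V^+$. For (v), if~$V$ is nearly $4$-good the matching comes from Lemma~\ref{regmatch} applied to the super-regular pair of~(iv); if $V\in\Z_L\cup\Z_R$ then $V_{Entry}=V$ by~(b$''$), the bad-walk construction forces $V_{Exit}=\emptyset$, and~(ii) yields $V^+_{Entry}=\emptyset$, so Lemma~\ref{regmatch} applied to the full $(10\eps,d/4)$-super-regular pair $(V,V^+)_{G'}$ provides the matching; the case $V^+\in\Z_L\cup\Z_R$ is symmetric. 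In all remaining cases $V_{Exit}=V_{exit}$ and $V^+_{Entry}=V^+_{entry}$ by~(b$''$), so Lemma~\ref{twinprops}(iii) furnishes the matching. The main obstacle is part~(i): carefully tracking the incidences of prescribed endvertices across the four families~$\cE_1,\cE_2,\cE_3,\cE_4$ and ruling out spurious cycles in~$\cE$ requires delicate bookkeeping, whereas the remaining parts reduce cleanly to the structural lemmas already developed.
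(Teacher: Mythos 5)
Your approach to parts (ii)--(iv) is sound and matches the paper's, but part (i) contains a genuine gap. You argue that ``a cycle in~$\cE$ would force~$W$ itself to contain a proper closed sub-walk avoiding the exceptional vertices.'' This inference is invalid: consecutive edges of a component of~$\cE$ share a \emph{vertex of~$G$}, but they need not be traversed consecutively on~$W$. Since~$W$ visits each cluster many times and different visits use different entry/exit vertices, a vertex that is the final endpoint of one $\cE$-edge and the initial endpoint of another (e.g.~a vertex in $\Exit_{BL}^{twin}\cap\Entry_{BL}^{twin}$ prescribed for two different $\cE_2$-edges) typically corresponds to two widely separated occurrences of its cluster on~$W$, with no closed sub-walk between them. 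Moreover, even if the inference were valid, $W$ does contain closed sub-walks avoiding~$V_0$ --- it winds around $F$-cycles repeatedly --- so there would be no contradiction. The correct argument is purely structural, as in the paper: once one knows in- and out-degrees are at most~$1$ so that~$\cE$ is a union of paths and cycles, one shows every component has a leaf. Each $\cE_1$-edge is an isolated component (both endvertices fresh, avoiding $X^*\cup V_0^*$). Each $\cE_2$-edge has its non-prescribed endvertex chosen fresh outside $X^*\cup V_0^*$ and distinct from all other choices, so it is a leaf. Any component containing only $\cE_3\cup\cE_4$-edges lies entirely in~$\cE_3$ or entirely in~$\cE_4$ (these are vertex-disjoint), and is a path of length at most~$2$ by the pseudo-matching definition or Lemma~\ref{excvs}(v). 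Cycles are thereby excluded because every component has a leaf, not because of any structure of~$W$.

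A secondary issue is in part~(v): your claim that the bad-walk construction forces $V_{Exit}=\emptyset$ for $V\in\Z_L\cup\Z_R$ is unjustified. A cluster $V\in\Z_L$ can lie in~$B$ and contain the non-excellent endvertex of a single-edge component of~$\match'_{BL}$, giving $V_{exit}=V\cap\Exit_{BL}\ne\emptyset$; by the third bullet of~(b$''$) one then has $V_{Exit}=V_{exit}\ne\emptyset$. The paper avoids this by treating the case $V\in\Z_L^-\cup\Z_R^-$ separately (where the matching is trivially between empty sets) and handling $V\in\Z_L\cup\Z_R$ together with the other non-nearly-$4$-good clusters via $V_{Exit}=V_{exit}$, $V^+_{Entry}=V^+_{entry}$ and Lemma~\ref{twinprops}(iii).
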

\proof
By construction every vertex is the initial vertex of at most one edge in $\cE$
and the final vertex of at most one edge in $\cE$,
so $\cE$ is a disjoint union of directed paths and cycles.
To prove statement (i), we note that $\cE_1$ forms an independent set of edges in~$\cE$ and
each edge in $\cE_1$ has both endvertices in $4$-excellent clusters. Moreover, every edge in~$\cE_2$ has a
prescribed endvertex, and if $u \in U$ was prescribed for an edge $VU$ or $UV$ in $W$,
then $V$ is a $4$-excellent cluster and we chose $v \in V$
so that $uv$ is the only edge of $\cE$ containing $v$.
Thus any component of $\cE$ containing an edge from $\cE_1\cup \cE_2$ is a directed path
having at least one endvertex in a $4$-excellent cluster.
Also, $\cE_3$ and $\cE_4$ are vertex-disjoint, so any component of $\cE$
not containing an edge from $\cE_1\cup \cE_2$ is either a component of $\cE_3$,
which has the required property by definition  of `pseudo-matching',
or a directed path consisting of two edges of $\cE_4$,
which has the required property by Lemma~\ref{excvs}. Thus statement (i) holds.

Condition~(ii) follows immediately from our construction of~$W$. The first part of~(iii) follows
from the last part of~(b$''$) and the definition of $4$-good clusters. To check the remainder of~(iii),
note that the last part of~(b$''$) implies that the vertices in $V_{Exit}\sm V_{exit}$ and in
$V_{Entry}\sm V_{entry}$ are endvertices of edges in $\cE_1\cup\cE_4$ or non-prescribed endvertices of
edges in~$\cE_2$. We chose the endvertices of edges in~$\cE_1$ and the non-prescribed endvertices of edges
in $\cE_2$ to be disjoint from each other and from $X^*\cup V_0^*$.
Also, $V_0^*\cap X^*=\emptyset$ by definition of~$X^*$ in~(\ref{Xstar}) -- see Subsection~\ref{twins}.
Altogether, this implies the remainder of~(iii).

To prove the first part of~(iv), note that~(iii) and Lemma~\ref{twinprops}(iv) applied with
$X':=V\sm V_{Exit}$ and $Y':=V^+\sm V^+_{Entry}$
together imply that $(V\sm V_{Exit},V^+\sm V^+_{Entry})_{G'}$ is
$(\sqrt{\eps},d^2)$-super-regular. The second part of~(iv) can be proved similarly.
It remains to prove~(v). If $V$ is nearly $4$-good then~(v) follows
from~(ii) and~(iv) and Lemma~\ref{regmatch}.
If $V\in \Z^-_L\cup \Z^-_R$ then~(v) is trivial since $V_{Exit}=V$ and $V^+_{Entry}=V^+$
by the second condition in~(b$''$). In all other cases the third condition of~(b$''$)
implies that $V_{Exit}=V_{exit}$ and so $V^+_{Entry}=V^+_{entry}$ by~(ii) and the fact that
$|V_{exit}|=|V^+_{entry}|$ by Lemma~\ref{twinprops}(iii). Thus in these cases~(v) follows from
Lemma~\ref{twinprops}(iii).
\endproof

Let $\C$ denote a spanning subdigraph of~$G$ whose edge set consists of~$\cE$ together with
a perfect matching from $V\sm V_{Exit}$ to $V^+\sm V^+_{Entry}$ for every cluster~$V$.
Then $\C$ is a 1-factor of~$G$. We will show that by choosing a different perfect matching
from $V\sm V_{Exit}$ to $V^+\sm V^+_{Entry}$ for some clusters~$V$ if necessary, we can
ensure that $\C$ consists of only one cycle, i.e.~that $\C$ is a Hamilton cycle of~$G$.
First we show that if $U$ is $4$-good then we can merge most vertices of $U^-$, $U$ and $U^+$
into a single cycle of~$\C$.

\begin{lemma}\label{4goodcycles}
We can choose the perfect matchings from $V\sm V_{Exit}$ to $V^+\sm V^+_{Entry}$
(for all clusters~$V$) so that the following holds for every cluster~$U$.
\begin{itemize}
\item[(i)] If $U$ is nearly $4$-good then
all vertices in $U\sm (U_{Exit}\cap U_{Entry})$ lie on a common cycle $C_U$ of~$\C$.
\item[(ii)] If $U$ is $4$-good then $C_{U^-}=C_U=C_{U^+}$.
\end{itemize}
\end{lemma}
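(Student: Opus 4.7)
The plan is to extend the re-routing argument of claim~$(\dagger)$ from Section~\ref{highconnect}, applying it to every pair $(V,V^+)$ on $F$ for which at least one of $V,V^+$ is nearly $4$-good. The crucial observation is that Lemma~\ref{exitentry}(iv) supplies $(\sqrt{\eps},d^2)$-super-regularity of $(V\sm V_{Exit}, V^+\sm V^+_{Entry})_{G'}$ under either of these conditions, so the auxiliary digraph $J$ on $V^+\sm V^+_{Entry}$ (formed by contracting each old matching edge $f(u)u$ and inserting an arc $u\to v$ whenever $f(u)v$ is an edge of $G'$) inherits super-regularity from the pair. Lemma~\ref{fk} then supplies a Hamilton cycle in $J$, giving a new perfect matching across the pair. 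Replacing the old matching by the new one forces all of $(V\sm V_{Exit})\cup(V^+\sm V^+_{Entry})$ onto a single cycle, and the second assertion of~$(\dagger)$ guarantees that vertices formerly on a common cycle remain so. Processing the applicable pairs sequentially, this invariant preserves the single-cycle property for every earlier pair. For pairs where neither $V$ nor $V^+$ is nearly $4$-good I would retain an arbitrary perfect matching supplied by Lemma~\ref{exitentry}(v); these include the pairs around $\Z_L\cup\Z_R$, where one of $V\sm V_{Exit}$, $V^+\sm V^+_{Entry}$ is empty anyway.

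For~(i), let $U$ be nearly $4$-good. Both pairs $(U^-,U)$ and $(U,U^+)$ then qualify for re-routing via $U$'s side, so there is a cycle $C'$ containing $(U^-\sm U^-_{Exit})\cup(U\sm U_{Entry})$ and a cycle $C''$ containing $(U\sm U_{Exit})\cup(U^+\sm U^+_{Entry})$. By Lemma~\ref{exitentry}(iii), $|U_{Exit}|,|U_{Entry}|\le 4\gamma^{1/2} m$, so $U\sm(U_{Exit}\cup U_{Entry})$ is non-empty and lies in both $C'$ and $C''$; hence $C'=C''=:C_U$, and taking unions gives $C_U\supseteq (U\sm U_{Entry})\cup(U\sm U_{Exit})=U\sm(U_{Exit}\cap U_{Entry})$, which is~(i).

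For~(ii), if $U$ is $4$-good then $U^-$, $U$, $U^+$ are all nearly $4$-good, so $C_{U^-}$, $C_U$, $C_{U^+}$ are all defined by~(i). The cycle $C_U$ contains $U^-\sm U^-_{Exit}$ from the re-routing of $(U^-,U)$, and $C_{U^-}\supseteq U^-\sm(U^-_{Exit}\cap U^-_{Entry})\supseteq U^-\sm U^-_{Exit}$ by~(i) applied to $U^-$; the non-empty intersection forces $C_U=C_{U^-}$, and $C_U=C_{U^+}$ is symmetric. The main technical hurdle I foresee is ensuring that~$(\dagger)$ applies at boundary pairs where only one of $V,V^+$ is nearly $4$-good, which is precisely what the two-sided formulation of Lemma~\ref{exitentry}(iv) is designed to provide; once this is in place the remainder is routine cycle-chasing using the cardinality bounds from Lemma~\ref{exitentry}(iii).
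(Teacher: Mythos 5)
Your proposal is correct and follows essentially the same route as the paper: sequentially apply the $(\dagger)$-style re-routing (using Lemma~\ref{fk} on the contracted auxiliary digraph, which inherits $(\sqrt{\eps},d^2)$-super-regularity from Lemma~\ref{exitentry}(iv)) to each pair of $F$ where super-regularity is available, preserving the common-cycle invariant, and then use the cardinality bound $|U_{Exit}|,|U_{Entry}|\le 4\gamma^{1/2}m$ from Lemma~\ref{exitentry}(iii) to stitch cycles through the non-empty overlap. The only slight imprecision is describing $f(u)u$ as an ``old matching edge'' --- in $(\dagger)$, $f(u)$ is the first vertex of $U^-\sm U^-_{Exit}$ reached by walking forward around the cycle from $u$, not the vertex matched to $u$ across a single pair --- but this does not affect the argument, which is otherwise faithful to the paper's proof.
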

\proof
Recall that if $U$ is nearly $4$-good then $|U_{Exit}|,|U_{Entry}|\le 4\gamma^{1/2} m$ by Lemma~\ref{exitentry}(iii).
Then it is clear that (i) implies (ii), so it suffices to prove~(i).
We will consider each nearly $4$-good cluster~$U$ in turn and show that we can
change the perfect matchings from $U^-\sm U^-_{Exit}$ to $U\sm U_{Entry}$ and from
$U\sm U_{Exit}$ to $U^+\sm U^+_{Entry}$ to ensure that $U$ satisfies the conclusions of the lemma,
and moreover, every $V$ satisfying the conclusions continues to satisfy them.
First we choose the perfect matching from $U^-\sm U^-_{Exit}$ to
$U\sm U_{Entry}$ to achieve the following.
\begin{enumerate}
\item All vertices which were on a common cycle in~$\C$ are still on a common cycle.
\item All vertices in $(U^-\sm U^-_{Exit})\cup (U\sm U_{Entry})$ lie on a
common cycle in~$\C$.
\end{enumerate}
Since $(U^-\sm U^-_{Exit},U\sm U_{Entry})_{G'}$ is
$(\sqrt{\eps},d^2)$-super-regular by Lemma~\ref{exitentry}(iv),
this can be achieved by the same argument used to prove statement ($\dagger$)
at the end of Section~\ref{highconnect}.
Next we apply the same argument to merge all the cycles in (the new)~$\C$
meeting $U^+\sm U^+_{Entry}$ into a single cycle, which then also contains all the vertices
in $U\sm U_{Exit}$. Since $|U_{Exit}|,|U_{Entry}|\le 4\gamma^{1/2} m$ by Lemma~\ref{exitentry}(iii),
we have that $U_{Exit}\cup U_{Entry}\neq U$. Thus the 1-factor~$\C$ obtained in this way satisfies~(1) and~(i)
for the cluster~$U$. Continuing in this way for all $4$-good clusters and their $F$-neighbours
yields a 1-factor~$\C$ as required in~(i).
\endproof

We will now show that any 1-factor~$\C$ as in Lemma~\ref{4goodcycles} must consist of a single cycle
(and thus must be a Hamilton cycle of~$G$).

\begin{lemma}\label{singlecycle}
Every $1$-factor~$\C$ of~$G$ as in Lemma~\ref{4goodcycles} satisfies the following conditions.
\begin{itemize}
\item[(i)] For every cycle $C$ of~$\C$ there exists a $4$-good cluster $U\in L\cup R$
such that $C=C_U$, i.e. $C$ contains all vertices in $U\sm (U_{Exit}\cap U_{Entry})$.
\item[(ii)] There is one cycle $\C_L$ in $\C$ which contains $U\sm (U_{Exit}\cap U_{Entry})$
for every $4$-good cluster $U\in L$. Similarly, there is some cycle $\C_R$ in $\C$ which contains
$U\sm (U_{Exit}\cap U_{Entry})$ for every $4$-good cluster $U\in R$.
\item[(iii)] $\C=\C_L=\C_R$ (and thus $\C$ is a Hamilton cycle of~$G$).
\end{itemize}
\end{lemma}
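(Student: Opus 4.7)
The plan is to prove all three parts via a single ``propagation'' observation: if $V$ and $V^+$ are both nearly $4$-good, then $C_V=C_{V^+}$. I would establish this by using Lemma~\ref{exitentry}(iv) and Lemma~\ref{regmatch} to find a perfect matching in~$\C$ from $V\sm V_{Exit}$ to $V^+\sm V^+_{Entry}$; since Lemma~\ref{exitentry}(iii) bounds $|V_{Exit}\cup V_{Entry}|$ and $|V^+_{Exit}\cup V^+_{Entry}|$ by $8\gamma^{1/2}m$, Lemma~\ref{4goodcycles}(i) implies that most of $V$ lies on~$C_V$ and most of $V^+$ on~$C_{V^+}$, so some matching edge must join the two cycles.

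For part~(i), I would take any cycle $C$ of~$\C$ and first argue that $C$ contains an $\cE$-edge: otherwise $C$ would be a union of $F$-matching edges lying entirely on a single $F$-cycle $C^F$, which is impossible since every $F$-cycle contains a cluster with non-trivial $\cE$-interaction~---~either a $4$-good cluster $U\in L\cup R$ which has $U_{Entry}\neq \emptyset$ (since $W^{good}_L$ or $W^{good}_R$ shifts into~$U$) or, when $C^F$ is bad, some $Z\in\Z_L\cup\Z_R$ with $Z_{Entry}=Z$. By Lemma~\ref{exitentry}(i), any $\cE$-edge in $C$ has an endvertex in a $4$-excellent cluster, which lies in $L\cup R$ (avoiding~$M$) and is $4$-good; combining with Lemma~\ref{4goodcycles}(i) gives $C=C_U$ for that cluster. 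Part~(ii) then follows because $W^{good}_L$ is a concatenation of useful shifted walks visiting every $4$-good cluster of~$L$: for consecutive $4$-good $V_i,V_{i+1}\in L$ on $W^{good}_L$, the connecting useful walk has shift endpoints in $4$-excellent (hence $4$-good) clusters, so iterated application of Lemma~\ref{4goodcycles}(ii) along these shifts yields $C_{V_i}=C_{V_{i+1}}$, giving a common cycle~$\C_L$; the analogous argument via $W^{good}_R$ yields~$\C_R$. For part~(iii), any transition segment of~$W$ joining two consecutive type-$TR$ exceptional vertices $a$ and $b$ starts at $a^+\in U\in R$ (with $U$ $4$-excellent by Lemma~\ref{excvs}), runs through a useful walk in $H[R]$, an $\cE_2$-edge, the matching edge $xy\in\match'_{BL}$ (or an $\Entry_{RL}$-transition), another $\cE_2$-edge, a useful walk in $H[L]$, and ends at $b^-\in V\in T$ (whose $F$-successor $V^+\in L$ is $4$-excellent). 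The resulting $\C$-cycle therefore contains both a vertex of the $4$-good cluster $U\in R$ (so equals $\C_R$ by~(ii)) and a vertex of the $4$-good cluster $V^+\in L$ (so equals $\C_L$), forcing $\C_L=\C_R$. Combined with~(i), $\C$ is a single cycle~---~the required Hamilton cycle.

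The main obstacle is part~(i): the propagation observation does not directly apply to vertices on bad $F$-cycles, since their clusters need not be nearly $4$-good. The careful construction of $W^{bad}_L$ and $W^{bad}_R$ resolves this~---~by selecting $\Z_L$ and~$\Z_R$ with distance at least~$2$ from~$M$ and then filling each such cluster entirely with prescribed endvertices reaching $4$-excellent clusters via Lemma~\ref{twinnbs}, we ensure that every $\C$-cycle passing through a bad $F$-cycle can tunnel out to a $4$-good cluster in $L\cup R$, so that the cycle-counting argument above still delivers $C=C_U$.
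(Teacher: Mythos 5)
Your overall approach follows the paper closely: you correctly identify the propagation via Lemma~\ref{4goodcycles}, the role of $W^{good}_L$ in gluing all the $4$-good clusters of $L$ into one cycle, and the role of a type-$TR$ vertex in merging $\C_L$ and $\C_R$. However, there is a genuine gap in your argument for part~(i).

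You claim that every cycle $C$ of~$\C$ must contain an $\cE$-edge, arguing that otherwise $C$ would wind around a single $F$-cycle $C^F$, and this is ``impossible since every $F$-cycle contains a cluster with non-trivial $\cE$-interaction --- either a $4$-good cluster $U$ with $U_{Entry}\neq\emptyset$, or some $Z\in\Z_L\cup\Z_R$ with $Z_{Entry}=Z$.'' The $\Z_L\cup\Z_R$ case is correct: if $Z_{Entry}=Z$ then $Z\sm Z_{Entry}=\emptyset$, so no matching edge enters $Z$ and hence no cycle can wind around $C^F$. But in the other case the inference fails: having $U_{Entry}\neq\emptyset$ does not force a cycle winding around $C^F$ to pass through $U_{Entry}$. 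Indeed, for a nearly $4$-good cluster $U$ we have $|U_{Exit}|,|U_{Entry}|\le 4\gamma^{1/2}m$ by Lemma~\ref{exitentry}(iii), so $U\sm(U_{Exit}\cup U_{Entry})$ is almost all of $U$, and a cycle $C$ consisting solely of $F$-matching edges can perfectly well pass through $U$ using only such vertices. So cycles of~$\C$ with no $\cE$-edge do exist (as long as they wind around non-bad $F$-cycles), and your claim is false. The paper does not try to exclude this case; instead it handles it directly: such a $C$ winds around a non-bad $C'$, which by definition contains a $4$-good $U\in L\cup R$; the vertex $u\in U$ that $C$ passes through satisfies $u\notin U_{Exit}\cup U_{Entry}$ (since both incident edges on~$C$ are $F$-matching edges), hence $u\in U\sm(U_{Exit}\cap U_{Entry})$, so by Lemma~\ref{4goodcycles}(i) $u$ lies on $C_U$ and $C=C_U$.

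There is also a smaller slip in the same paragraph: you paraphrase Lemma~\ref{exitentry}(i) as ``any $\cE$-edge in $C$ has an endvertex in a $4$-excellent cluster,'' but the lemma is about the directed-path components of~$\cE$, not individual edges (for instance, the $\cE_1$-edges only have endvertices in nearly $4$-good clusters, which need not be $4$-excellent). This distinction matters: the paper takes the longest $\cE$-subpath $P$ of~$C$ (a whole component of~$\cE$), obtains an endvertex $x$ of~$P$ in a $4$-excellent cluster~$U$, and then uses the maximality of~$P$ to deduce $x\notin U_{Exit}\cap U_{Entry}$, which is exactly what is needed to conclude $x\in C_U$. Picking an arbitrary $\cE$-edge does not give you this last property --- the endvertex you find could lie in $U_{Exit}\cap U_{Entry}$ and thus escape the set controlled by~$C_U$.

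Parts~(ii) and~(iii) of your sketch are essentially the paper's argument, modulo some looseness (in part~(iii) you only really need a single type-$TR$ vertex and its two $\cE$-neighbours, as in the paper; your phrasing with $a$ and $b$ implicitly assumes the $\C$-cycle follows the transition segment of $W$, which is not how $\C$ is built).
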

\proof
First consider the case when $C$ contains at least one edge in~$\cE$,
and let $P$ be the longest subpath of~$C$ which only consists of edges in~$\cE$.
By Lemma~\ref{exitentry}(i) $P$ has an endvertex $x$ lying in a $4$-excellent (and so $4$-good)
cluster~$U$. But $x$ cannot lie in both $U_{Exit}$ and $U_{Entry}$ by the maximality of the path.
Therefore $x \in C_U$, and so $C=C_U$.
Now suppose that $C$ does not contain any edges of~$\cE$.
This means that there is some cycle $C'\in F$ such that $C$ `winds around'~$C'$,
i.e. it only uses clusters in $C'$ and in each step moves from $V$ to $V^+$ for some $V \in C'$.
We claim that $C'$ cannot be bad. Otherwise, it would contain a cluster~$V\in \Z_L\cup \Z_R$.
But then the second part of (b$''$) implies that $V_{Entry}=V$, so $C$ cannot `wind around'~$C'$.
(The purpose of the walks $W_L^{bad}$ and $W_R^{bad}$ was to exclude this possibility.)
Thus $C'$ is not bad, so contains a $4$-good cluster~$U\in L\cup R$,
which $C$ must meet in at least one vertex, $u$ say.
But $u\notin U_{Exit}\cup U_{Entry}$ (otherwise one edge at $u$ on $C$ would lie in~$\cE$).
Now Lemma~\ref{4goodcycles}(i) implies that $C$ contains all vertices in
$U\sm (U_{Exit}\cap U_{Entry})$, as required.

To prove the first part of~(ii), consider the walk $W_L^{good}$ which connected all $4$-good
clusters in~$L$. Let $W_L^{good}= X_1 C_1 X_1^- X_2 C_2 X_2^- \ldots X_t C_t X^-_tX_{t+1}$,
where $X_1=X_{t+1}=V^*$. Then each $4$-good cluster in~$L$ appears at least once in $X_1,\dots,X_{t+1}$,
and $W_L^{good}$ only uses nearly $4$-good clusters.
Let $x^-_ix_{i+1}$ be the edge in~$\cE$ that we have chosen for the edge $X^-_iX_{i+1}$ on~$W_L^{good}$.
As neither $x^-_i$ nor $x_{i+1}$ was a prescribed endvertex for $X^-_iX_{i+1}$,
we have $x^-_i\in (X^-_i)_{Exit}\sm (X^-_i)_{exit}$ and $x_{i}\in (X_{i})_{Entry}\sm (X_i)_{entry}$.
Thus $x^-_i\notin (X^-_i)_{Entry}$ and  $x_{i}\notin (X_{i})_{Exit}$ by Lemma~\ref{exitentry}(iii).
So Lemma~\ref{4goodcycles}(ii) implies that for each $i=2,\dots, t$ the%
   \COMMENT{we really need to start at 2 here, at least that's much easier to check...}
vertices $x^-_i$ and $x_{i}$ lie on the same cycle of~$\C$.
Trivially, $x_i^-$ and $x_{i+1}$ also lie on the same cycle for each $i=1,\dots, t$.
This means that all of
$x_2,\dots,x_{t+1},x^-_1,\dots,x^-_t$ lie on the same cycle of~$\C$, which we will call $\C_L$.
This in turn implies that~$\C_L$ contains $U\sm (U_{Exit}\cap U_{Entry})$ for every $4$-good
cluster $U\in L$. Indeed, $U=X_i$ for some $i=2,\dots,t+1$ and $x_i\in U_{Entry}\sm U_{Exit}
\sub U\sm (U_{Exit}\cap U_{Entry})$. As $x_i\in \C_L$, Lemma~\ref{4goodcycles}(i)
implies that $\C_L$ contains all vertices in $U\sm (U_{Exit}\cap U_{Entry})$.
A similar argument for $W_R^{good}$ establishes the existence of~$\C_R$.

To verify~(iii), consider an exceptional vertex $x$ of type $TR$ (which exists since
we are assuming that $|TR|>|BL|$). Let $x^-$ and $x^+$ be the neighbours of $x$ on the cycle
$C\in\C$ which contains $x$. Let $X$ be the cluster containing $x^-$ and $X'$ be the cluster
containing $x^+$. By Lemma~\ref{excvs}, $X\in T$, $X'\in R$ and both $X$ and $X'$ are $5$-excellent
(and thus $4$-good). Since $x^+\in X'_{Entry}\sm X'_{entry}$ and so
$x^+\in X'\sm (X'_{Exit}\cap X'_{Entry})$ by Lemma~\ref{exitentry}(iii),
we must have $C=\C_R$. But on the other hand,
$x^-\in X\sm (X_{Exit}\cap X_{Entry})$ and $X^+ \in L$ is $4$-good (since $X$ is $5$-excellent).
Together with Lemma~\ref{4goodcycles}(ii) this implies that $C$ contains
$X\sm (X_{Exit}\cap X_{Entry})$, i.e.~$C=\C_L=\C_R$. Together with~(i) this now implies
that $C=\C$, as required.
\endproof


\subsection{The case when~$(\star\star)$ holds.}\label{subsec:starstar}
Recall that $(\star\star)$ is the case when $|\wt{M}|\ge |V_0|/\gamma^3$.
We only consider the case when $|TR|\ge |BL|$, as the argument for the other case is similar.
Let $F_{RL}$ denote the set of all those cycles in~$F$ which avoid all the clusters in~$L\cup R$ and
contain more clusters from $M_V^{RL}$ than from $M_V^{LR}$.
Let $F_{LR}$ denote the set of all other cycles in~$F$ which avoid all the clusters in~$L\cup R$.

We divide the argument in this subsection into two cases.
The main case is when $|TR|-|BL|> |F_{RL}|+|F_{LR}|$.
We start by showing that we have at least $\frac{|TR|-|BL|}{10\gamma^3}$ transitions from~$\wt{B}$ to~$\wt{L}$.
Note that $|\wt{M}_V|\ge |\wt{M}|/2 \ge |V_0|/(2\gamma^3)$.
If $|\wt{M}_V^{RL}|\ge |V_0|/(4\gamma^3)$ then, since $|TR|-|BL| \le |V_0|$,
we can use vertices in $\wt{M}_V^{RL}\cup \wt{M}_H^{RL}$ as in Subsection~\ref{twins}
to obtain the required transitions. On the other hand, if $|\wt{M}_V^{RL}|\le |V_0|/(4\gamma^3)$
then $|\wt{M}_V^{LR}|\ge |V_0|/(4\gamma^3)$. Applying Lemma~\ref{MBL2cor}(i)
and recalling $|V_0| \le d^{1/4}n \ll \gamma^4n$ we obtain
$$
|\match_{BL}|\ge \min\{|\wt{M}_V^{LR}|/2,\gamma^4n\}-|\wt{M}_V^{RL}|-|V_0|
\ge \frac{|V_0|}{8\gamma^3}-|\wt{M}_V^{RL}|-|V_0|
\ge \frac{|TR|-|BL|}{10\gamma^3}-|\wt{M}_V^{RL}|.
$$
Thus the pseudo-matching $\match_{BL}$ and the vertices in $\wt{M}_V^{RL}\cup \wt{M}_H^{RL}$ together
give at least $\frac{|TR|-|BL|}{10\gamma^3}$ transitions from~$\wt{B}$ to~$\wt{L}$.

We claim we can choose a sub-pseudo-matching~$\match'_{BL}$ of $\match_{BL}$
and a set $\Entry_{RL}\sub \wt{M}_V^{RL}$ with the following properties.
\begin{itemize}
\item[(i)] $|\match'_{BL}|+|\Entry_{RL}|=|TR|-|BL|+|F_{LR}|$,
\item[(ii)] No cluster contains more than $\gamma m/2$ endpoints of $\match'_{BL}$
or more than $\gamma m/2$ vertices in $\Entry_{RL}$.
\item[(iii)] Every cycle in $F_{RL}$ contains at least one vertex of $\Entry_{RL}$.
\end{itemize}
To see this, we first choose a vertex in $M_V^{RL}$ on every cycle in $F_{RL}$ to include in $\Entry_{RL}$,
which is possible since $|TR|-|BL| \ge |F_{RL}|$.
Next we arbitrarily discard one edge from each $2$-edge path in $\match_{BL}$
to obtain a matching, and then consider a random submatching in which
every edge is retained with probability $\gamma/4$.
As in Case~1 of the proof of Lemma~\ref{MBL2cor}, with high probability
we obtain a submatching of size at least $\frac{\gamma}{9}|\match_{BL}|$
with at most $\gamma m/2$ endpoints in any cluster.
We also arbitrarily choose $\gamma m/2$ vertices in each cluster of $M_V^{RL}$.
Then we still have at least $\gamma^{-1}(|TR|-|BL|)$ transitions.
Since $|TR|-|BL| \ge |F_{LR}|$, we can arbitrarily choose some
of these transitions so that $|\match'_{BL}|+|\Entry_{RL}|=|TR|-|BL|+|F_{LR}|$.

Note that there are no clusters which are $M^{RL}$-full or full with respect to~$\match'_{BL}$.
In particular, every cluster is $4$-good.
Next we choose twins as in Subsection~\ref{twins} so that
the properties in Lemma~\ref{twinprops}(ii)--(iv) hold.
Thus we obtain sets $\Exit_{BL}$, $\Entry_{BL}$, $\Exit_{BL}^{twin}$, $\Entry_{BL}^{twin}$,
$\Entry_{RL}$, $\Entry_{RL}^{twin}$ as before.

We now proceed similarly as in Subsection~\ref{subsec:BLstar}, forming a walk $W$
that incorporates all the exceptional vertices and uses $|TR|-|BL|$ transitions,
ending in some $4$-excellent cluster $V^*\in L$.
Since all clusters are $4$-good, the bad cycles are precisely those in~$F_{LR}\cup F_{RL}$.
Now we cannot construct the walks $W_L^{bad}$ and $W_R^{bad}$ as before,
since the bad cycles avoid $L\cup R$. Instead, we enlarge $W$ by including a walk $W_{LR}$
which `connects' all the cycles in~$F_{LR}$.
Suppose that $C_1,\dots,C_t$ are the cycles in~$F_{LR}$ and choose a cluster $V_i\in M_H^{LR}$
on each~$C_i$. Lemma~\ref{LRandRLmiddle}(i) implies that, in~$R_{G''}$, each $V^+_i$ has many
(at least $\beta k/4$) $4$-excellent inneighbours in~$T$,
while each $V_i$ has many $4$-excellent outneighbours in~$R$.
We pick $4$-excellent inneighbours $X_i\in T$ of $V^+_i$
and $4$-excellent outneighbours $Y_i \in R$ of $V_i$ for each~$i$.
Now we construct $W_{LR}$ as follows.
We start at $V^*\in L$, follow a useful shifted walk to $X^+_1 \in L$,
then the path in~$F$ from~$X^+_1$ to~$X_1\in T$ and then use the edge $X_1V^+_1$.
Next we wind around $C_1$ to $V_1$, use the edge $V_1Y_1$ and
follow a useful shifted walk from $Y_1$ to one of the $|F_{LR}|$ transitions from~$\wt{B}$ to~$\wt{L}$
that we have not yet used to move back to~$L$. We continue in this way until we have `connected'
all the~$C_i$. Finally, we close $W_{LR}$ by following a useful shifted walk back to~$V^*$.

As in Subsection~\ref{subsec:BLstar}, we fix the edges $\mc{E}$ and choose matchings
from $V \sm V_{Exit}$ to $V^+ \sm V^+_{Entry}$ for each cluster $V$ to obtain a $1$-factor $\mc{C}$.
Note that by construction every vertex outside of $V_0$ is incident to at most one edge of $\mc{E}$,
so $V_{Exit}\cap V_{Entry}=\emptyset$ for each cluster~$V$.
Lemmas~\ref{exitentry} and~\ref{4goodcycles} still hold, but instead of Lemma~\ref{singlecycle}(i)
we now only have that for every cycle $C$ of~$\C$ there exists a cluster $U$
(which is automatically $4$-good) such that $C$ contains all the vertices in $U$.
We then deduce that $\mc{C}$ has a cycle $\mc{C}_L$ containing all vertices in clusters of $L$
and a cycle $\mc{C}_R$ containing all vertices in clusters of $R$.
Moreover, since we use at least one transition from $\wt{B}$ to $\wt{L}$ we have
$\mc{C}_L=\mc{C}_R:=\mc{C}'$. Lemma~\ref{4goodcycles} now implies that
for every cycle $D$ in~$F$ there is a cycle $C$ in~$\C$ such that $C$ contains all
vertices belonging to clusters in~$D$. In particular, $\mc{C}'$ contains all vertices
belonging to clusters which lie on an $F$-cycle that intersects~$L\cup R$.

Moreover, if $C\neq \mc{C}'$ is another cycle in~$\C$, then there must be a cycle $D$ in $F$
such that $D$ only consists of clusters from~$M_V$ and such that $C$ contains all
vertices in $U$ for all clusters $U$ on~$D$. If $D\in F_{RL}$ then our choice of $\Entry_{RL}$ implies
that some such cluster~$U$ on~$D$ contains a vertex $x\in\Entry_{RL}$. The inneighbour~$y$ of~$x$ in~$\C$ lies
in some cluster $Y\in B$ and so $Y^+$ in $R$. But $\mc{C}'$ contains all vertices belonging to clusters that
lie on an $F$-cycle which intersects $R$. So $\mc{C}'$ contains all vertices in $Y$, and thus contains~$x$.
This shows that $\mc{C}'$ contains all those vertices which belong to clusters lying on
cycles from~$F_{RL}$. On the other hand, the walk $W_{LR}$ ensures that
$\C'$ also contains all those vertices which belong to clusters lying on
cycles from~$F_{LR}$. Altogether this shows that $\mc{C}'=\C$ is a Hamilton cycle, as required.

It remains to consider the case when $0\le |TR|-|BL| \le  |F_{RL}|+|F_{LR}|$.
We claim that there are at least $m/4$ transitions from $\wt{T}$ to $\wt{R}$ and
at least $m/4$ transitions from $\wt{B}$ to $\wt{L}$.
If $M_V^{RL}\neq \emptyset$, then we can use the vertices in $\wt{M}_V^{RL}\cup \wt{M}_H^{RL}$ to get
at least $m$ transitions from $\wt{B}$ to~$\wt{L}$. On the other hand,
if $M_V^{RL}= \emptyset$ then $|\wt{M}_V^{LR}| \ge |\wt{M}|/2 \ge |V_0|/(2\gamma^3)$ by $(\star\star)$,
so Lemma~\ref{MBL2cor}(i) implies the existence of a pseudo-matching
from~$\wt{B}$ to~$\wt{L}$ of size at least
$\min\{|\wt{M}_V^{LR}|/2,\gamma^4n\}-|V_0|\ge \min\{|\wt{M}_V^{LR}|/4,\gamma^4n/2\}\ge m/4$.
Similarly, if $M_V^{LR}\neq \emptyset$ then the vertices in $\wt{M}_V^{LR}\cup \wt{M}_H^{LR}$
give at least $m$ transitions from $\wt{T}$ to~$\wt{R}$. On the other hand,
if $M_V^{LR}= \emptyset$ then Lemma~\ref{MBL2cor}(ii) implies the existence of a
pseudo-matching from~$\wt{T}$ to~$\wt{R}$ of size at least $m/4$.
Thus in all cases, we have at least $m/4$ transitions in both
directions. We can use these transitions to argue
similarly as in the first case when $|TR|-|BL| >  |F_{RL}|+|F_{LR}|$,
but this time we also include a walk $W_{RL}$ into~$W$ which `connects' all the cycles in $F_{RL}$.
Since $|TR|-|BL| \le |F_{RL}|+|F_{LR}| \le k \ll m$ there are more than enough transitions.
Moreover, if $|F_{RL}|+|F_{LR}|=0$, then we also make sure that $W$ follows at least
one transition from $\wt{B}$ to $\wt{L}$ (and thus at least one transition from $\wt{T}$ to~$\wt{R}$).
Then we find a Hamilton cycle by the same argument as above.


\subsection{The case when $|TR|=|BL|$ and~$(\star)$ holds.}
If $|TR|=|BL| \ge 1$ then we can use the same procedure as in Subsection~\ref{subsec:BLstar},
with no need to use any edges from $\match_{BL}$ or $\match_{TR}$.
For the remainder of the proof we consider the case when $|TR|=|BL|=0$.
In this case, our list of the vertices of $V_0$
has all vertices of type $TL$ followed by all vertices of type $BR$,
so we will need to make a transition from incorporating vertices of type $TL$ to type $BR$ and
then another transition back from type $BR$ to type $TL$, i.e.~we need one transition from~$\wt{T}$ to~$\wt{R}$
and another one from $\wt{B}$ to~$\wt{L}$. If $M_V^{LR}$ and $M_V^{RL}$ are both non-empty, then a similar argument as
in the second case of the previous subsection implies that there are at least $m/4$ transitions
from~$\wt{B}$ to~$\wt{L}$ and at least $m/4$ transitions from~$\wt{T}$ to~$\wt{R}$.

Thus we may suppose that at least one of $M_V^{LR},M_V^{RL}$ is empty. We only consider the case when
$M_V^{RL}=\emptyset$ (and $M_V^{LR}$ could be empty or non-empty), as the other case is similar.
Let $x_1$ and $x_2$ be the first and last
vertices of the $TL$ list and $y_1$ and $y_2$ the first and last vertices
of the $BR$ list. Let $X_2 \in L$ be the cluster containing the outneighbour of $x_2$ in $\match_L$,
$Y_1 \in B$ the cluster containing the inneighbour of $y_1$ in $\match_B$,
$Y_2 \in R$ the cluster containing the outneighbour of $y_2$ in $\match_R$, and
$X_1 \in T$ the cluster containing the inneighbour of $x_1$ in $\match_T$.
We need to construct portions of our walk $W$ that link $X_2$ to $Y_1$ and $Y_2$ to $X_1$.
(If for instance the $TL$ list is empty, we take $X_1$ to be an arbitrary cluster in $T$.)
We consider two subcases according to whether there is a cycle of
$F$ that contains both a cluster of $L$ and a cluster of $R$.

\medskip

\noindent
\textbf{Case~1.} \emph{There is a cycle $C$ of $F$ containing a cluster $X \in L$ and a cluster $Y \in R$.}

\smallskip

\noindent
In this case we can reroute along $C$ to construct the required transitions.
Let $W_1$ be a shifted walk from $Y_2$ to $Y_1^+$. This exists since $Y_2,Y_1^+\in R$.
Also, since $C$ contains $Y \in R$ we can choose the walk $W_1$ to go via $Y$,
and $C$ will be one of the cycles traversed. Similarly we can choose a shifted walk $W_2$ from $X_2$ to
$X_1^+$, and we can choose $W_2$ to go via $X$, so it also traverses $C$.
Now we construct the portions of the walk $W$ joining $x_2$ to $y_1$
and $y_2$ to $x_1$ as follows. To join $x_2$ to $y_1$ we start at $X_2$,
follow $W_2$ until it reaches $X$, then follow $C$ round to $Y^-$, and then switch to $W_1$, which
takes us to $Y_1^+$, and we end by traversing a cycle of $F$ to reach $Y_1$.
This is balanced with respect to all cycles of $F$
except for $C$, where we have only used the portion from $X$ to $Y^-$.
To join $y_2$ to $x_1$ we start at $Y_2$, follow $W_1$ until it reaches $Y$,
then follow $C$ round to $X^-$, and then switch to $W_2$, which we follow
to $X_1^+$, then traverse a cycle of $F$ to reach $X_1$.
This is balanced with respect to all cycles
of $F$ except for $C$, where it only uses the portion from $Y$ to $X^-$.
But this is exactly the missing portion from the first transition, so in
combination they are balanced with respect to all cycles of $F$.
This scenario is illustrated in Figure~5.
\begin{figure}\label{reroute}
\includegraphics[scale=0.5]{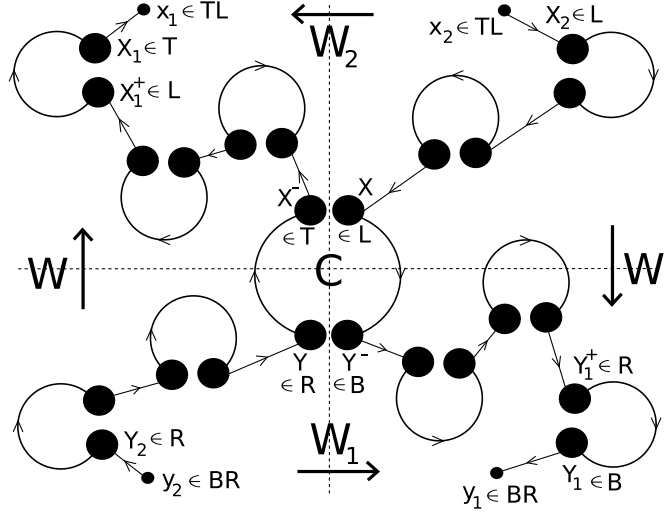}
\caption{Rerouting a cycle with both left and right clusters}
\end{figure}

\medskip

\noindent
\textbf{Case~2.} \emph{No cycle of $F$ contains clusters from both $L$ and $R$.}

\smallskip

\noindent
%
First we observe that in this case we have $B \subseteq R \cup M_V$ and $T \subseteq L \cup M_H$.
To see the first inclusion, note that if $U \in B$ then $U^+ \in R$, so we cannot have
$U \in L$ by our assumption for this case. The second inclusion is similar.
Since $|\wt{L}|,|\wt{R}|=n/2\pm 19\eta n$ by Lemma~\ref{propRL}
and since $\delta(G)\ge \beta n$, every vertex in~$V_0$ has either at least $\beta n/3$ inneighbours
in~$\wt{T}$ or at least $\beta n/3$ inneighbours in~$\wt{B}$ (and similarly for outneighbours in~$\wt{L}$
and $\wt{R}$). So by swapping the types of the exceptional vertices between $TL$ and $BR$ if necessary, we may assume that each
$v\in TL$ has either at least $\beta n/3$ inneighbours in~$\wt{T}$ or at least $\beta n/3$ outneighbours
in~$\wt{L}$ (or both), and similarly for the exceptional vertices of type~$BR$.

Recall that $M_V^{RL}$ is empty. We consider two further subcases
according to whether or not $M_V^{LR}$ is also empty.

\medskip

\noindent
\textbf{Case~2.1} $M_V^{LR}\neq \emptyset$.

\smallskip

\noindent
We start by choosing an edge $yy'$ from $\wt{B}\cup BR$ to $\wt{L}\cup TL$
such that $y\in \wt{B}\sm V_0^*$ or $y'\in \wt{L}\sm V_0^*$ (or both).
Such an edge exists by Lemma~\ref{MBL1}(v), since $|V_0^*|=2|V_0|\ll \beta n$.
If both $y\in \wt{B}\sm V_0^*$ and $y'\in \wt{L}\sm V_0^*$, then we can use $yy'$
for the transition from~$\wt{B}$ to $\wt{L}$. Together with suitable useful shifted walks in $L$
and in~$R$ this will achieve the transition from~$y_2$ to~$x_1$.
For the transition from~$\wt{T}$ to $\wt{R}$ we use one vertex in $M_V^{LR}$,
together with a twin of this vertex in $M_H^{LR}$.
Now we may suppose that $y\notin \wt{B}\sm V_0^*$ or $y'\notin \wt{L}\sm V_0^*$.
We only consider the case when the former holds, as the other case is similar.
We will still aim to use $yy'$ for the transition from~$\wt{B}$ to $\wt{L}$,
although we need to make the following adjustments according to various cases for $y$.

If $y\in BR$ then we relabel the $BR$ list so that $y_2=y$. We can then use the edge $yy'$
(together with a suitable useful shifted walk in $L$)
to obtain a transition from~$y_2$ to~$x_1$.

Suppose next that some edge in $\match_R$ joins an exceptional vertex $u\in BR$ to~$y$.%
    \COMMENT{We cannot just relabel the $BR$ list so that $y_2=u$ in this case and then use
$yy'$ for the transition since then the $F$-cycle containing~$y$ would not be balanced.}
If $u$ has an outneighbour $v\in \wt{R}\sm (V_0^*\cup \{y,y'\})$, then we replace
the edge $uy$ by $uv$ and can now use $yy'$ to obtain a transition from~$\wt{B}$ to $\wt{L}$.
If $u$ has no such outneighbour, then $u$ must have at least $\beta n/3$ inneighbours in $\wt{B}$
(by our assumption on the exceptional vertices at the beginning of Case~2) as well as
at least $\beta n/3$ outneighbours in $\wt{L}$.
Pick such an inneighbour $u^-\in \wt{B}\sm V_0^*$ and such an outneighbour $u^+\in \wt{L}\sm V^*_0$.
We can now use the path $u^-uu^+$ to obtain a transition from~$\wt{B}$ to $\wt{L}$.

Finally, suppose that some edge in $\match_B$ joins~$y$ to an exceptional vertex $u\in BR$.
If $u$ has an inneighbour $v\in \wt{B}\sm (V_0^*\cup \{y,y'\})$, then we replace
the edge $yu$ by $vu$ and can now use $yy'$ to obtain a transition from~$\wt{B}$ to $\wt{L}$.
If $u$ has no such inneighbour, then $u$ must have at least $\beta n/3$ inneighbours in $\wt{T}$.
Pick such an inneighbour $u^-\in \wt{T}\sm (V_0^*\cup \{y,y'\})$ and let $u^+\in \wt{R}$ be the outneighbour of~$u$
in the matching $\match_R$. We now use $yy'$ to obtain a transition from~$\wt{B}$ to $\wt{L}$
and the path $u^-uu^+$ to obtain a transition from~$\wt{T}$ to $\wt{R}$.


(Note that $y$ cannot be an endvertex of some edge in $\match_T$ or $\match_L$ outside~$V_0$,
since $y\in \wt{B}\cup BR$ and $B\subseteq R\cup M_V$.)%
    \COMMENT{Here is what happens in the case when $y'\notin \wt{L}\sm V_0^*$.
If $y'\in TL$ we reorder the $TL$ list to have $x_1=y'$ and then use $yy'$ for the transition.
If some edge in $\match_T$ joins $y'$ to an exc vx $u\in TL$ and we cannot find another inneighbour of $u$ then
$u$ has many inneighbours in $\wt{B}$ and many outneighbours in $\wt{L}$. So we can use a 2-path with
inner vx $u$ for the transition from~$\wt{B}$ to~$\wt{L}$. [This can't happen...
If some edge in $\match_B$ joins $y'$ to an exc vx
$u\in BR$ and we cannot find another inneighbour of $u$ then
$u$ has many inneighbours in $\wt{T}$. So we can use a 2-path with
inner vx $u$ for the transition from~$\wt{T}$ to~$\wt{R}$ and $yy'$ for the transition from~$\wt{B}$ to~$\wt{L}$.
 -- ?? The latter cannot hold since $y'\in T\cup TL$ and $B\subseteq R\cup M_V$.]
If some edge in $\match_L$ joins an exc vx $u\in TL$ to $y'$ and we cannot find another outneighbour of $u$ then
$u$ has many outneighbours in $\wt{R}$. So we can use a 2-path with
inner vx $u$ for the transition from~$\wt{T}$ to~$\wt{R}$ and $yy'$ for the transition from~$\wt{B}$ to~$\wt{L}$.}

\medskip

\noindent
\textbf{Case~2.2} $M_V^{LR}=\emptyset$.

\smallskip

\noindent
In this case we have $M_V=M_H=\emptyset$.
Thus $B \sub R$, and since $|B|=|R|$ we have $B=R$. Similarly $L=T$.

Next suppose that there are exceptional vertices $v_1\neq v_2$ such that
\textno $v_1$ has least $\beta n/3$ inneighbours in $\wt{T}$ and at least $\beta n/3$ outneighbours in $\wt{R}$. &(v_1)

\textno $v_2$ has least $\beta n/3$ inneighbours in $\wt{B}$ and at least $\beta n/3$ outneighbours in $\wt{L}$. &(v_2)

Then we use $v_1$ to get a transition from~$\wt{T}$ to $\wt{R}$,
and use $v_2$ to get a transition from~$\wt{B}$ to $\wt{L}$.
Now we may suppose that we cannot find $v_1$ and $v_2$ as above. It may even be that we can find neither
$v_1$ nor $v_2$ as above, in which case the following holds.
  \textno Every vertex of type $TL$ has at least $\beta n/3$ inneighbours in~$\wt{T}$
and at least $\beta n/3$  outneighbours in $\wt{L}$. Every vertex of type $BR$ has
at least $\beta n/3$ inneighbours~$\wt{B}$ and at least $\beta n/3$ outneighbours in $\wt{R}$. &(\spadesuit)

For the remainder of the proof we suppose that either $(v_1)$ or $(\spadesuit)$ holds,
as the case $(v_2)$ is similar to $(v_1)$. We consider the partition
$V(G) = L' \cup R'$ where $L':= \wt{L} \cup TL$ and $R' = \wt{R} \cup BR$.
When $(v_1)$ holds we add $v_1$ to either $L'$ or $R'$ such that the sets obtained in this
way have different size. We still denote these sets by $L'$ and $R'$.
The following lemma will supply the required transitions.

\begin{lemma}\label{final} $ $
\begin{itemize}
\item[(i)] When $(v_1)$ holds there is an edge $yy'$ from
$R'\sm\{v_1\}$ to $L'\sm\{v_1\}$ having at least one endvertex in $(\wt{L} \cup \wt{R}) \sm V_0^*$.
\item[(ii)] When $(\spadesuit)$ holds there are two disjoint edges $xx'$ and $yy'$ with $x,y' \in L'$, $x',y \in R'$
such that both edges have at least one endpoint in $(\wt{L} \cup \wt{R})\sm V_0^*$.
\end{itemize}
\end{lemma}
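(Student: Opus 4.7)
The plan is to apply the degree assumptions of $G$ at the parameter $i=(1-\beta)n/2$ (noting $n-i-\beta n=(1-\beta)n/2$) to produce many vertices of high out- or in-degree, which then force edges between $L'$ and $R'$ in whichever direction is needed. Throughout I use that Lemma~\ref{propRL}(ii) combined with $|TL|+|BR|=|V_0|\le d^{1/4}n$ gives $|L'|,|R'|=n/2\pm 20\eta n$, and that $|V_0\cup V_0^*|\le 3|V_0|\le 3d^{1/4}n\ll\beta n$, so $V_0^*$ and (in~(i)) the single vertex $v_1$ can safely be avoided.

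The core claim is that in each of the two directions between $L'$ and $R'$ there are at least $\beta n/3$ candidate edges, each having at least one endvertex in $(\wt{L}\cup\wt{R})\setminus V_0^*$. I verify this for the direction $R'\to L'$; the direction $L'\to R'$ is handled symmetrically. Condition~(i) of the degree assumptions at $i=(1-\beta)n/2$ gives either (A) $d^+_{(1-\beta)n/2}(G)\ge n/2$ or (B) $d^-_{(1-\beta)n/2}(G)\ge(1+\beta)n/2$. Case~(B) directly supplies $\ge(1+\beta)n/2-|R'|\ge\beta n/3$ vertices in $L'$ of indegree $\ge(1+\beta)n/2$, each with $\ge(1+\beta)n/2-(|L'|-1)\ge\beta n/3$ inneighbours in $R'$. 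In Case~(A), at least $\beta n/3$ vertices in $R'$ have outdegree $\ge n/2$; if $|R'|\le n/2$ each such vertex has $\ge 1$ outneighbour in $L'$, while if $|R'|>n/2$ we appeal additionally to condition~(ii) at the same $i$: its Case~(B$'$) ($d^+_{(1-\beta)n/2}(G)\ge(1+\beta)n/2$) yields $\ge\beta n/3$ vertices in $R'$ with $\ge\beta n/3$ outneighbours in $L'$, and its Case~(A$'$) ($d^-_{(1-\beta)n/2}(G)\ge n/2$) yields $\ge\beta n/3$ vertices in $L'$ with $\ge 2$ inneighbours in $R'$ (since $|L'|<n/2$ when $|R'|>n/2$).

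For (i), the adjustment ensures $|L'|\neq|R'|$; when $|R'|\le(n-1)/2$, Case~(A) supplies each candidate $y$ with $\ge n/2-(|R'|-1)\ge 2$ outneighbours in $L'$ so that some $y'\neq v_1$ is available, and when $|R'|\ge(n+1)/2$, Case~(B) or condition~(ii) as above provides $\ge\beta n/3$ candidate edges with at least $2$ endpoint-choices on the $L'$-side. In either event the abundance $\ge\beta n/3\gg|V_0\cup V_0^*|+1$ easily yields $yy'$ with $y\in R'\setminus\{v_1\}$, $y'\in L'\setminus\{v_1\}$ and at least one endpoint in $(\wt{L}\cup\wt{R})\setminus V_0^*$. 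For (ii), the core claim applied in both directions produces $\ge\beta n/3$ candidates for each of $xx'$ and $yy'$, so a trivial greedy selection yields disjoint edges with the required endpoint property. The main mild technical obstacle is the case distinction in the core claim between $|R'|\le n/2$ and $|R'|>n/2$, neatly resolved by combining degree conditions~(i) and~(ii) at the same parameter.
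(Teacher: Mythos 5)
Your strategy---applying the degree assumptions at $i=(1-\beta)n/2$ and counting neighbours inside $L'$ and $R'$---is the paper's, and your argument for part~(i), where $|L'|\ne|R'|$ is forced by the reassignment of $v_1$ and every candidate vertex has at least two neighbours across the cut, is sound.

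The closing step of part~(ii), however, is not. In the subcase $|L'|=|R'|=n/2$ with $d^+_{(1-\beta)n/2},d^-_{(1-\beta)n/2}<(1+\beta)n/2$ (so only the bound $\ge n/2$ is available on both sides), each candidate vertex is guaranteed just $n/2-(|R'|-1)=1$ neighbour across the cut. Moreover, extracting the two directions from conditions~(i) and~(ii) at the same $i$ naturally gives two candidate source sets both lying in $\wt{R}$: the high-outdegree vertices of $\wt{R}$ supply the $R'\to L'$ edges, and the high-indegree vertices of $\wt{R}$ supply the $L'\to R'$ edges. Nothing in the hypotheses rules out the degenerate configuration in which some fixed $w\in L'$ is the unique $L'$-outneighbour of every such high-outdegree vertex and also the unique $L'$-inneighbour of every such high-indegree vertex; then every candidate edge in either direction uses $w$, so no two candidates are disjoint, and the ``trivial greedy selection'' fails. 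The paper sidesteps this by taking $X\sub\wt{L}\sm V_0^*$ and $Y\sub\wt{R}\sm V_0^*$ to \emph{both} consist of vertices of outdegree at least $n/2$, so that the two source sets sit in different halves, and then gives a short but genuinely non-trivial case split: either some $y\in Y\sm\{x_0'\}$ has an outneighbour $y'\ne x_0$ in $L'$ (yielding the pair $x_0x_0'$, $yy'$), or every such $y$ satisfies $yx_0\in E(G)$, in which case one takes a fresh $x\in X\sm\{x_0\}$ with outneighbour $x'$ and some $y\in Y\sm\{x_0',x'\}$, outputting $xx'$ and $yx_0$. A closing argument of this kind is needed to complete part~(ii); as written, the greedy step is a genuine gap.
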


\proof
Suppose first that we have $d^+_{(1-\beta)n/2} \ge (1+\beta)n/2$.
Then we have at least $(1+\beta)n/2$ vertices of outdegree at least $(1+\beta)n/2$.
Since $|\wt{L}|,|\wt{R}|=n/2\pm 19\eta n$ (by Lemma~\ref{propRL}) and $|V_0^*|\ll \beta n$,
we can choose vertices $x\in \wt{L} \sm V_0^*$ and $y\in \wt{R} \sm V_0^*$ with outdegree
at least $(1+\beta)n/2$, and then outneighbours $x' \ne y$ in $\wt{R} \sm V_0^*$ of $x$ and $y' \ne x$ in
$\wt{L} \sm V_0^*$ of $y$. Then $xx'$ and $yy'$ are the edges required in~(ii)
and $yy'$ is the edge required in~(i). A similar argument applies
if $d^-_{(1-\beta)n/2} \ge (1+\beta)n/2$. Therefore we can assume that
$d^+_{(1-\beta)n/2} < (1+\beta)n/2$ and $d^-_{(1-\beta)n/2} < (1+\beta)n/2$.
Now our degree assumptions give $d^+_{(1-\beta)n/2} \ge n/2$ and $d^-_{(1-\beta)n/2} \ge n/2$,
so there are at least  $(1+\beta)n/2$ vertices of outdegree at least $n/2$
and at least $(1+\beta)n/2$ vertices of indegree at least $n/2$. We consider cases according to the
size of~$L'$ and~$R'$.

If $|R'|>|L'|$ then we have sets $X, Y \sub \wt{L} \sm V_0^*$ with $|X|, |Y| \ge \beta n/3$
such that every vertex in $X$ has at least $2$ outneighbours in $R'$ and
every vertex in $Y$ has at least $2$ inneighbours in $R'$.
Then we can obtain the required edges greedily: if $(\spadesuit)$ holds, choose any $x\in X$,
an outneighbour $x' \in R'$ of $x$, any $y'\in Y$ with $y' \ne x$
and any inneighbour $y \in R'$ of $y'$ with $y \ne x'$; if $(v_1)$ holds, then
we choose $y\in R'\sm\{v_1\}$ and an outneighbour $y'\in L'\sm\{v_1\}$.
A similar argument applies when $|L'|>|R'|$.

Finally we have the case $|L'|=|R'|=n/2$, in which case $(\spadesuit)$ holds by definition of $L'$ and $R'$.
Then we have sets $X \sub \wt{L} \sm V_0^*$ and $Y \sub \wt{R} \sm V_0^*$
of vertices with outdegree at least $n/2$, with $|X|, |Y| \ge \beta n/3$.
Note that each $x \in X$ has at least one outneighbour in $R'$
and each $y \in Y$ has at least one outneighbour in $L'$.
Choose some $x_0 \in X$ and an outneighbour $x_0' \in R'$ of $x_0$.
If there is any $y \in Y$, $y \ne x_0'$ with an outneighbour $y' \ne x_0$ in~$L'$ then
$x_0 x_0'$ and $yy'$ are our required edges. Otherwise, we have
the edge $yx_0$ for every $y \in Y$ with $y \ne x_0'$. So we choose some other
$x \in X$ with $x \ne x_0$, an outneighbour $x' \in R'$ of $x$ and
a vertex $y \in Y \sm \{x_0',x'\}$, and our required edges are
$xx'$ and $yx_0$.
\endproof

Now suppose that $(v_1)$ holds. Let $yy'$ be the edge provided by Lemma~\ref{final}(i).
If $y\in \wt{R}\sm V_0^*$ and $y'\in \wt{L}\sm V_0^*$, then we can use $yy'$ for the transition from~$\wt{B}=\wt{R}$
to $\wt{L}$. For the transition from~$\wt{T}=\wt{L}$ to $\wt{R}$ we use a path $v^-_1v_1v^+_1$ such
that $v^-_1\in \wt{L}\sm (V_0^*\cup \{y,y'\})$ and $v^+_1\in \wt{R}\sm (V_0^*\cup \{y,y'\})$.
(Such a path exists since $v_1$ has many inneighbours in~$\wt{L}$ and many outneighbours in~$\wt{R}$.)
Now we may suppose that either $y\notin \wt{R}\sm V_0^*$ or $y'\notin \wt{L}\sm V_0^*$.
We only consider the case when the former holds, as the other case is similar.
We still aim to use $yy'$ for the transition from~$\wt{B}$ to $\wt{L}$,
although we need to make adjustments as in Case~2.1.
For example, consider the case when some edge in $\match_B$ joins~$y$
to an exceptional vertex $u\in BR$. If $u$ has an inneighbour $v\in \wt{B}\sm (V_0^*\cup \{y,y'\})$, then we replace
the edge $yu$ by $vu$ and can now use $yy'$ to obtain a transition from~$\wt{B}$ to $\wt{L}$.
If $u$ has no such inneighbour, then $u$ must have at least $\beta n/3$ inneighbours in $\wt{T}=\wt{L}$.
Choose such an inneighbour $u^-\in \wt{T}\sm (V_0^*\cup\{y,y'\})$
and let $u^+$ be the outneighbour of $u$ in $\match_R$. Add $v_1$ to the set $TL$, $BR$
which contained it previously. We can now use $yy'$ to obtain a transition from~$\wt{B}$ to $\wt{L}$
and the path $u^-uu^+$ to obtain a transition from~$\wt{T}$ to $\wt{R}$.
The other cases are similar to those in Case 2.1.%
    \COMMENT{If eg $y\in BR$ then $y\neq v_1$ and so we can still relabel the $BR$ list to get that $y_2=y$ and
use $yy'$ for the transition from~$\wt{B}$ to $\wt{L}$ and $v_1$ for transition from~$\wt{T}$ to $\wt{R}$.
If some edge in $\match_R$ join an exceptional vertex $u\in BR$ to~$y$ and $u=v_1$ then we just choose another
outneighbour of $v_1$. The remaining cases are really similar.}

Finally, suppose that $(\spadesuit)$ holds. Let $xx'$, $yy'$ be the edges provided by Lemma~\ref{final}(ii).
Then by changing the edges in $\match_L\cup \match_R \cup \match_T\cup \match_B$ if necessary
we can ensure that $x,x',y,y'\notin V_0^*$. Now we can use $xx'$ for the transition from~$\wt{T}$
to~$\wt{R}$ and $yy'$ for the transition from~$\wt{B}$ to $\wt{L}$. This is clear if none
of $x,x',y,y'$ lies in $V_0$. But if we have $x\in V_0$ (for example), then $x\in TL$,
and relabelling the $TL$ list so that $x=x_2$ we can use $xx'$ for the transition from~$x_2$ to~$y_1$.
The other cases are similar.

\medskip

\noindent
In all of the above cases for $|TR|=|BL|=0$ we obtain a transition from $\wt{B}$ to $\wt{L}$
and a transition from~$\wt{T}$ to $\wt{R}$. Now we can complete the proof as in Subsection~\ref{subsec:BLstar}.
Here no cluster is full, and so every cluster is $4$-good. Moreover, every vertex outside~$V_0$
is an endvertex of at most one edge in~$\cE$. Thus as in Lemma~\ref{singlecycle} one can show that there
are cycles $\C_L,\C_R\in \C$ such that $\C_L$ contains all vertices belonging to clusters in~$L$,
$\C_R$ contains all vertices belonging to clusters in~$R$ and every exceptional vertex lies
in~$\C_L$ or $\C_R$. Moreover, since every cluster is
$4$-good and every vertex outside~$V_0$ is an endvertex of at most one edge
in~$\cE$, Lemma~\ref{4goodcycles} now implies that
for every cycle $D$ in~$F$ there is a cycle $C'$ in~$\C$ such that $C'$ contains all
vertices belonging to clusters in~$D$.
Now considering the transition from $\wt{B}$ to $\wt{L}$ (say) we see that $\C_L=\C_R$.
Since $(\star)$ implies that every cycle of~$F$
contains at least one cluster from $L\cup R$, we also have that all vertices in $\wt{M}$ are
contained in $\C_L=\C_R$. Thus $\C=\C_L=\C_R$ is a Hamilton cycle of~$G$.
This completes the proof of Theorem~\ref{CKKO - Approximate Chvatal}.

\section{A concluding remark}

The following example demonstrates that the degree properties used
in our main theorem cannot be substantially improved using our current method.
Let $G$ be a digraph with $V(G) = \{1,\dots,n\}$ such that $ij \in E(G)$ for
every $1 \le i<j \le n$ and also for every $1 \le j < i \le an+1$ and
$n-an \le j < i \le n$, for some $0<a<1/2$.
Then $G$ has minimum semidegree $an$ and satisfies $d^+_i, d^-_i \ge i-1$
for all $1 \le i \le n$, so if we apply the regularity lemma it
is `indistinguishable' from a digraph satisfying
the hypotheses of Conjecture~\ref{Nash-Williams Conjecture - Posa}.
However, any disjoint union of cycles in $G$ covers
at most $2an$ vertices, so the argument used in Section 6 breaks down.
We remark that our argument may well still be useful in combination with
a separate method for treating the case when the reduced digraph cannot be
nearly covered by disjoint cycles.

\bigskip

\noindent
{\footnotesize
Demetres Christofides, Daniela K\"uhn, Deryk Osthus, School of Mathematics,
University of Birmingham, Birmingham, B15 2TT, United Kingdom,
{\tt \{christod,kuehn,osthus\}@maths.bham.ac.uk}

\smallskip

\noindent Peter Keevash, School of Mathematical Sciences, Queen
Mary, University of London, Mile End Road, London, E1 4NS, United
Kingdom, {\tt p.keevash@qmul.ac.uk}
}

\end{document}